\theoremstyle{plain}
\newtheorem{theorem}{Theorem}[section]
\newtheorem{lemma}[theorem]{Lemma}
\newtheorem{corollary}[theorem]{Corollary}
\newtheorem{proposition}[theorem]{Proposition}
\newtheorem*{claim*}{Claim}
\theoremstyle{definition}
\newtheorem{definition}[theorem]{Definition}
\newtheorem{remark}[theorem]{Remark}
\newtheorem{question}[theorem]{Question}
\newcommand{\betrag}[1]{\vert{#1}\vert}
\newcommand{\dom}[1]{{{\rm{dom}}(#1)}}
\newcommand{\crit}[1]{{{\rm{crit}}\left({#1}\right)}}
\newcommand{\cof}[1]{{{\rm{cof}}(#1)}}
\newcommand{\ran}[1]{{{\rm{ran}}(#1)}}
\newcommand{\tc}[1]{{\rm{tc}}({#1})}
\newcommand{\length}[1]{{\rm{lh}}({#1})}
\newcommand{\POT}[1]{{\mathcal{P}}({#1})}
\newcommand{\map}[3]{{#1}:{#2}\longrightarrow{#3}}
\newcommand{\Map}[5]{{#1}:{#2}\longrightarrow{#3};~{#4}\longmapsto{#5}}
\newcommand{\Set}[2]{\{{#1}~\vert~{#2}\}}
\newcommand{\seq}[2]{\langle{#1}~\vert~{#2}\rangle}
\newcommand{\goedel}[2]{{\prec}{#1},{#2}{\succ}}
\newcommand{\anf}[1]{{\text{``}\hspace{0.3ex}{#1}\hspace{0.3ex}\text{''}}}
\newcommand{\HH}[1]{{\rm{H}}(#1)}
\newcommand{\Ult}[2]{{\mathrm{Ult}}({#1},{#2})}
\newcommand{\Add}[2]{{\rm{Add}}({#1},{#2})}
\newcommand{\Col}[2]{{\rm{Col}}({#1},{#2})}
\newcommand{\id}{{\rm{id}}}
\newcommand{\Lim}{{\rm{Lim}}}
\newcommand{\On}{{\rm{Ord}}}
\newcommand{\LL}{{\rm{L}}}
\newcommand{\HOD}{{\rm{HOD}}}
\newcommand{\ZF}{{\rm{ZF}}}
\newcommand{\ZFC}{{\rm{ZFC}}}
\newcommand{\CH}{{\rm{CH}}}
\newcommand{\AC}{{\rm{AC}}}
\newcommand{\DC}{{\rm{DC}}}
\newcommand{\AD}{{\rm{AD}}}
\newcommand{\PFA}{{\sf{PFA}}}
\newcommand{\MM}{{\sf{MM}}}
\newcommand{\BMM}{{\sf{BMM}}}
\newcommand{\BPFA}{{\sf{BPFA}}}
\newcommand{\CCC}{{\mathbb{C}}}
\newcommand{\PPP}{{\mathbb{P}}}
\newcommand{\RRR}{{\mathbb{R}}}
\newcommand{\TTT}{{\mathbb{T}}}
\newcommand{\KK}{{\rm{K}}}
\newcommand{\VV}{{\rm{V}}}
\newcommand{\calA}{\mathcal{A}}
\newcommand{\calB}{\mathcal{B}}
\newcommand{\calL}{\mathcal{L}}
\title{Partition properties for simply definable colourings}
\author{Philipp L\"ucke}
\address{Mathematisches Institut\\Rheinische Friedrich-Wilhelms-Universit\"at Bonn\\En\-de\-nicher Allee 60\\53115 Bonn\\Germany}
\email{pluecke@math.uni-bonn.de}
\subjclass[2010]{03E02, 03E47, 03E55} 
\keywords{Definability, partition relations, large cardinals, forcing axioms}
\begin{document}

\begin{abstract}
 We study partition properties for uncountable regular cardinals that arise by restricting partition properties defining large cardinal notions to classes of simply definable colourings.  
 We show that both large cardinal assumptions and forcing axioms imply that there is a homogeneous closed unbounded subset of $\omega_1$ for every colouring of the finite sets of countable ordinals that is definable by a $\Sigma_1$-formula that only uses the cardinal $\omega_1$ and real numbers as parameters. 
Moreover, it is shown that certain large cardinal properties cause analogous partition properties to hold at the given large cardinal and these implications yield natural examples of inaccessible cardinals that possess strong partition properties for $\Sigma_1$-definable colourings and are not weakly compact. 
In contrast, we show that $\Sigma_1$-definability behaves fundamentally different at $\omega_2$ by showing that various large cardinal assumptions and \emph{Martin's Maximum} are compatible with the existence of a colouring of pairs of elements of $\omega_2$ that is definable by a $\Sigma_1$-formula with parameter $\omega_2$ and has no uncountable homogeneous set. 
Our results will also allow us to derive tight bounds for the consistency strengths of various partition properties for definable colourings. 
Finally, we use the developed theory to study the question whether certain homeomorphisms that witness failures of weak compactness at small cardinals can be simply definable.  
\end{abstract}

\maketitle



\section{Introduction}

Many important results in contemporary set theory show that canonical extensions of the axioms of \emph{Zermelo--Fraenkel set theory} $\ZFC$ by large cardinal assumptions or forcing axioms cause small uncountable cardinals to satisfy strong fragments of large cardinal properties. 
For example, a classical result of Baumgartner shows that the \emph{Proper Forcing Axiom $\PFA$} implies the non-existence of $\aleph_2$-Aronszajn trees  (see {\cite[Theorem 7.2]{MR776640}}) and hence this axiom causes the second uncountable cardinal $\omega_2$ to possess a strong fragment of weak compactness.
Another example of such a results is given by a theorem of Woodin that shows that the existence of a Woodin cardinal $\delta$ causes the first uncountable cardinal $\omega_1$ to possess non-trivial fragments of almost hugeness by showing that, in a generic extension $\VV[G]$ of the ground model $\VV$, there is an elementary embedding $\map{j}{\VV}{M}$ with critical point $\omega_1^\VV$ that satisfies $j(\omega_1^\VV)=\delta$ and $({}^{{<}\delta}M)^{\VV[G]}\subseteq M$ (see {\cite[Theorem 2.5.8]{MR2069032}}).

In this paper, we focus on large cardinal properties defined with the help of partition properties and fragments of these properties that arise through restrictions of the considered colourings. 
 Remember that, if $X$ is a set and $k<\omega$, then we let $[X]^k$ denote the collection of all $k$-element subsets of $X$ and, given a function $c$ with domain $[X]^k$, a subset $H$ of $X$ is \emph{$c$-homogeneous} if $c\restriction{[H]^k}$ is constant. 
 A classical result of Erd\H{o}s and Tarski then shows that an uncountable cardinal $\kappa$ is weakly compact if and only if for every function $\map{c}{[\kappa]^2}{2}$, there is a $c$-homogeneous subset of $\kappa$ of cardinality $\kappa$. 
 %
 %
 The other large cardinal property defined through partition properties that is relevant for this paper is the concept of \emph{Ramseyness} introduced by Erd\H{o}s and Hajnal in \cite{MR0141603}. 
 They defined an infinite cardinal $\kappa$ to be \emph{Ramsey} if for every function $\map{c}{[\kappa]^{{<}\omega}}{\gamma}$ that sends elements of the collection $[\kappa]^{{<}\omega}$ of all finite subsets of $\kappa$ to elements of an ordinal $\gamma<\kappa$, there is a subset $H$ of $\kappa$ of cardinality $\kappa$ that is $(c\restriction[\kappa]^k)$-homogeneous for all $k<\omega$.

The work presented in this paper studies the fragments of the above partition properties that are obtained by restricting these properties to definable colourings. 
 Similar restrictions have already been studied in {\cite{MR2310340}}, \cite{MR2267146} and \cite{MR1791372}, where large cardinal properties are restricted to objects that are \emph{locally} definable, i.e. subsets of $\HH{\kappa}$ that are definable over the structure $\langle\HH{\kappa},\in\rangle$. 
In contrast, we will focus on partitions that are \emph{globally} definable, i.e. subsets of $\HH{\kappa}$ that are definable over $\langle\VV,\in\rangle$. 
Our results will show that canonical extensions of $\ZFC$ by large cardinal assumptions or forcing axioms cause strong partion properties for simply definable colourings to hold at $\omega_1$ and that several well-known large cardinal provide examples of inaccessible cardinal that are not weakly compact but possess strong partition properties for simply definable colourings. 
In contrast, we show that neither large cardinal assumptions nor  forcing axioms yield similar partition properties for $\omega_2$. 
Before we formulate these properties, we make two observations that suggest that the validity of partition principles for simply definable functions can be considered intuitively plausible and also foundationally desirable.  
First, we will later show that the axioms of $\ZFC$ already prove such partition properties for colourings that are very simply definable, i.e. functions defined by formulas that only use bounded quantifiers and parameters contained in $\HH{\kappa}\cup\{\kappa\}$ (see Theorem \ref{theorem:Sigma0Partitions} below). Therefore it is reasonably to expect that canonical extensions of $\ZFC$ expand this implication to larger classes of  simply definable partitions. 
Second, if we look at the colourings that witness failures of weak compactness of small cardinals, then the constructions of these functions rely on complicated objects, like $\kappa$-Aronszajn trees or well-orderings of the collection $\HH{\kappa}$ of all sets of hereditary cardinality less than $\kappa$, that can, in general, only be obtained through applications of the \emph{Axiom of Choice $\AC$}. 
For example, the canonical colouring $\map{c}{[\omega_1]^2}{2}$ witnessing the failure of the  weak compactness of $\omega_1$ is constructed by using $\AC$ to find an injection $\map{\iota}{\omega_1}{\RRR}$ of $\omega_1$ into the real line $\RRR$ and then setting $$c(\{\alpha,\beta\})=1 ~ \Longleftrightarrow ~ \iota(\alpha) < \iota(\beta)$$ for all $\alpha<\beta<\omega_1$. 
 %
%
 %
Moreover, it is well-known that these applications of $\AC$ are actually necessary to derive failures of weak compactness at accessible cardinals, because the axioms of $\ZF$ are consistent with the statement that $\omega_1$ is weakly compact (see \cite{MR0244036}).  
 This suggests that the partitions witnessing failures of weak compactness of small cardinals should be viewed as complicated objects and therefore it seems natural to expect canonical extensions of $\ZFC$ to imply that these functions are not simply  definable. 
 %
 %

 %
 %
 %

In the following, we formulate the principles studied in this paper. 
 %
Remember that a formula in the language $\calL_\in=\{\in\}$ of set theory is a \emph{$\Sigma_0$-formula} if it is contained in the smallest collection of $\calL_\in$-formulas that contains all atomic formulas and is closed under negations, conjunctions and bounded quantification. 
 Moreover, a $\calL_\in$-formula is a \emph{$\Sigma_{n+1}$-formula} for some $n<\omega$ if it is of the form $\exists x ~ \neg\varphi$ for some $\Sigma_n$-formula $\varphi$. 
 Note that the class of all formulas that are $\ZFC$-provable equivalent to a $\Sigma_{n+1}$ is closed under existential quantification, bounded quantification, conjunctions and disjunctions. 
Finally, given sets $z_0,\ldots,z_{m-1}$, we say that a class $X$ is \emph{$\Sigma_n(z_0,\ldots,z_{m-1})$-definable} if there is a $\Sigma_n$-formula $\varphi(v_0,\ldots,v_m)$ with $X = \Set{x}{\varphi(x,z_0,\ldots,z_{m-1})}$.

\begin{definition}\label{definition:SigmanPartition}
  Given a cardinal $\kappa$, $k<\omega$ and sets $z_0,\ldots,z_{m-1}$,  a function $c$ with domain $[\kappa]^k$ is a \emph{$\Sigma_n(z_0,\ldots,z_{m-1})$-partition} if there is a $\Sigma_n$-formula $\varphi(v_0,\ldots,v_{k+m+1})$ with the property that for all $\alpha_0<\ldots<\alpha_{k-1}<\kappa$, the value $c(\{\alpha_0,\ldots,\alpha_{k-1}\})$ is the unique set $y$ such that $\varphi(\alpha_0,\ldots,\alpha_{k-1},y,\kappa,z_0,\ldots,z_{n-1})$ holds. 
\end{definition}

It is  easy to see that if $\kappa$ is a cardinal and $n>0$, then a function $c$ with domain $[\kappa]^k$ is a $\Sigma_n(z_0,\ldots,z_{m-1})$-partition if and only if the set $c$ is $\Sigma_n(\kappa,z_0,\ldots,z_{m-1})$-definable. 
 %
 Moreover, since we allow the cardinal $\kappa$ as a parameter in the definitions the graphs of our partitions, these sets will in fact be $\Delta_n$-definable, i.e. there also is a $\Pi_n$-formula (i.e. a negated $\Sigma_n$-formula) that defines the function $c$ in the above way. 
 %
 In addition, the same argument shows that, if we instead consider $\Pi_n$-definable partitions, 
then we end up with the same class of functions.

The next definition shows how we restrict weak compactness to the definable context.

\begin{definition}
 Let $\kappa$ be an uncountable regular cardinal. 
 \begin{enumerate}[leftmargin=0.9cm]
  \item Given sets $z_0,\ldots,z_{m-1}$, the cardinal $\kappa$ has the \emph{$\Sigma_n(z_0,\ldots,z_{m-1})$-colouring property} if for all $\Sigma_n(z_0,\ldots,z_{m-1})$-partitions $\map{c}{[\kappa]^2}{2}$, there is a $c$-homogeneous set of cardinality $\kappa$. 

  \item The cardinal $\kappa$ has the 
\emph{$\mathbf{\Sigma}_n$-colouring 
property\footnote{This name was chosen to avoid conflicts with the definitions of {\cite{MR2310340}} and \cite{MR2267146}, where \emph{$\mathbf{\Sigma}_n$-weakly compact cardinals} and the \emph{$\mathbf{\Sigma}_n$-partition property} were introduced. 
If $\kappa$ is an inaccessible cardinal with the $\mathbf{\Sigma}_1$-colouring property, then the fact that the set $\HH{\kappa}$ is $\Sigma_1(\kappa)$-definable implies that $\kappa$ has the $\mathbf{\Sigma}_1$-partition property (see {\cite[Definition 2.9]{MR2310340}}). 
Moreover, if $\kappa$ is an inaccessible cardinal with the $\mathbf{\Sigma}_2$-colouring property, then the set $\{\HH{\kappa}\}$ is $\Sigma_2(\kappa)$-definable and therefore $\kappa$ has the $\mathbf{\Sigma}_\omega$-partition property. 
 In addition, if $\VV=\LL$ holds and $\kappa$ is a cardinal with the $\mathbf{\Sigma}_1$-colouring property, then the set $\{\HH{\kappa}\}$ is $\Sigma_1(\kappa)$-definable, Corollary \ref{corollary:InaccessibleInLA} below shows that $\kappa$ is inaccessible and hence $\kappa$ has the $\mathbf{\Sigma}_\omega$-partition property. 
 Finally, if $\VV=\LL$ holds and $\kappa$ is a cardinal with the $\mathbf{\Sigma}_\omega$-partition property, then there is a subset $A$ of $\kappa$ with the property that the set $\{A\}$ is $\Sigma_1(\kappa)$-definable and whenever a function $\map{c}{[\kappa]^2}{2}$ is definable over $\langle\LL_\kappa,\in\rangle$ and $\lambda$ is an ordinal greater than $\kappa$ with $\LL_\lambda[A]\models\ZFC^-$, then $\LL_\lambda[A]$ contains a $c$-homogeneous subset of $\kappa$ of cardinality $\kappa$. In combination with Lemma \ref{lemma:CharacterizationsPartitionProperty} below, this shows that the assumption $\VV=\LL$ implies that every cardinal with the $\mathbf{\Sigma}_1$-colouring and the $\mathbf{\Sigma}_\omega$-partition property lies above an inaccessible cardinal with the $\mathbf{\Sigma}_\omega$-partition property. In particular, the $\mathbf{\Sigma}_\omega$-partition property does not provably imply the $\mathbf{\Sigma}_1$-colouring property.}
} 
if it has the $\Sigma_n(z_0,\ldots,z_{m-1})$-colouring property for all  $z_0,\ldots,z_{m-1}\in\HH{\kappa}$.  
 \end{enumerate}
\end{definition}

The results of this paper will show that the assumption $\VV=\HOD$ implies that all cardinals with the $\Sigma_2$-colouring  property are already weakly compact. 
Since the extension of $\ZFC$ that we consider in this paper are all compatible with the assumption $\VV=\HOD$, this result shows that the above property is most interesting for $n\leq 1$. 
 The restriction of parameters to the set $\HH{\kappa}\cup\{\kappa\}$ in the second part of the above definition is supposed to prevent partitions witnessing failures of weak compactness to be used as parameters in our definitions.  
 Note that the class of of sets that are definable by a $\Sigma_1$-formula with parameters in $\HH{\kappa}\cup\{\kappa\}$ was already studied in detail in \cite{MR3694344} and there it was shown that for certain uncountable regular cardinals $\kappa$, canonical extensions of $\ZFC$ provide a strong structure theory for this rich class of objects.

We will later show that every uncountable regular cardinal has the $\mathbf{\Sigma}_0$-colouring property and this statement cannot be strengthened to $n=1$, because cardinals with the $\mathbf{\Sigma}_1$-colouring property will turn out to be inaccessible with high Mahlo-degree in G\"odel's constructible universe $\LL$. 
But the results of this paper will allow us to show that successors of regular cardinals, successors of singular cardinals of countable cofinality and non-weakly compact inaccessible cardinals can all consistently possess the $\mathbf{\Sigma}_n$-colouring property for all $n<\omega$. 
 Moreover, we will show that many canonical extensions of $\ZFC$ cause $\omega_1$ to have the $\mathbf{\Sigma}_1$-colouring  property and $\ZFC$ alone proves that several types of non-weakly compact large cardinal have this property. 
In contrast, we will show that the influence of large cardinal assumptions and forcing axioms on $\Sigma_1$-definability at $\omega_2$ is completely different from the effect of these extensions of $\ZFC$ on $\Sigma_1$-definability at $\omega_1$ by showing that these assumptions are compatible with a failure of the $\Sigma_1$-colouring property at $\omega_2$. 
These arguments will also allow us to answer one of the main questions left open by the results of \cite{MR3694344} by showing that the existence of a $\Sigma_1(\omega_2)$-definable well-ordering of the reals is compatible with the existence of various very large cardinal assumptions (see {\cite[Question 7.5]{MR3694344}}). 
Finally, we will show that the $\mathbf{\Sigma}_2$-colouring  property provably fails for all successors of singular strong limits cardinals of uncountable cofinality.

In the proofs of the positive results mentioned above, we will often derive the following much stronger partition property for definable colourings.

\begin{definition}
 Let $\kappa$ be an uncountable regular cardinal.
  \begin{enumerate}[leftmargin=0.9cm]
   \item Given sets $z_0,\ldots,z_{m-1}$, the cardinal $\kappa$ has the \emph{$\Sigma_n(z_0,\ldots,z_{m-1})$-club property} if for every $\Sigma_n(\kappa,z_0,\ldots,z_{m-1})$-partition $\map{c}{[\kappa]^k}{\alpha}$ with $\alpha<\kappa$, there is a $c$-homogeneous set that is closed and unbounded in $\kappa$.  

 \item The cardinal $\kappa$ has the \emph{$\mathbf{\Sigma}_n$-club property} if it has the $\Sigma_n(z_0,\ldots,z_{m-1})$-club  property for all $z_0,\ldots,z_{m-1}\in\HH{\kappa}$.  
  \end{enumerate}
\end{definition}

For $n>0$, the $\mathbf{\Sigma}_n$-club property can easily be seen as a strengthening of the restriction of the partition property defining Ramsey cardinals to definable colourings, because, if $\map{c}{[\kappa]^{{<}\omega}}{\alpha}$ is a function with $\alpha<\kappa$ that is definable by a $\Sigma_n$-formula with parameter $z\in\HH{\kappa}$, then the  restrictions $c\restriction[\kappa]^k$ are all $\Sigma_n(z)$-partitions and hence this property yields a club in $\kappa$ that is $(c\restriction[\kappa]^k)$-homogeneous for all $k<\omega$. 
 We will present more justification for this view by showing that this implication also holds true when we consider alternative characterizations of Ramseyness through the existence of certain iterable models containing subsets of $\kappa$ and the restrictions of these properties to definable subsets. 
In fact, we will show that in the \emph{Dodd--Jensen core model $\KK^{DJ}$}, the $\mathbf{\Sigma}_1$-club property is equivalent to the restriction of Ramseyness to $\Sigma_1$-definable subsets of $\kappa$ in the above sense. 
In another direction, we will show that for all $n>0$, the validity of the $\mathbf{\Sigma}_n$-club property is equivalent to the non-existence of bistationary (i.e. stationary and costationary) subsets $A$ of $\kappa$ with the property that the corresponding set $\{A\}$ is definable by a $\Sigma_n$-formula with parameters in $\HH{\kappa}\cup\{\kappa\}$.

In the next section, we will show that all uncountable regular cardinals provably have the $\mathbf{\Sigma}_0$-club property and earlier remarks show that  this statement cannot be extended to $n=1$. Moreover, we will later show that the existence of a cardinal with the $\mathbf{\Sigma}_1$-club property implies the existence of $0^\#$. 
A short argument will allow us to show that a cardinal with the $\mathbf{\Sigma}_1$-club property is either equal to $\omega_1$ or a limit cardinal. 
Moreover, our results will show that many canonical extensions of $\ZFC$ cause $\omega_1$ to have the $\mathbf{\Sigma}_1$-club property, several large cardinal notions imply this property at the given large cardinal and the existence of an accessible regular limit cardinal with this property is consistent. 
Finally, we will show that no cardinal greater than $\omega_1$ has the $\mathbf{\Sigma}_2$-club property and that the statement that $\omega_1$ has the $\mathbf{\Sigma}_n$-club property for all $n<\omega$ is equiconsistent with the existence of a measurable cardinal.

We end this introduction by outlining the content of this paper. 
As a motivation for the later results of this paper, we show that all uncountable regular cardinals have the $\mathbf{\Sigma}_0$-club property in Section \ref{section:Sigma0}. 
In Section \ref{section:SigmaNpartiotionproperty}, we derive a long list of basic results on the $\mathbf{\Sigma}_n$-colouring property and present two alternative characterizations of this property that are also fragments of properties characterizing weakly compact cardinals. 
These results will allow us to determine the consistency strength of the $\mathbf{\Sigma}_n$-colouring property in many important cases. 
Section \ref{section:SigmaNclubproperty} contains an analogous  investigation of the $\mathbf{\Sigma}_n$-club property that provides the exact consistency strength of all instances of this property. 
In Section \ref{section:DefPartCountableOrdinals}, we use results from \cite{MR3694344} to show that both large cardinal assumptions and forcing axioms imply that $\omega_1$ has the $\mathbf{\Sigma}_1$-club property. 
In contrast, the results of Section \ref{section:Omega2} show that both of these assumptions are compatible with a failure of the $\Sigma_2$-colouring property at $\omega_2$. 
Section \ref{section:Limits} contains various example of non-weakly compact limit cardinals that provably have the $\mathbf{\Sigma}_1$-club property. 
In Section \ref{section:SuccSingular}, we will use results from \cite{DefSuccSingCardinals}, \cite{MR661475} and \cite{MR1462202} to study the $\mathbf{\Sigma}_n$-colouring property at successors of singular cardinals. 
Section \ref{section:DefHomeo} contains the results that originally motivated the work of this paper. These results deal with the question whether certain homeomorphisms witnessing failures of weak compactness can be simply definable and connect this question with the $\mathbf{\Sigma}_n$-colouring property. 
We conclude this paper in Section \ref{section:questions} with some question raised by its results.


\section{$\Sigma_0$-definable partitions}\label{section:Sigma0}

As a motivation for the main results of this paper, we show that all uncountable regular cardinals are weakly compact with respect to  $\Sigma_0$-definable colourings. In fact, we will prove to following stronger statement.

\begin{theorem}\label{theorem:Sigma0Partitions}
 Every uncountable regular cardinal has the $\mathbf{\Sigma}_0$-club property. 
\end{theorem}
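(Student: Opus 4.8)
The plan is to fix $\kappa$, $k<\omega$, parameters $z_0,\dots,z_{m-1}\in\HH{\kappa}$, an ordinal $\alpha<\kappa$, and a $\Sigma_0$-formula $\varphi$ witnessing that a given function $\map{c}{[\kappa]^k}{\alpha}$ is a $\Sigma_0(z_0,\dots,z_{m-1})$-partition, and to produce a $c$-homogeneous club by induction on $k$, the case $k=0$ being trivial. The main tool is that, since $\varphi$ is $\Sigma_0$ and all parameters other than $\kappa$ lie in $\HH{\kappa}$, the colouring $c$ can be \emph{localised} below suitable closure points. To this end I would fix a large regular $\theta$ together with a well-ordering $<^*$ of $\HH{\theta}$ and build a continuous, $\subseteq$-increasing sequence $\seq{M_\xi}{\xi<\kappa}$ of elementary submodels of $\langle\HH{\theta},\in,<^*\rangle$ with $c,\alpha,z_0,\dots,z_{m-1}\in M_0$, $\tc{\{z_0,\dots,z_{m-1}\}}\subseteq M_0$, $\betrag{M_\xi}<\kappa$, $M_\xi\cap\kappa\in\kappa$, and $\seq{M_\eta}{\eta\le\xi}\in M_{\xi+1}$. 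Setting $\delta_\xi=M_\xi\cap\kappa$ and $C=\Set{\delta_\xi}{\xi<\kappa}$, the regularity of $\kappa$ guarantees that $C$ is a club in $\kappa$ and that every $\delta_\xi$ is a limit ordinal.

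For $\gamma\in C$ let $\map{c^{(\gamma)}}{[\gamma]^k}{\alpha}$ be the function obtained by running $\varphi$ with $\gamma$ in place of $\kappa$, i.e.\ $c^{(\gamma)}(\{\beta_0,\dots,\beta_{k-1}\})$ is the unique $y<\alpha$ with $\varphi(\beta_0,\dots,\beta_{k-1},y,\gamma,z_0,\dots,z_{m-1})$. Applying the transitive collapse $\pi$ of the model $M_\xi$ with $\delta_\xi=\gamma$ (so that $\pi(\kappa)=\gamma$ while $\pi$ fixes $\alpha$ and each $z_i$), the elementary fact that $c$ is a $\varphi$-defined partition of $[\kappa]^k$ into values below $\alpha$ turns into the statement that $[\gamma]^k$ is partitioned by $\varphi$ into values below $\alpha$; once the witnesses are known to lie below $\alpha$, this latter statement is expressed by a $\Sigma_0$-formula in the parameters $\gamma,\alpha,z_0,\dots,z_{m-1}$ --- every quantifier being bounded by $\gamma$ or by $\alpha<\gamma$ --- so it reflects back up to $\VV$ by $\Sigma_0$-absoluteness. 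Hence each $c^{(\gamma)}$ is a total function, and in fact a $\Sigma_0(z_0,\dots,z_{m-1})$-partition of $[\gamma]^k$. The same collapse computation shows $c^{(\gamma)}\restriction[C\cap\gamma]^k = c\restriction[C\cap\gamma]^k$, so the localised colourings cohere with $c$ along $C$; in particular $c(\vec\delta)=c^{(\gamma)}(\vec\delta)$ for every $\vec\delta\in[C]^k$ and every $\gamma\in C$ above $\max\vec\delta$.

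For the inductive step, note that for each $\gamma<\kappa$ the section $\map{c_\gamma}{[\kappa\setminus(\gamma+1)]^{k-1}}{\alpha}$ given by $c_\gamma(\vec\beta)=c(\{\gamma\}\cup\vec\beta)$ is a $\Sigma_0(\gamma,z_0,\dots,z_{m-1})$-partition, hence by the inductive hypothesis has a homogeneous club, and any two such clubs agree on their colour $i_\gamma<\alpha$. Since the $<^*$-least homogeneous club for $c_\gamma$ is definable from $\gamma$ (and $c,<^*$), it belongs to $M_\xi$ as soon as $\gamma\in M_\xi$, so a tail of $C$ is contained in it, and consequently $c(\{\gamma\}\cup\vec\delta)=i_\gamma$ for all $\vec\delta\in[C]^{k-1}$ lying above $\gamma$ --- in particular the colour of a tuple from $C$ depends only on its least element. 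A diagonal-intersection construction then shows that, whenever some colour $i^*<\alpha$ has the property that $\Set{\gamma<\kappa}{i_\gamma=i^*}$ contains a club $E^*$, one obtains a $c$-homogeneous club of colour $i^*$ by interleaving, inside $E^*$, the closure points of the construction with the homogeneous clubs of the relevant sections; here it is essential that $E^*$ is closed, so that the colour $i^*$ survives at limit stages. The whole proof thus reduces to showing that $\gamma\mapsto i_\gamma$ is constant on a club. This map is regressive above $\alpha$, so Fodor's lemma yields a stationary set on which it takes a constant value; the step that genuinely uses $\Sigma_0$-definability, and which I expect to be the main obstacle, is to promote this stationary set to a club, i.e.\ to rule out that the colours of the homogeneous sets of the sections oscillate on a bistationary set. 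I would handle this again through the localisation: using $i_\gamma=c(\{\gamma,\delta',\dots\})$ for $\delta',\dots$ the successors of $\gamma$ in $C$, together with the fact that $c$ depends $\Sigma_0$-definably --- hence absolutely, via the transitive collapses --- on only finitely many of its arguments, one argues that the value at a limit point of $C$ is determined by the values below it, which excludes oscillation and delivers the club. For $k=1$ this last point is precisely the assertion that no $\Sigma_0(\kappa,z_0,\dots,z_{m-1})$-definable subset of $\kappa$ is bistationary when the parameters lie in $\HH{\kappa}$, and by the preceding remark the case of general $k$ reduces to it.
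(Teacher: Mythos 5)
Your reduction of the general case to the arity-one case is the sound part of the proposal: the elementary-submodel localisation, the coherence of $c^{(\gamma)}$ with $c$ along $C$ via transitive collapses and $\Sigma_0$-absoluteness, and the observation that on $[C]^k$ the colour depends only on the least element of the tuple, all check out. But the proof stops exactly where the theorem's actual content begins. The statement you isolate as "the main obstacle" --- that a $\Sigma_0(\kappa,z_0,\ldots,z_{m-1})$-definable subset of $\kappa$ cannot be bistationary --- is not a corollary of your localisation machinery; it is the whole theorem in its essential case, and the sentence "the value at a limit point of $C$ is determined by the values below it" is not an argument for it. Indeed, the collapse of a model $M$ with $\delta=M\cap\kappa$ computes $c(\beta)$ only for $\beta<\delta$ and says nothing about $c(\delta)$ itself: the truth of $\varphi(\delta,y,\kappa,\vec z)$ for a $\Sigma_0$-formula $\varphi$ depends on the internal arithmetic structure of the ordinal $\delta$ (whether it is a limit of limits, a multiple of $\omega^2$, etc.), which is invisible to the submodels below it. Ruling out oscillation therefore requires showing that ordinals which are "sufficiently similar" in this arithmetic sense are indiscernible for $\varphi$, and nothing in your proposal supplies that. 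This is precisely what the paper does: it introduces equivalence relations $E^\xi_l$ on finite sets of ordinals, defined through Cantor-normal-form data of the successive differences modulo $\omega^l$, proves a tuple-extension property for them (Proposition \ref{proposition:TupleExtensionEquiv}), shows by induction on the complexity of $\varphi$ that truth is invariant across $E^\xi_l$-classes for $l=l_{\varphi,\iota}$ (Lemma \ref{lemma:EquivalentFormulasForEquivalentTuples}), and then observes that the multiplicatively indecomposable ordinals in $[\xi,\kappa)$ form a club of pairwise $E^\xi_l$-equivalent tuples. No comparable mechanism appears in your write-up, so the central step is missing.

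A secondary problem is the way you propose to "promote the stationary set to a club." Even granting the $k=1$ case, you cannot apply it to the map $\gamma\mapsto i_\gamma$: the ordinal $i_\gamma$ is defined as the unique colour whose $c_\gamma$-preimage contains a club, and this definition quantifies over club subsets of $\kappa$, so $\gamma\mapsto i_\gamma$ is not a $\Sigma_0(\kappa,\vec z)$-partition. (This is exactly why the paper's Lemma \ref{lemma:CharacterizationsClubProperty}, which performs the analogous arity reduction, is restricted to $n\geq 1$, where the auxiliary functions remain $\Sigma_n$-definable, and why Theorem \ref{theorem:Sigma0Partitions} is instead proved directly for all arities at once by the indiscernibility argument.) So as written the proposal neither proves the base case nor correctly reduces the general case to it.
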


In order to prove this results, we introduce equivalence relations on the classes of the form $[\On\setminus\xi]^{{<}\omega}$ that consist of all finite sets of ordinals greater than some fixed ordinal $\xi$.  
Given $0<l<\omega$ and $\xi\in\On$, we let $E_l^\xi$ denote the unique equivalence  relation on $[\On\setminus\xi]^{{<}\omega}$ such that for all $a,b\in[\On\setminus\xi]^{{<}\omega}$, we have $E_l^\xi(a,b)$ if and only if the following statements hold: 
  \begin{enumerate}[leftmargin=0.9cm]
   \item $\betrag{a}=\betrag{b}$. 
   
   \item Let $\alpha_1<\ldots<\alpha_k$ be the monotone enumeration of $a$ and let $\beta_1<\ldots<\beta_k$ is the monotone enumeration of $b$. Set $\alpha_0=\beta_0=\xi$. If  $i<k$, then there are ordinals $\mu$, $\nu$ and $\rho$ such that the following statements hold: 
    \begin{enumerate}
     \item $\alpha_{i+1}=\alpha_i+\omega^l\cdot\mu+\rho$.
     
     \item $\beta_{i+1}=\beta_i+\omega^l\cdot\nu+\rho$.
     
     \item $\rho<\omega^l$.
     
     \item $\min\{\mu,\nu\}=0$ implies $\mu=\nu=0$.   
    \end{enumerate}
  \end{enumerate} 
 Note that we have $E^\chi_{l+1}\subseteq E^\xi_l$ for all $0< l<\omega$ and $\xi\leq\chi\in\On$.

 \begin{proposition}\label{proposition:TupleExtensionEquiv}
  If $0<k,l<\omega$, $\xi\in\On$, $a\in[\On\setminus\xi]^{k+1}$, $b\in[\On\setminus\xi]^k$ and $\alpha\in a$ with $E^\xi_{l+1}(a\setminus\{\alpha\},b)$, then there is $\xi\leq\beta\in\On\setminus b$ with $E^\xi_l(a,b\cup\{\beta\})$. 
 \end{proposition}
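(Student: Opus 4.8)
The plan is to pass to monotone enumerations and reduce the statement to a single ordinal-arithmetic splitting lemma. Write $a=\{\alpha_1<\dots<\alpha_{k+1}\}$ and $b=\{\beta_1<\dots<\beta_k\}$, set $\alpha_0=\beta_0=\xi$, and fix $j\le k$ with $\alpha=\alpha_{j+1}$. For $i\le k$ let $\theta_i$ be the ordinal with $\alpha_{i+1}=\alpha_i+\theta_i$, and for $i<k$ let $\eta_i$ be the ordinal with $\beta_{i+1}=\beta_i+\eta_i$; all of these are positive. Write $\sigma\sim_m\tau$ to mean that, writing $\sigma=\omega^m\mu+\rho$ and $\tau=\omega^m\nu+\rho$ with $\rho<\omega^m$, one has $\min\{\mu,\nu\}=0\Rightarrow\mu=\nu=0$; the remark preceding the proposition gives in particular that $\sigma\sim_{l+1}\tau$ implies $\sigma\sim_l\tau$. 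Reading off the monotone enumeration of $a\setminus\{\alpha\}$ (whose successive gaps are the $\theta_i$ for $i<j$, the merged gap $\theta_j+\theta_{j+1}$ at index $j$ when $j<k$, and the $\theta_{i+1}$ for $j<i<k$), the hypothesis $E^\xi_{l+1}(a\setminus\{\alpha\},b)$ unpacks into: $\theta_i\sim_{l+1}\eta_i$ for $i<j$; $\theta_j+\theta_{j+1}\sim_{l+1}\eta_j$ if $j<k$; and $\theta_{i+1}\sim_{l+1}\eta_i$ for $j<i<k$. It then suffices to produce an ordinal $\beta$ with $\xi\le\beta\notin b$ whose insertion into $b$ in position $j+1$ yields a set whose $k+1$ successive gaps are, respectively, $\sim_l$ to $\theta_0,\dots,\theta_k$; this is exactly $E^\xi_l(a,b\cup\{\beta\})$.

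For $j=k$ I would simply take $\beta=\beta_k+\theta_k$, so that $\beta>\beta_k$ and hence $\xi\le\beta\notin b$: the top gap of $b\cup\{\beta\}$ is then exactly $\theta_k$, and each lower gap is an unchanged $\eta_i$, which is $\sim_l\theta_i$ because it is $\sim_{l+1}\theta_i$. For $j<k$ the whole matter reduces to the claim that if $\theta,\theta',\Gamma$ are positive ordinals with $\theta+\theta'\sim_{l+1}\Gamma$, then there are positive ordinals $\Gamma_0,\Gamma_1$ with $\Gamma_0+\Gamma_1=\Gamma$, $\Gamma_0\sim_l\theta$, and $\Gamma_1\sim_l\theta'$. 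Granting this and applying it with $\theta=\theta_j$, $\theta'=\theta_{j+1}$, $\Gamma=\eta_j$, I would put $\beta=\beta_j+\Gamma_0$; since $\Gamma_0,\Gamma_1>0$ this gives $\beta_j<\beta<\beta_{j+1}$, hence $\xi\le\beta\notin b$, and inserting $\beta$ into $b$ replaces the gap $\eta_j$ by the gaps $\Gamma_0\sim_l\theta_j$ and $\Gamma_1\sim_l\theta_{j+1}$ at positions $j$ and $j+1$, keeps the gaps $\eta_i$ with $i<j$ where they were (each $\sim_l\theta_i$), and shifts the gaps $\eta_i$ with $j<i<k$ to position $i+1$ (each $\sim_l\theta_{i+1}$). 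A routine index check then confirms that all $k+1$ gaps of $b\cup\{\beta\}$ are $\sim_l$ to the corresponding $\theta_i$, giving $E^\xi_l(a,b\cup\{\beta\})$.

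The main obstacle is the splitting claim, since it is the one place where the drop from scale $\omega^{l+1}$ to $\omega^l$ is actually used. The plan for it is to divide at both scales: write $\theta=\omega^lp+r$, $\theta'=\omega^lq+s$, $\Gamma=\omega^lN+t$ with $r,s,t<\omega^l$. If the $\omega^{l+1}$-quotient of $\Gamma$ (equivalently of $\theta+\theta'$) is $0$, then $\Gamma=\theta+\theta'$ and $(\Gamma_0,\Gamma_1)=(\theta,\theta')$ works. Otherwise $\Gamma\ge\omega^{l+1}$, so $N\ge\omega$; comparing the $\omega^l$-remainders of $\theta+\theta'$ and $\Gamma$ and invoking additive indecomposability of $\omega^l$ and of $\omega^{l+1}$, one sees that $p=q=0$ cannot occur and that, according to which of $p,q$ is $0$, one of the pairs $(\omega^l+r,\,\omega^lN+t)$, $(r,\,\omega^lN+t)$, $(\omega^lN+r,\,s)$ serves as $(\Gamma_0,\Gamma_1)$: the relations $\Gamma_0\sim_l\theta$ and $\Gamma_1\sim_l\theta'$ are read off the normal forms, positivity follows from $\theta,\theta'>0$, and the identity $\Gamma_0+\Gamma_1=\Gamma$ comes from the absorption rules $r+\omega^lN=\omega^lN$ (valid since $r<\omega^l\le\omega^lN$) and $1+N=N$ (valid since $N$ is infinite).
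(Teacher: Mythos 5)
Your reduction to gap sequences and the splitting lemma is sound, and the splitting lemma itself checks out: the three pairs you list do handle the cases $p,q>0$; $p=0,q>0$; $p>0,q=0$, the absorption identities $r+\omega^lN=\omega^lN$ and $1+N=N$ are exactly what is needed for the sums, and $p=q=0$ is indeed impossible once $\Gamma\geq\omega^{l+1}$. In substance this is the same computation the paper performs: your $j=k$ case is its Case~2, and your three splitting pairs correspond to its Subcases~3.1, 3.2.1 and 3.2.2; you have merely packaged the ordinal arithmetic as a standalone lemma about gaps rather than as a positional case analysis on $\alpha$, which is a clean way to organize it.

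There is, however, one degenerate case your argument does not cover, and it is precisely the one the paper isolates as its Case~1: $\alpha=\xi$, i.e.\ $\xi\in a$ and $\alpha=\min(a)$. Since $a\subseteq\On\setminus\xi$ permits $\xi\in a$, your blanket assertion that all the gaps $\theta_i$, $\eta_i$ are positive is false here: $\theta_0=0$ when $\alpha_1=\alpha=\xi$. With $j=0$ you would then be invoking the splitting claim with $\theta=\theta_0=0$, which its hypotheses exclude; and even granting the natural extension $(\Gamma_0,\Gamma_1)=(0,\Gamma)$, the step ``since $\Gamma_0,\Gamma_1>0$ this gives $\beta_j<\beta<\beta_{j+1}$, hence $\xi\le\beta\notin b$'' fails, because $\beta=\beta_0=\xi$ and you must argue separately that $\xi\notin b$. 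That argument is short --- $\eta_0\sim_{l+1}\theta_0+\theta_1=\theta_1>0$ forces $\eta_0>0$, so $\beta_1>\xi$ and $\beta=\xi$ serves --- but it is absent, so as written the proof is incomplete in this one case. (The other place a zero gap can occur, namely $\xi\in a\setminus\{\alpha\}$ with $j\geq 1$, which forces $\eta_0=0$ as well, causes no harm: there you only need $0\sim_l 0$ at position $0$, and the splitting is applied at an index where all three ordinals are positive.)
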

 
 \begin{proof}
  Let $\alpha_1<\ldots<\alpha_k$ be the monotone enumeration of $a\setminus\{\alpha\}$, let $\beta_1<\ldots<\beta_k$ be the monotone enumeration of $b$ and set $\alpha_0=\beta_0=\xi$. 
  
  \smallskip
  
  \paragraph{\textbf{Case 1}: $\alpha=\xi$.} Pick $\mu$, $\nu$ and $\rho$ such that $\alpha_1=\xi+\omega^{l+1}\cdot\mu+\rho$ and $\beta_1=\xi+\omega^{l+1}\cdot\nu+\rho$. Since $\alpha\notin a$, we have $\alpha_1>\xi$ and either $\mu>0$ or $\rho>0$. This implies that $\beta_1>\xi$ and $\xi\notin b$. If we set $\beta=\xi$, then $E^\xi_{l+1}(a,b\cup\{\beta\})$ and therefore $E^\xi_l(a,b\cup\{\beta\})$. 
  
  \smallskip 
  
  \paragraph{\textbf{Case 2}: $\alpha>\alpha_k$.} Pick $\sigma,\tau\in\On$ with $\alpha=\alpha_k+\omega^l\cdot\sigma+\tau$ and $\tau<\omega^l$. If we set $\beta=\beta_k+\omega^l\cdot\sigma+\tau>\beta_k$, then $E^\xi_l(a,b\cup\{\beta\})$ holds. 
 
   \smallskip 
  
  \paragraph{\textbf{Case 3}: $\alpha_i<\alpha<\alpha_{i+1}$ for some $i\leq k$.} Pick $\mu,\nu,\rho\in\On$ such that $\alpha_{i+1}=\alpha_i+\omega^{l+1}\cdot\mu+\rho$, $\beta_{i+1}=\beta_i+\omega^{l+1}\cdot\nu+\rho$, $\rho<\omega^{l+1}$ and $\min\{\mu,\nu\}=0$ implies $\mu=\nu=0$.  
  
  \smallskip 
  
  \paragraph{\textbf{Subcase 3.1}: $\alpha\geq\alpha_i+\omega^{l+1}\cdot\mu$.} Pick $\sigma<\rho$ and $0<\tau\leq\rho$ with $\alpha=\alpha_i+\omega^{l+1}\cdot\mu+\sigma$ and $\rho=\sigma+\tau$. Set $\beta=\beta_i+\omega^{l+1}\cdot\nu+\sigma$. Then $\alpha_{i+1}=\alpha+\tau$ and $\beta_{i+1}=\beta+\tau>\beta$. This shows that  $E^\xi_{l+1}(a,b\cup\{\beta\})$ and therefore $E^\xi_l(a,b\cup\{\beta\})$.  
  
  \smallskip 
  
  \paragraph{\textbf{Subcase 3.2}: $\alpha<\alpha_i+\omega^{l+1}\cdot\mu$.} Then we can find $\mu_0<\mu$, $\mu_1\leq\mu$ and $\sigma<\omega^{l+1}$ with $\mu=\mu_0+1+\mu_1$ and $\alpha=\alpha_i+\omega^{l+1}\cdot\mu_0+\sigma$. Pick ordinals $\pi$ and $\tau$ such that $\sigma=\omega^l\cdot\pi+\tau$ and $\tau<\omega^l$. Then $\alpha=\alpha_i+\omega^l\cdot(\omega\cdot\mu_0+\pi)+\tau$ and 
  \begin{equation}\label{equation:Subcase321alphaRemainder}
   \begin{split}
    \alpha + \omega^{l+1}\cdot(1+\mu_1)+\rho  ~ & = ~  \alpha_i+\omega^{l+1}\cdot\mu_0+\sigma+ \omega^{l+1} + \omega^{l+1}\cdot\mu_1+\rho \\ 
    & = ~ \alpha_i+\omega^{l+1}\cdot(\mu_0+ 1 + \mu_1)+\rho ~ = ~  \alpha_{i+1},
   \end{split}
  \end{equation}
  because $\sigma<\omega^{l+1}$ implies that $\sigma+ \omega^{l+1}=\omega^{l+1}$. 
  
  \smallskip
  
  \paragraph{\textbf{Subcase 3.2.1}: $\omega\cdot\mu_0+\pi=0$.} Then $\tau>0$ and $\alpha=\alpha_i+\tau$. Set $\beta=\beta_i+\tau>\beta_i$. Since $\mu>0$ implies $\nu>0$, we have $\tau+\omega^{l+1}\cdot\nu=\omega^{l+1}\cdot\nu$, $$\beta+\omega^{l+1}\cdot\nu+\rho ~ = ~ \beta_i+\tau+\omega^{l+1}\cdot\nu+\rho ~ = ~ \beta_i+\omega^{l+1}\cdot\nu+\rho ~ = ~ \beta_{i+1}$$ and $\beta<\beta_{i+1}$. 
  In combination with (\ref{equation:Subcase321alphaRemainder}), this shows that  $E^\xi_{l+1}(a,b\cup\{\beta\})$ and we can conclude that $E^\xi_l(a,b\cup\{\beta\})$.

  \smallskip
  
    \paragraph{\textbf{Subcase 3.2.2}: $\omega\cdot\mu_0+\pi>0$.} Set $\beta=\beta_i+\omega^l+\tau$. Since $\omega^l+\tau<\omega^{l+1}$ and $\mu>0$ implies $\nu>0$, we then have  $$\beta+\omega^{l+1}\cdot\nu+\rho=\beta_i+\omega^l+\tau+\omega^{l+1}\cdot\nu+\rho=\beta_i+\omega^{l+1}\cdot\nu+\rho=\beta_{i+1}.$$ This allows us to conclude that $\beta_i<\beta<\beta_{i+1}$ and $E^\xi_l(a,b\cup\{\beta\})$ holds. 
 \end{proof}

We now use the above proposition to show that for all $\Sigma_0$-formulas, there are indices $l$ and $\xi$ such that the validity of the given formula is invariant across all $E^\xi_l$-equivalence classes.

 \begin{lemma}\label{lemma:EquivalentFormulasForEquivalentTuples}
  For every $\Sigma_0$-formula $\varphi(v_0,\ldots,v_K)$, every natural number $k\leq K$ and every injection $\map{\iota}{k+1}{K+1}$, there is a natural number $0<l_{\varphi,\iota}<\omega$ such that 
  $$\varphi(y_0,\ldots,y_K) ~ \longleftrightarrow ~ \varphi(z_0,\ldots,z_K)$$ holds for all sets $y_0,\ldots,y_K,z_0,\ldots,z_K$ such that there are $\xi\in\On$ and $a,b\in[\On\setminus\xi]^{k+1}$ satisfying the following statements: 
  \begin{enumerate}[leftmargin=0.9cm]
  \item $E^\xi_{l_{\varphi,\iota}}(a,b)$.
  
   \item If $\alpha_0<\ldots<\alpha_k$ is the monotone enumeration of $a$ and $\beta_0<\ldots<\beta_k$ is the monotone enumeration of $b$, then $\alpha_i=y_{\iota(i)}$ and $\beta_i=z_{\iota(i)}$ for all $i\leq k$. 
   
   \item If $j\leq K\setminus\ran{\iota}$, then $y_j=z_j$ and $\tc{\{y_j\}}\cap\On\subseteq\xi$. 
  \end{enumerate}
 \end{lemma}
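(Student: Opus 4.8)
The plan is to argue by induction on the build-up of the $\Sigma_0$-formula $\varphi$, choosing at each step a value of $l_{\varphi,\iota}$ that dominates the finitely many values demanded by the subformulas and the finitely many injections that enter the recursion; since $E^\xi_{l+1}\subseteq E^\xi_l$, it suffices to produce \emph{some} admissible positive $l$. For an atomic $\varphi$ of the form $v_i=v_j$ or $v_i\in v_j$ I would take $l_{\varphi,\iota}=1$: if $i=\iota(p)$ and $j=\iota(q)$ both lie in $\ran\iota$, then each side of the biconditional reduces to the same statement about whether $p<q$, $p=q$ or $p>q$; if exactly one of $i,j$ is in $\ran\iota$, then one coordinate is an ordinal $\geq\xi$ while the other, by (iii), has only ordinals $<\xi$ in its transitive closure, and a short check using $\xi\leq\alpha_p,\beta_p$ shows both $v_i=v_j$ and $v_i\in v_j$ get the same truth value on the two sides; if neither index is in $\ran\iota$, the tuples agree in those coordinates by (iii). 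The propositional cases are immediate, taking $l_{\neg\psi,\iota}=l_{\psi,\iota}$ and $l_{\psi_0\wedge\psi_1,\iota}=\max\{l_{\psi_0,\iota},l_{\psi_1,\iota}\}$ and using monotonicity of $E^\xi_\bullet$ for the conjunction.

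The substance is the bounded quantifier case $\varphi=\exists v\,(v\in v_m\wedge\psi(v,v_0,\dots,v_K))$ (the universal case reduces to this via negation); by symmetry it is enough to convert a witness $w\in y_m$ for $\varphi(\vec y)$ into a witness $w'\in z_m$ for $\varphi(\vec z)$. If $m\notin\ran\iota$, then $y_m=z_m$ and every $w\in y_m$ has $\tc{\{w\}}\cap\On\subseteq\xi$, so applying the induction hypothesis to $\psi$ with the injection $p\mapsto\iota(p)+1$, which installs $w$ as a small parameter in the fresh coordinate $0$ of $(w,\vec y)$, gives $\psi(w,\vec y)\leftrightarrow\psi(w,\vec z)$ and $w'=w$ works. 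If $m=\iota(p_0)\in\ran\iota$, so $y_m=\alpha_{p_0}$, $z_m=\beta_{p_0}$ and $w<\alpha_{p_0}$, I would split into three subcases. If $w<\xi$, take $w'=w$ (now $w<\xi\leq\beta_{p_0}$) with the same injection. If $w=\alpha_i$, which forces $i<p_0$, take $w'=\beta_i<\beta_{p_0}$ and invoke the induction hypothesis for the $\Sigma_0$-formula $\psi(v_{\iota(i)},v_0,\dots,v_K)$, of the same complexity as $\psi$, with the injection $\iota$ itself, obtaining $\psi(\alpha_i,\vec y)\leftrightarrow\psi(\beta_i,\vec z)$.

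The remaining subcase, $\xi\leq w$ with $w\notin a$, is where Proposition \ref{proposition:TupleExtensionEquiv} is used: set $\hat a=a\cup\{w\}\in[\On\setminus\xi]^{k+2}$ and apply the proposition to get $\beta\in(\On\setminus\xi)\setminus b$ with $E^\xi_{l_{\varphi,\iota}-1}(\hat a,\hat b)$, where $\hat b=b\cup\{\beta\}$. The construction in that proposition puts $\beta$ in the same position of the monotone enumeration of $\hat b$ that $w$ holds in that of $\hat a$, and since $w<\alpha_{p_0}$ this position is $\leq p_0$, so $\beta<\beta_{p_0}$ and $w'=\beta\in z_m$ is admissible. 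Letting $\hat\iota\colon k+2\to K+2$ be the injection carrying the enumeration of $\hat a$ onto the matching coordinates of $(w,\vec y)$ --- it agrees with $p\mapsto\iota(p)+1$ off the slot holding $w$, which it sends to $0$ --- the tuple $(w,\vec y)$ together with $(\hat a,\hat b)$ meets the hypotheses of the lemma for $\psi$, so the induction hypothesis yields $\psi(w,\vec y)\leftrightarrow\psi(\beta,\vec z)$ provided $l_{\varphi,\iota}-1\geq l_{\psi,\hat\iota}$. Since $m,k,K,\iota$ are fixed, only finitely many injections $\iota'$, $\hat\iota$ and formulas $\psi(v_{\iota(i)},\dots)$ arise, so taking $l_{\varphi,\iota}$ to be one more than the maximum of all demanded values completes the induction.

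I expect the genuine obstacle to be that last subcase: what matters is not just that Proposition \ref{proposition:TupleExtensionEquiv} returns \emph{some} $\beta$ making the enlarged tuples $E^\xi$-equivalent, but that the returned $\beta$ sits in the correct slot of the enumeration --- this is precisely what forces $\beta<\beta_{p_0}$ and hence makes $w'=\beta$ a legitimate witness below $z_m$. The rest is bookkeeping: how $\iota$ transforms when the bound variable is instantiated, and making sure the finitely many lower bounds on $l_{\varphi,\iota}$ coming from the various subcases hold simultaneously.
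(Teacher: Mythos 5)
Your proposal is correct and follows essentially the same route as the paper: induction on the complexity of $\varphi$ with $l=1$ for atomic formulas, the obvious choices for negation and conjunction, and in the bounded-quantifier step the same three-way split on the witness (below $\xi$ or in a coordinate outside $\ran{\iota}$, equal to some $\alpha_i$, or a new ordinal $\geq\xi$ handled via Proposition \ref{proposition:TupleExtensionEquiv} with the budget $l_{\varphi,\iota}\geq l_{\psi,\hat\iota}+1$). The point you flag as the genuine obstacle --- that the $\beta$ produced by the proposition occupies the same slot of the enlarged enumeration as the witness $w$, and hence lies below $z_m$ --- is indeed what the case-by-case construction in that proposition delivers, and making it explicit is a small improvement over the paper's presentation.
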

 
 \begin{proof}
  We prove the above statement by induction on the complexity of $\varphi$. 
  
  First, assume that $\varphi$ is atomic and set $l_{\varphi,\iota}=1$. Then some easy case distinctions show that the above assumptions (ii) and (iii) imply the desired equivalence for $\varphi$. In the case of negations and conjunctions, the above statement follows directly from the induction hypothesis if we set $l_{\neg\varphi,\iota}=l_{\varphi,\iota}$ and $l_{\varphi_0\wedge\varphi_1,\iota}=\max\{l_{\varphi_0,\iota},l_{\varphi_1,\iota}\}$. Finally, assume that $\varphi\equiv\exists x\in v_j  ~ \psi(v_0,\ldots,v_K,x)$ and the above statement holds for $\psi(v_0,\ldots,v_{K+1})$. Given $i\leq k+1$, let $\map{\tau_i}{i+1}{k+2}$ denote the unique order-preserving function with $i\notin\ran{\tau_i}$ and let $\map{\iota_i}{k+2}{K+2}$ denote the unique injection with $\iota_i(i)=K+1$ and $\iota(h)=(\iota_i\circ\tau_i)(h)$ for all $h\leq k$. 
Next, given $i\leq k$, let $\psi_i(v_0,\ldots,v_K)$ denote the formula obtained from $\psi$ by replacing all occurrences of the variable $v_{K+1}$ with the variable $v_{\iota(i)}$. Define  
$$l_{\varphi,\iota} ~ = ~ \max\{l_{\psi,\iota},l_{\psi_0,\iota},\ldots,l_{\psi_k,\iota},l_{\psi,\iota_0}+1,\ldots,l_{\psi,\iota_{k+1}}+1\}$$ and fix sets $y_0,\ldots,y_K,z_0,\ldots,z_K$, an ordinal $\xi$ and sets $a,b\in[\On\setminus\xi]^{k+1}$ that satisfy the above statements (i)-(iii) with respect to $\iota$ and $l_{\varphi,\iota}$.  Now, assume that there is an $y_{N+1}\in y_j$ such that $\psi(y_0,\ldots,y_{K+1})$ holds. First, if either $j\notin\ran{\iota}$ or $y_{K+1}\in\xi$, then we know that $E^\xi_{l_{\psi,\iota}}(a,b)$, $K+1\notin\ran{\iota}$ and $\tc{\{y_{K+1}\}}\cap\On\subseteq\xi$. Therefore our induction hypothesis implies that $\psi(z_0,\ldots,z_K,y_{K+1})$ holds in this case. 
Next, if $y_{K+1}=\alpha_i$ for some $i\leq k$, then $E^\xi_{l_{\psi_i},\iota}(a,b)$  and our induction hypothesis implies that $\psi(z_0,\ldots,z_K,\beta_i)$ holds. 
Finally, assume that  $j\in\ran{\iota}$, $\xi\leq y_{K+1}\notin a$ and $y_{K+1}$ is the $i$-th element in the monotone enumeration of $a\cup\{y_{K+1}\}$. Then $E^\xi_{l_{\psi,\iota_i}+1}(a,b)$ and  Proposition \ref{proposition:TupleExtensionEquiv} yields a  $\xi\leq\beta_{k+1}\in\On\setminus b$ with $E^\xi_{l_{\psi,\iota_i}}(a\cup\{y_{K+1}\},b\cup\{\beta_{k+1}\})$. In particular, our induction hypothesis implies that $\psi(z_0,\ldots,z_K,\beta_{k+1})$ holds. 
In all of the above cases, we can conclude that $\varphi(y_0,\ldots,y_K)$ implies  $\varphi(z_0,\ldots,z_K)$.  Moreover, the same arguments show that $\varphi(z_0,\ldots,z_K)$ also implies $\varphi(y_0,\ldots,y_K)$. 
 \end{proof}

\begin{proof}[Proof of Theorem \ref{theorem:Sigma0Partitions}]
  Let $\kappa$ be an uncountable regular cardinal, let $z$ be an element of $\HH{\kappa}$ and let $\map{c}{[\kappa]^k}{\alpha}$ be a $\Sigma_0(z)$-partition with $\alpha<\kappa$. 
Then there is a $\Sigma_0$-formula $\varphi(v_0,\ldots,v_{k+2})$ with the property that for  $\alpha_0<\ldots<\alpha_{k-1}<\kappa$, $c(\{\alpha_0,\ldots,\alpha_{k-1}\})$ is the unique ordinal $\gamma$ such that  $\varphi(\alpha_0,\ldots,\alpha_{k-1},\kappa,\gamma,z)$ holds.  
Pick an ordinal $\alpha+\omega^\omega<\xi<\kappa$ with $\tc{\{z\}}\cap\On\subseteq\xi$, let $H$ be the set of all  multiplicatively indecomposable ordinals in the interval $[\xi,\kappa]$ and let $\iota$ denote the identity function on $k+1$. 
Then $\kappa\in H$, $C=H\cap\kappa$ is a club in $\kappa$ and $E^\xi_{l_{\varphi,\iota}}(a,b)$ holds for all $a,b\in[H]^{n+1}$.
But then Lemma \ref{lemma:EquivalentFormulasForEquivalentTuples} shows that, if $\alpha_0<\ldots<\alpha_{k-1}$ is the monotone enumeration of $a\in[C]^k$ and $\beta_0<\ldots<\beta_{k-1}$ is the monotone enumeration of $b\in[C]^k$, then $$\varphi(\alpha_0,\ldots,\alpha_{k-1},\kappa,\gamma,z) ~ \longleftrightarrow ~ \varphi(\beta_0,\ldots,\beta_{k-1},\kappa,\gamma,z)$$ for all $\gamma<\alpha$ and therefore $c(a)=c(b)$. 
\end{proof}


\section{The $\Sigma_n$-colouring property}\label{section:SigmaNpartiotionproperty}

In the remainder of this paper, we always use $n$ to denote a natural number greater than $0$. Note that, since sets of the form $\HH{\kappa}$ are closed under the pairing functions, this assumptions allows us to only consider $\Sigma_n$-formulas that use a single parameter from $\HH{\kappa}$ when we verify that an uncountable regular cardinal $\kappa$ has the $\mathbf{\Sigma}_n$-colouring property.

This section contains a number of basic results about the $\mathbf{\Sigma}_n$-colouring property that generalize fundamental results about weakly compact cardinals to the definable setting. 
These results will allow us to show that for all $0<n<\omega$, there is a natural connection between the $\mathbf{\Sigma}_n$-colouring property and a large cardinal property, in the sense that the large cardinal implies the $\mathbf{\Sigma}_n$-colouring property, the $\mathbf{\Sigma}_n$-colouring property implies that the given cardinal has the large cardinal property in $\LL$ and it is possible to use forcing to turn an inaccessible cardinal with the relevant large cardinal property into either the successor of a regular cardinal or into an accessible regular limit cardinal with the $\mathbf{\Sigma}_n$-colouring property. 
For $n\geq 2$, the corresponding large cardinal property will turn out to be weak compactness. 
In contrast, our results will show that the $\mathbf{\Sigma}_1$-colouring  property corresponds to a large cardinal property strictly between Mahloness and weak compactness. 
Finally, our results will also allow us present several ways to establish the consistency of failures of definable weak compactness.

The following result transfers the fact that weakly compact cardinals are inaccessible to the definable setting.

\begin{proposition}\label{Proposition:NoDefinableInjection}
 Let $\kappa$ be an uncountable regular cardinal with the $\Sigma_n(z)$-colouring property. If $\map{f}{\kappa}{{}^{{<}\kappa}2}$ is a $\Sigma_n(\kappa,z)$-definable function and $\gamma<\kappa$, then the set $\Set{f(\alpha)\restriction\gamma}{\alpha<\kappa}$ has cardinality less than $\kappa$. 
\end{proposition}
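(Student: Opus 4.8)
The plan is to argue by contradiction: assume some $\gamma<\kappa$ satisfies $\betrag{\{f(\alpha)\restriction\gamma : \alpha<\kappa\}}=\kappa$ (equivalently $\geq\kappa$, since the image of a function on $\kappa$ has size $\leq\kappa$) and build a $\Sigma_n(\kappa,z)$-partition $\map{c}{[\kappa]^2}{2}$ with no homogeneous set of size $\kappa$, contradicting the $\Sigma_n(z)$-colouring property. The first move is to replace $\gamma$ by the \emph{least} ordinal $\gamma^*<\kappa$ with $\betrag{\{f(\alpha)\restriction\gamma^* : \alpha<\kappa\}}\geq\kappa$; this both keeps the construction definable from the single parameter $z$ (I do not get to use $\gamma$ as a parameter) and, by minimality, guarantees that $\{f(\alpha)\restriction\delta : \alpha<\kappa\}$ has size $<\kappa$ for every $\delta<\gamma^*$ — which is exactly what will drive the combinatorics. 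Since $\kappa$ is regular, the set $\{f(\alpha)\restriction\gamma^* : \alpha<\kappa,\ \betrag{f(\alpha)}<\gamma^*\}=\bigcup_{\delta<\gamma^*}\{f(\alpha):\betrag{f(\alpha)}=\delta\}$ is a union of fewer than $\kappa$ sets, each (being contained in $\{f(\alpha)\restriction\delta:\alpha<\kappa\}$) of size $<\kappa$, hence has size $<\kappa$; so $\{f(\alpha)\restriction\gamma^*:\betrag{f(\alpha)}\geq\gamma^*\}$ has size $\geq\kappa$, and all of its members lie in ${}^{\gamma^*}2$.

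Next I would pass to a genuine injection. Let $A$ be the set of $\alpha<\kappa$ with $\betrag{f(\alpha)}\geq\gamma^*$ that are the first appearance of their value $f(\alpha)\restriction\gamma^*$ among such ordinals; then $\betrag{A}=\kappa$, and, writing $e$ for the increasing enumeration of $A$, the map $\map{g}{\kappa}{{}^{\gamma^*}2}$ given by $g(\xi)=f(e(\xi))\restriction\gamma^*$ is injective, with $\{g(\xi)\restriction\delta : \xi<\kappa\}\subseteq\{f(\alpha)\restriction\delta:\alpha<\kappa\}$ of size $<\kappa$ for every $\delta<\gamma^*$. Now set $c(\{\xi,\eta\})=0$ if $\xi<\eta$ and $g(\xi)<_{\mathrm{lex}}g(\eta)$, and $c(\{\xi,\eta\})=1$ otherwise (using the lexicographic order on ${}^{\gamma^*}2$, a $\Sigma_0$-notion); since $g$ is injective, $c$ records for each pair whether $g$ is lex-increasing or lex-decreasing on it. Fixing a $c$-homogeneous $H$ of size $\kappa$ with increasing enumeration $\langle\alpha_\xi:\xi<\kappa\rangle$ and putting $s_\xi=g(\alpha_\xi)$, we obtain in the case $c\restriction[H]^2\equiv 0$ a strictly $<_{\mathrm{lex}}$-increasing sequence $\langle s_\xi : \xi<\kappa\rangle$ in ${}^{\gamma^*}2$; the case $c\restriction[H]^2\equiv 1$ is symmetric, with $0$ and $1$ interchanged.

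To finish, let $\delta_\xi<\gamma^*$ be the first coordinate at which $s_\xi$ and $s_{\xi+1}$ differ, so $s_\xi(\delta_\xi)=0$, $s_{\xi+1}(\delta_\xi)=1$ and $s_\xi\restriction\delta_\xi=s_{\xi+1}\restriction\delta_\xi$. Since $\betrag{\gamma^*}<\kappa$ and $\kappa$ is regular, some $\delta<\gamma^*$ has $B=\{\xi:\delta_\xi=\delta\}$ of size $\kappa$; and since $\{s_\xi\restriction\delta:\xi\in B\}\subseteq\{g(\xi)\restriction\delta:\xi<\kappa\}$ has size $<\kappa$, some $t\in{}^\delta 2$ has $B'=\{\xi\in B:s_\xi\restriction\delta=t\}$ of size $\kappa$. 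For $\xi\in B'$ we then have $t^\frown\langle 0\rangle\subseteq s_\xi$ and $t^\frown\langle 1\rangle\subseteq s_{\xi+1}$; choosing $\xi<\xi'$ in $B'$ gives $s_{\xi+1}\leq_{\mathrm{lex}}s_{\xi'}$ (as $\xi+1\leq\xi'$), while comparing at coordinate $\delta$ — using $t^\frown\langle 0\rangle\subseteq s_{\xi'}$ and $t^\frown\langle 1\rangle\subseteq s_{\xi+1}$ — yields $s_{\xi'}<_{\mathrm{lex}}s_{\xi+1}$, a contradiction. Hence no such $H$ exists.

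The step I expect to need the most care is verifying that $c$ genuinely is a $\Sigma_n(\kappa,z)$-partition, i.e.\ the complexity bookkeeping. The crucial observation is that for $\delta<\kappa$ the statement ``$\betrag{\{f(\alpha)\restriction\delta:\alpha<\kappa\}}\geq\kappa$'' is $\Delta_n(\kappa,z)$: it is witnessed by an injection of $\kappa$ into the image, while its negation is witnessed by an injection of the image into some ordinal $<\kappa$, and — using that $f$ is $\Delta_n(\kappa,z)$-definable and that $\Sigma_n$-formulas are closed under existential and bounded quantification — each of these two witnessing statements is $\Sigma_n(\kappa,z)$. Consequently $\gamma^*$ is $\Delta_n(\kappa,z)$-definable as the least ordinal with a $\Delta_n$-property, the sets $A$ and (the graph of) $e$ are $\Delta_n(\kappa,z)$-definable, $g$ is $\Delta_n(\kappa,z)$-definable, and finally $c$ is $\Delta_n(\kappa,z)$-definable; so the $\Sigma_n(z)$-colouring property applies, and the contradiction above shows that no $\gamma<\kappa$ has $\betrag{\{f(\alpha)\restriction\gamma:\alpha<\kappa\}}=\kappa$, which is the assertion.
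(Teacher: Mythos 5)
Your proof is correct and follows essentially the same route as the paper's: take the least $\delta<\kappa$ with $\kappa$-many restrictions (which both keeps everything $\Sigma_n(\kappa,z)$-definable and handles the domain issue), extract a definable injection of $\kappa$ into ${}^{\delta}2$, colour pairs by lexicographic comparison, and show that a lex-monotone $\kappa$-sequence cannot exist. The only (immaterial) difference is in the final contradiction: you stabilize the common branching prefix $t$ using the minimality of $\gamma^*$ and the regularity of $\kappa$, whereas the paper fixes a per-element-minimal splitting level $\gamma_*$ and derives three pairwise distinct binary values at that coordinate; both are standard Sierpi\'nski-style endgames.
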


\begin{proof}
 Assume, towards a contradiction, that there is a $\gamma<\kappa$ with the property that the set $\Set{f(\alpha)\restriction\gamma}{\alpha<\kappa}$ has cardinality $\kappa$. 
 Let $\delta$ be minimal with this property. 
Then it is easy to see that the set $\{\delta\}$ is $\Sigma_n(\kappa,z)$-definable and the minimality of $\delta$ implies that the set $\Set{f(\alpha)\restriction\delta}{\alpha<\kappa, ~ \delta\subseteq\dom{f(\alpha)}}$ also has cardinality $\kappa$. 
Let $\map{i}{\kappa}{\kappa}$ be the unique injection with the property that for all $\alpha<\kappa$, the image $i(\alpha)$ is the minimal $\beta<\kappa$ with $\delta\subseteq\dom{f(\beta)}$ and $f(\beta)\restriction\delta\neq f(i(\bar{\alpha}))\restriction\delta$ for all $\bar{\alpha}<\alpha$. 
 Then the $\Sigma_n$-Recursion Theorem implies that  $i$ is $\Sigma_n(\kappa,z)$-definable and this shows that the injection $$\Map{\iota}{\kappa}{{}^\delta 2}{\alpha}{(f\circ i)(\alpha)\restriction\delta}$$ is definable in the same way. 
 Set $$\Delta(\alpha,\beta) ~ = ~ \min\Set{\gamma<\delta}{\iota(\alpha)(\gamma)\neq\iota(\beta)(\gamma)}$$ for all $\alpha<\beta<\kappa$ and let $\map{c}{[\kappa]^2}{2}$ denote the unique map satisfying $$c(\{\alpha,\beta\})=0 ~ \Longleftrightarrow ~ \iota(\alpha)(\Delta(\alpha,\beta))<\iota(\beta)(\Delta(\alpha,\beta))$$ for all $\alpha<\beta<\kappa$. Then $c$ is $\Sigma_n(\kappa,z)$-definable and our assumptions yield a $c$-homogeneous set $H$ that is unbounded in $\kappa$. 
 Given $\gamma<\delta$, let $H_\gamma$ denote the set of all $\alpha\in H$ with the property that $\gamma$ is the minimal element of $\delta$ with $\gamma=\Delta(\alpha,\beta)$ for some $\alpha<\beta\in H$. 
  Since $H=\bigcup\Set{H_\gamma}{\gamma<\delta}$, there is a $\gamma_*<\delta$ with $H_{\gamma_*}$ unbounded in $\kappa$. 
Fix $\alpha_0,\alpha_1\in H_{\gamma_*}$ and $\beta_0,\beta_1\in H$ with $\alpha_0<\beta_0<\alpha_1<\beta_1$ and $\gamma_*=\Delta(\alpha_0,\beta_0)=\Delta(\alpha_1,\beta_1)$. 
 Then the minimality of $\gamma_*$ implies that $$\iota(\alpha_0)\restriction\gamma_* ~ = ~ \iota(\beta_0)\restriction\gamma_* ~ = ~ \iota(\alpha_1)\restriction\gamma_* ~ = ~ \iota(\beta_1)\restriction\gamma_*$$ and therefore $\iota(\beta_0)(\gamma_*)=\iota(\alpha_1)(\gamma_*)$, because otherwise we would have $\Delta(\alpha_0,\beta_0)=\Delta(\beta_0,\alpha_1)=\gamma_*$ and the homogeneity of $H$ would imply that the ordinals $\iota(\alpha_0)(\gamma_*)$, $\iota(\beta_0)(\gamma_*)$ and $\iota(\alpha_1)(\gamma_*)$ are pairwise different. 
 But then $\Delta(\alpha_0,\beta_0)=\Delta(\beta_0,\beta_1)=\gamma_*$ and this allows us to conclude that the ordinals $\iota(\alpha_0)(\gamma_*)$, $\iota(\beta_0)(\gamma_*)$ and $\iota(\alpha_1)(\gamma_*)$ are pairwise different, a contradiction.  
\end{proof}

\begin{corollary}\label{corollary:InaccessibleInLA}
 If $\kappa$ is an uncountable regular cardinal with the $\Sigma_n(z)$-colouring property and $A$ is a subset of $\kappa$ such that the set $\{A\}$ is $\Sigma_n(\kappa,z)$-definable, then $\kappa$ is inaccessible in $\LL[A]$. 
\end{corollary}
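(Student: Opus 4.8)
The plan is to establish the contrapositive with the help of Proposition~\ref{Proposition:NoDefinableInjection}. So suppose, towards a contradiction, that $\kappa$ is not inaccessible in $\LL[A]$. Since $\kappa$ is a regular cardinal in $\VV$ and regularity is downward absolute to inner models, $\kappa$ is also regular in $\LL[A]$, and hence the failure of inaccessibility forces $\kappa$ to not be a strong limit cardinal in $\LL[A]$. I would then fix the least ordinal $\gamma_0<\kappa$ such that $\LL[A]$ contains an injection of $\kappa$ into $\POT{\gamma_0}$, fix such an injection $e\in\LL[A]$, and let $\map{f}{\kappa}{{}^{{<}\kappa}2}$ send each $\alpha<\kappa$ to the characteristic function of $e(\alpha)\subseteq\gamma_0$. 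Then $f$ maps $\kappa$ injectively into ${}^{\gamma_0}2\subseteq{}^{{<}\kappa}2$, so $\Set{f(\alpha)\restriction\gamma_0}{\alpha<\kappa}=\ran{f}$ has cardinality $\kappa$; once it is verified that $f$ is $\Sigma_n(\kappa,z)$-definable, Proposition~\ref{Proposition:NoDefinableInjection}, applied with $\gamma=\gamma_0<\kappa$, yields the desired contradiction, so that $\kappa$ must be inaccessible in $\LL[A]$ after all.

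The remaining task, and the one carrying all the weight, is to make $f$ genuinely $\Sigma_n(\kappa,z)$-definable using no parameters other than $\kappa$ and $z$. The decisive observation is that, while the predicate \anf{$x\in\LL[A]$} is only $\Sigma_1$ in the parameter $A$ and in general not $\Delta_1$, the levels of the hierarchy are uniformly $\Delta_n(\kappa,z)$-definable: the statement \anf{$M=\LL_\delta[A]$ for some ordinal $\delta$} is equivalent to \anf{$M$ is transitive, $M$ satisfies a fixed finite fragment of $\ZFC^-$ together with $\VV=\LL[\dot A]$, and $\dot A^M=A\cap\On^M$}, which is $\Delta_1$ in the parameters $M$ and $A$, and since $\{A\}$ is $\Sigma_n(\kappa,z)$-definable and $n>0$ it is also $\Pi_n(\kappa,z)$-definable, making the displayed property of $M$ a $\Delta_n(\kappa,z)$-property. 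Consequently I would phrase the definition of $f$ -- and of $\gamma_0$ and of the canonically chosen $e$, say the $<_{\LL[A]}$-least injection of $\kappa$ into $\POT{\gamma_0}$ -- entirely in terms of witness models: $f(\alpha)=y$ holds if and only if there is a transitive $M=\LL_\delta[A]$ modelling a large enough finite fragment of $\ZFC$, containing $\alpha$ and $y$ and the ordinal that $M$ regards as the cardinal successor of $\kappa$, and satisfying that $\gamma_0$ is least with $2^{\gamma_0}\geq\kappa$ and that $y$ is the characteristic function of the value at $\alpha$ of the $<$-least injection of $\kappa$ into $\POT{\gamma_0}$ relative to $M$'s canonical well-ordering. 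A routine condensation argument, using that $A$ is a bounded subset of $\kappa$, shows that $\POT{\gamma}\cap\LL[A]\subseteq\LL_\lambda[A]$ for every $\gamma<\kappa$, where $\lambda$ is the cardinal successor of $\kappa$ in $\LL[A]$; this guarantees that every such witness model $M$ computes $\POT{\gamma_0}\cap\LL[A]$, the ordinal $\gamma_0$, and the injection $e$ correctly, so that $f$ is well-defined and, after unwinding the definition, both \anf{$f(\alpha)=y$} and its negation turn out to be $\Sigma_n(\kappa,z)$.

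I expect the main obstacle to be precisely this definability bookkeeping rather than any set-theoretic subtlety: one has to resist using $\gamma_0$ or the well-ordering of $\LL[A]$ as parameters -- hence the detour through internally defined objects and through witness models, whose defining property \anf{$M=\LL_\delta[A]$} is $\Delta_1$ in $M$ and $A$ rather than merely $\Sigma_1$ -- and one has to invoke the condensation fact above to know that these witness models are correct about the power sets of ordinals below $\kappa$, which is what makes $f$ a function and keeps the relevant classes inside $\Sigma_n(\kappa,z)$ even in the critical case $n=1$. Granting these routine verifications, Proposition~\ref{Proposition:NoDefinableInjection} closes the argument.
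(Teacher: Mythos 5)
Your argument is correct and essentially identical to the paper's: both derive, from the failure of inaccessibility of the regular cardinal $\kappa$ in $\LL[A]$, a $\Sigma_n(\kappa,z)$-definable injection of $\kappa$ into some ${}^{\gamma_0}2$ with $\gamma_0<\kappa$ (namely the one induced by the $<_{\LL[A]}$-least such injection) and then contradict Proposition \ref{Proposition:NoDefinableInjection}; the paper merely leaves implicit the definability bookkeeping via witness levels of the $\LL[A]$-hierarchy that you spell out. One cosmetic slip: the condensation argument giving $\POT{\gamma}\cap\LL[A]\subseteq\LL_\lambda[A]$ needs only that $A\subseteq\kappa$ and that the hull contains $\kappa\cup\{A\}$, not that $A$ is bounded in $\kappa$.
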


\begin{proof}
 Assume that the above conclusion fails. 
Let $\iota$ denote the $<_{\LL[A]}$-least injection of $\kappa$ into some ${}^\nu 2$ with $\nu<\kappa$ in $\LL[A]$. 
 By our assumptions, the sets $\{\nu\}$ and $\{\iota\}$ are both $\Sigma_n(\kappa,z)$-definable and hence there is a $\Sigma_n(\kappa,z)$-definable injection from $\kappa$ into ${}^\nu 2$, contradicting Proposition \ref{Proposition:NoDefinableInjection}.  
\end{proof}

Proposition \ref{Proposition:NoDefinableInjection} also allows us to show that a small partial order can force a failure of the $\mathbf{\Sigma}_1$-colouring property at the successors of an  uncountable regular cardinal.   
In particular, large cardinal axioms do not imply that successors of uncountable regular cardinals have the $\mathbf{\Sigma}_1$-colouring property. 
The results of Section \ref{section:DefPartCountableOrdinals} will show that the situation for $\omega_1$ is completely different.

\begin{corollary}\label{corollary:GenericTreeCodingCounterexamples}
 If $\nu$ is an uncountable cardinal with $\nu=\nu^{{<}\nu}$, then there is a ${<}\nu$-closed partial order $\PPP$ satisfying the $\nu^+$-chain condition with 
 \begin{equation}\label{equation:FailureSigma1PP}
  \mathbbm{1}_\PPP\Vdash\anf{\textit{The cardinal $\nu^+$ does not have the $\mathbf{\Sigma}_1$-colouring property}}.
 \end{equation}
\end{corollary}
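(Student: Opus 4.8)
The plan is to reduce Corollary~\ref{corollary:GenericTreeCodingCounterexamples} to the construction of a suitable coding forcing via Proposition~\ref{Proposition:NoDefinableInjection}. Note first that $\nu=\nu^{{<}\nu}$ forces $\nu$ to be regular, so it makes sense to ask for $\PPP$ to be ${<}\nu$-closed. I claim it suffices to produce a ${<}\nu$-closed partial order $\PPP$ satisfying the $\nu^+$-chain condition that forces the existence of a set $z\in\HH{\nu^+}$ and an injection $\map{g}{\nu^+}{{}^\nu 2}$ whose graph is $\Sigma_1(\nu^+,z)$-definable. Indeed, such a $\PPP$ preserves cardinals and cofinalities up to and including $\nu^+$ and keeps $\nu=\nu^{{<}\nu}$ true, so in $\VV^\PPP$ the cardinal $\nu^+$ is still an uncountable regular cardinal; viewing $g$ as a $\Sigma_1(\nu^+,z)$-definable function from $\nu^+$ into ${}^{{<}\nu^+}2$, the set $\Set{g(\alpha)\restriction\nu}{\alpha<\nu^+}=\ran{g}$ has cardinality $\nu^+$, so Proposition~\ref{Proposition:NoDefinableInjection} shows that $\nu^+$ does not have the $\Sigma_1(z)$-colouring property, and hence fails to have the $\mathbf{\Sigma}_1$-colouring property, which is~(\ref{equation:FailureSigma1PP}). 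One could instead code a surjection of $\nu$ onto $\nu^+$ into a $\Sigma_1(\nu^+,z)$-definable subset $A$ of $\nu^+$ and apply Corollary~\ref{corollary:InaccessibleInLA}; going through Proposition~\ref{Proposition:NoDefinableInjection} is cleaner since it avoids analysing $\LLOf{A}$.

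For $\PPP$ I would use a standard \emph{coding forcing}. It is an iteration of length $\nu^+$ with supports of size ${<}\nu$; the first factor is $\Add{\nu}{\nu^+}$, adding a sequence $\seq{B_\alpha}{\alpha<\nu^+}$ of pairwise distinct subsets of $\nu$, which, via a pairing function on $\nu^+$, we read off from a single generic set $B^*\subseteq\nu^+$. The remaining factors first \emph{reshape} $B^*$ — arranging that $\LLOf{B^*\cap\gamma}\models\betrag{\gamma}\leq\nu$ for all $\gamma\in[\nu,\nu^+)$ — and then \emph{almost-disjointly code} the reshaped set downwards, using at each $\gamma$ the almost disjoint family canonically computed inside $\LLOf{B^*\cap\gamma}$; the resulting coding objects may be taken to be generically added trees, which is where the name of the corollary comes from. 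The effect is that the final assembled set $A\subseteq\nu^+$ is \emph{locally verifiable}: there is a club $D\subseteq\nu^+$ and a $\Sigma_0$-formula such that, for $\gamma\in D$, membership in $A$ below $\gamma$ is reconstructible from $A\cap\gamma$ and a fixed parameter $z\in\HH{\nu^+}$. One then defines $A$ by the $\Sigma_1$-formula asserting the existence of a transitive set $M$ with $\nu^+,X\in M$ satisfying a fixed finite fragment of $\ZFC$ together with the sentence ``$X$ is a reshaped subset of $\nu^+$ that codes itself correctly against the canonical almost disjoint families on a club''; local verifiability guarantees that this pins down $X=A$. Hence $\{A\}$ is $\Sigma_1(\nu^+,z)$-definable, and so is the injection $\Map{g}{\nu^+}{{}^\nu 2}{\alpha}{B_\alpha}$ it encodes. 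Constructions of this kind are by now routine and are essentially those of \cite{MR3694344}; I would either quote the relevant instance from there or carry out the construction with $\nu^+$ in place of $\omega_1$.

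The routine verifications are: each iterand is ${<}\nu$-closed, so, using supports of size ${<}\nu$ and $\nu=\nu^{{<}\nu}$, the whole iteration $\PPP$ is ${<}\nu$-closed; each iterand has a dense subset of size at most $\nu^+$ and is sufficiently Knaster-like, so a $\Delta$-system argument based on $(\nu^+)^{{<}\nu}=\nu^+$ gives $\PPP$ the $\nu^+$-chain condition; consequently $\PPP$ collapses no cardinals, preserves the regularity of $\nu^+$ and keeps $\nu^{{<}\nu}=\nu$. Granting this, $\PPP$ is as required and~(\ref{equation:FailureSigma1PP}) holds. The concluding remark about large cardinal axioms is then immediate, since a ${<}\nu$-closed forcing with the $\nu^+$-chain condition preserves all large cardinals sufficiently far above $\nu^+$.

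The main obstacle is the $\Sigma_1$-definability clause, i.e.\ arranging the coding so that it is genuinely \emph{locally verifiable and absolute}: the reshaping and the choice of almost disjoint families must be set up so that no spurious $X\neq A$ can satisfy the witnessing-model formula — in particular every witnessing model that arises must be forced to compute the canonical almost disjoint families correctly — while simultaneously one must preserve both the ${<}\nu$-closure and the $\nu^+$-chain condition (the reshaping factors are the ones most in tension with the chain condition, and it is exactly $\nu=\nu^{{<}\nu}$ that keeps this under control) and avoid collapsing $\nu^+$. Everything else — the reduction via Proposition~\ref{Proposition:NoDefinableInjection} and the propagation of the closure and chain condition through the iteration — is bookkeeping.
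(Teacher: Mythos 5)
Your reduction is exactly the one the paper uses: force a $\Sigma_1(\nu^+,z)$-definable injection of $\nu^+$ into ${}^\nu 2$ with $z\in\HH{\nu^+}$ and then invoke Proposition~\ref{Proposition:NoDefinableInjection}. The paper fixes a ground-model injection $\map{\iota}{\nu^+}{{}^\nu 2}$, codes it into the set $A=\Set{\langle\iota(\alpha),\iota(\beta)\rangle}{\alpha<\beta<\nu^+}$, and then quotes {\cite[Theorem 1.5]{MR2987148}} as a black box: under $\nu=\nu^{{<}\nu}$ there is a ${<}\nu$-closed, $\nu^+$-cc forcing making $A$ $\Sigma_1(\nu,z)$-definable for some $z\in\POT{\nu}$ of the generic extension. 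That forcing adds a \emph{tree} on $\nu$ whose projection is the coded set, with the tree itself (a set of size $\nu$) serving as the parameter --- this is why the corollary is labelled ``GenericTreeCodingCounterexamples'' --- and no reshaping or almost-disjoint coding is involved.

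The genuine gap is in your construction of the coding forcing. The reshaping factors you describe cannot satisfy the $\nu^+$-chain condition: conditions of a reshaping forcing are functions $\map{p}{\gamma}{2}$ with $\gamma<\nu^+$ ordered by end-extension, so any two distinct conditions with domain $\nu$ are incompatible, and since the reshaping requirement is vacuous at $\delta=\nu$, \emph{all} $2^\nu\geq\nu^+$ such functions are conditions. This gives an antichain of size at least $\nu^+$, and the hypothesis $\nu=\nu^{{<}\nu}$ does nothing to remove it; reshaping forcings are tamed by ${<}\nu^+$-distributivity arguments, not by chain conditions, whereas the corollary explicitly demands the $\nu^+$-cc (and this is needed downstream, e.g.\ to combine the corollary with Lemma~\ref{lemma:Sigma1PartitionPropertyCollapses} and preserve large cardinals). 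So the ``main obstacle'' you identify at the end is not actually overcome by the construction you propose: as written, your $\PPP$ does not have the advertised properties. The repair is not to patch the reshaping but to replace the whole almost-disjoint-coding scheme by the tree-coding forcing of \cite{MR2987148}, which codes the relevant subset of ${}^\nu 2\times{}^\nu 2$ directly as the projection of the $\nu$-branches of a generically added tree of size $\nu$, is ${<}\nu$-closed and $\nu^+$-cc outright, and hands you the parameter $z$ in $\HH{\nu^+}$ for free.
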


\begin{proof}
 Fix an injection $\map{\iota}{\nu^+}{{}^\nu 2}$ and set $A=\Set{\langle\iota(\alpha),\iota(\beta)\rangle}{\alpha<\beta<\nu^+}$. 
By {\cite[Theorem 1.5]{MR2987148}}, there is a ${<}\nu$-closed partial order $\PPP$ satisfying the $\nu^+$-chain condition with the property that whenever $G$ is $\PPP$-generic over $\VV$, then there is $z\in\POT{\nu}^{\VV[G]}$ such that the set $A$ is $\Sigma_1(\nu,z)$-definable in $\VV[G]$. 
 But then Proposition \ref{Proposition:NoDefinableInjection} shows that (\ref{equation:FailureSigma1PP}) holds. 
\end{proof}

The next lemma now generalizes the characterizations of weak compactness through the tree property (see {\cite[Theorem 7.8]{MR1994835}}) and certain  elementary embeddings (see \cite{MR1133077}). Remember that, given an infinite cardinal $\kappa$, a \emph{weak $\kappa$-model} is a transitive model $M$ of $\mathsf{ZFC}^-$ of size $\kappa$ with $\kappa\in M$.\footnote{By $\ZFC^-$, we mean the usual axioms of $\ZFC$ without the power set axiom, however including the Collection scheme instead of the Replacement scheme. Note that $\HH{\kappa}$ is a model of this
theory for every uncountable regular cardinal $\kappa$. }

\begin{lemma}\label{lemma:CharacterizationsPartitionProperty}
 The following statements are equivalent for every uncountable regular cardinal $\kappa$ and every set $z$: 
 \begin{enumerate}[leftmargin=0.9cm]
  \item $\kappa$ has the $\Sigma_n(z)$-colouring property.

  \item If $\map{\iota}{\kappa}{{}^{{<}\kappa}2}$ is a $\Sigma_n(\kappa,z)$-definable injection, then there is an $x\in{}^\kappa 2$ with the property that the set $\Set{\alpha<\kappa}{\exists\beta<\kappa ~ x\restriction\alpha\subseteq\iota(\beta)}$ is unbounded in $\kappa$. 
  
    \item If $A\subseteq\kappa$ with the property that $\{A\}$ is $\Sigma_n(\kappa,z)$-definable, then there is a weak $\kappa$-model $M$, a transitive set $N$ and an elementary embedding $\map{j}{M}{N}$ such that $A\in M$, $\crit{j}=\kappa$, $\kappa$ is inaccessible in $M$ and $\HH{\kappa}^M\in M$.  
 \end{enumerate}
 \end{lemma}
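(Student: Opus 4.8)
The strategy is to prove the cycle of implications $(i)\Rightarrow(ii)\Rightarrow(iii)\Rightarrow(i)$, which is the standard pattern for characterising weak compactness via the tree property and elementary embeddings, but carried out with uniform attention to the $\Sigma_n(\kappa,z)$-definability of every object we construct.

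\emph{$(i)\Rightarrow(ii)$.} Given a $\Sigma_n(\kappa,z)$-definable injection $\map{\iota}{\kappa}{{}^{{<}\kappa}2}$, I would build a colouring that records, for each pair $\alpha<\beta$, the point where $\iota(\alpha)$ and $\iota(\beta)$ first disagree, exactly as in the proof of Proposition \ref{Proposition:NoDefinableInjection}: for $\alpha<\beta<\kappa$ with $\iota(\alpha)\neq\iota(\beta)$ set $\Delta(\alpha,\beta)=\min\Set{\gamma}{\iota(\alpha)(\gamma)\neq\iota(\beta)(\gamma)}$ (if one of $\iota(\alpha),\iota(\beta)$ is an initial segment of the other, use a default value) and let $c(\{\alpha,\beta\})=\iota(\alpha)(\Delta(\alpha,\beta))$. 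This $c$ is $\Sigma_n(\kappa,z)$-definable, so $(i)$ yields an unbounded $c$-homogeneous $H\subseteq\kappa$, say with constant value $\varepsilon\in 2$. The usual argument then shows that $\Set{\iota(\alpha)}{\alpha\in H}$ is linearly ordered by $\subseteq$ together with the lexicographic order in a coherent way, so that $\bigcup\Set{\iota(\alpha)\restriction\Delta(\alpha,\beta)}{\alpha<\beta\in H}$ — more precisely, the branch determined by declaring the value at each level reached to be forced by homogeneity — is a function $x\in{}^\kappa 2$ (here one uses Proposition \ref{Proposition:NoDefinableInjection} to see that the levels reached are unbounded, since otherwise $\iota$ restricted to $H$ would factor through some ${}^\gamma 2$). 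By construction every proper initial segment $x\restriction\alpha$ extends to some $\iota(\beta)$, giving $(ii)$.

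\emph{$(ii)\Rightarrow(iii)$.} Given $A\subseteq\kappa$ with $\{A\}$ being $\Sigma_n(\kappa,z)$-definable, I would first note that Corollary \ref{corollary:InaccessibleInLA} (or rather its proof via Proposition \ref{Proposition:NoDefinableInjection}) shows $\kappa$ is inaccessible, in fact inaccessible in $\LL[A]$. Now enumerate, inside $\LL[A]$, a continuous increasing chain of elementary submodels and form the tree whose nodes at level $\alpha$ are (codes for) the theories, or equivalently the $\in$-diagrams, of initial segments $\LL_{\beta}[A]$ for appropriate $\beta$; the point is to arrange a $\Sigma_n(\kappa,z)$-definable injection $\map{\iota}{\kappa}{{}^{{<}\kappa}2}$ whose range, under $\subseteq$, \emph{is} a $\kappa$-tree coding, along any cofinal branch, an elementary chain of weak $\kappa$-models with $A$ in them. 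Applying $(ii)$ to this $\iota$ produces $x\in{}^\kappa 2$ all of whose initial segments lie below nodes of the tree; decoding $x$ yields a cofinal branch, hence a weak $\kappa$-model $M$ with $A\in M$, $\kappa$ inaccessible in $M$, $\HH{\kappa}^M\in M$, together with the direct limit embedding $\map{j}{M}{N}$ into a transitive $N$ with $\crit j=\kappa$. The delicate point here is the bookkeeping that makes the tree of models $\Sigma_n(\kappa,z)$-definable: one has to be careful that the predicate ``$\LL_\beta[A]$ satisfies such-and-such and sits coherently above the previous level'' is expressible by a $\Sigma_n$-formula using only $\kappa$ and $z$, which works because $A$ itself is $\Sigma_n(\kappa,z)$-definable and the relevant satisfaction relations over set-sized structures are $\Delta_1$. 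I expect this definability bookkeeping to be \textbf{the main obstacle}.

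\emph{$(iii)\Rightarrow(i)$.} Let $\map{c}{[\kappa]^2}{2}$ be a $\Sigma_n(\kappa,z)$-partition. Since $c$ is $\Sigma_n(\kappa,z)$-definable, there is a single subset $A\subseteq\kappa$ coding enough of $\HH{\kappa}$ and of $z$ so that $\{A\}$ is $\Sigma_n(\kappa,z)$-definable and $c$ is definable over any weak $\kappa$-model containing $A$ and closed under the relevant operations; apply $(iii)$ to this $A$ to obtain $\map{j}{M}{N}$ with $c\in M$ (as $c$ is definable from $A$ inside $M$), $\crit j=\kappa$. Then $j(c)$ is a colouring of $[j(\kappa)]^2$ in $N$ agreeing with $c$ on pairs below $\kappa$, so the value $i=j(c)(\{\kappa\}\cup\{?\})$ — precisely, fix any $\eta$ with $\kappa<\eta<j(\kappa)$ in $N$ and set $i=j(c)(\{\kappa,\eta\})$, or more cleanly use that $j(c)\restriction[\kappa]^2=c$ and look at the value $j(c)(\{\alpha,\kappa\})$ for $\alpha<\kappa$ — and the set $H=\Set{\alpha<\kappa}{j(c)(\{\alpha,\kappa\})=i}$ lies in $M$, is unbounded in $\kappa$ (by elementarity, in $M$ it is ``$j(c)$-homogeneous below $\kappa$ for colour $i$ and of size $\kappa$''), and is genuinely $c$-homogeneous since $c$ and $j(c)$ agree on $[\kappa]^2$ and $H$ is $j(c)$-homogeneous for colour $i$ with respect to $\kappa$. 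Pulling this through elementarity gives a $c$-homogeneous set of size $\kappa$, establishing $(i)$. The routine verifications — that the coding set $A$ exists, that $M$ really computes $c$ correctly, and that ``$c$ has a homogeneous set of size $\kappa$'' is absolute between $M$ and $N$ in the right direction — are standard and I would only sketch them.
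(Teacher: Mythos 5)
Your direction (i)$\Rightarrow$(ii) is essentially the paper's argument (the paper uses the lexicographic colouring, which is equivalent to your $\Delta$-colouring, and the same appeal to Proposition \ref{Proposition:NoDefinableInjection} to see that the levels reached are unbounded). The other two implications, however, each contain a genuine gap, and the second one is precisely why the paper closes the cycle differently: it proves (i)$\Leftrightarrow$(ii) directly in both directions (the implication (ii)$\Rightarrow$(i) via the Ramification Lemma, producing a $\Sigma_n(\kappa,z)$-definable tree ordering $<_c$ on $\kappa$ and extracting a homogeneous set from a branch), and then shows (ii)$\Rightarrow$(iii) and (iii)$\Rightarrow$(ii).

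In your (ii)$\Rightarrow$(iii), a cofinal branch through a tree whose nodes code theories or $\in$-diagrams of the $\LL_\beta[A]$ only yields a model (the union of an elementary chain); it does not yield an elementary embedding $\map{j}{M}{N}$ with $\crit{j}=\kappa$. The missing idea is an $M$-ultrafilter: the paper fixes $\LL_\theta[A]\models\ZFC^-$, enumerates $\POT{\kappa}^{\LL_\theta[A]}$ as $\Set{b(\alpha)}{\alpha<\kappa}$, and applies (ii) to the ($\Sigma_n(\kappa,z)$-definable) enumeration of those $t\in{}^{{<}\kappa}2$ whose Boolean combination $B_t=\bigcap\Set{b(\alpha)}{t(\alpha)=1}\cap\bigcap\Set{\kappa\setminus b(\alpha)}{t(\alpha)=0}$ has size $\kappa$; the resulting $x\in{}^\kappa 2$ generates a non-principal ${<}\kappa$-complete filter measuring every set in $\LL_\theta[A]$, whose well-founded ultrapower gives $j$. (Showing that there are $\kappa$-many such $t$ is itself an application of Proposition \ref{Proposition:NoDefinableInjection}, not mere bookkeeping.) The definability bookkeeping you flag as the main obstacle is in fact the easy part. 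In your (iii)$\Rightarrow$(i), the set $H=\Set{\alpha<\kappa}{j(c)(\{\alpha,\kappa\})=i}$ is an element of $N$, not of $M$, and it is not $c$-homogeneous: for $\alpha<\beta$ both in $H$ you only control the colours of the pairs $\{\alpha,\kappa\}$ and $\{\beta,\kappa\}$, which say nothing about $j(c)(\{\alpha,\beta\})=c(\{\alpha,\beta\})$. The usual repair (recursively intersecting the sets $\Set{\beta}{c(\{\alpha,\beta\})=j(c)(\{\alpha,\kappa\})}$, all of which lie in the induced $M$-ultrafilter) requires completeness for sequences that need not belong to the weak $\kappa$-model $M$, so it does not go through here. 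The paper avoids this entirely: from (iii) it derives (ii) by setting $t=j(\iota)(\kappa)$, showing $\dom{t}\geq\kappa$, and using elementarity of $j$ together with $\HH{\kappa}^N\subseteq M$ to see that every initial segment of $t\restriction\kappa$ is extended by some $\iota(\alpha)$; the passage back to a homogeneous set is then done by the tree argument for (ii)$\Rightarrow$(i). You should restructure your proof along these lines.
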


 \begin{proof}
 Assume that (i) holds and let $\map{\iota}{\kappa}{{}^{{<}\kappa}2}$ be a $\Sigma_n(\kappa,z)$-definable injection. Remember that the lexicographic ordering $<_{lex}$ of ${}^{{<}\kappa}2$ is the unique linear ordering  of ${}^{{<}\kappa}2$ with the property that for all $s,t\in{}^{{<}\kappa}2$, we have $s<_{lex}t$ if either $s\subsetneq t$ or there is an ordinal $\alpha\in\dom{s}\cap\dom{t}$ with $s\restriction\alpha=t\restriction\alpha$ and $s(\alpha)<t(\alpha)$. Given $s,t,u,v\in{}^{{<}\kappa}2$ with $s\subseteq t\cap v$ and $t<_{lex} u<_{x} v$, a short computation shows that $s\subseteq u$ holds.
  Let $\map{c}{[\kappa]^2}{2}$ denote the unique function with the property that for all $\alpha<\beta<\kappa$, we have $c(\alpha,\beta)=0$ if and only if $\iota(\alpha)<_{lex}\iota(\beta)$. 
  Then $c$ is a $\Sigma_n(z)$-partition and our assumption yields a $c$-homogeneous subset $H$ of $\kappa$ of cardinality $\kappa$.

 \begin{claim*}
  Given $\gamma<\kappa$, there is $\gamma<\alpha_\gamma\in H$ and  $t_\gamma\in{}^\gamma 2$ with $t_\gamma\subseteq\iota(\alpha)$ for all $\alpha_\gamma<\alpha\in H$. 
 \end{claim*}

 \begin{proof}[Proof of the Claim]
  By Proposition \ref{Proposition:NoDefinableInjection}, there is a sequence  $t_\gamma\in{}^\gamma 2$ with the property that the set $H_\gamma=\Set{\alpha\in H}{t_\gamma\subseteq\iota(\alpha)}$ has cardinality $\kappa$. 
  Define $\alpha_\gamma=\min(H_\gamma)$, fix $\alpha_\gamma<\alpha\in H$ and pick $\alpha<\beta\in H_\gamma$. Then $t_\gamma\subseteq\iota(\alpha_\gamma)\cap\iota(\beta)$ and we  either have $\iota(\alpha_\gamma)<_{lex}\iota(\alpha)<_{lex}\iota(\beta)$ or $\iota(\beta)<_{lex}\iota(\alpha)<_{lex}\iota(\alpha_\gamma)$. By the above remarks, we can conclude  that $t_\gamma\subseteq\iota(\alpha)$. 
 \end{proof}

 Pick $\gamma<\delta<\kappa$ and $\max\{\alpha_\gamma,\alpha_\delta\}<\alpha\in H$. Then the above claim yields  $t_\gamma\subseteq t_\delta\subseteq\iota(\alpha)$ and this implies that $x=\bigcup\Set{t_\gamma}{\gamma<\kappa}$ is an element of ${}^\kappa 2$ with the property that the set $\Set{\alpha<\kappa}{\exists\beta<\kappa ~ x\restriction\alpha\subseteq\iota(\beta)}$ is unbounded in $\kappa$.


  Next, assume that (ii) holds and let $\map{c}{[\kappa]^2}{2}$ be a $\Sigma_n(z)$-partition.  
  Then the proof of the classical \emph{Ramification Lemma} (see, for example, {\cite[Lemma 7.2]{MR1994835}}) yields a unique sequence $\seq{<_\alpha}{\alpha<\kappa}$ such that the following statements hold for all $\alpha<\kappa$:  
 \begin{enumerate}[leftmargin=0.9cm]
  \item[(a)] $<_\alpha$ is a binary relation on $\alpha$ that extends the $\in$-relation, $\langle\alpha,<_\alpha\rangle$ is a tree  and, if $\alpha$ is a limit ordinal, then ${<_\alpha}={\bigcup\Set{<_{\bar{\alpha}}}{\bar{\alpha}<\alpha}}$.

  \item[(b)] We have $0<_2 1$ and, if $\alpha>1$, then there is a unique maximal branch $b_\alpha$ through $\langle\alpha,<_\alpha\rangle$ with  $c(\alpha_0,\alpha_1)=c(\alpha_0,\alpha)$  for all $\alpha_0,\alpha_1\in b_\alpha$ satisfying  $\alpha_0<\alpha_1$.

  \item[(c)] Given $\alpha<\beta<\kappa$, we have ${<_\alpha}={{<_\beta}\restriction(\beta\times\alpha)}$ and, if $\alpha>1$, then $b_\alpha$ is equal to the set of all predecessors of $\alpha$ in $\langle\beta,<_\beta\rangle$. 
 \end{enumerate}

Then there is a unique binary relation $<_c$ on $\kappa$ with ${<_c}={\bigcup\Set{<_\alpha}{\alpha<\kappa}}$ for some sequence $\seq{<_\alpha}{\alpha<\kappa}$ with the above properties and the structure $\langle\kappa,<_c\rangle$ is a tree. 
Moreover, the uniqueness of the sequence $\seq{<_\alpha}{\alpha<\kappa}$ and the branches $\seq{b_\alpha}{1<\alpha<\kappa}$ implies that the  set $\{<_c\}$ is $\Sigma_n(\kappa,z)$-definable. 

 \begin{claim*} 
  Every $\alpha<\kappa$ has at most two direct successors in $\langle\kappa,<_c\rangle$. 
 \end{claim*}

 \begin{proof}[Proof of the Claim]
  Otherwise, we can find  
 $\alpha<\beta_0<\beta_1<\kappa$ such that $\beta_0$ and $\beta_1$ are both direct successors of $\alpha$ in $\langle\kappa,<_c\rangle$ and 
$c(\alpha,\beta_0)=c(\alpha,\beta_1)$. Since $b_{\beta_0}=b_{\beta_1}=b_\alpha\cup\{\alpha\}$, our assumptions imply that $c(\alpha_0,\alpha_1)=c(\alpha_0,\beta_1)$ holds for all $\alpha_0,\alpha_1\in b_{\beta_1}\cup\{\beta_0\}$. But this contradicts the maximality of $b_{\beta_1}$.  
 \end{proof}

Now let $\map{\iota}{\kappa}{{}^{{<}\kappa}2}$ denote the unique injection with $\dom{\iota(\beta)}=\beta+1$ and $$\iota(\beta)(\alpha)=1 ~ \Longleftrightarrow ~ \left( \alpha<_c\beta ~ \vee ~ \alpha=\beta\right)$$ for all $\alpha\leq\beta<\kappa$. Then $\iota$ is $\Sigma_n(\kappa,z)$-definable and our assumption (ii) yields $x\in{}^\kappa 2$ with the property  that the set $\Set{\alpha<\kappa}{\exists\beta<\kappa ~ x\restriction\alpha\subseteq\iota(\beta)}$ is unbounded in $\kappa$. Define $K=\Set{\alpha<\kappa}{x(\alpha)=1}$. Then $\alpha<_c\beta$ for all $\alpha,\beta\in K$ with $\alpha<\beta$.

 \begin{claim*} 
  The set $K$ is unbounded in $\kappa$. 
 \end{claim*}

 \begin{proof}[Proof of the Claim]
  First, assume that $K$ has a maximal element $\alpha<\kappa$. Then the above claim shows that there is a $\beta\in\Lim\cap\kappa$ such that all direct successor of $\alpha$ in $\langle\kappa,<_c\rangle$ are elements of $\beta$.  
Pick $\gamma<\kappa$ that is minimal with the property that $x\restriction\beta\subseteq\iota(\gamma)$. 
Then $\beta\leq\gamma$ and  then the minimality of $\gamma$ implies that $\iota(\gamma)(\bar{\gamma})=0=x(\bar{\gamma})$ for all $\beta\leq\bar{\gamma}<\gamma$, because otherwise $\iota(\gamma)(\bar{\gamma})=1$ would imply that $\iota(\bar{\gamma})=\iota(\gamma)\restriction(\bar{\gamma}+1)$ and hence $x\restriction\beta\subseteq\iota(\bar{\gamma})$. 
This shows that $\iota(\gamma)\restriction\gamma=x\restriction\gamma$. 
Since $\iota(\gamma)(\alpha)=x(\alpha)=1$, we can conclude that $\gamma$ is a direct successor of $\alpha$ in $\langle\kappa,<_c\rangle$ that is not contained in $\beta$, a contradiction.

Now, assume that $K$ is a cofinal subset of $\alpha\in\Lim\cap\kappa$. Pick $\beta_0<\kappa$ minimal
with $x\restriction\alpha\subseteq\iota(\beta_0)$. 
Then $\alpha\leq\beta_0$ and, since $K\subseteq\alpha$, the minimality of $\beta_0$ implies that $x\restriction\beta_0=\iota(\beta_0)\restriction\beta_0$.  
Next, pick $\beta_1<\kappa$ minimal with $x\restriction(\beta_0+1)\subseteq\iota(\beta_1)$. 
Then $\beta_0\leq\beta_1$ and $x(\beta_0)=0<1=\iota(\beta_1)(\beta_1)$ implies that $\beta_0<\beta_1$. Then the minimality of $\beta_1$ and $K\subseteq\alpha$ imply  $x\restriction\beta_1=\iota(\beta_1)\restriction\beta_1$.  
 In particular, we have $b_{\beta_0}=K=b_{\beta_1}$. 
  Given $\alpha_0\in K$, there is $\alpha_1\in K$ with $\alpha_0<\alpha_1$ and the above equalities imply that $c(\alpha_0,\beta_0)=c(\alpha_0,\alpha_1)=c(\alpha_0,\beta_1)$. This shows that $c(\alpha_0,\alpha_1)=c(\alpha_0,\beta_1)$ holds for all $\alpha_0,\alpha_1\in b_{\beta_1}\cup\{\beta_0\}$, contradicting the maximality of $b_{\beta_1}$. 
 \end{proof}

 If we define $$\Map{f}{K}{2}{\alpha}{c(\alpha,\min(K\setminus(\alpha+1)))},$$ then the above claim yields an unbounded subset $H$ of $K$ with $f\restriction H$ is constant.  
 Since $\alpha<_c\beta$ holds for all $\alpha,\beta\in K$ with $\alpha<\beta$, we know that $c(\alpha,\beta)=c(\alpha,\gamma)$ for all $\alpha,\beta,\gamma\in K$ with $\alpha<\beta\leq\gamma$. 
 In particular, if $\alpha,\beta\in H$ with $\alpha<\beta$, then $c(\alpha,\beta)=c(\alpha,\min(K\setminus(\alpha+1)))=f(\alpha)$. This shows that $H$ is $c$-homogeneous.


 Now, assume that (ii) holds and pick $A\subseteq\kappa$ such that the set $\{A\}$ is $\Sigma_n(\kappa,z)$-definable. 
Since we know that (i) holds, we can use Corollary \ref{corollary:InaccessibleInLA} to show that $\kappa$ is inaccessible in $\LL[A]$ and hence $({}^{{<}\kappa}2)^{\LL[A]}\subseteq\LL_\kappa[A]=\HH{\kappa}^{\LL[A]}$.  
Let $\theta>\kappa$ be minimal with $\LL_\theta[A]\models\ZFC^-+\anf{\textit{$\POT{\kappa}$ exists}}$, let $b$ be the $<_{\LL[A]}$-minimal bijection between $\kappa$ and $\POT{\kappa}^{\LL_\theta[A]}$ in $\LL[A]$ and let $\vartheta>\theta$ be minimal with the property that   $b\in\LL_\vartheta[A]\models\ZFC^-+\anf{\textit{$\POT{\kappa}$ exists}}$. 
Then the sets $\{\LL_\theta[A]\}$, $\{\LL_\vartheta[A]\}$ and $\{b\}$ are all $\Sigma_1(\kappa,A)$-definable and therefore our assumption implies that they are also $\Sigma_n(\kappa,z)$-definable. 
 Define $$B_t ~ = ~ \left(\bigcap\Set{b(\alpha)}{t(\alpha)=1}\right) ~ \cap ~ \left(\bigcap\Set{\kappa\setminus b(\alpha)}{t(\alpha)=0}\right) ~ \in ~ \POT{\kappa}^{\LL_\vartheta[A]}$$ for all $t\in({}^{{<}\kappa}2)^{\LL[A]}$ and let $\calB$ be the set of all $t\in({}^{{<}\kappa}2)^{\LL[A]}$ with $\betrag{B_t}^{\LL_\vartheta[A]}=\kappa$.

\begin{claim*}
 The set $\calB$ has cardinality $\kappa$.
\end{claim*}
 
 \begin{proof}[Proof of the Claim]
  Assume not. Then there is a minimal $\beta<\kappa$ with $\calB\subseteq{}^{{<}\beta}2$.  Let $\map{f}{\kappa}{{}^\beta 2}$ denote the unique function with $$f(\gamma)(\alpha)=1 ~ \Longleftrightarrow ~ \alpha\in b(\gamma)$$ for all $\gamma<\kappa$ and $\alpha<\beta$. 
Then $\ran{f}\subseteq\LL[A]$ and for all $\gamma<\kappa$, we have $\gamma\in B_{f(\gamma)}$ and $\betrag{B_{f(\gamma)}}<\kappa$. This shows that $\betrag{\ran{f}}=\kappa$. 
Let $\map{\iota}{\kappa}{\ran{f}}$ denote the monotone enumeration of $\ran{f}$ with respect to $<_{\LL[A]}$. 
Then the set $\{\beta\}$, the function $f$ and the function $\iota$ are all definable over the structure $\langle\LL_\vartheta[A],\in\rangle$ by a formula with parameters $A$ and $b$. 
But this shows that $\iota$ is a $\Sigma_n(\kappa,z)$-definable injection from $\kappa$ into ${}^\beta 2$, contradicting Proposition \ref{Proposition:NoDefinableInjection}.  
 \end{proof}

 Now, let $\map{\iota}{\kappa}{{}^{{<}\kappa}2}$ denote the monotone enumeration of $\calB$ with respect to $<_{\LL[A]}$. As above, we know that $\iota$ is $\Sigma_n(\kappa,z)$-definable and hence the assumption (ii) yields an $x\in{}^\kappa 2$ with $\Set{\alpha<\kappa}{\exists\beta<\kappa ~ x\restriction\alpha\subseteq\iota(\beta)}$ is unbounded in $\kappa$.  If we define $$U ~ = ~ \Set{B\subseteq\kappa}{\exists\gamma<\kappa ~ B_{x\restriction\gamma}\subseteq B},$$ then it is easy to see that $U$ is a non-principal, ${<}\kappa$-complete filter on $\kappa$ that measures every subset of $\kappa$ contained in $\LL_\theta[A]$.  In particular, this implies that the  ultrapower $\Ult{\LL_\theta[A]}{U\cap\POT{\kappa}^{\LL_\theta[A]}}$ (that uses only functions $\map{f}{\kappa}{\LL_\theta[A]}$ contained in $\LL_\theta[A]$) is well-founded and, if we let $N$ denote its transitive collapse, then the  corresponding elementary embedding $\map{j}{\LL_\theta[A]}{N}$ has critical point $\kappa$.

 Finally, assume (iii) and let $\map{\iota}{\kappa}{{}^{{<}\kappa}2}$ be a $\Sigma_n(\kappa,z)$-definable injection. Set\footnote{We let use $\goedel{\cdot}{\ldots,\cdot}$ to denote  (iterated applications of) the G\"odel pairing function.} $$A ~ = ~ \Set{\goedel{\alpha}{\gamma,\iota(\alpha)(\gamma)}}{\alpha<\kappa, ~ \gamma\in\dom{\iota(\gamma)}}.$$ 
Then the set $\{A\}$ is also $\Sigma_n(\kappa,z)$-definable and (iii) yields a weak $\kappa$-model $M$, a transitive set $N$ and an elementary embedding $\map{j}{M}{N}$ such that  $\crit{j}=\kappa$, $A\in M$ and $\kappa$ is inaccessible in $M$.  
 Since $\kappa$ is inaccessible in $M$ and $\HH{\kappa}^M\in M$, elementarity implies that $\HH{\kappa}^N\subseteq M$. 
 Moreover, $A\in M$ implies that $\iota$ is an element of $M$. 
Set $t=j(\iota)(\kappa)$ and assume, towards a contradiction, that $\dom{t}<\kappa$. Then $t=j(t)\in\HH{\kappa}^N\subseteq M$ and elementarity yields an $\alpha<\kappa$ with $\iota(\alpha)=t$. But then $j(\iota)(\alpha)=t=j(\iota)(\kappa)$, a contradiction. This shows that $\dom{t}\geq\kappa$ and $x=t\restriction\kappa\in{}^\kappa 2$. If $\gamma<\kappa$, then $x\restriction\gamma\in M$ and elementarity yields an $\alpha<\kappa$ with $x\restriction\gamma\subseteq\iota(\alpha)$. Therefore $x$ witnesses that (ii) holds with respect to $\iota$.  
\end{proof}

We now use the above characterizations to strengthen the conclusion of Corollary \ref{corollary:InaccessibleInLA} and isolate the large cardinal properties that correspond to the $\mathbf{\Sigma}_n$-colouring properties.

\begin{corollary}
 If $\kappa$ is an uncountable regular cardinal with the $\Sigma_n(z)$-colouring property and $A$ is a subset of $\kappa$ with the property that the set $\{A\}$ is $\Sigma_n(\kappa,z)$-definable, then $\kappa$ is a Mahlo cardinal in $\LL[A]$. 
\end{corollary}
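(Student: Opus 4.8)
The plan is to combine Corollary \ref{corollary:InaccessibleInLA} with the elementary embedding characterisation in Lemma \ref{lemma:CharacterizationsPartitionProperty}.(iii), running an argument in the spirit of the classical proof that weakly compact cardinals are Mahlo, but organised so that it never needs to reflect the statement \anf{$\kappa$ is inaccessible in $\LL[A]$} through the embedding (doing so would only yield information about $j(\kappa)$ and about $\LL[j(A)]$).

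First, by Corollary \ref{corollary:InaccessibleInLA} the cardinal $\kappa$ is inaccessible in $\LL[A]$, so assume towards a contradiction that it is not Mahlo there. Working inside $\LL[A]$, let $C$ be the $<_{\LL[A]}$-least subset of $\kappa$ that is closed and unbounded in $\kappa$ and contains no ordinal inaccessible in $\LL[A]$. I would then verify that $\{C\}$, and hence also $\{A\oplus C\}$ where $A\oplus C\subseteq\kappa$ codes the pair, is $\Sigma_n(\kappa,z)$-definable: since $\kappa$ is inaccessible in $\LL[A]$, the ordering $<_{\LL[A]}$ and the property \anf{$\alpha$ is inaccessible in $\LL[A]$} for ordinals $\alpha<\kappa$ are decided inside every initial segment $\LL_\delta[A]$ of $\LL[A]$ with $\LL_\delta[A]\models\ZFC^-+\anf{$\POT{\kappa}$ exists}$, so \anf{$x=C$} can be written as a $\Sigma_1$-formula quantifying over such a transitive model $u$ (with parameter $A$); substituting the $\Sigma_n(\kappa,z)$-definition of $\{A\}$ then yields the claim.

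Now apply the implication (i)$\Rightarrow$(iii) of Lemma \ref{lemma:CharacterizationsPartitionProperty} to the set $A\oplus C$, obtaining a weak $\kappa$-model $M$, a transitive set $N$ and an elementary embedding $\map{j}{M}{N}$ with $A\oplus C\in M$, $\crit{j}=\kappa$, $\kappa$ inaccessible in $M$ and $\HH{\kappa}^M\in M$; decoding gives $A,C\in M$. Since $M\models\ZFC^-$ contains $A$, it contains $\LL_\kappa[A]$ and therefore every $\LL[A]$-witness to singularity, to collapse, or to a failure of the strong-limit property of an ordinal below $\kappa$; hence being inaccessible in $M$ is at least as restrictive as being inaccessible in $\LL[A]$ for ordinals below $\kappa$, so the genuine club $C$ avoids the ordinals inaccessible in $M$ as well, i.e. $M\models\anf{$C$ is a club in $\kappa$ with no inaccessible members}$. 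Because $C\subseteq\kappa=\crit{j}$ we have $C\subseteq j(C)$, and since $\kappa=\sup(C)$ is a limit point of the closed set $j(C)\subseteq j(\kappa)$ with $\kappa<j(\kappa)$, it follows that $\kappa\in j(C)$; applying $j$ to the displayed statement then gives $N\models\anf{$\kappa$ is not inaccessible}$.

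The final step is to reach a contradiction by showing $N\models\anf{$\kappa$ is inaccessible}$. Here $\POT{\kappa}$ exists in $N$, since $\HH{j(\kappa)}^N=j(\HH{\kappa}^M)\in N$ and $\kappa<j(\kappa)$, so the statement is meaningful. As $N$ is transitive and $\kappa$ is an uncountable regular cardinal in $\VV$, it remains a regular cardinal in $N$. For each $\beta<\kappa$ we have $\POT{\beta}^N=j(\POT{\beta}^M)$, and applying $j$ to the true statement $M\models\anf{$\betrag{\POT{\beta}}=\beta^+<\kappa$}$ (valid since $\kappa$ is inaccessible in $M$) together with $\crit{j}=\kappa$ shows $N\models\anf{$\betrag{\POT{\beta}}<\kappa$}$; likewise the cardinals of $M$ below $\kappa$, which are cofinal in $\kappa$, are fixed by $j$ and stay cardinals in $N$, so $\kappa$ is a limit cardinal in $N$. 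Thus $N\models\anf{$\kappa$ is inaccessible}$, contradicting the previous paragraph. I expect the main obstacles to be exactly these last two themes: arranging the defining model $u$ to be tall enough to compute the genuine $<_{\LL[A]}$-least club, and phrasing the reflection in terms of the internal notion of inaccessibility in $M$ and $N$ (using that $j$ is the identity below $\kappa$) rather than in terms of $\LL[A]$; once this is set up, the absoluteness facts for \anf{$\alpha$ inaccessible in $\LL[A]$} with $\alpha<\kappa$ are routine.
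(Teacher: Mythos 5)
Your argument is correct, but it takes a genuinely different route from the paper's. The paper invokes a theorem of Todor\v{c}evi\'{c} that every inaccessible non-Mahlo cardinal carries a special $\kappa$-Aronszajn tree, codes the $<_{\LL[A]}$-least such tree of $\LL[A]$ into a $\Sigma_n(\kappa,z)$-definable set $B\subseteq\kappa$, applies Lemma \ref{lemma:CharacterizationsPartitionProperty} to $B$, and gets a contradiction because a node on the $\kappa$-th level of $j(\TTT)$ induces a cofinal branch through the special tree $\TTT\in M$, which is impossible at a regular cardinal. You instead run the classical Hanf--Scott reflection argument: code the $<_{\LL[A]}$-least club avoiding the $\LL[A]$-inaccessibles, pull it into $M$, use $\LL_\kappa[A]\subseteq M$ to see that this club also avoids the $M$-inaccessibles, and then play $\kappa\in j(C)$ against the fact that $\kappa$ stays inaccessible in $N$. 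Your version is self-contained (no external combinatorial input), at the price of leaning on the extra clauses of Lemma \ref{lemma:CharacterizationsPartitionProperty}.(iii) --- that $\kappa$ is inaccessible in $M$ and $\HH{\kappa}^M\in M$ --- which the tree argument does not need beyond $\crit{j}=\kappa$ and the regularity of $\kappa$ in $\VV$; it also requires the careful bookkeeping you describe for comparing \anf{inaccessible} across $\LL[A]$, $M$ and $N$, whereas \anf{special Aronszajn tree with no cofinal branch} is upward absolute essentially for free. One small misstatement: inaccessibility of $\kappa$ in $M$ gives $\betrag{\POT{\beta}}^M<\kappa$, not $\betrag{\POT{\beta}}^M=\beta^+$; only the former is needed, so nothing breaks. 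The definability of your club $C$ via a $\Sigma_1$ quantification over a sufficiently tall $\LL_\delta[A]\models\ZFC^-$ is of exactly the same nature as the coding steps the paper itself treats as routine (e.g.\ in Corollary \ref{corollary:InaccessibleInLA} and Corollary \ref{corollary:PartitionPropertyDownwardsL}), so I see no gap there.
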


\begin{proof}
 Assume that the above conclusion fails. 
Since Corollary \ref{corollary:InaccessibleInLA} implies that $\kappa$ is inaccessible in $\LL[A]$, a result of Todor\v{c}evi\'{c} (see {\cite[Theorem 6.1.4]{MR2355670}}) shows that $\LL[A]$ contains a special $\kappa$-Aronszajn tree (see {\cite[Definition 6.1.1]{MR2355670}}). 
By using $<_{\LL[A]}$ and $\goedel{\cdot}{\cdot}$ to code the $<_{\LL[A]}$-least special $\kappa$-Aronszajn tree in $\LL[A]$ into an element of $\POT{\kappa}^{\LL[A]}$, we find $B\subseteq\kappa$ with the property that the set $\{B\}$ is $\Sigma_1(\kappa,A)$-definable and every weak $\kappa$-model that contains $B$ also contains a special $\kappa$-Aronszajn tree. 
Then the set $\{B\}$ is $\Sigma_n(\kappa,z)$-definable and Lemma \ref{lemma:CharacterizationsPartitionProperty} yields a weak $\kappa$-model $M$, a transitive set $N$ and an elementary embedding $\map{j}{M}{N}$ with $\crit{j}=\kappa$ and $B\in M$.  Then $M$ contains a special $\kappa$-Aronszajn tree $\TTT$ and every element of the $\kappa$-th level of $j(\TTT)$ in $N$ induces a cofinal branch through $\TTT$. Since $\TTT$ is special, this contradicts the regularity of $\kappa$. 
\end{proof}

\begin{corollary}\label{corollary:PartitionPropertyDownwardsL}
 Let $\kappa$ be an uncountable regular cardinal, let $x\in\HH{\kappa}\cap\POT{\kappa}$ and let $z\in\HH{\kappa^+}^{\LL[x]}$. 
 If $\kappa$ has the  $\Sigma_n(x,z)$-colouring property, then $\kappa$ has the $\Sigma_n(z)$-colouring property in $\LL[x]$. 
\end{corollary}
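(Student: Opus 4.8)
The plan is to verify that $\kappa$ has the $\Sigma_n(z)$-colouring property in $\LL[x]$ by checking the second characterization from Lemma \ref{lemma:CharacterizationsPartitionProperty}, namely the branch-existence property for $\Sigma_n(\kappa,z)$-definable injections $\map{\iota}{\kappa}{{}^{{<}\kappa}2}$. So, first I would work inside $\LL[x]$ and fix a $\Sigma_n(\kappa,z)$-definable injection $\iota$ there, witnessed by some $\Sigma_n$-formula $\varphi(v_0,\ldots,v_{k+2})$ with parameters $\kappa$ and $z$. Since $z\in\HH{\kappa^+}^{\LL[x]}$, there is an ordinal $\vartheta<(\kappa^+)^{\LL[x]}$ with $z\in\LL_\vartheta[x]$; fixing the $<_{\LL[x]}$-least such $\vartheta$, the set $\{z\}$ (and the set $\{\LL_\vartheta[x]\}$) is $\Sigma_1(\kappa,x)$-definable in $\VV$. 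The key point is that ``being $\Sigma_n$-definable in $\LL[x]$ by $\varphi$ with parameters $\kappa, z$'' can be expressed by a $\Sigma_n(\kappa,x)$-formula in $\VV$: relativising the $\Sigma_n$-formula $\varphi$ to the $\Sigma_1(\kappa,x)$-definable class $\LL[x]$ keeps it (provably equivalent to) a $\Sigma_n$-formula, because $\LL[x]$ is $\Sigma_1$-definable from $x$ and the class of formulas equivalent to $\Sigma_n$-formulas is closed under bounded quantification and such relativisations. Hence $\iota$, viewed as a subset of $\HH{\kappa}$ in $\VV$, is $\Sigma_n(\kappa,x,z)$-definable in $\VV$, and since $z$ is itself $\Sigma_1(\kappa,x)$-definable, $\iota$ is in fact $\Sigma_n(\kappa,x)$-definable in $\VV$, hence $\Sigma_n(x,z')$-definable for any suitable $z'\in\HH{\kappa}$ coding $x$ — in particular it falls under the hypothesis that $\kappa$ has the $\Sigma_n(x,z)$-colouring property (after folding $x$ and $z$ into a single parameter via the pairing functions).

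Next, applying the characterization of the $\Sigma_n(x,z)$-colouring property in $\VV$ (Lemma \ref{lemma:CharacterizationsPartitionProperty}(ii)) to the injection $\iota$, I obtain in $\VV$ some $x'\in{}^\kappa 2$ with $\Set{\alpha<\kappa}{\exists\beta<\kappa ~ x'\restriction\alpha\subseteq\iota(\beta)}$ unbounded in $\kappa$. Now I need this branch, or some branch with the same property, to live in $\LL[x]$. Here is where I would use that $\kappa$ is inaccessible in $\LL[x]$: by Corollary \ref{corollary:InaccessibleInLA} (applied in $\VV$ to the set $A = x$, whose singleton is trivially $\Sigma_n(\kappa,x)$-definable, hence $\Sigma_n(\kappa,x,z)$-definable, and $\kappa$ has the corresponding colouring property), $\kappa$ is inaccessible in $\LL[x]$, so $({}^{{<}\kappa}2)^{\LL[x]}\subseteq\LL_\kappa[x]$. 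The set $T$ of all $t\in({}^{{<}\kappa}2)^{\LL[x]}$ such that $\Set{\beta<\kappa}{t\subseteq\iota(\beta)}$ is unbounded in $\kappa$ is an element of $\LL[x]$ (it is definable over a sufficiently large $\LL_\lambda[x]$ using $\iota$, which is a definable-over-$\LL[x]$ object), and in $\VV$ the existence of $x'$ shows every level of $T$ is nonempty and $T$ has no maximal node above a certain point; more precisely, the restrictions $x'\restriction\gamma$ for $\gamma$ in an unbounded set witness that $T$ is a $\kappa$-tree with nodes on cofinally many levels. Then $\kappa$ inaccessible in $\LL[x]$ together with $T\in\LL[x]$ lets me run, inside $\LL[x]$, the argument that a $\Sigma_n(\kappa,z)$-definable injection cannot have small range on any level (Proposition \ref{Proposition:NoDefinableInjection} holds in $\LL[x]$ since $\kappa$ has the $\Sigma_n(z)$-colouring property there — wait, that's circular) — so instead I would argue directly: the tree $T$ has size $\leq\kappa$ in $\LL[x]$, is a subtree of $({}^{{<}\kappa}2)^{\LL[x]}$, and I must extract a branch of length $\kappa$ lying in $\LL[x]$. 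I expect to handle this by the absoluteness of well-foundedness / the fact that ``$T$ has a cofinal branch'' is upward absolute is the wrong direction; rather I would observe that $T$ (being the tree of all nodes extendable to the $\VV$-branch-property) is a tree all of whose levels are nonempty and which is $<\kappa$-closed enough that a branch can be constructed by a $\Sigma_n$-recursion inside $\LL[x]$ — or, cleanest, I would instead phrase everything through part (iii) of the Lemma and the elementary-embedding characterization, transferring the embedding down to $\LL[x]$ via the usual condensation/absoluteness of the construction in the proof of Lemma \ref{lemma:CharacterizationsPartitionProperty}, where the model $M$ and map $j$ are built from $\LL_\theta[A]$ for a $\Sigma_1(\kappa,A)$-definable $A$ coding the relevant data.

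Concretely, I would run the (iii)-version: given $A\subseteq\kappa$ with $\{A\}$ $\Sigma_n(\kappa,z)$-definable in $\LL[x]$, show $\{A\}$ is $\Sigma_n(\kappa,x)$-definable in $\VV$ by the relativisation argument above, apply Lemma \ref{lemma:CharacterizationsPartitionProperty}(iii) in $\VV$ to the set coding both $A$ and $x$, and obtain a weak $\kappa$-model $M$, transitive $N$, and $\map{j}{M}{N}$ with $\crit j = \kappa$, $A, x\in M$, $\kappa$ inaccessible in $M$, $\HH{\kappa}^M\in M$. Then I would pass to $\LL[x]^M = \LL[x]\cap M$ (using $x\in M$): this is a weak $\kappa$-model in the sense of $\LL[x]$, it contains $A$, $\kappa$ is inaccessible in it (by condensation, since $\kappa$ inaccessible in $M$ and $\HH{\kappa}^{\LL[x]}\subseteq M$), and $\HH{\kappa}^{\LL[x]\cap M}\in \LL[x]\cap M$; the restriction of $j$ to $\LL[x]\cap M$ maps into $\LL[x]\cap N = \LL[x]^N$ with critical point $\kappa$, because $\LL[x]$ is defined by a formula absolute between $M$ and $N$ and $j(x)=x$. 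This gives exactly the witness required by Lemma \ref{lemma:CharacterizationsPartitionProperty}(iii) inside $\LL[x]$, hence $\kappa$ has the $\Sigma_n(z)$-colouring property there. The main obstacle I anticipate is the bookkeeping around definability-preservation under relativisation to $\LL[x]$ — verifying carefully that a $\Sigma_n$-formula defining $A$ over $\LL[x]$ yields, after unfolding the $\Sigma_1(\kappa,x)$-definition of $\LL[x]$ and the $\Sigma_1(\kappa,x)$-definition of $z$, a genuine $\Sigma_n(\kappa,x)$-formula over $\VV$ (using the closure of the $\Sigma_n$ class under bounded quantifiers, conjunctions, disjunctions, and existential quantification, and the fact that $n\geq 1$) — together with checking that the hypothesis $z\in\HH{\kappa^+}^{\LL[x]}$ is exactly what is needed to make $z$ eliminable as a parameter in favour of $x$.
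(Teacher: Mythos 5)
Your overall strategy --- push the definition of $A$ up to $\VV$, apply characterization (iii) of Lemma \ref{lemma:CharacterizationsPartitionProperty} there, and then cut the resulting embedding down to $\LL[x]$ --- is the same as the paper's, but the final step has a genuine gap. To verify characterization (iii) \emph{inside} $\LL[x]$ you need the weak $\kappa$-model, the transitive target and, crucially, the elementary embedding itself to be \emph{elements of} $\LL[x]$. Your candidate embedding is $j\restriction(\LL[x]\cap M)$, and while you check that it maps $\LL[x]^M$ elementarily into $\LL[x]^N$ and fixes $x$, you never argue that this restriction (equivalently, $j\restriction(\On\cap M)$) is a \emph{set in} $\LL[x]$ --- and there is no reason it should be for an arbitrary $j$ produced in $\VV$. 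This is exactly the point the paper's proof is organised around: it fixes $\theta$ minimal with $z,A\in\LL_\theta[x]\models\ZFC^-$, takes the $<_{\LL[x]}$-least bijection $b$ between $\kappa$ and $\LL_\theta[x]$, codes its membership relation as $E\subseteq\kappa\times\kappa$, and arranges (via an auxiliary $\Sigma_1$-definably-coded set $B$) that $E\in M$. Then $j(E)$ is a single element of $N$ lying in $\LL_{j(\vartheta)}[x]^N=\LL_{j(\vartheta)}[x]\subseteq\LL[x]$ by absoluteness, $j(b)$ is recovered inside $\LL[x]$ as the transitive collapse of $\langle j(\kappa),j(E)\rangle$, and finally $j\restriction\LL_\theta[x]=j(b)\circ b^{-1}$ is exhibited as a set in $\LL[x]$. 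Without some version of this coding argument your proof does not close.

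Two smaller points. First, your claim that $\{z\}$ is $\Sigma_1(\kappa,x)$-definable is false (membership in a definable level $\LL_\vartheta[x]$ does not single out $z$); it is also unnecessary, since the hypothesis is the lightface $\Sigma_n(x,z)$-colouring property, so $z$ may simply be carried along as a parameter, which is what you do in the end. Second, your first attempt via characterization (ii) fails for the reason you half-notice: a cofinal branch through the tree of initial segments of $\ran{\iota}$ found in $\VV$ need not descend to $\LL[x]$, and neither ``closure'' nor a ``$\Sigma_n$-recursion'' repairs this; abandoning that route in favour of (iii) was the right move, but (iii) then has to be executed with the coding trick above.
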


\begin{proof}
 Pick $A\in\POT{\kappa}^{\LL[x]}$  such that $\{A\}$ is $\Sigma_n(\kappa,z)$-definable in $\LL[x]$. 
Then $\{A\}$ is $\Sigma_n(\kappa,x,z)$-definable. 
 Let $\theta>\kappa$ be minimal with $z,A\in\LL_\theta[x]\models\ZFC^-$, 
let $b$ be the $<_{\LL[x]}$-least bijection between $\kappa$ and $\LL_\theta[x]$ in $\LL[x]$,
 let $\vartheta>\theta$ be minimal such that $b\in\LL_\theta[x]\models\ZFC^-$ and let $c$ be the $<_{\LL[x]}$-least bijection between $\kappa$ and $\LL_\vartheta[x]$ in $\LL[x]$. 
 Set $B = \Set{\goedel{\alpha}{\beta}}{\alpha,\beta<\kappa, ~ c(\alpha)\in c(\beta)} \in \POT{\kappa}^{\LL[x]}$. Then the set $\{B\}$ is $\Sigma_1(\kappa,x,z,A)$-definable and therefore it is also $\Sigma_n(\kappa,x,z)$-definable.  
By Lemma \ref{lemma:CharacterizationsPartitionProperty}, there is a weak $\kappa$-model $M$, a transitive set $N$ and an elementary embedding $\map{j}{M}{N}$ with $\crit{j}=\kappa$ and $B\in M$. 
Then $A\in\LL_\vartheta[x]\in M$ and $j(x)=x$. 
If we define $E=\Set{\langle\alpha,\beta\rangle\in\kappa\times\kappa}{b(\alpha)\in b(\beta)}$, then we know that $E\in\LL_\vartheta[x]\in M$ and $j(E)\in\LL_{j(\vartheta)}[x]\subseteq\LL[x]$. Moreover, since the function $j(b)$ is the transtive collapse of $\langle j(\kappa),j(E)\rangle$, we know that  $j(b)$ is also contained in $\LL[x]$. 
Finally, an easy computation  shows that $$\map{j\restriction\LL_\theta[x]=j(b)\circ b^{{-}1}}{\LL_\theta[x]}{\LL_{j(\theta)}[x]}$$ is an elementary embedding contained in $\LL[x]$. 
 By Lemma \ref{lemma:CharacterizationsPartitionProperty}, these  computations show that  $\kappa$ has the $\Sigma_n(z)$-colouring property in $\LL[x]$.  
\end{proof}


The concept introduced in the next definition will allow us to further  strengthen the above conclusions. Moreover, it will enable us to show that for all $n\geq 2$, the $\mathbf{\Sigma}_n$-colouring property is equivalent to the $\mathbf{\Sigma}_2$-colouring property.

\begin{definition}
 Given sets $z_0,\ldots,z_{m-1}$, a class $A$ \emph{has a good $\Sigma_n(z_0,\ldots,z_{m-1})$-well-ordering} if there is a well-ordering $\lhd$ of a class $B$ such that $A\subseteq B$ and the class $I(\lhd) = \Set{\Set{y}{y\lhd x}}{x\in B}$ of all proper initial segments of $\lhd$ is $\Sigma_n(z_0,\ldots,z_{m-1})$-definable. 
\end{definition}

It is easy to see that the canonical well-ordering of the constructible universe witnesses that the class $\LL$ has a good $\Sigma_1$-well-ordering. 
More generally, for every set of ordinals $x$, the class $\LL[x]$ has a good $\Sigma_1(x)$-well-ordering. 
Moreover, in the \emph{Dodd--Jensen core model $\KK^{DJ}$}, there is a good $\Sigma_1(\kappa)$-well-ordering of $\POT{\kappa}$ for every uncountable cardinal $\kappa$ (see {\cite[Lemma 1.10.]{GoodWO}}). 
Finally, it can also easily be shown that the canonical well-ordering of the collection $\HOD_x$ of all hereditarily $x$-ordinal-definable witnesses that $\HOD_x$ has a good $\Sigma_2(x)$-well-ordering (see  the proof of {\cite[Lemma 13.25]{MR1940513}} for details). 
Since the assumption $\VV=\HOD$ is compatible with various large cardinal assumptions and forcing axioms, this shows that the existence a good $\Sigma_2$-well-ordering of $\VV$ is also consistent with these extensions of $\ZFC$ and this will imply that it is most interesting to study the influence of these extensions of $\ZFC$ on the $\mathbf{\Sigma}_1$-colouring property. 
Moreover, this fact will allow us to show that the $\mathbf{\Sigma}_2$-colouring property implies all higher partition properties. This implication will be an easy consequence of the following observation.

\begin{proposition}\label{proposition:IntersectionsWithHOD}
 Let $X$ be a class of sets of ordinals with the property that both $X$ and $\VV\setminus X$ are $\Sigma_n(y)$-definable for some set $y$. If $z$ is a set with the property that $\HOD_z\cap X\neq\emptyset$, then there is an $A\in\HOD_z\cap X$ such that the set $\{A\}$ is $\Sigma_n(y,z)$-definable.  
\end{proposition}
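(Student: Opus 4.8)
The plan is to single out the $\lhd$-least element of $\HOD_z\cap X$, where $\lhd$ is the instance at $z$ of the canonical good $\Sigma_2$-well-ordering of $\HOD$ recalled above, and then to verify that this set is pinned down by a $\Sigma_n$-formula whose only parameters are $y$ and $z$.

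First I would fix the well-ordering $\lhd$ of $\HOD_z$ whose class $I(\lhd)$ of proper initial segments is $\Sigma_2(z)$-definable, and note that, since $\HOD_z$ is a proper class, $\lhd$ has order type $\On$; in particular every element of $\HOD_z$ has an immediate $\lhd$-successor, so that $S\cup\{s\}\in I(\lhd)$ holds whenever $S\in I(\lhd)$ and $s$ is the $\lhd$-least element of $\HOD_z\setminus S$. Since $\HOD_z\cap X$ is a nonempty subclass of the field of $\lhd$, it has a $\lhd$-least element $A$, and this $A$ lies in $\HOD_z\cap X$ by construction.

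Next I would show that $\{A\}$ is defined by the formula asserting of a set $x$ that $x\in X$ and that there is a set $S$ with $S\in I(\lhd)$, $x\notin S$, $S\cup\{x\}\in I(\lhd)$, and $w\notin X$ for all $w\in S$. That $A$ satisfies this is witnessed by $S=\Set{w\in\HOD_z}{w\lhd A}$, where the last clause uses the $\lhd$-minimality of $A$ in $\HOD_z\cap X$ and the membership $S\cup\{A\}\in I(\lhd)$ uses the preceding paragraph. For the converse, if $x$ and $S$ witness the displayed conditions, then $S$ and $S\cup\{x\}$ are $\lhd$-initial segments whose difference is the singleton $\{x\}$; this forces $S=\Set{w\in\HOD_z}{w\lhd x}$ and hence $x\in\HOD_z$, and together with $x\in X$ and $S\cap X=\emptyset$ it says exactly that $x$ is the $\lhd$-least element of $\HOD_z\cap X$, so that $x=A$.

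Finally I would carry out the complexity bookkeeping. By hypothesis, both $x\in X$ and $w\notin X$ are $\Sigma_n(y)$; the conditions $S\in I(\lhd)$ and $S\cup\{x\}\in I(\lhd)$ — the latter rewritten as $\exists T\,(T=S\cup\{x\}\wedge T\in I(\lhd))$ — are $\Sigma_2(z)$; and the bounded universal quantifier in $\forall w\in S\,(w\notin X)$ can be absorbed via the Collection scheme, by passing from $\forall w\in S\,\exists u\,\sigma(w,u)$ to $\exists v\,\forall w\in S\,\exists u\in v\,\sigma(w,u)$, so that this clause is again $\Sigma_n(y)$. Hence the defining formula is an existential quantification over $S$ (and $T$) of a conjunction of $\Sigma_n(y)$- and $\Sigma_2(z)$-formulas, which for $n\geq 2$ is a $\Sigma_n(y,z)$-formula, as desired; this is also the case in which the proposition is afterwards applied. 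The delicate point, which I expect to be the main obstacle, is the case $n=1$: there the $\Sigma_2$-complexity inherent in the $\HOD_z$-well-ordering exceeds what is allowed, so $\lhd$ cannot be used directly and the construction has to be relativised to a suitably correct, definably well-ordered set-sized structure, with care taken that the parameters $y$ and $z$ suffice to describe it.
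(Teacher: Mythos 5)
Your argument is correct and is essentially the paper's own proof: single out the $\lhd$-least element $A$ of $\HOD_z\cap X$ for the canonical good $\Sigma_2(z)$-well-ordering of $\HOD_z$, and characterize $A$ as the unique $x\in X$ admitting a $D\in I(\lhd)$ with $D\cup\{x\}\in I(\lhd)$ and $D\cap X=\emptyset$; your verification of this characterization and the absorption of the bounded quantifier via Collection are exactly the details the paper leaves implicit. The caveat you raise about $n=1$ is real but applies equally to the paper's proof, which only yields $\Sigma_{\max\{n,2\}}(y,z)$-definability; since every application of the proposition in the paper is at $n=2$, nothing is lost, and you need not pursue the relativised construction you sketch at the end.
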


\begin{proof}
 Let $\lhd$ denote the canonical well-ordering of $\HOD_z$ and let $A$ be the $\lhd$-least element in $\HOD_z\cap X$. By the above remarks, the class $I(\lhd)$ is definable by a $\Sigma_2$-formula with parameter $z$. Then $A$ is the unique element of $X$ with the property that there is a $D\in I(\lhd)$ with $D\cup\{A\}\in I(\lhd)$ and $D\cap X=\emptyset$. By our assumptions, this shows that the set $\{A\}$ is $\Sigma_n(y,z)$-definable.  
\end{proof}

The following corollary now shows that the validity of the $\mathbf{\Sigma}_2$-colouring property at a cardinal $\kappa$ is equivalent to the assumption that for every function $\map{c}{[\kappa]^2}{2}$ that is ordinal definable with parameters from $\HH{\kappa}$, there is a $c$-homogeneous set that is unbounded in $\kappa$.

\begin{corollary}\label{corollary:2ImpliesN}
 The following statements are equivalent for every uncountable regular cardinal $\kappa$ and every set $z$: 
  \begin{enumerate}[leftmargin=0.9cm]
   \item $\kappa$ has the $\Sigma_2(z)$-colouring property. 

   \item $\kappa$ has the $\Sigma_n(z)$-colouring property for all $n<\omega$. 

   \item For every function $\map{c}{[\kappa]^2}{2}$ in $\HOD_z$, there is a $c$-homogeneous set that is unbounded in $\kappa$. 
  \end{enumerate}
\end{corollary}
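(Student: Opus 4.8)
The plan is to establish the cycle of implications (ii) $\Rightarrow$ (i) $\Rightarrow$ (iii) $\Rightarrow$ (ii). The implication (ii) $\Rightarrow$ (i) is immediate, since every $\Sigma_2(z)$-partition is a $\Sigma_n(z)$-partition with $n=2$. The real work lies in (i) $\Rightarrow$ (iii), which I would derive from the $\Sigma_2(z)$-colouring property together with Proposition \ref{proposition:IntersectionsWithHOD}, and in (iii) $\Rightarrow$ (ii), which is a short observation about which colourings are forced to lie in $\HOD_z$.

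For (iii) $\Rightarrow$ (ii), I would fix a $\Sigma_n(z)$-partition $\map{c}{[\kappa]^2}{2}$ (with $n>0$, as throughout this section). By the remark following Definition \ref{definition:SigmanPartition}, the set $c$ is $\Sigma_n(\kappa,z)$-definable and hence ordinal-definable from $z$; moreover, every element of $\tc{\{c\}}$ other than $c$ itself is an ordinal or a set that is definable from finitely many ordinals without further parameters (a Kuratowski pair, a doubleton $\{\alpha,\beta\}$ of ordinals, and so on), and therefore ordinal-definable. Thus $c \in \HOD_z$, and (iii) yields a $c$-homogeneous set that is unbounded in $\kappa$ and therefore, by regularity, has cardinality $\kappa$. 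Since $n$ was arbitrary, this establishes (ii).

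For (i) $\Rightarrow$ (iii), I would assume (i), fix $\map{c}{[\kappa]^2}{2}$ in $\HOD_z$, and suppose towards a contradiction that $c$ has no homogeneous set of cardinality $\kappa$. Fixing a standard $\Delta_1$-definable coding of colourings of $[\kappa]^2$ by subsets of $\kappa$, let $A_c \subseteq \kappa$ be the code of $c$; then $A_c$ is ordinal-definable from $c$, hence from $z$, and it is a set of ordinals, so $A_c \in \HOD_z$. Let $X$ be the class of all $A \subseteq \kappa$ such that the colouring $\map{c_A}{[\kappa]^2}{2}$ decoded from $A$ has no homogeneous set of cardinality $\kappa$. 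A routine computation shows that its complement $\VV \setminus X$ --- consisting of the sets $A$ that either are not subsets of $\kappa$ or decode a colouring with a homogeneous set of cardinality $\kappa$ --- is $\Sigma_1(\kappa)$-definable, the only unbounded quantifier needed being the existential one ranging over the homogeneous set; hence $X$ is $\Pi_1(\kappa)$-definable, and in particular both $X$ and $\VV \setminus X$ are $\Sigma_2(\kappa)$-definable. Since $c_{A_c} = c$ has no homogeneous set of cardinality $\kappa$, we have $A_c \in \HOD_z \cap X$, so Proposition \ref{proposition:IntersectionsWithHOD}, applied with $n=2$ and $y=\kappa$, produces an $A \in \HOD_z \cap X$ such that $\{A\}$ is $\Sigma_2(\kappa,z)$-definable. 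But then $c_A$ is $\Sigma_2(\kappa,z)$-definable, i.e.\ a $\Sigma_2(z)$-partition, so (i) provides a $c_A$-homogeneous set of cardinality $\kappa$, contradicting $A \in X$. Hence $c$ has a homogeneous set of cardinality $\kappa$; as $\kappa$ is regular, this is an unbounded $c$-homogeneous subset of $\kappa$, which proves (iii).

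The step I expect to be the main obstacle is the definability bookkeeping in the previous paragraph: one has to check carefully that both $X$ and $\VV \setminus X$ can be defined by $\Sigma_2$-formulas whose only parameter is $\kappa$. The crucial point is that ``$c_A$ has a homogeneous set of cardinality $\kappa$'' is $\Sigma_1(\kappa)$ as a property of the code $A$ --- which keeps $X$ at the level $\Pi_1 \subseteq \Sigma_2$ rather than $\Pi_2$ --- together with the fact that, since $\kappa$ is regular, this clause may equivalently be read as ``$c_A$ has an unbounded homogeneous subset of $\kappa$'', which matches the phrasing of (iii). One should also ensure that the decoding map $A \mapsto c_A$ is absolute and simple enough that the $\Sigma_2(\kappa,z)$-definability of the singleton $\{A\}$ transfers to $c_A$ being a genuine $\Sigma_2(z)$-partition; fixing the coding at the outset makes this routine.
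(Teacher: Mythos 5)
Your proposal is correct and follows essentially the same route as the paper: the only implication requiring work is (i) $\Rightarrow$ (iii), which the paper also proves by coding colourings as subsets $A\subseteq\kappa$ via the G\"odel pairing function, observing that the class $X$ of codes of counterexamples and its complement are both $\Sigma_2(\kappa)$-definable, and applying Proposition \ref{proposition:IntersectionsWithHOD} to extract a $\Sigma_2(\kappa,z)$-definable counterexample in $\HOD_z$. Your explicit verification of the easy implications (ii) $\Rightarrow$ (i) and (iii) $\Rightarrow$ (ii) is left implicit in the paper but is carried out correctly.
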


\begin{proof}
 Assume that there is a function $\map{c}{[\kappa]^2}{2}$ in $\HOD_z$ with the property that every $c$-homogeneous set is bounded in $\kappa$. Given $A\subseteq\kappa$, we let $\map{c_A}{[\kappa]^2}{2}$ denote the unique function with the property that for all $\alpha<\beta<\kappa$, we have $c(\alpha,\beta)=1$ if and only if $\goedel{\alpha}{\beta}\in A$. Let $X$ denote the set of all $A\subseteq\kappa$ with the property that every $c_A$-homogeneous set is bounded in $\kappa$. Then both $X$ and $\VV\setminus X$ are $\Sigma_2(\kappa)$-definable and our assumptions implies that $\HOD_z\cap X\neq\emptyset$. Therefore, we can apply Proposition \ref{proposition:IntersectionsWithHOD} to find an $A\in\HOD_z\cap X$ with the property that the set $\{A\}$ is $\Sigma_2(\kappa,z)$-definable. But this shows that $c_A$ is a $\Sigma_2(z)$-partition and therefore (i) fails. 
\end{proof}

Next, we show that the $\mathbf{\Sigma}_2$-colouring property is equivalent to weak compactness in certain canonical models of set theory.

\begin{proposition}\label{proposition:Sigma2PartitionPropertyWeaklyCompact}
 Let $\kappa$ be an uncountable regular cardinal with the property that there is a good $\Sigma_n(\kappa,z)$-well-ordering of $\POT{\kappa}$ for some set $z$. If $\kappa$ has the $\Sigma_2(z)$-colouring property, then $\kappa$ is weakly compact. 
\end{proposition}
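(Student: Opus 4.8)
The plan is to adapt the argument behind Corollary \ref{corollary:2ImpliesN}, using the good $\Sigma_n(\kappa,z)$-well-ordering of $\POT{\kappa}$ in the role that the canonical well-ordering of $\HOD_z$ plays in Proposition \ref{proposition:IntersectionsWithHOD}. Write $N=\max\{n,2\}$. Since $\kappa$ has the $\Sigma_2(z)$-colouring property, Corollary \ref{corollary:2ImpliesN} shows that $\kappa$ also has the $\Sigma_N(z)$-colouring property, so it suffices to show that if $\kappa$ is not weakly compact, then there is a $\Sigma_N(z)$-partition $\map{c}{[\kappa]^2}{2}$ with no $c$-homogeneous subset of $\kappa$ of cardinality $\kappa$.

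Assume that $\kappa$ is not weakly compact, and use the Erd\H{o}s--Tarski characterization of weak compactness to fix a colouring $\map{c}{[\kappa]^2}{2}$ with no $c$-homogeneous subset of $\kappa$ of cardinality $\kappa$. Given $A\subseteq\kappa$, let $\map{c_A}{[\kappa]^2}{2}$ be the unique colouring with $c_A(\{\alpha,\beta\})=1$ if and only if $\goedel{\alpha}{\beta}\in A$, for all $\alpha<\beta<\kappa$, and let $X$ denote the class of all $A\subseteq\kappa$ such that $c_A$ has no homogeneous subset of $\kappa$ of cardinality $\kappa$. Since $\kappa$ is regular, a subset of $\kappa$ has cardinality $\kappa$ precisely if it is unbounded in $\kappa$, and a straightforward computation then shows that the class $X$ is $\Pi_1(\kappa)$-definable while $\VV\setminus X$ is $\Sigma_1(\kappa)$-definable; in particular, both classes are $\Sigma_N(\kappa)$-definable. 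Moreover, setting $A_c=\Set{\goedel{\alpha}{\beta}}{\alpha<\beta<\kappa, ~ c(\{\alpha,\beta\})=1}$ yields $c_{A_c}=c$, and hence $A_c\in X\cap\POT{\kappa}$.

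Now let $\lhd$ be a well-ordering of a class $B\supseteq\POT{\kappa}$ such that $I(\lhd)$ is $\Sigma_n(\kappa,z)$-definable, and let $A$ be the $\lhd$-least element of the non-empty class $X$. If $A$ is the $\lhd$-greatest element of $B$, then $X=\{A\}$ and this set is already $\Sigma_N(\kappa)$-definable; otherwise, the argument from the proof of Proposition \ref{proposition:IntersectionsWithHOD} (now applied with $I(\lhd)$ in place of the canonical well-ordering of $\HOD_z$) shows that $A$ is the unique element $a$ of $X$ for which there is a $D\in I(\lhd)$ satisfying $D\cup\{a\}\in I(\lhd)$ and $D\cap X=\emptyset$. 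Since $I(\lhd)$ is $\Sigma_n(\kappa,z)$-definable and both $X$ and $\VV\setminus X$ are $\Sigma_N(\kappa)$-definable, the closure of the class of $\Sigma_N$-formulas under bounded quantification, conjunctions and disjunctions shows that the set $\{A\}$ is $\Sigma_N(\kappa,z)$-definable. Hence $c_A$ is a $\Sigma_N(z)$-partition, and applying the $\Sigma_N(z)$-colouring property to $c_A$ produces a $c_A$-homogeneous subset of $\kappa$ of cardinality $\kappa$, contradicting $A\in X$.

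The main difficulty I expect lies in the complexity bookkeeping of the last paragraph: one must check that the defining property of $A$ as the $\lhd$-least element of $X$ can be captured by a $\Sigma_N$-formula in the parameters $\kappa$ and $z$, i.e. that the negation implicit in ``$\lhd$-least'' does not raise the complexity beyond $\Sigma_N$. This is exactly why the argument records that $X$ and its complement are \emph{both} $\Sigma_N(\kappa)$-definable, so that the quantifier over the proper initial segment $D$ witnessing minimality of $A$ can be kept bounded, and why the degenerate case in which the class $B$ underlying the good well-ordering has a $\lhd$-greatest element must be handled separately.
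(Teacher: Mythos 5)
Your proof is correct and takes essentially the same route as the paper's: both arguments use the good well-ordering to show that the $\lhd$-least code $A$ of a counterexample colouring is definable by a $\Sigma$-formula of bounded complexity (via the characterization ``$A$ is the unique element of $X$ admitting a witness $D\in I(\lhd)$ with $D\cap X=\emptyset$ and $D\cup\{A\}\in I(\lhd)$'') and then invoke Corollary \ref{corollary:2ImpliesN} to contradict the $\Sigma_2(z)$-colouring property. Your explicit treatment of the degenerate case in which the $\lhd$-least element of $X$ is the $\lhd$-maximum of the underlying class, and your complexity bound $\max\{n,2\}$ in place of the paper's $n+1$, are minor refinements of the identical argument.
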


\begin{proof}
 Assume that $\kappa$ is not weakly compact. 
 Let $\lhd$ be a well-ordering of a class $B$ such that $\POT{\kappa}\subseteq B$ and the corresponding class $I(\lhd)$  is $\Sigma_n(\kappa,z)$-definable. 
We define the colourings $\map{c_A}{[\kappa]^2}{2}$ for all $A\subseteq\kappa$ and the corresponding non-empty set $X\subseteq\POT{\kappa}$ as in the proof of Corollary \ref{corollary:2ImpliesN}. 
Let $B$ denote the $\lhd$-least element of $X$. Then $B\in X$ is the unique subset of $\kappa$ with the property that there exists a $D\in I(\lhd)$ with $D\cap X=\emptyset$ and $D\cup\{B\}\in I(\lhd)$. 
Since both $X$ and $\VV\setminus X$ are $\Sigma_2(\kappa)$-definable, our assumptions imply that the set $\{B\}$ is $\Sigma_{n+1}(\kappa,z)$-definable and hence $c_B$ is a $\Sigma_{n+1}(z)$-partition. 
Using Corollary \ref{corollary:2ImpliesN}, we can conclude that $\kappa$ does not have the $\Sigma_2(z)$-colouring property. 
\end{proof}

\begin{corollary}\label{corollary:LowerBoundsSigma2}
 Let $\kappa$ be an uncountable regular cardinal.
  \begin{enumerate}[leftmargin=0.9cm]
   \item If there is a set $z$ such that $\VV=\HOD_z$ and $\kappa$ has the $\Sigma_2(z)$-colouring property, then $\kappa$ is weakly compact. 

  \item If $\kappa$ has the $\Sigma_2(z)$-colouring property for some $z\in\HH{\kappa}\cap\POT{\kappa}$, then $\kappa$ is weakly compact in $\LL[z]$. \qed
 \end{enumerate} 
\end{corollary}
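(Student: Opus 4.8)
The plan is to deduce both parts from Proposition \ref{proposition:Sigma2PartitionPropertyWeaklyCompact}, using the remarks on good well-orderings recorded just after the definition of a good $\Sigma_n(z_0,\ldots,z_{m-1})$-well-ordering.

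For (i), I would argue directly. Assume $\VV=\HOD_z$ and that $\kappa$ has the $\Sigma_2(z)$-colouring property. By the remark that the canonical well-ordering of $\HOD_z$ witnesses that $\HOD_z$ has a good $\Sigma_2(z)$-well-ordering, there is a well-ordering $\lhd$ of the class $\VV=\HOD_z$ such that $I(\lhd)$ is $\Sigma_2(z)$-definable, hence also $\Sigma_2(\kappa,z)$-definable. Since $\POT{\kappa}\subseteq\VV$, this is in particular a good $\Sigma_2(\kappa,z)$-well-ordering of $\POT{\kappa}$, so Proposition \ref{proposition:Sigma2PartitionPropertyWeaklyCompact}, applied with $n=2$, immediately yields that $\kappa$ is weakly compact.

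For (ii), the subtlety is that one cannot simply repeat this argument in $\VV$, because $\POT{\kappa}^\VV$ need not be contained in any $\Sigma_n$-definably well-ordered class; so I would first relativize to $\LL[z]$. Given $z\in\HH{\kappa}\cap\POT{\kappa}$ with the $\Sigma_2(z)$-colouring property, note that $z\in\LL[z]$ has cardinality at most $\kappa$ there, so $z\in\HH{\kappa^+}^{\LL[z]}$, and Corollary \ref{corollary:PartitionPropertyDownwardsL}, applied with $x=z$ and with $z$ itself as the remaining parameter, shows that $\kappa$ has the $\Sigma_2(z)$-colouring property in $\LL[z]$. Inside $\LL[z]$ the canonical well-ordering of $\LL[z]$ is a good $\Sigma_1(z)$-well-ordering of $\LL[z]$, hence a good $\Sigma_1(\kappa,z)$-well-ordering of $\POT{\kappa}$; since $\LL[z]\models\ZFC$, Proposition \ref{proposition:Sigma2PartitionPropertyWeaklyCompact} applies there with $n=1$ and parameter $z$, and we conclude that $\kappa$ is weakly compact in $\LL[z]$.

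The only genuinely delicate point is the parameter bookkeeping in (ii): one must check that the hypotheses of Corollary \ref{corollary:PartitionPropertyDownwardsL} are met (namely $z\in\HH{\kappa}\cap\POT{\kappa}$ and $z\in\HH{\kappa^+}^{\LL[z]}$, both immediate) and that passing to $\LL[z]$ lowers the required complexity of the good well-ordering from $\Sigma_2$ to $\Sigma_1$, so that Proposition \ref{proposition:Sigma2PartitionPropertyWeaklyCompact} can be invoked there with $n=1$. Everything else is routine.
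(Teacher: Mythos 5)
Your argument is correct and is exactly the intended derivation behind the paper's \qed: part (i) follows from the remark that the canonical well-ordering of $\HOD_z$ is a good $\Sigma_2(z)$-well-ordering together with Proposition \ref{proposition:Sigma2PartitionPropertyWeaklyCompact}, and part (ii) follows by passing to $\LL[z]$ via Corollary \ref{corollary:PartitionPropertyDownwardsL} and applying the same proposition there with the canonical good $\Sigma_1(z)$-well-ordering of $\LL[z]$. Your parameter bookkeeping (identifying the $\Sigma_2(z,z)$- with the $\Sigma_2(z)$-colouring property, and checking $z\in\HH{\kappa^+}^{\LL[z]}$) is exactly the routine verification the paper leaves to the reader.
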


The above results show that there is a natural correspondence between the $\mathbf{\Sigma}_2$-colouring property and weak compactness. 
 In contrast, the results of this paper will show that the $\mathbf{\Sigma}_1$-colouring property corresponds to a large cardinal property that is weaker than weak compactness but stronger than Mahloness. 
In the following, we strengthen earlier results by showing that cardinals with the $\mathbf{\Sigma}_1$-colouring property possess a high degree of Mahloness in the constructible universe. 
An upper bound for the consistency strength of the $\mathbf{\Sigma}_1$-colouring property will be given by Theorem \ref{theorem:Sigma1WCbelowWC} below.

\begin{proposition}\label{proposition:FailureSigma1PPgoodWO}
 Let $\kappa$ be an uncountable regular cardinal and let $\nu<\kappa$ be a cardinal with $2^\nu\geq\kappa$. If there is a good $\Sigma_n(\kappa,z)$-well-ordering of $\HH{\kappa}$, then $\kappa$ does not have the $\Sigma_n(\nu,z)$-colouring property. 
\end{proposition}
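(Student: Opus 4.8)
The plan is to reduce the statement to the contrapositive of Proposition~\ref{Proposition:NoDefinableInjection} by using the given good well-ordering to construct a $\Sigma_n$-definable injection of $\kappa$ into ${}^\nu 2$; this is the definable incarnation of the classical relation $2^\nu\not\to(\nu^+)^2_2$, of which Proposition~\ref{Proposition:NoDefinableInjection} is essentially a definable refinement. Note first that $\nu<\kappa$ yields ${}^\nu 2\subseteq\HH{\kappa}$, while the assumption $2^\nu\geq\kappa$ gives $\betrag{{}^\nu 2}\geq\kappa$. Fix a well-ordering $\lhd$ of a class $B$ with $\HH{\kappa}\subseteq B$ such that the class $I(\lhd)$ of proper initial segments of $\lhd$ is $\Sigma_n(\kappa,z)$-definable. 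The first step is the observation that the restriction of $\lhd$ to $\HH{\kappa}$ is $\Sigma_n(\kappa,z)$-definable: for $x,y\in\HH{\kappa}$ we have $x\lhd y$ if and only if some $D\in I(\lhd)$ satisfies $x\in D$ and $y\notin D$, with the initial segment $\Set{u}{u\lhd y}$ serving as a witness in the forward direction and linearity of $\lhd$ handling the converse.

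Next I would define $\map{\iota}{\kappa}{{}^\nu 2}$ by recursion, letting $\iota(\alpha)$ be the $\lhd$-least element of ${}^\nu 2\setminus\ran{\iota\restriction\alpha}$ for every $\alpha<\kappa$. This recursion never breaks down, since $\betrag{\ran{\iota\restriction\alpha}}\leq\betrag{\alpha}<\kappa\leq\betrag{{}^\nu 2}$, and the resulting $\iota$ is obviously injective. Because the restriction of $\lhd$ to $\HH{\kappa}$ is $\Sigma_n(\kappa,z)$-definable and the predicate $x\in{}^\nu 2$ is definable by a $\Sigma_0$-formula with parameter $\nu$, an application of the $\Sigma_n$-Recursion Theorem, carried out exactly as in the proof of Proposition~\ref{Proposition:NoDefinableInjection}, shows that $\iota$ is $\Sigma_n(\kappa,\nu,z)$-definable. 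Since $\nu<\kappa$ gives ${}^\nu 2\subseteq{}^{{<}\kappa}2$, the map $\iota$ is in particular a $\Sigma_n(\kappa,\nu,z)$-definable function from $\kappa$ into ${}^{{<}\kappa}2$ with $\betrag{\Set{\iota(\alpha)\restriction\nu}{\alpha<\kappa}}=\betrag{\ran{\iota}}=\kappa$, so the contrapositive of Proposition~\ref{Proposition:NoDefinableInjection}, applied with the parameters $\nu$ and $z$ (its proof applies equally to functions definable from finitely many parameters), immediately gives that $\kappa$ does not have the $\Sigma_n(\nu,z)$-colouring property.

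The step I expect to cause the most friction is the verification that $\iota$ is genuinely $\Sigma_n(\kappa,\nu,z)$-definable rather than merely $\Sigma_{n+1}$-definable: the obvious description of the recursion step---``$s\in{}^\nu 2$, $s\notin\ran{g}$, and no element of ${}^\nu 2\setminus\ran{g}$ is $\lhd$-below $s$''---carries an unbounded universal quantifier, so one must argue, just as in Proposition~\ref{Proposition:NoDefinableInjection}, that having $\kappa$ available as a parameter still lets the $\Sigma_n$-Recursion Theorem return a $\Sigma_n$-definable function. Everything else---the well-definedness of the recursion, the injectivity of $\iota$, the extraction of $\lhd\restriction\HH{\kappa}$ from $I(\lhd)$, and the inclusion ${}^\nu 2\subseteq{}^{{<}\kappa}2$---is routine. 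If one prefers not to invoke Proposition~\ref{Proposition:NoDefinableInjection}, the same $\iota$ feeds the Sierpi\'nski colouring $\map{c}{[\kappa]^2}{2}$ given by declaring $c(\{\alpha,\beta\})=0$ for $\alpha<\beta$ to hold exactly when $\iota(\alpha)<_{lex}\iota(\beta)$, whose $c$-homogeneous sets are precisely those on which $\iota$ is strictly $<_{lex}$-monotone and hence have size at most $\nu<\kappa$ by the combinatorics underlying $2^\nu\not\to(\nu^+)^2_2$; but this route still needs the definability of $\iota$ and is no shorter.
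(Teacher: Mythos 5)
Your proposal is correct and follows exactly the paper's (much terser) argument: use the good well-ordering to build a $\Sigma_n(\kappa,\nu,z)$-definable injection of $\kappa$ into ${}^\nu 2$ and then invoke Proposition~\ref{Proposition:NoDefinableInjection}. The details you supply (extracting $\lhd\restriction\HH{\kappa}$ from $I(\lhd)$, the recursion, and the definability of the ``$\lhd$-least'' clause via initial segments) are all sound and are precisely what the paper leaves implicit.
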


\begin{proof}
 By our assumptions, we can use the good $\Sigma_n(\kappa,z)$-well-ordering of $\HH{\kappa}$ to construct a $\Sigma_n(\kappa,\nu,z)$-definable injection of $\kappa$ into ${}^\nu 2$. 
By Proposition \ref{Proposition:NoDefinableInjection}, the existence of such an injection contradicts the  $\Sigma_n(\nu,z)$-colouring property. 
\end{proof}


The following lemma generalizes the simultaneous reflection of stationary subsets of weakly compact cardinals to our definable setting. 
Note that the assumptions of the next lemma are satisfied in $\LL$ for every uncountable regular cardinal.

\begin{lemma}\label{lemma:DefinableSimultaneousReflection}
 Let $\kappa$ be an uncountable regular cardinal with the $\mathbf{\Sigma}_1$-colouring property. Assume that for some  $z\in\HH{\kappa}$, the set $\{\HH{\kappa}\}$ is $\Sigma_1(\kappa,z)$-definable and $\POT{\kappa}$ has a good $\Sigma_1(\kappa,z)$-well-ordering. If $\map{s}{\kappa}{\POT{\kappa}}$ is a $\Sigma_1(\kappa,z)$-definable function with the property that $s(\alpha)$ is stationary in $\kappa$ for all $\alpha<\kappa$, then the set $$S  =  \Set{\mu<\kappa}{\textit{$\mu$ is a regular cardinal with $s(\alpha)\cap\mu$  stationary in $\mu$ for all $\alpha<\mu$}}$$ is also stationary in $\kappa$. 
\end{lemma}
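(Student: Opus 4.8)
The plan is to run the standard simultaneous reflection argument for weakly compact cardinals, but using the $\mathbf{\Sigma}_1$-colouring property (or rather its embedding characterization from Lemma~\ref{lemma:CharacterizationsPartitionProperty}(iii)) in place of genuine weak compactness, and to check that every object appearing in the argument can be coded into a single subset $A$ of $\kappa$ whose singleton $\{A\}$ is $\Sigma_1(\kappa,z)$-definable, so that the embedding we obtain acts correctly on it. First I would fix an arbitrary club $C\subseteq\kappa$ and aim to find $\mu\in C\cap S$. Using the good $\Sigma_1(\kappa,z)$-well-ordering of $\POT{\kappa}$ together with the $\Sigma_1(\kappa,z)$-definability of $\{\HH{\kappa}\}$ and of $s$, I would build a single set $A\subseteq\kappa$ coding: the well-ordering $\lhd$ restricted to an appropriate initial segment (equivalently, enough of $I(\lhd)$ to recover witnessing objects), the function $s$, the club $C$, and the parameter $z$ — all of this via the Gödel pairing function $\goedel{\cdot}{\cdot}$ and $<_{\LL}$-style canonical choices — in such a way that $\{A\}$ is $\Sigma_1(\kappa,z)$-definable. (Here I would lean on the fact, used repeatedly earlier in the paper, that $\{\HH{\kappa}\}$ being $\Sigma_1(\kappa,z)$-definable lets one express "$x\in\HH{\kappa}$" and quantify over $\HH{\kappa}$ in a $\Sigma_1$ way, and that a good $\Sigma_1$-well-ordering makes canonical choices $\Sigma_1$-definable.)

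Next I would apply Lemma~\ref{lemma:CharacterizationsPartitionProperty}, clause (i)$\Rightarrow$(iii): since $\kappa$ has the $\Sigma_1(z)$-colouring property and $\{A\}$ is $\Sigma_1(\kappa,z)$-definable, there is a weak $\kappa$-model $M$, a transitive $N$, and an elementary embedding $\map{j}{M}{N}$ with $A\in M$, $\crit{j}=\kappa$, $\kappa$ inaccessible in $M$, and $\HH{\kappa}^M\in M$. Because $A$ codes $C$, $s$, $z$, and the well-ordering data, elementarity and the inaccessibility of $\kappa$ in $M$ guarantee that $M$ correctly sees $C$ as a club in $\kappa$, sees $s\restriction\kappa$, and — crucially — sees each $s(\alpha)$ ($\alpha<\kappa$) as stationary in $\kappa$; the latter needs an argument, namely that stationarity in $\kappa$ for a set in $M$ is absolute between $M$ and $\VV$ when $\kappa$ is inaccessible in $M$ and $\HH{\kappa}^M\in M$ (clubs of $\kappa$ lying in $M$ suffice to test stationarity of $M$-sets, by a thinning/diagonal-intersection argument inside $M$ combined with the fact that $M$ is closed enough). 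Then in $N$, the ordinal $\kappa$ is a regular cardinal (being $\crit{j}$), $\kappa\in j(C)$ since $C$ is unbounded and $\crit{j}=\kappa$, and for every $\alpha<\kappa$ we have $j(s)(\alpha)\cap\kappa = s(\alpha)$, which $N$ sees as stationary in $\kappa$ by elementarity applied to the statement "$s(\alpha)$ is stationary in $\kappa$" holding in $M$. Hence $N\models$ "$\kappa$ is a regular cardinal in $j(C)$ with $j(s)(\alpha)\cap\kappa$ stationary in $\kappa$ for all $\alpha<\kappa$", so $N\models\exists\mu<j(\kappa)\,[\mu\in j(C)\wedge\mu$ regular $\wedge\,\forall\alpha<\mu\ j(s)(\alpha)\cap\mu$ stationary$]$. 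Pulling this back through $j$ by elementarity yields $\mu<\kappa$ in $C$ with $\mu$ regular and $s(\alpha)\cap\mu$ stationary in $\mu$ for all $\alpha<\mu$, i.e. $\mu\in C\cap S$. Since $C$ was arbitrary, $S$ is stationary in $\kappa$.

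**Main obstacle.** The delicate point is the absoluteness of stationarity between $M$ and $\VV$: I need that "$T$ is stationary in $\kappa$" holds in $M$ if and only if it holds in $\VV$, for $T\in M$. Upward absoluteness ($M\models$ stationary $\Rightarrow\VV\models$ stationary) can fail in general, so I would instead argue as follows. I only need, for the pull-back, that each $s(\alpha)$ is stationary *as computed in $M$* — and for that I would ensure that $A$ additionally codes (via the good well-ordering) a canonical witness to the genuine stationarity of each $s(\alpha)$, or more simply I would exploit that the colouring property is being applied to the definable function $s$ directly: since $s$ is $\Sigma_1(\kappa,z)$-definable and $\{A\}$ is $\Sigma_1(\kappa,z)$-definable, $s\restriction\kappa$ is recoverable inside $M$ from $A$ by the *same* $\Sigma_1$ definition (using that $\{\HH{\kappa}^M\}\in M$ and absoluteness of $\Sigma_1$ formulas), so $M$ computes $s$ correctly; and then "$s(\alpha)$ stationary in $\kappa$" is a statement about $\HH{\kappa^+}$-sized objects that, by downward $\Sigma_1$-absoluteness together with the genuine stationarity in $\VV$ plus closure of $M$ (every club of $\kappa$ in $M$ is a club of $\kappa$ in $\VV$, so a $\VV$-stationary set meets it), transfers down to $M$. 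Making this transfer fully rigorous — pinning down exactly which fragment of $\ZFC^-$ and which closure of $M$ is needed so that "$T\cap E\neq\emptyset$ for every club $E\in M$" implies "$M\models T$ stationary" — is where the real work lies; everything else is bookkeeping about coding into $A$ and routine elementarity.
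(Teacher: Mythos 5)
Your overall strategy (get an embedding from Lemma \ref{lemma:CharacterizationsPartitionProperty}(iii) applied to a definable code $A$, then reflect) is the right one, but the proof as written has two genuine gaps, both of which the paper's argument is specifically structured to avoid. First, you fix an \emph{arbitrary} club $C$ and propose to code it into a set $A$ with $\{A\}$ $\Sigma_1(\kappa,z)$-definable. This is impossible: there are only countably many $\Sigma_1(\kappa,z)$-definable singletons, but $2^\kappa$ many clubs, and a club is not an element of $\HH{\kappa}$, so it cannot be smuggled in as an extra parameter either. The good well-ordering only makes \emph{canonical} choices definable, and an arbitrary $C$ is not canonical. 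The paper therefore argues by contradiction: assuming $S$ is non-stationary, the set $\{S\}$ is $\Sigma_1(\kappa,z)$-definable (via the definability of $\HH{\kappa}$), and the $\lhd$-least club disjoint from $S$ is then a definable object to which the embedding characterization applies.

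Second, and independently, your final pull-back step fails. From $N\models\exists\mu\,(\mu\in j(C)\wedge\dots)$ elementarity gives a $\mu<\kappa$ that is regular \emph{in the sense of $M$} with $s(\alpha)\cap\mu$ stationary in $\mu$ \emph{in the sense of $M$}. Both properties are $\Pi_1$ over $\HH{\mu^+}$ and are only downward absolute: $M$ is a weak $\kappa$-model of size $\kappa$ (in the construction, of the form $\LL_\theta[A]$) whose $\HH{\kappa}^M$ is typically a proper subset of $\HH{\kappa}$, so $M$ may be missing a cofinal map witnessing that $\mu$ is singular, or a club of $\mu$ disjoint from $s(\alpha)\cap\mu$. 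So you cannot conclude $\mu\in S$. The "main obstacle" you isolate (that $N$ sees each $s(\alpha)$ as stationary in $\kappa$) is in fact the easy direction — a club of $\kappa$ lying in a transitive set is a genuine club, so a truly stationary set meets all of them — whereas the upward transfer at $\mu<\kappa$ is where the argument breaks. The paper sidesteps this entirely: from $\kappa\in j(C)$, $j(C)\cap j(S)=\emptyset$ and the regularity of $\kappa$ in $N$ it extracts an $\alpha<\kappa$ and a club $D\in N$ with $D\cap s(\alpha)=\emptyset$; since $D$ is a real club, this contradicts the genuine stationarity of $s(\alpha)$, using only the unproblematic direction of absoluteness.
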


\begin{proof}
 Assume that the above conclusion fails. 
Note that the assumption that the set $\{\HH{\kappa}\}$ is $\Sigma_1(\kappa,z)$-definable implies that the set $\{S\}$ is definable in the same way. 
 Let $\lhd$ be a well-ordering of a class $B$ such that $\POT{\kappa}\subseteq B$ and the corresponding class $I(\lhd)$ is $\Sigma_1(\kappa,z)$-definable. 
 Let $C$ denote the $\lhd$-least club in $\kappa$ with $C\cap S=\emptyset$. 
 Then the set $\{C\}$ is also $\Sigma_1(\kappa,z)$-definable. 
 By Lemma \ref{lemma:CharacterizationsPartitionProperty},   there is a weak $\kappa$-model $M$, a transitive set $N$ and an elementary embedding $\map{j}{M}{N}$ with critical point $\kappa$ and $C,S\in M$. Since $\kappa\in j(C)$ and $\kappa$ is regular in $N$, elementarity yields an $\alpha<\kappa$ and a club subset $D$ of $\kappa$ in $N$ with $D\cap j(s)(\alpha)=\emptyset$. But $j(s)(\alpha)\cap\kappa=j(s(\alpha))\cap\kappa=s(\alpha)$ and hence $D$ witnesses that $s(\alpha)$ is not stationary in $\kappa$, a contradiction. 
\end{proof}

  Remember that, given an inaccessible cardinal $\kappa$ and an ordinal $\delta\leq\kappa^+$, the cardinal $\kappa$ is \emph{$\delta$-Mahlo} if there is a sequence $\seq{A_\gamma}{\gamma<\delta}$ of stationary subsets of $\kappa$ such that the following statements hold for all $\gamma<\delta$: 
  \begin{enumerate}[leftmargin=0.9cm]
   \item $A_0=\Set{\alpha<\kappa}{\textit{$\alpha$ is regular}}$. 
   
   \item If $\gamma=\beta+1$, then $A_\gamma=\Set{\alpha\in A_\beta}{\textit{$A_\beta\cap\alpha$ is stationary in $\alpha$}}$. 
   
   \item If $\gamma$ is a limit ordinal of cofinality less than $\kappa$, then there is a strictly increasing sequence $\seq{\beta_\alpha}{\alpha<\cof{\gamma}}$ that is cofinal in $\gamma$ with $A_\gamma=\bigcap\Set{A_{\beta_\alpha}}{\alpha<\cof{\gamma}}$. 
   
   \item If $\gamma$ is a limit ordinal of cofinality $\kappa$, then there is a strictly increasing sequence $\seq{\beta_\alpha}{\xi<\kappa}$ that is cofinal in $\gamma$ with $A_\gamma=\bigtriangleup\Set{A_{\beta_\alpha}}{\alpha<\kappa}$. 
  \end{enumerate}
      
A cardinal $\kappa$ is then called \emph{hyper-Mahlo} if it is $\kappa$-Mahlo. 
 Note that, given two sequences $\seq{A_\beta}{\beta<\alpha}$ and $\seq{B_\beta}{\beta<\alpha}$ of subsets of $\kappa$ that satisfy the above four statements and some $\beta<\alpha$, the sets $A_\beta$ and $B_\beta$ only differ by a non-stationary subset of $\kappa$. In particular, if $\kappa$ is an inaccessible cardinal that is  not $\delta$-Mahlo for some  $\delta\leq\kappa^+$, then there is a $\gamma<\delta$, such that $\kappa$ is $\gamma$-Mahlo and not $(\gamma+1)$-Mahlo.

\begin{theorem}\label{theorem:DegreesOfMahlonessFromPP}
 Let $\kappa$ be an uncountable regular cardinal with the $\mathbf{\Sigma}_1$-colouring property. Assume that for some  $z\in\HH{\kappa}$, the set $\{\HH{\kappa}\}$ is $\Sigma_1(\kappa,z)$-definable and $\POT{\kappa}$ has a good $\Sigma_1(\kappa,z)$-well-ordering. Define $\sigma$ to be the supremum of all ordinals $\delta$ with the property that there is a subset $E$ of $\kappa\times\kappa$ such that $\langle\kappa,E\rangle$ is a well-ordering of order-type $\delta$ and the set $\{E\}$ is $\Sigma_1(\kappa,w)$-definable for some $w\in\HH{\kappa}$. Then $\kappa$ is a $\sigma$-Mahlo cardinal. 
\end{theorem}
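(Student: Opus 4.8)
The plan is to argue by contradiction, running the $\mathbf{\Sigma}_1$-machinery through the recursive definition of the Mahlo hierarchy. First I would record that $\kappa$ is inaccessible: if $\kappa$ were a successor cardinal $\nu^+$, then $2^\nu\geq\kappa$ together with the good $\Sigma_1(\kappa,z)$-well-ordering $\lhd$ and the $\Sigma_1$-Recursion Theorem would yield a $\Sigma_1(\kappa,\nu,z)$-definable injection of $\kappa$ into ${}^\nu 2\subseteq{}^{{<}\kappa}2$, contradicting Proposition \ref{Proposition:NoDefinableInjection}; strong limitness follows in the same way from Proposition \ref{proposition:FailureSigma1PPgoodWO}. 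Fix once and for all the \emph{canonical} Mahlo sequence obtained from clauses (i)--(iv) by always choosing, at a limit stage $\rho$, the $\lhd$-least increasing sequence $\seq{\rho_i}{i<\ell_\rho}$ cofinal in $\rho$ of length $\leq\kappa$ and declaring uniformly $A_\rho=\Set{\beta<\kappa}{\forall i<\min(\beta,\ell_\rho) ~ \beta\in A_{\rho_i}}$. A short computation shows this sequence differs from every sequence obeying (i)--(iv) by a non-stationary set at each stage, so that, by the remark preceding the theorem (note $\sigma\leq\kappa^+$), if $\kappa$ fails to be $\sigma$-Mahlo then there is an ordinal $\gamma<\sigma$ such that $A_\rho$ is stationary for all $\rho<\gamma$ while $A_\gamma$ is non-stationary. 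By the definition of $\sigma$ there is $w\in\HH{\kappa}$ and a $\Sigma_1(\kappa,w)$-definable well-ordering $E$ of $\kappa$ of order-type $\delta>\gamma$; since sets of the form $\HH{\kappa}$ are closed under pairing, I fold $w$, the $\lhd$-parameter and the element $\xi_\gamma\in\kappa$ of $E$-rank $\gamma$ into a single parameter still denoted $z$.

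Next I would verify that the canonical hierarchy below $\gamma$ is ``$\Delta_1(\kappa,z)$-coded''. Here the point is that $E$ is $\Delta_1(\kappa,z)$-definable; that the hypothesis ``$\{\HH{\kappa}\}$ is $\Sigma_1(\kappa,z)$-definable'' makes ``$A\cap\alpha$ is stationary in $\alpha$'' \emph{and} its negation both $\Delta_1(\kappa,z)$ for $A\subseteq\kappa$ and $\alpha<\kappa$, by absoluteness between $\HH{\kappa}$ and $\VV$; that the good $\Sigma_1(\kappa,z)$-well-ordering $\lhd$ makes ``$x$ is the $\lhd$-least set with property $P$'' expressible by a $\Sigma_1(\kappa,z)$-formula whenever $P$ is $\Delta_1(\kappa,z)$, because $I(\lhd)$ provides the proper initial segments of $\lhd$ as sets; and that the uniform limit clause above removes any need to decide cofinalities. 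Using these, together with the $\Sigma_1$-Recursion Theorem and the fact that a total function whose graph is $\Sigma_1(\kappa,z)$ also has a $\Pi_1(\kappa,z)$ graph, I obtain a $\Delta_1(\kappa,z)$-definable function $\map{s}{\kappa}{\POT{\kappa}}$ listing the relevant sub-levels $A_\beta$ ($\beta<\gamma$) visited by the recursion that computes $A_\gamma$. Since $\kappa$ is $\gamma$-Mahlo, every value of $s$ is stationary.

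I would then apply Lemma \ref{lemma:DefinableSimultaneousReflection} to $s$, obtaining a stationary set $S\subseteq\kappa$ of regular cardinals $\mu$ such that $s(\alpha)\cap\mu$ is stationary in $\mu$ for all $\alpha<\mu$, and finish by producing a club $C$ with $S\cap C\subseteq A_\gamma$ -- which contradicts the non-stationarity of $A_\gamma$. For $\mu\in S\cap C$ one runs the recursion computing ``$\mu\in A_\gamma$'': the base case $\mu\in A_0$ holds because $\mu$ is regular; at a limit stage $\rho$ one descends along $\seq{\rho_i}{i<\ell_\rho}$, invoking the induction hypothesis for its strictly smaller entries and using no reflection; and at a successor stage $\beta+1$ one combines the induction hypothesis for $\beta$ with the stationarity of $A_\beta\cap\mu$ in $\mu$, which $S$ delivers \emph{provided} $A_\beta$ is listed by $s$ at a position below $\mu$. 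When $\gamma<\kappa$ this is automatic: take $s$ to list $\seq{A_\rho}{\rho\leq\gamma}$ (padded with $\kappa$) and $C=(\gamma,\kappa)$, and note that every limit stage occurring has cofinality $<\kappa$, so no stage above $\mu$ is ever visited. When $\gamma\geq\kappa$ the recursion does visit ordinals above $\mu$, and I would instead index the (at most $\kappa$ many) levels occurring in the recursion tree of $A_\gamma$ using $E$ together with the G\"odel pairing function, in such a way that for $\mu$ in a suitable club $C$ of closure points of the relevant indexing, rank and pairing functions every level needed at a successor stage of the computation of ``$\mu\in A_\gamma$'' is listed by $s$ below $\mu$; this works because that computation only ever calls on $|\mu|$ many reflected levels.

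The main obstacle is exactly this last bookkeeping in the case $\gamma\geq\kappa$: arranging the $\Delta_1(\kappa,z)$-definable indexing of the levels appearing in the (possibly very tall) recursion tree of $A_\gamma$ so that the reflection supplied by Lemma \ref{lemma:DefinableSimultaneousReflection} localizes below stationarily many $\mu$. The remaining routine-but-delicate ingredient is the verification that $s$ is genuinely $\Delta_1(\kappa,z)$-definable, which rests on the careful interplay of $\{\HH{\kappa}\}$ being $\Sigma_1(\kappa,z)$-definable, of $E$ being $\Delta_1(\kappa,z)$, of the good well-ordering yielding $\Sigma_1$-definitions of $\lhd$-least elements, and of the uniform limit clause.
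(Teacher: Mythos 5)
Your overall strategy is the paper's: replace the choices in clauses (iii) and (iv) of the Mahlo hierarchy by $\lhd$-least ones so that the resulting sequence becomes $\Sigma_1$-definable via the $\Sigma_1$-Recursion Theorem, use the definable well-ordering $E$ (equivalently, a $\Sigma_1(\kappa,y)$-definable bijection $\map{b}{\lambda}{\delta}$ with $\lambda\leq\kappa$) to re-index the levels by ordinals below $\kappa$, feed the resulting function $s$ into Lemma \ref{lemma:DefinableSimultaneousReflection} to get a stationary set $S$ of reflection points, and then derive a contradiction with the non-stationarity of the first non-stationary level. Up to that point the proposal is sound, and your preliminary reductions (inaccessibility via Proposition \ref{proposition:FailureSigma1PPgoodWO}, passage to the least bad stage $\gamma$ using the remark that any two hierarchies agree modulo non-stationary sets, folding of parameters) match the paper.

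However, you explicitly leave open what you call ``the main obstacle'': arranging, when $\gamma\geq\kappa$, that for stationarily many $\mu\in S$ every level invoked at a successor step of the computation of $\anf{\mu\in A_\gamma}$ is listed by $s$ at an index below $\mu$. This is not merely bookkeeping to be deferred --- it is the combinatorial heart of the final step, and your proposed fix (a single club of ``closure points of the recursion tree'' of $A_\gamma$) is not carried out and is harder to control than what is actually needed, since at a cofinality-$\kappa$ limit stage the computation of $\anf{\mu\in A_\rho}$ branches into $\mu$-many subcomputations. The paper closes this gap differently and more simply: rather than one club for the whole recursion, it proves by induction on $\rho\leq\delta$ that there is a club $C_\rho$ with $C_\rho\cap S\subseteq A_\rho$, taking $C_0=\kappa$, $C_{\rho+1}=C_\rho\cap(b^{{-}1}(\rho),\kappa)$ (so that any $\mu\in C_{\rho+1}\cap S$ satisfies $b^{{-}1}(\rho)<\mu$ and the reflection supplied by $S$ applies to $s(b^{{-}1}(\rho))=A_\rho$), plain intersections $C_\rho=\bigcap\Set{C_{b(\beta)}}{\beta\in c_\rho}$ at limits of cofinality less than $\kappa$, and the diagonal intersection $C_\rho=\bigtriangleup\Set{C_{b(\beta^\rho_\alpha)}}{\alpha<\kappa}$ at limits of cofinality $\kappa$, exactly mirroring the limit clauses defining $A_\rho$. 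The conclusion $\emptyset\neq C_\delta\cap D\cap S\subseteq A_\delta\cap D$ (with $D$ a club disjoint from $A_\delta$) then gives the contradiction. You should replace your closure-point sketch by this level-by-level club construction; with that change the argument goes through as in the paper.
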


\begin{proof}
 Let $\lhd$ be a well-ordering of a class $D$ such that $\POT{\kappa}\subseteq D$ such that the  class $I(\lhd)$ is $\Sigma_1(\kappa,z)$-definable. 
 Assume that the above conclusion fails. 
 Since Proposition \ref{proposition:FailureSigma1PPgoodWO} shows that $\kappa$ is inaccessible,  the above remarks show that there is a $\delta<\sigma$ such that $\kappa$ is $\delta$-Mahlo and not $(\delta+1)$-Mahlo. 
Pick $E\subseteq\kappa\times\kappa$ such that $\langle\kappa,E\rangle$ is a well-ordering of order-type at least $\delta$ and the set $\{E\}$ is $\Sigma_1(\kappa,w)$-definable for some $w\in\HH{\kappa}$. 
Then we can find $\lambda\leq\kappa$ and $y\in\HH{\kappa}$ such that the set $\{\delta\}$ is $\Sigma_1(\kappa,y)$-definable and there is a $\Sigma_1(\kappa,y)$-definable bijection $\map{b}{\lambda}{\delta}$. 
 %
 %
Let $\seq{A_\gamma}{\gamma\leq\delta}$ denote the unique sequence of subsets of $\kappa$ such that for all $\gamma\leq\delta$,  the above statements (i) and (ii) as well as the following two statements hold: 
 \begin{enumerate}[leftmargin=0.9cm]
  \item[(iii)${}^\prime$]  If $\gamma\in\Lim$ with $\cof{\gamma}<\kappa$ and $c_\gamma$ is the $\lhd$-least subset of $\lambda$ of cardinality less than $\kappa$ with the property that $b[c_\gamma]$ is a cofinal subset of $\gamma$ of order-type $\cof{\gamma}$, then $A_\gamma=\bigcap\Set{A_{b(\beta)}}{\beta\in c_\gamma}$.

  \item[(iv)${}^\prime$]  If $\gamma\in\Lim$ with $\cof{\gamma}=\kappa$ and $c_\gamma$ is the $\lhd$-least subset of $\kappa$ such that $b\restriction b_\gamma$ is strictly increasing and $b[c_\gamma]$ is a cofinal subset of $\gamma$ of order-type $\kappa$, then  $A_\gamma=\bigtriangleup\Set{A_{b(\beta^\gamma_\alpha)}}{\alpha<\kappa}$, where $\seq{\beta^\gamma_\alpha}{\alpha<\kappa}$ denotes the monotone enumeration of $c_\gamma$. 
 \end{enumerate}

Then the sequence $\seq{A_\gamma}{\gamma\leq\delta}$ also satisfies the above properties (iii) and (iv). 
 Therefore, our assumptions imply that $A_\gamma$ is a stationary subset of $\kappa$ for every $\gamma<\delta$ and that  there is a club $D$ in $\kappa$ that is disjoint from $A_\delta$.
Moreover, by combining the $\Sigma_1$-Recursion Theorem with the fact that the set $\{\HH{\kappa}\}$ and the function $b$ are both $\Sigma_1(\kappa,y,z)$-definable, we know that the set $\{\seq{A_\gamma}{\gamma\leq\delta}\}$ is $\Sigma_1(\kappa,y,z)$-definable and therefore the function $$\Map{s}{\lambda}{\POT{\kappa}}{\beta}{A_{b(\beta)}}$$ is definable in the same way.  
If we now let $S$ denote the set of all regular cardinals $\mu<\kappa$ with the property that $s(\alpha)\cap\mu$ is stationary in $\mu$ for all $\alpha<\min\{\lambda,\mu\}$, then Lemma \ref{lemma:DefinableSimultaneousReflection} shows that $S$ is stationary in $\kappa$.

\begin{claim*}
 For all $\gamma\leq\delta$, there is a club $C_\gamma$ in $\kappa$ with $C_\gamma\cap S\subseteq A_\gamma$. 
\end{claim*}

\begin{proof}[Proof of the Claim]
 We prove the claim by induction on $\gamma\leq\delta$. 
First, since $S\subseteq A_0$, we can define $C_0=\kappa$. 
Now, if $\gamma<\delta$ and $C_\gamma$ is already constructed, then we define  $C_{\gamma+1}=C_\gamma\cap(b^{{-}1}(\gamma),\kappa)$. 
Given $\mu\in C_{\gamma+1}\cap S$, we then have $\mu\in A_\gamma$, $b^{{-}1}(\gamma)<\min\{\lambda,\mu\}$ and  hence  $A_\gamma\cap\alpha=s(b^{{-}1}(\gamma))\cap\mu$ is stationary in $\alpha$. 
This shows that $C_{\gamma+1}\cap S\subseteq A_{\gamma+1}$.
  Next, if $\gamma\in\Lim\cap(\delta+1)$ with $\cof{\gamma}<\kappa$ and $C_\beta$ is defined for all $\beta<\gamma$, then we define $C_\gamma=\bigcap\Set{C_{b(\beta)}}{\beta\in c_\gamma}$. 
Then the definition of $A_\gamma$ directly implies that $C_\gamma\cap S\subseteq A_\gamma$. 
Finally, assume that $\gamma\in\Lim\cap(\delta+1)$ with $\cof{\gamma}=\kappa$ and $C_\beta$ is defined for all $\beta<\gamma$. 
Set $C_\gamma=\bigtriangleup\Set{C_{b(\beta^\gamma_\alpha)}}{\alpha<\kappa}$. 
Given $\mu\in C_\gamma\cap S$, we then have $\mu\in C_{b(\beta^\gamma_\alpha)}\cap S\subseteq A_{b(\beta^\gamma_\alpha)}$ for all $\alpha<\mu$ and this allows us to conlcude that $\mu\in A_\gamma=\bigtriangleup\Set{A_{b(\beta^\gamma_\alpha)}}{\alpha<\kappa}$. 
\end{proof}

But, now we have $\emptyset\neq C_\delta\cap D\cap S\subseteq A_\delta\cap D$, a contradiction.  
\end{proof}

Note that the assumptions of Theorem \ref{theorem:DegreesOfMahlonessFromPP} are satisfied if $\kappa$ has the $\mathbf{\Sigma}_1$-colouring property and there is an $A\subseteq\kappa$ such that $\POT{\kappa}\subseteq\LL[A]$ and the set is $\{A\}$ is $\Sigma_1(\kappa,z)$-definable for some $z\in\HH{\kappa}$. 
Moreover, Proposition \ref{proposition:SmallForcingDef} below  shows that we can force over $\LL$ to show that the conclusion of the above theorem can fail if one discards the assumption that there is a good $\Sigma_1$-well-ordering of $\POT{\kappa}$.

\begin{corollary}
 Let $\kappa$ be an uncountable regular cardinal with the $\mathbf{\Sigma}_1$-colouring property. 
 \begin{enumerate}[leftmargin=0.9cm]
  \item If there is $A\subseteq\kappa$ such that $\VV=\LL[A]$ and the set $\{A\}$ is $\Sigma_1(\kappa,z)$-definable for some $z\in\HH{\kappa}$, then $\kappa$ is a hyper-Mahlo cardinal. 

 \item If $x\in\HH{\kappa}\cap\POT{\kappa}$, then $\kappa$ is a hyper-Mahlo cardinal in $\LL[x]$. \qed
 \end{enumerate}
\end{corollary}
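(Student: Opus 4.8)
The plan is to obtain both statements from Theorem~\ref{theorem:DegreesOfMahlonessFromPP}, using the elementary observation that the ordinal $\sigma$ appearing in its statement always satisfies $\sigma\geq\kappa$: the relation ${\in}\restriction(\kappa\times\kappa)$ is a well-ordering of $\kappa$ of order-type $\kappa$ whose singleton is $\Sigma_1(\kappa,w)$-definable for $w=\emptyset\in\HH{\kappa}$, so $\kappa\leq\sigma$. Hence, whenever the hypotheses of Theorem~\ref{theorem:DegreesOfMahlonessFromPP} are met, $\kappa$ is $\sigma$-Mahlo for some $\sigma\geq\kappa$ and is therefore hyper-Mahlo.

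For the first part, I would observe that $\VV=\LL[A]$ gives $\POT{\kappa}\subseteq\LL[A]$, so that the remark following Theorem~\ref{theorem:DegreesOfMahlonessFromPP} applies: together with the hypothesis that $\{A\}$ is $\Sigma_1(\kappa,z)$-definable for some $z\in\HH{\kappa}$, this guarantees that the assumptions of that theorem hold. Concretely, Corollary~\ref{corollary:InaccessibleInLA} shows that $\kappa$ is inaccessible and hence $\HH{\kappa}=\LL_\kappa[A]$, so $\{\HH{\kappa}\}$ is $\Sigma_1(\kappa,A)$-definable, and the canonical well-ordering of $\LL[A]$ is a good $\Sigma_1(A)$-well-ordering of $\POT{\kappa}$; replacing the parameter $A$ throughout by its $\Sigma_1(\kappa,z)$-definition, exactly as in the proof of Lemma~\ref{lemma:CharacterizationsPartitionProperty}, turns these into $\Sigma_1(\kappa,z)$-definitions. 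By Theorem~\ref{theorem:DegreesOfMahlonessFromPP} the cardinal $\kappa$ is $\sigma$-Mahlo, and by the observation above it is hyper-Mahlo.

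For the second part, the idea is to transfer the colouring property down into $\LL[x]$ and then apply the first part there. Let $z\in\HH{\kappa}^{\LL[x]}$; then $z\in\HH{\kappa^+}^{\LL[x]}$, and since $\HH{\kappa}$ is closed under the pairing function and $x,z\in\HH{\kappa}$, the cardinal $\kappa$ has the $\Sigma_1(x,z)$-colouring property in $\VV$, so Corollary~\ref{corollary:PartitionPropertyDownwardsL} yields that $\kappa$ has the $\Sigma_1(z)$-colouring property in $\LL[x]$. As $z$ was arbitrary, $\kappa$ has the $\mathbf{\Sigma}_1$-colouring property in $\LL[x]$. Furthermore, $\kappa$ is still an uncountable regular cardinal in $\LL[x]$; as $x$ is a subset of $\kappa$ of cardinality less than $\kappa$, it is bounded in $\kappa$, whence $\tc{x}$ is bounded in $\kappa$ and, $\kappa$ being a cardinal in $\LL[x]$, we get $x\in\HH{\kappa}^{\LL[x]}$; the singleton $\{x\}$ is trivially $\Sigma_1(x)$-definable; and $\LL[x]$ satisfies $\VV=\LL[x]$. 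Applying the first part inside $\LL[x]$ with $A=z=x$ now shows that $\kappa$ is hyper-Mahlo in $\LL[x]$. This argument has no substantial obstacle; the only points requiring (entirely routine) attention are the parameter-replacement bookkeeping in the first part and the absoluteness verifications in the second.
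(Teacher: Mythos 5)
Your proposal is correct and follows exactly the route the paper intends: part (i) is the remark preceding the corollary (verifying the hypotheses of Theorem~\ref{theorem:DegreesOfMahlonessFromPP} from $\VV=\LL[A]$) together with the observation that ${\in}\restriction(\kappa\times\kappa)$ forces $\sigma\geq\kappa$, and part (ii) reduces to part (i) inside $\LL[x]$ via Corollary~\ref{corollary:PartitionPropertyDownwardsL}. All the absoluteness and parameter-substitution steps you flag are indeed routine and handled as elsewhere in the paper.
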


In the remainder of this section, we will show that the validity of the $\mathbf{\Sigma}_n$-colouring property at the successor of a regular cardinal is equiconsistent with both the existence of an inaccessible cardinal with the $\mathbf{\Sigma}_n$-colouring  property and the existence of an accessible limit cardinals with this property. 
By the above results, this will show that, in the case $n=2$, all of these corresponding theories are equiconsistent to the existence of a  weakly compact cardinal. 
In the case $n=1$, the above computations and Theorem \ref{theorem:Sigma1WCbelowWC} below will show that the consistency strength of the given theories lies strictly between the existence of a hyper-Mahlo cardinal and a weakly compact cardinal. 
Finally, our results will also show that the $\mathbf{\Sigma}_2$-colouring property does not imply Mahloness for inaccessible cardinals.

\begin{lemma}\label{lemma:LevySolovayPartitionProperty}
 Let $\kappa$ be an uncountable regular cardinal with the $\mathbf{\Sigma}_n$-colouring property and let $\PPP\in\HH{\kappa}$ be a partial order. If $G$ is $\PPP$-generic over $\VV$, then $\kappa$ has the $\mathbf{\Sigma}_n$-colouring property in $\VV[G]$. 
\end{lemma}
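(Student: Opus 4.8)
The plan is to show that the $\mathbf{\Sigma}_n$-colouring property is preserved by small forcing by using the characterization of this property given in Lemma~\ref{lemma:CharacterizationsPartitionProperty}, specifically the elementary embedding characterization in part~(iii). First I would fix a partial order $\PPP\in\HH{\kappa}$, a $\PPP$-generic filter $G$ over $\VV$, a parameter $z\in\HH{\kappa}^{\VV[G]}=\HH{\kappa}^{\VV}$ (note $\HH{\kappa}$ is unchanged since $\vert\PPP\vert<\kappa$), and a set $A\subseteq\kappa$ in $\VV[G]$ such that $\{A\}$ is $\Sigma_n(\kappa,z)$-definable in $\VV[G]$. The aim is to produce, in $\VV[G]$, a weak $\kappa$-model $M$, a transitive set $N$, and an elementary embedding $\map{j}{M}{N}$ with $A\in M$, $\crit{j}=\kappa$, $\kappa$ inaccessible in $M$, and $\HH{\kappa}^M\in M$; by Lemma~\ref{lemma:CharacterizationsPartitionProperty} this suffices.

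The key step is to reflect the $\Sigma_n$-definition of $A$ back into $\VV$. Fix a $\PPP$-name $\dot{A}$ for $A$ and a condition $p\in G$ forcing that $\dot{A}$ is the unique set satisfying the relevant $\Sigma_n$-formula $\varphi(v,\kappa,z)$. In $\VV$, consider the set $B\subseteq\kappa$ coding the triple $\langle\PPP,p,\dot{A}\rangle$ together with the forcing relation data needed to recover $A$ from $G$; more precisely, let $B$ code $\PPP$, the subset of $\PPP$ deciding membership of each ordinal in $\dot{A}$, and (via $<_{\LL[\PPP]}$ or G\"odel pairing) enough bookkeeping so that the set $\{B\}$ is $\Sigma_1(\kappa,\PPP,p,\dot{A})$-definable in $\VV$, hence $\Sigma_n(\kappa,w)$-definable for a single parameter $w\in\HH{\kappa}$ (here we use that $\PPP,p,\dot{A}\in\HH{\kappa}$ and $\HH{\kappa}$ is closed under pairing). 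Applying Lemma~\ref{lemma:CharacterizationsPartitionProperty} in $\VV$ to $B$ yields a weak $\kappa$-model $M_0$, a transitive $N_0$, and $\map{j_0}{M_0}{N_0}$ with $\crit{j_0}=\kappa$, $B\in M_0$, $\kappa$ inaccessible in $M_0$, and $\HH{\kappa}^{M_0}\in M_0$.

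Finally I would lift this situation into $\VV[G]$. Since $\vert\PPP\vert<\kappa\leq\vert M_0\vert$ and $M_0$ is a transitive model of $\ZFC^-$ containing $\PPP$ (as $B$ codes it and $\PPP\in\HH{\kappa}^{M_0}$), the filter $G$ is $\PPP$-generic over $M_0$ as well, so $M=M_0[G]$ is again a weak $\kappa$-model of $\ZFC^-$; likewise $N=N_0[G]$ is transitive, and since $j_0(\PPP)=\PPP$ (as $\PPP\in\HH{\kappa}^{M_0}$ and $\crit{j_0}=\kappa$), the embedding lifts to $\map{j}{M_0[G]}{N_0[G]}$ with $j(\dot{A}^G)=\dot{A}^{j_0[G]}=\dot{A}^G$ — wait, more carefully: $j(\tau^G)=j_0(\tau)^G$ for $\tau\in M_0$, so $j$ has critical point $\kappa$ and $j\restriction M_0=j_0$. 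Because $\HH{\kappa}^{M_0}\in M_0$ and small forcing does not change $\HH{\kappa}$, we get $\HH{\kappa}^M=\HH{\kappa}^{M_0}[G]\in M$ and $\kappa$ remains inaccessible in $M$. Now $A$ itself is definable inside $M_0[G]$ from $B$ (which codes $\dot{A}$, $p$, $\PPP$) and $G$: decode $\dot A^G=A$. Thus $A\in M$, and the decoding is absolute enough that $j(A)$ is computed correctly; then $\langle M,N,j\rangle$ witnesses condition~(iii) of Lemma~\ref{lemma:CharacterizationsPartitionProperty} in $\VV[G]$, so $\kappa$ has the $\Sigma_n(z)$-colouring property there. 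Since $z\in\HH{\kappa}$ was arbitrary, $\kappa$ has the $\mathbf{\Sigma}_n$-colouring property in $\VV[G]$.

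\emph{Main obstacle.} The delicate point is verifying that $A$, as reconstructed inside $M[G]=M_0[G]$, really is the set defined by $\varphi(v,\kappa,z)$ \emph{in $\VV[G]$} — i.e. that the $\Sigma_n$-definition is upward and downward absolute between $M_0[G]$ and $\VV[G]$. This requires arranging $M_0$ to be sufficiently $\Sigma_n$-correct, which is why the parameter $B$ must encode not merely $\dot A$ but also a certificate that $p\Vdash_\PPP\varphi(\dot A,\check\kappa,\check z)$ in a form whose truth transfers; handling this cleanly — and confirming that the lifted $j$ sends this certificate to the analogous certificate about $j_0(\dot A)$ — is the crux, while the genericity of $G$ over $M_0$ and the lifting of $j_0$ are routine.
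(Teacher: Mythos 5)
Your overall strategy --- reflect the definition of $A$ into $\VV$ via a set $B\subseteq\kappa$ coding forcing information, apply characterization (iii) of Lemma \ref{lemma:CharacterizationsPartitionProperty} in $\VV$, and lift the resulting embedding through the small forcing --- is exactly the paper's, and the lifting half of your argument is fine. But the crucial step, showing that $\{B\}$ is $\Sigma_n$-definable in $\VV$ from parameters in $\HH{\kappa}\cup\{\kappa\}$, contains a genuine error: you assert that $\dot{A}\in\HH{\kappa}$ and use $\dot{A}$ as one of the parameters. A $\PPP$-name for an (unbounded) subset of $\kappa$ must reference cofinally many ordinals below $\kappa$, so its transitive closure has cardinality $\kappa$ and it is \emph{not} an element of $\HH{\kappa}$; with $\dot A$ as a parameter the definability claim is vacuous for the purposes of the colouring property. (A smaller instance of the same confusion is your claim that $\HH{\kappa}^{\VV[G]}=\HH{\kappa}^{\VV}$: small forcing does add new elements of $\HH{\kappa}$, e.g.\ new reals; what is true, and what the paper uses, is that every such element has a name in $\HH{\kappa}^{\VV}$.) The missing idea is to avoid any name for $A$ altogether: since $A$ is the \emph{unique} set satisfying $\varphi(\check\kappa,\dot z,x)$, one defines $B=\Set{\goedel{\alpha}{\gamma}}{b(\gamma)\Vdash\exists x\,[\varphi(\check\kappa,\dot z,x)\wedge\check\alpha\in x]}$ using only the small name $\dot z$ and a bijection $b$ of some $\nu<\kappa$ with the conditions below $p$, and then invokes the fact that the forcing relation restricted to $\Sigma_n$-formulas is itself uniformly $\Sigma_n$-definable to see that both $B$ and (via the uniqueness of $A$ and a density argument) its complement are $\Sigma_n$-definable, whence $\{B\}$ is.

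Your ``main obstacle'' paragraph is also aimed at the wrong target: you do not need $M_0[G]$ to be $\Sigma_n$-correct or to verify the definition of $A$. Characterization (iii) only requires $A\in M$, and $A$ is recovered inside $M_0[G]$ purely combinatorially as $\Set{\alpha<\kappa}{\exists\gamma<\nu\ (b(\gamma)\in G\wedge\goedel{\alpha}{\gamma}\in B)}$; no transfer of the $\Sigma_n$-definition between $M_0[G]$ and $\VV[G]$ is needed. The real crux is the definability of $\{B\}$ in $\VV$ from small parameters, which is precisely the step your write-up gets wrong.
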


\begin{proof}
 Fix $z\in\HH{\kappa}^{\VV[G]}$, $A\in\POT{\kappa}^{\VV[G]}$ and a $\Sigma_n$-formula $\varphi(v_0,v_1,v_2)$ such that $A$ is the unique set in $\VV[G]$ with the property that $\varphi(\kappa,z,A)$ holds in $\VV[G]$. 
 Since $\PPP$ is an element of $\HH{\kappa}^\VV$, there is a $\PPP$-name $\dot{z}$ in $\HH{\kappa}^\VV$ with $z=\dot{z}^G$. 
Pick a condition $p$ in $G$ that forces the above statements about $\dot{z}$ to hold true and fix a bijection $b$ between a cardinal $\nu<\kappa$ and the set of all conditions below $p$ in $\PPP$. Set $$B ~ = ~ \Set{\goedel{\alpha}{\gamma}}{\alpha<\kappa, ~ \gamma<\nu, ~ b(\gamma)\Vdash^\VV_\PPP\anf{\exists x ~ [\varphi(\check{\kappa},\dot{z},x) ~ \wedge ~ \check{\alpha}\in x]}} ~ \in ~ \POT{\kappa}^\VV.$$

 A careful review of the definition of the forcing relation (see, for example, {\cite[Section VII.3]{MR597342}}) shows that for every  $\Sigma_n$-formula $\psi(v_0,\ldots,v_{m-1})$, there is a $\Sigma_n$-formula $\psi(v_0,\ldots,v_{m+1})$ such that the axioms of $\ZFC^-$ prove that for every partial order $\PPP$, every  $p$ in $\PPP$ and all $\tau_0,\ldots,\tau_{n-1}$, the statement $\psi(\tau_0,\ldots,\tau_{n-1},\PPP,p)$ holds if and only if the sets $\tau_0,\ldots,\tau_{n-1}$ are $\PPP$-names with $p\Vdash_\PPP\varphi(\tau_0,\ldots,\tau_{n-1})$. 
In particular, the set $B$ is $\Sigma_n(\kappa,\nu,b,\dot{z},\PPP)$-definable in $\VV$. 
Moreover, if $\alpha<\kappa$ and $\gamma<\nu$, then  $\goedel{\alpha}{\gamma}$ is not contained in $B$ if and only if there is a $\delta<\nu$ with $b(\delta)\leq_\PPP b(\gamma)$ and $b(\delta)  \Vdash^\VV_\PPP  \anf{\exists x ~ [\varphi(\check{\kappa},\dot{z},x)  \wedge  \check{\alpha}\notin x]}$.  
This shows that the set $\kappa\setminus B$ is also $\Sigma_n(\kappa,\nu,b,\dot{z},\PPP)$-definable in $\VV$ and this allows us to conclude that the set $\{B\}$ is definable in the same way. 
By our assumption and Lemma \ref{lemma:CharacterizationsPartitionProperty},  there is an elementary embedding $\map{j}{M}{N}$ with critical point $\kappa$ in $\VV$ such that  $M$ is a weak $\kappa$-model, $N$ is  transitive, $\dot{z},B,\PPP\in M$ and $\kappa$ is inaccessible in $M$. 
Since $j\restriction\PPP=\id_\PPP$, this embedding has a canonical lift $\map{j_G}{M[G]}{N[G]}$ in $\VV[G]$ (see {\cite[Proposition 9.1]{MR2768691}}). 
But then $A$ consists of all $\alpha<\kappa$ with the property that there is a $\gamma<\nu$ with $b(\gamma)\in G$ and $\goedel{\alpha}{\gamma}\in B$. 
This shows that $A$ is an element of $M[G]$. 
Since $\kappa$ is inaccessible in $M[G]$, Lemma \ref{lemma:CharacterizationsPartitionProperty} shows that $\kappa$ has the $\mathbf{\Sigma}_n$-colouring property in $\VV[G]$. 
\end{proof}

\begin{proposition}\label{proposition:SmallForcingDef}
 Let $\kappa$ be an uncountable regular cardinal. 
 \begin{enumerate}[leftmargin=0.9cm]
 \item Let $\mu<\kappa$ be an infinite regular cardinal and let $\PPP\in\{\Add{\mu}{\kappa},\Col{\mu}{{<}\kappa}\}$. If either $\mu=\omega$ or $\VV=\LL$ holds, then the set $\{\PPP\}$ is $\Sigma_1(\kappa,\mu)$-definable. 

  \item Let $\PPP$ be a weakly homogeneous partial order with the property that the set $\{\PPP\}$ is $\Sigma_n(\kappa,y)$-definable for some set $y$. If $G$ is $\PPP$-generic over $\VV$ and $A$ is a subset of $\kappa$ in $\VV[G]$ with the property that the set $\{A\}$ is $\Sigma_n(\kappa,z)$-definable in $\VV[G]$ for some $z\in\VV$, then $A$ is an element of $\VV$ and the set $\{A\}$ is $\Sigma_n(\kappa,y,z)$-definable in $\VV$. 

 \item If $\kappa$ has the $\mathbf{\Sigma}_2$-colouring  property, then in a generic extension $\VV[G]$ of $\VV$ with $\HH{\kappa}^{\VV[G]}\subseteq\VV$, the cardinal $\kappa$ has the $\mathbf{\Sigma}_2$-colouring property and is not Mahlo. 
 \end{enumerate}
\end{proposition}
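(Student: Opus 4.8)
The three parts are essentially independent, so I would prove them separately. For part (i), the plan is to relativise the (absolute-looking) definition of $\PPP$ to an initial segment of the constructible hierarchy: since $\gamma\mapsto\LL_\gamma$ is uniformly $\Sigma_1$-definable, the set $\{\PPP\}$ becomes $\Sigma_1(\kappa,\mu)$-definable once one verifies that there is a limit ordinal $\gamma>\kappa$ with $\LL_\gamma\models\ZFC^-$ (or a sufficiently large finite fragment) such that $\PPP$ equals the relativisation $\PPP^{\LL_\gamma}$ of the definition of $\Add{\mu}{\kappa}$ (respectively $\Col{\mu}{{<}\kappa}$) to $\LL_\gamma$. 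The only nontrivial point is that $\PPP^{\LL_\gamma}=\PPP$ for all large enough $\gamma$, which holds because for $\mu=\omega$ the conditions are finite (hence absolute) and every sufficiently large $\LL_\gamma$ contains $\kappa$, $\mu$ and all of them, while for $\VV=\LL$ this is immediate; for $\mu>\omega$ and $\VV\neq\LL$ it genuinely fails, since conditions have size ${<}\mu$, which is why the extra hypothesis is imposed. For part (ii), the plan is the standard weak-homogeneity argument combined with the definability analysis of the forcing relation already used in Lemma \ref{lemma:LevySolovayPartitionProperty}: weak homogeneity forces $\mathbbm{1}_\PPP$ to decide every sentence all of whose parameters are canonical names $\check u$ for ground-model sets, so applying this to the sentence asserting uniqueness of $x$ with $\varphi(x,\check\kappa,\check z)$ and to the sentences $\exists x(\varphi(x,\check\kappa,\check z)\wedge\check\alpha\in x)$ shows $A=\Set{\alpha<\kappa}{\mathbbm{1}_\PPP\Vdash_\PPP\exists x(\varphi(x,\check\kappa,\check z)\wedge\check\alpha\in x)}\in\VV$; then, to get definability, I would use the characterisation ``$w=A$ iff $w\subseteq\kappa$ and $\mathbbm{1}_\PPP\Vdash_\PPP\varphi(\check w,\check\kappa,\check z)$'' (again using weak homogeneity and uniqueness of $A$), note that ``$\mathbbm{1}_\PPP\Vdash_\PPP\varphi(\check w,\check\kappa,\check z)$'' is $\Sigma_n$-expressible in $w$, $\kappa$, $z$, $\PPP$ by Kunen's theorem, and substitute the $\Delta_1$-definitions of $\check w$, $\check\kappa$, $\check z$, $\mathbbm{1}_\PPP$ and the (by uniqueness) $\Delta_n(\kappa,y)$-definition of $\PPP$ to conclude that $\{A\}$ is $\Sigma_n(\kappa,y,z)$-definable in $\VV$. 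Neither part presents a real obstacle beyond bookkeeping of complexities.

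For part (iii) I would first observe that if $\kappa$ is not Mahlo in $\VV$ (in particular if it is not inaccessible) then the trivial forcing works, so one may assume $\kappa$ is Mahlo, hence inaccessible, and then $\kappa^{{<}\kappa}=\kappa$. Let $S=\Set{\alpha<\kappa}{\textit{$\alpha$ is a regular cardinal}}$, a stationary subset of $\kappa$, and let $\PPP$ be the usual club-shooting forcing for $\kappa\setminus S$: conditions are closed bounded $c\subseteq\kappa$ with $c\cap S=\emptyset$, ordered by end-extension. The key structural observation is that for every $\lambda<\kappa$ the set $D_\lambda=\Set{c\in\PPP}{\sup c>\lambda}$ is dense (because $\kappa\setminus S$ is unbounded in $\kappa$) and ${\leq}\lambda$-closed: given a descending sequence of length ${\leq}\lambda$ from $D_\lambda$, the natural lower bound is the union of the sequence together with its supremum $\delta$, and $\delta$ is a limit ordinal with $\cof{\delta}\leq\lambda<\delta$, hence singular, hence not in $S$, so this really is a condition. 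It follows that $\PPP$ is ${<}\kappa$-distributive, so it adds no new bounded subsets of $\kappa$ and preserves the regularity and the inaccessibility of $\kappa$; in particular $\HH{\kappa}^{\VV[G]}=\HH{\kappa}^\VV\subseteq\VV$, and the regular cardinals below $\kappa$ are the same in $\VV[G]$. Finally $C=\bigcup G$ is a closed unbounded subset of $\kappa$ with $C\cap S=\emptyset$, so $\kappa$ is inaccessible but not Mahlo in $\VV[G]$.

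It remains to show that $\kappa$ keeps the $\mathbf{\Sigma}_2$-colouring property in $\VV[G]$, and here the plan is to avoid lifting embeddings entirely: a direct lift of the weak-$\kappa$-model embeddings from Lemma \ref{lemma:CharacterizationsPartitionProperty} fails because the only possible master condition, $C\cup\{\kappa\}$, is illegal precisely since $\kappa$ is regular in the target model — this is, in fact, the main conceptual obstacle. Instead I would use the $\HOD$-reformulation of Corollary \ref{corollary:2ImpliesN}: $\PPP$ is definable from $\kappa$, hence ordinal definable, and it is \emph{cone homogeneous} (given conditions $p,q$, pick $\gamma\in\kappa\setminus S$ above $\sup p$ and $\sup q$; then the cones below $p\cup\{\gamma\}$ and $q\cup\{\gamma\}$ are isomorphic via $p\cup\{\gamma\}\cup e\mapsto q\cup\{\gamma\}\cup e$). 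A standard argument then shows that for any $z\in\VV$ the truth in $\VV[G]$ of any formula with parameter $z$ and ordinal parameters is decided by $\mathbbm{1}_\PPP$, so $\HOD_z^{\VV[G]}\subseteq\HOD_z^\VV$. Consequently every colouring $\map{c}{[\kappa]^2}{2}$ lying in $\HOD_z^{\VV[G]}$ for some $z\in\HH{\kappa}^{\VV[G]}=\HH{\kappa}^\VV$ already lies in $\HOD_z^\VV$, so by the $\mathbf{\Sigma}_2$-colouring property of $\VV$ and Corollary \ref{corollary:2ImpliesN} it has a $c$-homogeneous set unbounded in $\kappa$ in $\VV$, which remains $c$-homogeneous and unbounded in $\VV[G]$; a further appeal to Corollary \ref{corollary:2ImpliesN} shows $\kappa$ has the $\mathbf{\Sigma}_2$-colouring property in $\VV[G]$. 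The hard part, then, is recognising that the proof should go through cone homogeneity and the $\HOD$-characterisation rather than through embeddings, and checking carefully that the two structural properties of $\PPP$ cooperate — ${<}\kappa$-distributivity (to keep $\HH{\kappa}$ and the regular cardinals below $\kappa$ fixed) and cone homogeneity (to transport the colouring property down from $\VV$) — with the singular-supremum observation underlying distributivity being the one genuinely nonroutine computation.
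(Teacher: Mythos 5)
Your parts (i) and (ii) follow the paper's own argument essentially verbatim: part (i) relativises the definition of $\PPP$ to the constructible hierarchy (the paper works directly over $\langle\LL_\kappa,\in\rangle$ rather than over a $\ZFC^-$-level $\LL_\gamma$ above $\kappa$, but this is cosmetic), and part (ii) is the same weak-homogeneity computation of $A$ combined with the definability of the forcing relation already recorded in the proof of Lemma \ref{lemma:LevySolovayPartitionProperty}. For part (iii) you pick essentially the same partial order --- the paper shoots a club through the fat stationary set of singular limit ordinals, citing Abraham--Shelah for ${<}\kappa$-distributivity where you verify it directly via the singular-supremum observation, and citing \cite{MR0373889} for weak homogeneity where you check cone homogeneity by hand --- but your preservation argument is genuinely different. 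The paper applies its own part (ii) to pull any $\Sigma_2(\kappa,z)$-definable subset of $\kappa$ in $\VV[G]$ back into $\VV$ with the same complexity (using that $\{\CCC(S)\}$ is $\Sigma_2(\kappa)$-definable), extracts a weak $\kappa$-model embedding in $\VV$ via Lemma \ref{lemma:CharacterizationsPartitionProperty}, and simply observes that this witness is upward absolute to $\VV[G]$; in particular no lifting is ever attempted, so the ``master condition'' obstacle you describe does not actually arise in the paper's route. You instead go through Corollary \ref{corollary:2ImpliesN}: ordinal definability plus cone homogeneity of the forcing give that every set of ordinals in $\HOD_z^{\VV[G]}$ lies in $\HOD_z^{\VV}$, so every $\Sigma_2(z)$-partition of $\VV[G]$ already lies in $\HOD_z^{\VV}$ and its unbounded homogeneous set from $\VV$ persists. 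Both arguments are correct; yours trades the definable-singleton bookkeeping of part (ii) for the $\HOD$-characterisation, while the paper's stays entirely within the embedding characterisation it uses throughout the section.
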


\begin{proof}
 (i) Our assumptions imply $\Add{\mu}{\kappa}=\Add{\mu}{\kappa}^\LL$ and $\Col{\mu}{{<}\kappa}=\Col{\mu}{{<}\kappa}^\LL$ and this shows that $\PPP$ is definable over $\langle\LL_\kappa,\in\rangle$ by a formula with parameter $\mu$. Since the set $\{\LL_\kappa\}$ is $\Sigma_1(\kappa)$-definable, we know that the set $\{\PPP\}$ is $\Sigma_1(\kappa,\mu)$-definable.

 (ii) Pick a $\Sigma_n$-formula $\varphi(v_0,v_1,v_2)$ such that $A$ is the unique set in $\VV[G]$ with the property that $\varphi(A,\kappa,z)$ holds. Then the weak homogeneity of $\PPP$ in $\VV$ implies $$A ~ = ~ \Set{\alpha<\kappa}{\mathbbm{1}_\PPP\Vdash^\VV\anf{\exists X ~ [\check{\alpha}\in X ~ \wedge ~ \varphi(X,\check{\kappa},\check{z})}]} ~ \in ~ \VV$$ and, by the remarks made in the proof of Lemma \ref{lemma:LevySolovayPartitionProperty}, this shows that the set $\{A\}$ is $\Sigma_n(\kappa,z,\PPP)$-definable in $\VV$. By our assumptions on $\PPP$, this shows that $\{A\}$ is $\Sigma_n(\kappa,y,z)$-definable in $\VV$. 

 (iii) Let $S$ denote the set of all singular limit ordinals less than $\kappa$. Then $S$ is a fat stationary subset of $\kappa$ and the  canonical partial order $\CCC(S)$  that shoots a club through $S$ using bounded closed subsets of $S$ is ${<}\kappa$-distributive (see {\cite[Section 1]{MR716625}}). Moreover, the set $\{\CCC(S)\}$ is $\Sigma_2(\kappa)$-definable and  {\cite[Section 3.5, Theorem 1]{MR0373889}} implies that $\CCC(S)$ is weakly homogeneous. 
Let $G$ be $\CCC(S)$-generic over $\VV$ and let $A$ be a subset of $\kappa$ in $\VV[G]$ such that the set $\{A\}$ is $\Sigma_2(\kappa,z)$-definable for some $z\in\HH{\kappa}^{\VV[G]}$. By (ii) and the above remarks, we know that $A,z\in\VV$ and the set $\{A\}$ is $\Sigma_2(\kappa,z)$-definable in $\VV$. Hence our assumptions allow us to use Lemma \ref{lemma:CharacterizationsPartitionProperty} to find a weak $\kappa$-model $M$, a transitive set $N$ and an elementary embedding $\map{j}{M}{N}$ in $\VV$ such that $A\in M$, $\crit{j}=\kappa$, $\kappa$ inaccessible in $M$ and $\HH{\kappa}^M\in M$. 
Since these properties of $M$, $N$ and $j$ are upwards absolute to $\VV[G]$, Lemma \ref{lemma:CharacterizationsPartitionProperty} shows that $\kappa$ has the $\mathbf{\Sigma}_n$-colouring property in $\VV[G]$.  
\end{proof}

\begin{lemma}\label{lemma:Sigma1PartitionPropertyCollapses}
 Let $\kappa$ be an inaccessible cardinal with the $\mathbf{\Sigma}_n$-colouring property, let $\mu<\kappa$ be an infinite regular cardinal, let $\PPP\in\{\Add{\mu}{\kappa},\Col{\mu}{{<}\kappa}\}$ and let $G$ be $\PPP$-generic over $\VV$. If either $\mu=\omega$, or $\VV=\LL$ holds, or $\kappa$ is weakly compact in $\VV$, then $\kappa$ has the $\mathbf{\Sigma}_n$-colouring property in $\VV[G]$.  
\end{lemma}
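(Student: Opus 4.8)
The plan is to verify, inside $\VV[G]$, the embedding characterisation of the $\mathbf{\Sigma}_n$-colouring property provided by Lemma~\ref{lemma:CharacterizationsPartitionProperty}, and to obtain the required embeddings by pushing the relevant definable objects down into a small generic extension of $\VV$ over which the remaining part of $\PPP$ is a weakly homogeneous forcing. So fix $z\in\HH{\kappa}^{\VV[G]}$ and a subset $A$ of $\kappa$ in $\VV[G]$ with $\{A\}$ being $\Sigma_n(\kappa,z)$-definable in $\VV[G]$; by Lemma~\ref{lemma:CharacterizationsPartitionProperty} it is enough to find, in $\VV[G]$, a weak $\kappa$-model $M$ with $A\in M$, a transitive set $N$ and an elementary embedding $\map{j}{M}{N}$ with $\crit{j}=\kappa$, $\kappa$ inaccessible in $M$ and $\HH{\kappa}^M\in M$. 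Since $\kappa$ is inaccessible in $\VV$, the forcing $\PPP$ has the $\kappa$-chain condition there, and the usual nice-name argument yields an $\alpha<\kappa$ such that, factoring $\PPP\cong\PPP_\alpha\times\PPP^\alpha$ at $\alpha$ and correspondingly $G=G_\alpha\times H$, the parameter $z$ already lies in $\VV_1:=\VV[G_\alpha]$.

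Now $\PPP_\alpha\in\HH{\kappa}^\VV$, so Lemma~\ref{lemma:LevySolovayPartitionProperty} shows that $\kappa$ retains the $\mathbf{\Sigma}_n$-colouring property in $\VV_1$; moreover $\kappa$ stays inaccessible in $\VV_1$, and, in the case where $\kappa$ is weakly compact in $\VV$, it stays weakly compact in $\VV_1$ by the L\'evy--Solovay argument. We have $\VV[G]=\VV_1[H]$, the tail forcing $\QQQ:=\PPP^\alpha$ is weakly homogeneous, and since $z\in\VV_1$ the argument from the proof of part~(ii) of Proposition~\ref{proposition:SmallForcingDef} shows that $A$ is an element of $\VV_1$. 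As $\kappa$ remains a cardinal in $\VV[G]$, one also gets $z\in\HH{\kappa}^{\VV_1}$.

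It therefore suffices to produce the triple $M,N,j$ inside $\VV_1$, since the statements \anf{$M$ is a weak $\kappa$-model}, \anf{$\map{j}{M}{N}$ is elementary with $\crit{j}=\kappa$}, \anf{$A\in M$}, \anf{$\kappa$ is inaccessible in $M$} and \anf{$\HH{\kappa}^M\in M$} are all absolute between $\VV_1$ and its forcing extension $\VV[G]$. If $\kappa$ is weakly compact in $\VV$, then it is weakly compact in $\VV_1$ and the standard embedding characterisation of weak compactness (compare the implication (ii)$\,\Rightarrow\,$(iii) in the proof of Lemma~\ref{lemma:CharacterizationsPartitionProperty}) applied to the set $A\in\VV_1$ provides such a triple. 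If instead $\mu=\omega$ or $\VV=\LL$, then the argument behind part~(i) of Proposition~\ref{proposition:SmallForcingDef}, applied to the tail forcing $\QQQ$, shows that $\{\QQQ\}$ is $\Sigma_1(\kappa,y)$-definable in $\VV_1$ for some $y\in\HH{\kappa}^{\VV_1}$ --- here one uses that $\PPP_\alpha$ is ${<}\mu$-closed, so that $\QQQ$ is the same set in $\VV_1$ as in $\VV$, together with the fact that $\{\LL_\kappa\}$ is $\Sigma_1(\kappa)$-definable in $\VV_1$. Part~(ii) of Proposition~\ref{proposition:SmallForcingDef} then shows that $\{A\}$ is $\Sigma_n(\kappa,y,z)$-definable in $\VV_1$, and since $\kappa$ has the $\Sigma_n(y,z)$-colouring property in $\VV_1$, Lemma~\ref{lemma:CharacterizationsPartitionProperty} again yields the required triple.

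The key point --- which makes the argument work even though $\kappa$ is typically neither weakly compact nor inaccessible in $\VV[G]$ --- is that no lifting of embeddings through $\PPP$ is needed: weak homogeneity of the tail $\QQQ$ forces every $\Sigma_n$-definable subset of $\kappa$ of $\VV[G]$, once its parameter has been absorbed into $\VV_1$, to already lie in $\VV_1$ (together with a suitable definition in the case where $\QQQ$ is itself definable), so that the known partition property, respectively the weak compactness, of $\kappa$ in $\VV_1$ can be invoked directly. I expect the only genuinely fiddly part to be the bookkeeping around the factoring $\PPP\cong\PPP_\alpha\times\PPP^\alpha$: checking that $\QQQ$ is still weakly homogeneous and, in the case $\VV=\LL$, that it is $\Sigma_1$-definable over $\LL_\kappa$ already in the extension $\VV_1$ and not merely in $\LL$. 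These verifications are routine.
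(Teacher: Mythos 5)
Your proof is correct and follows essentially the same route as the paper's: factor the forcing so that the parameter $z$ lands in a small intermediate extension, apply Lemma \ref{lemma:LevySolovayPartitionProperty} there, pull $A$ down into that model via the weak homogeneity and $\Sigma_1$-definability of the tail forcing (Proposition \ref{proposition:SmallForcingDef}), and transfer the resulting embedding upward using the characterization in Lemma \ref{lemma:CharacterizationsPartitionProperty}. If anything, your separate treatment of the weakly compact case --- where the definability of the tail is unavailable but also unnecessary, since weak compactness survives small forcing and supplies the required embedding for any $A$ in the intermediate model --- is more explicit than the paper's, which cites Proposition \ref{proposition:SmallForcingDef} without distinguishing that case.
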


\begin{proof}
 Fix a $A\in\POT{\kappa}^{\VV[G]}$ that is $\Sigma_n(\kappa,z)$-definable in $\VV[G]$ for some $z\in\HH{\kappa}^{\VV[G]}$. 
  Then there is a regular cardinal $\mu<\nu<\kappa$ and $H\in\VV[G]$ such that $z\in\VV[H]$, $H$ is either a $\Add{\mu}{\nu}$- or $\Col{\mu}{\nu}$-generic over $\VV$ and $\VV[G]$ is a $\PPP$-generic extension of $\VV[H]$.  
Moreover, Lemma \ref{lemma:LevySolovayPartitionProperty} implies that $\kappa$ has the $\mathbf{\Sigma}_n$-colouring property in $\VV[H]$. 
 By our assumptions, Proposition \ref{proposition:SmallForcingDef} shows that there is a $y\in\VV$ such that,  in $\VV[H]$, the set $\{\PPP\}$ is $\Sigma_1(\kappa,y)$-definable and $\kappa$ has the $\Sigma_n(\kappa,y,z)$-colouring property. 
 Another application of Proposition \ref{proposition:SmallForcingDef} shows that $A$ is an element of $\VV[H]$ and the set $\{A\}$ is $\Sigma_n(\kappa,\mu,z)$-definable in $\VV[H]$. 
As in the last part of the proof of Proposition \ref{proposition:SmallForcingDef}, these computations show that $\kappa$ has the $\mathbf{\Sigma}_n$-colouring property in $\VV[G]$. 
\end{proof}


\section{The $\Sigma_n$-club property}\label{section:SigmaNclubproperty}

In this section, we will provide an analysis of the $\Sigma_n$-club property that parallels the investigation of the $\Sigma_n$-colouring  property in the last section. 
In particular, we will show that $\omega_1$ is the only uncountable cardinal that can consistently possess the $\Sigma_2$-club property and the only successor cardinal that can consistently have the $\mathbf{\Sigma}_1$-club property. 
In contrast, the results of this paper will show that $\omega_1$ can consistently possess the $\mathbf{\Sigma}_n$-club property for all $n<\omega$, several well-known large cardinal properties imply the $\mathbf{\Sigma}_1$-club property and the existence of an accessible limit cardinal with the $\mathbf{\Sigma}_1$-club property is consistent. 
Moreover, we will show that the $\mathbf{\Sigma}_2$-club property implies all higher club properties. 
Finally, we will again establish a natural connection between these properties and large cardinal properties. Our results will show that the $\mathbf{\Sigma}_2$-club property is naturally connected with measurability through the inner model $\HOD$ and that it is possible to use the \emph{Dodd--Jensen core model $\KK^{DJ}$} to connect the $\mathbf{\Sigma}_1$-club property to a large cardinal property that implies the existence of sharps for reals and is a consequence of \emph{$\omega_1$-iterability} (see Definition \ref{definition:IterableCardinal}). 
The following characterizations of the $\mathbf{\Sigma}_n$-club properties is the starting point of our analysis. 

\begin{lemma}\label{lemma:CharacterizationsClubProperty}
 The following statements are equivalent for every uncountable regular cardinal $\kappa$ and every set $z$: 
 \begin{enumerate}[leftmargin=0.9cm]
  \item Given $\gamma_0,\ldots,\gamma_{m-1}<\kappa$ and $A\subseteq\kappa$, if the set $\{A\}$ is $\Sigma_n(\kappa,\gamma_0,\ldots,\gamma_{m-1},z)$-definable, then either $A$ contains a club subset of $\kappa$ or is disjoint from such a subset. 
  
  \item Given $\gamma_0,\ldots,\gamma_m<\kappa$, every $\Sigma_n(\kappa,\gamma_0,\ldots,\gamma_{m-1},z)$-definable function $\map{c}{\kappa}{\gamma_m}$ is constant on a club subset of $\kappa$. 
  
  \item Given $\gamma_0,\ldots,\gamma_m<\kappa$, if $\map{c}{[\kappa]^k}{\gamma_m}$ is a $\Sigma_n(\kappa,\gamma_0,\ldots,\gamma_{m-1},z)$-partition, then there is a $c$-homogeneous club subset of $\kappa$. 
  
    \item Given $\gamma_0,\ldots,\gamma_m<\kappa$, if $\map{c}{[\kappa]^{{<}\omega}}{\gamma_m}$ is a $\Sigma_n(\kappa,\gamma_0,\ldots,\gamma_{m-1},z)$-definable function, then there is a $c$-homogeneous closed unbounded subset of $\kappa$. 
 \end{enumerate}
\end{lemma}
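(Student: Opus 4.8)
The plan is to prove the four statements equivalent by running through the cycle $(i)\Rightarrow(ii)\Rightarrow(iii)\Rightarrow(iv)\Rightarrow(i)$, with $(ii)\Rightarrow(iii)$ carrying essentially all of the work; the remaining three implications are routine once one keeps track of the relevant definability and uses the closure of the $\Sigma_n$-classes under bounded quantification recorded after the definition of $\Sigma_n$-formulas, together with the fact that the graph of any $\Sigma_n(\kappa,\ldots)$-partition is automatically $\Delta_n$ (as noted after Definition~\ref{definition:SigmanPartition}).

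First I would dispatch the easy implications. For $(i)\Rightarrow(ii)$, given a $\Sigma_n(\kappa,\gamma_0,\ldots,\gamma_{m-1},z)$-definable $\map{c}{\kappa}{\gamma_m}$, each preimage $A_v=\Set{\alpha<\kappa}{c(\alpha)=v}$ (for $v<\gamma_m$) has $\{A_v\}$ $\Sigma_n(\kappa,v,\gamma_0,\ldots,\gamma_{m-1},z)$-definable, so by $(i)$ it either contains or is disjoint from a club; since the $A_v$ partition $\kappa$ into fewer than $\kappa$ pieces and an intersection of fewer than $\kappa$ many clubs is a club, some $A_v$ must contain a club, and there $c$ is constant. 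For $(iv)\Rightarrow(i)$, given $A\subseteq\kappa$ with $\{A\}$ $\Sigma_n(\kappa,\gamma_0,\ldots,\gamma_{m-1},z)$-definable, the function $\map{c}{[\kappa]^{{<}\omega}}{2}$ sending $s$ to $1$ exactly when $s$ is a singleton whose element lies in $A$ is $\Sigma_n(\kappa,\gamma_0,\ldots,\gamma_{m-1},z)$-definable, and a $c$-homogeneous club supplied by $(iv)$ is either contained in or disjoint from $A$. For $(iii)\Rightarrow(iv)$, each restriction $c\restriction[\kappa]^k$ of a $\Sigma_n$-definable $\map{c}{[\kappa]^{{<}\omega}}{\gamma_m}$ is a $\Sigma_n(\kappa,\gamma_0,\ldots,\gamma_{m-1},z)$-partition, so $(iii)$ yields a $c\restriction[\kappa]^k$-homogeneous club $C_k$, and $\bigcap_{k<\omega}C_k$ is a club (since $\kappa$ has uncountable cofinality) that is $c$-homogeneous.

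The heart of the argument is $(ii)\Rightarrow(iii)$, which I would prove by induction on $k$, with $k\le 1$ immediate from $(ii)$. For the step $k\to k+1$, let $\map{c}{[\kappa]^{k+1}}{\gamma_m}$ be a $\Sigma_n(\kappa,\gamma_0,\ldots,\gamma_{m-1},z)$-partition. For each $\alpha<\kappa$ the derived colouring $\map{\hat c_\alpha}{[\kappa]^k}{\gamma_m}$ sending $s$ to $c(\{\alpha\}\cup s)$ when $\min s>\alpha$ and to $0$ otherwise is a $\Sigma_n(\kappa,\alpha,\gamma_0,\ldots,\gamma_{m-1},z)$-partition, so the induction hypothesis (which quantifies over all finite tuples of ordinal parameters below $\kappa$) produces a $\hat c_\alpha$-homogeneous club; a tail of it above $\alpha$ shows that there is a unique $i(\alpha)<\gamma_m$ together with a club $D$ with $c(\{\alpha\}\cup s)=i(\alpha)$ for all $s\in[D]^k$ with $\min s>\alpha$. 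The crucial observation is that $\alpha\mapsto i(\alpha)$ is again $\Sigma_n(\kappa,\gamma_0,\ldots,\gamma_{m-1},z)$-definable: one writes ``$i(\alpha)=v$'' as ``there is a club $D$ in $\kappa$ such that $c(\{\alpha\}\cup s)=v$ for every $s\in[D]^k$ with $\min s>\alpha$'', and since ``$D$ is a club in $\kappa$'' is $\Delta_0(\kappa,D)$ and $\Sigma_n$ is closed under bounded quantification, the matrix after the quantifier ``$\exists D$'' is $\Delta_n$, so the whole formula is $\Sigma_n$; the negation is $\Sigma_n$ as well by the usual trick for functions with $\Sigma_n$-graph, so $i$ is a $\Sigma_n(\kappa,\gamma_0,\ldots,\gamma_{m-1},z)$-partition. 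Now $(ii)$ applied to $i$ yields a club $E$ on which $i$ is constant with value $v^*$, and a diagonal intersection concludes: choosing for each $\alpha\in E$ a club $D_\alpha\subseteq(\alpha,\kappa)$ witnessing $i(\alpha)=v^*$ and setting $D_\alpha=\kappa$ for $\alpha\notin E$, the set $C=E\cap\bigtriangleup_{\alpha<\kappa}D_\alpha$ is a club, and whenever $\alpha_0<\cdots<\alpha_k$ all lie in $C$ one has $\{\alpha_1,\ldots,\alpha_k\}\subseteq D_{\alpha_0}$, hence $c(\{\alpha_0,\ldots,\alpha_k\})=v^*$; thus $C$ is $c$-homogeneous.

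The step I expect to be the main obstacle is exactly the verification that this ``eventual value'' function $i$ remains $\Sigma_n$-definable: a naive formulation of ``the homogeneous club exists'' risks pushing the complexity up to $\Sigma_{n+1}$, and the point is that the witnessing club can be captured by a single unbounded existential quantifier whose matrix is built from the ($\Delta_n$) graph of $c$ and the ($\Delta_0$) predicate ``is a club in $\kappa$'' using only bounded quantifiers, so that closure of $\Sigma_n$ under bounded quantifiers keeps us at level $n$. Once this is in place, the diagonal-intersection bookkeeping in the induction step and the reductions in the three easy implications are routine.
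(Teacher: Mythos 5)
Your proposal is correct and takes essentially the same approach as the paper: the three easy implications are handled identically, and your induction on the arity $k$ for (ii)$\Rightarrow$(iii) is just an unrolled form of the paper's simultaneous downward recursion on the auxiliary colourings $c^l_a$, their eventual club values $\xi^l_a$ and the witnessing $\Sigma_n$-formulas $\varphi_l$, followed by the same diagonal intersection. In particular, the point you single out as the main obstacle --- that the ``eventual value'' function stays at level $\Sigma_n$ because the witnessing club is absorbed by a single unbounded existential quantifier over a matrix built from the $\Delta_n$ graph of $c$ and bounded quantifiers --- is exactly the content of clause (b) in the paper's construction.
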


\begin{proof}
 First, assume that (i) holds. Fix a $\Sigma_n(\kappa,\gamma_0,\ldots,\gamma_{m-1},z)$-definable function $\map{c}{\kappa}{\gamma_m}$ with  $\gamma_0,\ldots,\gamma_m<\kappa$. 
 Given $\xi<\gamma_m$, define $A_\xi=\Set{\alpha<\kappa}{c(\alpha)=\xi}$. 
For every $\xi<\gamma_m$, the set $\{A_\xi\}$ is $\Sigma_n(\kappa,\gamma_0,\ldots,\gamma_{m-1},\xi,z)$-definable and therefore our assumption yields a club subset $C_\xi$ that is either contained in this set or disjoint from it. 
If $\alpha\in\bigcap\Set{C_\xi}{\xi<\gamma_m}$, then $\alpha\in A_{c(\alpha)}\cap C_{c(\alpha)}$, $C_{c(\alpha)}\subseteq A_{c(\alpha)}$ and therefore $c$ is constant on $C_{c(\alpha)}$ with value $c(\alpha)$.

 Now, assume  (ii)  and fix a $\Sigma_n(\kappa,\gamma_0,\ldots,\gamma_{m-1},z)$-partition $\map{c}{[\kappa]^k}{\gamma_m}$ with $\gamma_0,\ldots,\gamma_m<\kappa$. 
 In this situation, we can use (ii) to inductively construct  
 \begin{itemize}[leftmargin=0.9cm]
  \item  a sequence $\seq{\map{c^l_a}{(\max(a),\kappa)}{\gamma_m}}{l<k, ~ a\in[\kappa]^l}$ of functions, 
  
  \item a sequence $\seq{\xi^l_a<\gamma_m}{l<k, ~ a\in[\kappa]^l}$ of ordinals, and 
  
  \item a sequence  $\seq{\varphi_l(v_0,\ldots,v_{m+l+3})}{l<k}$ of $\Sigma_n$-formulas
 \end{itemize}
 such that the following statements hold:
  \begin{enumerate}[leftmargin=0.9cm]
    \item[(a)] If $a\in[\kappa]^{k-1}$ and $\max(a)<\alpha<\kappa$, then $c^{k-1}_a(\alpha) =  c(a\cup\{\alpha\})$.
    
    \item[(b)] If $l<k$ and $\alpha_0<\ldots<\alpha_{l-1}<\alpha<\kappa$, then $c^l_{\{\alpha_0,\ldots,\alpha_{l-1}\}}(\alpha)$ is the unique ordinal $\beta$ such that $\varphi_l(\kappa,\alpha_0,\ldots,\alpha_{l-1},\alpha,\beta,\gamma_0,\ldots,\gamma_{m-1},z)$ holds. 
    
    \item[(c)] If $l<k$ and $a\in[\kappa]^l$, then $\xi^l_a$ is the unique element of $\gamma_m$ whose preimage under $c^l_a$ contains a closed unbounded subset of $\kappa$. 
    
    \item[(d)] If $0<l<k$, $a\in[\kappa]^{l-1}$ and $\max(a)<\alpha<\kappa$, then $c^{l-1}_a(\alpha)=\xi^l_{a\cup\{\alpha\}}$. 
  \end{enumerate}

 Given $l<k$ and $a\in[\kappa]^l$, pick a club $C_a$ in $\kappa$ with $c^l_a[C_a]=\{\xi^l_a\}$. 
Define $$C ~ = ~ \bigtriangleup\Set{\bigcap\Set{C^l_a}{l<k, ~ a\in[\alpha]^l}}{\alpha<\kappa}.$$ 
Pick $\alpha_0,\ldots,\alpha_{k-1}\in C\cap\Lim$ with $\alpha_0<\ldots<\alpha_{k-1}$ and set $a_l=\{\alpha_0,\ldots,\alpha_{l-1}\}$ for all $l\leq k$. 
Then $\alpha_l\in C^l_{a_l}$ and $c^l_{a_l}(\alpha_l)=\xi^l_{a_l}$ for all $l<k$. 
 Moreover, if $0<l\leq k$, then $\xi^l_{a_l}=c^{l-1}_{a_{l-1}}(\alpha_{l-1})$. 
 In combination, this allows us to conclude that $c(a_k)=c^{k-1}_{a_{k-1}}(\alpha_{k-1})=\xi^0_\emptyset$ and this shows that $C$ is $c$-homogeneous.

 Next, assume  (iii)  and pick a $\Sigma_n(\kappa,\gamma_0,\ldots,\gamma_{m-1},z)$-definable function $\map{c}{[\kappa]^{{<}\omega}}{\gamma_m}$ with $\gamma_0,\ldots,\gamma_m<\kappa$.  Given $0<k<\omega$, the function $c\restriction[\kappa]^k$ is definable in the same way and our assumption yields a $(c\restriction[\kappa]^k)$-homogeneous club $C_k$ in $\kappa$. 
Then the club $\bigcap\Set{C_k}{0<k<\omega}$ is $c$-homogeneous.

 Finally, assume that (iv) holds. Pick $\gamma_0,\ldots,\gamma_{m-1},\gamma<\kappa$ and $A\subseteq\kappa$ such that the set $\{A\}$ is $\Sigma_n(\kappa,\gamma_0,\ldots,\gamma_{m-1},z)$-definable. 
 Let $\map{c}{[\kappa]^{{<}\omega}}{2}$ denote the unique function with the property that for all $a\in[\kappa]^{{<}\omega}$, we have $$c(a)=1 ~ \Longleftrightarrow ~ (a\neq\emptyset ~ \wedge ~ \min(a)\in A).$$  
 Then the function $c$ is $\Sigma_n(\kappa,\gamma_0,\ldots,\gamma_{m-1},z)$-definable and (iv) yields a $c$-homogeneous club $C$ in $\kappa$. We can conclude that either $C\subseteq A$ or $A\cap C=\emptyset$. 
\end{proof}

The above lemma now allows us to prove the restrictions on the possible types of cardinals possessing the club property that were mentioned above. 
Remember that, given regular cardinals $\mu<\nu$,  
we let $S^\nu_\mu$ denote the set of all limit ordinals $\lambda<\nu$ with $\cof{\lambda}=\mu$.

\begin{proposition}\label{proposition:FailuresClubProperty}
 \begin{enumerate}[leftmargin=0.9cm]
 \item The set $\{S^{\nu^+}_\nu\}$ is $\Sigma_1(\nu^+,\nu)$-definable for all infinite regular cardinals $\nu$. 

  \item The set $\{S^{\omega_{k+1}}_{\omega_k}\}$ is $\Sigma_1(\omega_{k+1})$-definable for all $k<\omega$. 

  \item If $\nu$ is an uncountable cardinal, then $\nu^+$ does not have the $\mathbf{\Sigma}_1$-club property.  

  \item Regular cardinals greater than $\omega_1$ do not have the $\Sigma_2$-club property.  
 \end{enumerate}
\end{proposition}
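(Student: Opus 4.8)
The plan is, for each of the four cardinals, to produce a bistationary subset whose singleton is definable by a $\Sigma_1$- or $\Sigma_2$-formula of the prescribed form, and then to apply Lemma~\ref{lemma:CharacterizationsClubProperty}. For part~(i), I would give a $\Sigma_1$-definition of $S^{\nu^+}_\nu$ with parameters $\nu^+$ and $\nu$: since $\nu$ is regular, for every ordinal $\lambda$ the statement that $\lambda<\nu^+$ is a limit ordinal with $\cof{\lambda}=\nu$ is equivalent to the statement that $\lambda<\nu^+$ is a limit ordinal admitting a strictly increasing cofinal function from $\nu$, which is $\Sigma_1(\nu^+,\nu)$; and since $\cof{\lambda}\leq\nu$ holds automatically for limit ordinals $\lambda<\nu^+$, the statement ``$\lambda\notin S^{\nu^+}_\nu$'' is equivalent to ``$\lambda\geq\nu^+$, or $\lambda$ is not a limit ordinal, or some $\mu<\nu$ admits a strictly increasing cofinal function into $\lambda$'', which is again $\Sigma_1(\nu^+,\nu)$. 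Thus membership and non-membership in $S^{\nu^+}_\nu$ are both $\Sigma_1(\nu^+,\nu)$, so ``$y=S^{\nu^+}_\nu$'' — the conjunction of ``every element of $y$ belongs to $S^{\nu^+}_\nu$'' with ``every $\lambda<\nu^+$ either belongs to $y$ or lies outside $S^{\nu^+}_\nu$'' — is a conjunction of bounded quantifications of $\Sigma_1$-formulas, hence $\Sigma_1(\nu^+,\nu)$ by the closure of the $\Sigma_1$-formulas under bounded quantification; so $\{S^{\nu^+}_\nu\}$ is $\Sigma_1(\nu^+,\nu)$-definable.

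Part~(ii) is proved along the same lines, the one extra point being that $\omega_k$ must be eliminated from the parameters in favour of $\omega_{k+1}$. This is possible because, with $\omega_{k+1}$ at hand, the cardinal arithmetic below $\omega_{k+1}$ needed to recognise cofinality $\omega_k$ can be phrased by $\Sigma_1$-formulas; for instance ``$|\alpha|=\omega_k$'' is equivalent to the $\Sigma_1(\omega_{k+1})$-statement that $\alpha<\omega_{k+1}$ and every ordinal in $[\alpha,\omega_{k+1})$ admits a bijection onto $\alpha$, and building on such facts one obtains $\Sigma_1(\omega_{k+1})$-descriptions of membership and non-membership in $S^{\omega_{k+1}}_{\omega_k}$, after which the step from part~(i) applies. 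I expect this complexity accounting — in particular dealing uniformly with the cofinalities $\omega_j$ for $0<j<k$ without referring to $\omega_j$ itself — to be the most delicate point of the whole proof.

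For part~(iii), recall that $S^{\nu^+}_\nu$ is stationary in $\nu^+$ (a classical fact, as $\nu^+$ is regular and larger than $\nu$), while, since $\nu$ is uncountable, $S^{\nu^+}_\omega$ is a stationary subset of $\nu^+$ disjoint from $S^{\nu^+}_\nu$; hence $S^{\nu^+}_\nu$ is bistationary. On the other hand, by the analysis in part~(i) both ``$\alpha\in S^{\nu^+}_\nu$'' and ``$\alpha\notin S^{\nu^+}_\nu$'' are $\Sigma_1(\nu^+,\nu)$, so the characteristic function of $S^{\nu^+}_\nu$ on $[\nu^+]^1$ is a $\Sigma_1(\nu)$-partition; since $\nu\in\HH{\nu^+}$, the $\mathbf{\Sigma}_1$-club property at $\nu^+$ (or, via part~(i), Lemma~\ref{lemma:CharacterizationsClubProperty}) would make this function constant on a club, that is, would make $S^{\nu^+}_\nu$ contain or be disjoint from a club subset of $\nu^+$ — contradicting bistationarity.

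For part~(iv), let $\kappa$ be a regular cardinal greater than $\omega_1$. Then $S^\kappa_\omega$ is stationary in $\kappa$ and $S^\kappa_{\omega_1}$ is a stationary subset of $\kappa$ disjoint from it, so $S^\kappa_\omega$ is bistationary. Here ``$\cof{\lambda}=\omega$'' is $\Sigma_1(\kappa)$, while its negation for $\lambda<\kappa$ is $\Pi_1(\kappa)$ and hence $\Sigma_2(\kappa)$; repeating the argument of part~(i) with $\Sigma_2$ in place of $\Sigma_1$ — using that the class of $\Sigma_2$-formulas contains the $\Pi_1$-formulas and is closed under conjunction, disjunction and bounded quantification — shows that $\{S^\kappa_\omega\}$ is $\Sigma_2(\kappa)$-definable and that the characteristic function of $S^\kappa_\omega$ on $[\kappa]^1$ is a $\Sigma_2$-partition. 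So, by Lemma~\ref{lemma:CharacterizationsClubProperty} with $n=2$ and no additional parameters, the $\Sigma_2$-club property would again make $S^\kappa_\omega$ contain or avoid a club, which is impossible. The only genuine obstacle is the complexity bookkeeping of part~(ii); parts~(iii) and~(iv) are routine once the standard stationarity facts are recorded.
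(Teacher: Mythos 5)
Parts (i) and (iv) of your proposal are correct and essentially identical to the paper's argument. There are, however, genuine gaps in (ii) and (iii).

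In (ii), the step you yourself flag as delicate is exactly where the route breaks down. Your clause certifies ``$|\alpha|=\omega_k$'' in a $\Sigma_1(\omega_{k+1})$ way, but that is the easy half. To write ``$\lambda\in S^{\omega_{k+1}}_{\omega_k}$'' positively you need a strictly increasing cofinal map from $\omega_k$ itself — a map from some ordinal merely of cardinality $\omega_k$ only certifies $\cof{\lambda}\leq\omega_k$ — and to write ``$\lambda\notin S^{\omega_{k+1}}_{\omega_k}$'' positively you need a cofinal map from some $\mu<\omega_k$, i.e.\ you must express ``$|\mu|<\omega_k$''. Both reduce to pinning down the ordinal $\omega_k$ (equivalently, to certifying ``$\mu$ is a cardinal'' or ``$\mu$ is regular''), which is prima facie $\Pi_1$ and is not delivered by bijection-counting against $\omega_{k+1}$. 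The paper closes this with a different device: $\omega_k$ is the unique $\lambda$ for which there is a transitive model $M$ of a suitable finite fragment of $\ZFC^-$ with $\omega_{k+1}^M=\omega_{k+1}$ and $\omega_k^M=\lambda$; since true cardinals are $M$-cardinals, anchoring the top cardinal forces $\omega_j^M=\omega_j$ for all $j\leq k$, and the existence of such an $M$ is a $\Sigma_1(\omega_{k+1},\lambda)$ assertion. Once $\{\omega_k\}$ is $\Sigma_1(\omega_{k+1})$-definable, part (i) finishes the job. Without this (or an equivalent) idea your (ii) is incomplete.

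In (iii) your argument covers only regular $\nu$, whereas the statement allows $\nu$ to be any uncountable cardinal. For singular $\nu$ the set $S^{\nu^+}_\nu$ is empty (cofinalities are regular cardinals), so it is not bistationary, and part (i) does not apply. The paper treats this case separately: let $z$ be the set of uncountable regular cardinals below $\nu$, so $z\in\HH{\nu^+}$. Since $\nu$ is singular, every limit $\lambda<\nu^+$ satisfies $\cof{\lambda}<\nu$, hence $\lambda\notin S^{\nu^+}_\omega$ if and only if $\lambda$ is not a limit or some $\mu\in z$ admits a strictly increasing cofinal map into $\lambda$. Thus membership and non-membership in $S^{\nu^+}_\omega$ are both $\Sigma_1(\nu^+,z)$, the set $S^{\nu^+}_\omega$ is bistationary because $\nu$ is uncountable, and Lemma \ref{lemma:CharacterizationsClubProperty} yields the contradiction. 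You need to add this case to complete (iii).
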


\begin{proof}
  (i) Fix an infinite regular cardinal $\nu$ and $\gamma\in\Lim\cap\nu^+$. 
 If there is a strictly increasing cofinal function $\map{s}{\nu}{\gamma}$, then $\cof{\gamma}=\nu$. 
 In the other case, if there is a limit ordinal $\lambda<\nu$ and a strictly increasing cofinal function $\map{s}{\lambda}{\gamma}$, then  $\cof{\gamma}<\nu$. 
 These two implications yield a $\Sigma_1(\nu^+,\nu)$-definition of $\{S^{\nu^+}_\nu\}$.

 (ii) Given $k<\omega$, the cardinal $\omega_k$ is the unique ordinal $\lambda$ with the property that there is a transitive model $M$ of $\ZFC+\anf{\textit{$\omega_\omega$ exists}}$ such that $\omega_{k+1}=\omega_{k+1}^M$ and $\omega_k=\lambda$. 
This observation shows that the set $\{\omega_k\}$ is  $\Sigma_1(\omega_{k+1})$-definable and, in combination with (i), this yields the desired statement.

 (iii) Assume, towards a contradiction, that there is an uncountable cardinal $\nu$ such that the cardinal $\nu^+$ has the $\mathbf{\Sigma}_1$-club property. 
 Since $\nu$ is uncountable, Lemma \ref{lemma:CharacterizationsClubProperty} and (i) imply that  $\nu$ is singular. 
 Let $z$ denote the set of all uncountable regular cardinals less than $\nu$. 
 Then the set  $S^{\nu^+}_\omega$ consists of all limit ordinals $\lambda<\nu^+$ with the property that there is no strictly increasing cofinal function $\map{s}{\mu}{\gamma}$ with $\mu\in z$. 
 This shows that $\{S^{\nu^+}_\omega\}$ is $\Sigma_1(\nu^+,z)$-definable, contradicting Lemma \ref{lemma:CharacterizationsClubProperty}.

 (iv) If $\kappa<\omega_1$ is a regular cardinal, then $\{S^\kappa_\omega\}$ is $\Sigma_2(\kappa)$-definable and therefore Lemma \ref{lemma:CharacterizationsClubProperty} implies that $\kappa$ does not have the $\Sigma_2$-club property. 
\end{proof}

Note that in general, if $\nu$ is an infinite cardinal, then the set $\{\nu\}$ need not be $\Sigma_1(\nu^+)$-definable. For example, {\cite[Corollary 3.3]{GoodWO}} shows that it is consistent that for some measurable cardinal $\delta$, the sets $\{\delta\}$ and $\{\delta^+\}$ are not $\Sigma_1(\delta^{++})$-definable.

\begin{proposition}
The following statements are equivalent for every uncountable regular cardinal $\kappa$: 
  \begin{enumerate}[leftmargin=0.9cm]
   \item $\kappa$ has the $\mathbf{\Sigma}_2$-club property. 

   \item $\kappa$ has the $\mathbf{\Sigma}_n$-club property for all $n<\omega$. 

   \item If $z\in\HH{\kappa}$ , then $\HOD_z$ does not contain a bistationary subset of $\kappa$. 
  \end{enumerate}
\end{proposition}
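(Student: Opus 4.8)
The statement is the club-property counterpart of Corollary \ref{corollary:2ImpliesN}, and the plan is to establish it along the same lines, by running the cycle of implications (ii)$\Rightarrow$(i)$\Rightarrow$(iii)$\Rightarrow$(ii). The implication (ii)$\Rightarrow$(i) is immediate, since (i) is the instance $n=2$ of (ii), and the two remaining implications will be proved in contrapositive form. The main ingredient is Lemma \ref{lemma:CharacterizationsClubProperty}: combined with the definition of the $\mathbf{\Sigma}_n$-club property and the observation that a subset of $\kappa$ that neither contains a closed unbounded subset of $\kappa$ nor is disjoint from one is precisely a bistationary subset of $\kappa$, it shows that $\kappa$ has the $\mathbf{\Sigma}_n$-club property if and only if there is no $z\in\HH{\kappa}$ for which some $\Sigma_n(\kappa,z)$-definable subset of $\kappa$ is bistationary. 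The other ingredient is Proposition \ref{proposition:IntersectionsWithHOD}, which will be used to convert a bistationary set living in $\HOD_z$ into a $\Sigma_n(\kappa,z)$-definable one.

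For (i)$\Rightarrow$(iii), I would assume that (iii) fails, fix $z\in\HH{\kappa}$ such that $\HOD_z$ contains a bistationary subset of $\kappa$, and let $X$ denote the class of all bistationary subsets of $\kappa$. Here the one computation that needs to be done is a complexity estimate: since being a closed unbounded subset of $\kappa$ is a $\Delta_0(\kappa)$-property, the assertion that a subset $A$ of $\kappa$ is stationary in $\kappa$, i.e. that $A$ meets every such set, is $\Pi_1(\kappa)$, and its negation is $\Sigma_1(\kappa)$; hence the class $X$ is $\Pi_1(\kappa)$-definable and $\VV\setminus X$ is $\Sigma_1(\kappa)$-definable, so that both of these classes are in particular $\Sigma_2(\kappa)$-definable. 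Proposition \ref{proposition:IntersectionsWithHOD}, applied with $y=\kappa$ and $n=2$, then produces an $A\in\HOD_z\cap X$ such that the set $\{A\}$ is $\Sigma_2(\kappa,z)$-definable. Since such an $A$ is a bistationary subset of $\kappa$ and $z\in\HH{\kappa}$, Lemma \ref{lemma:CharacterizationsClubProperty} shows that $\kappa$ does not have the $\mathbf{\Sigma}_2$-club property, so that (i) fails.

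For (iii)$\Rightarrow$(ii), I would assume that (ii) fails. By Theorem \ref{theorem:Sigma0Partitions}, the $\mathbf{\Sigma}_0$-club property always holds, so there is some $0<n<\omega$ with $\kappa$ failing the $\mathbf{\Sigma}_n$-club property, and by the characterization above there are a set $z\in\HH{\kappa}$ and a subset $A$ of $\kappa$ such that $\{A\}$ is $\Sigma_n(\kappa,z)$-definable and $A$ is bistationary. As $A$ is a set of ordinals that is definable from ordinal parameters together with $z$, it is an element of $\HOD_z$, and since $z\in\HH{\kappa}$, this contradicts (iii).

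I expect the only slightly delicate point to be the complexity bookkeeping in (i)$\Rightarrow$(iii), where one must be sure that ``$A$ is bistationary'' and its negation sit, respectively, at the $\Pi_1$ and $\Sigma_1$ levels over the single parameter $\kappa$, so that Proposition \ref{proposition:IntersectionsWithHOD} can be invoked with one common value of $n$. Everything else is a direct transcription of the argument behind Corollary \ref{corollary:2ImpliesN}.
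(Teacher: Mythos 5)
Your proposal is correct and follows essentially the same route as the paper: the paper's proof consists exactly of the contrapositive of (i)$\Rightarrow$(iii), obtained by applying Proposition \ref{proposition:IntersectionsWithHOD} (with $n=2$) to the class $X$ of bistationary subsets of $\kappa$ and then invoking Lemma \ref{lemma:CharacterizationsClubProperty}, which is precisely your argument. The remaining implications, which the paper leaves implicit, are filled in by you correctly \textemdash{} in particular the observation that any set of ordinals whose singleton is $\Sigma_n(\kappa,\vec{\gamma},z)$-definable lies in $\HOD_z$, which gives (iii)$\Rightarrow$(ii).
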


\begin{proof}
 Assume that (iii) fails for some $z\in\HH{\kappa}$ and let $X$ denote the set of all bistationary subsets of $\kappa$. Then both $X$ and $\VV\setminus X$ are $\Sigma_2(\kappa)$-definable and hence Proposition \ref{proposition:IntersectionsWithHOD} yields an $A\in X$ with the property that the set $\{A\}$ is $\Sigma_2(\kappa,z)$-definable. By Lemma \ref{lemma:CharacterizationsClubProperty}, this implies that (i) fails. 
\end{proof}

In the remainder of this section, we investigate the consistency strength of the $\mathbf{\Sigma}_n$-club properties. 
In the case $n=1$, we will show that the validity of the $\mathbf{\Sigma}_1$-club property at $\omega_1$ is equiconsistent with both the existence of an inaccessible cardinal with this property and the existence of an accessible limit cardinal possessing this property. Moreover, we will present narrow bounds for the consistency strength of these theories.  
In the following, many arguments rely on the notion of \emph{good sets of indiscernibles} (see {\cite[Section 1]{MR645907}}). 
Remember that, if $\kappa$ is a cardinal and $A$ is a subset of $\kappa$, then $I\subseteq\kappa$ is a \emph{good set of indiscernibles for $\langle\LL_\kappa[A],\in,A\rangle$} if the following statements hold for all $\gamma\in I$: 
 \begin{enumerate}[leftmargin=0.9cm]
  \item $\langle\LL_\gamma[A\cap\gamma],\in,A\cap\gamma\rangle$ is an elementary substructure of $\langle\LL_\kappa[A],\in,A\rangle$. 
  
  \item  $I\setminus\gamma$ is a set of indiscernibles for the structure $\langle\LL_\kappa[A],\in,A,\xi\rangle_{\xi<\gamma}$. 
 \end{enumerate}
Then {\cite[Lemma 1.2]{MR645907}} shows that a cardinal $\kappa$ is Ramsey if and only if for every $A\subseteq\kappa$, there is a good set of indiscernibles for $\langle\LL_\kappa[A],\in,A\rangle$. 
In the following, we will show that cardinals with $\mathbf{\Sigma}_1$-club property are Ramsey with respect to  subsets of $\kappa$ whose singletons are $\Sigma_1$-definable, in the sense that the club property implies the existence of good sets of indiscernibles for the corresponding structures $\langle\LL_\kappa[A],\in,A\rangle$.  
Moreover, we will show that the this restricted form of Ramseyness is downwards absolute to the Dodd--Jensen core model $\KK^{DJ}$ and, in this inner model, it is equivalent to the $\mathbf{\Sigma}_1$-club property. 
These arguments will also allow us to show that the existence of a cardinal with the $\mathbf{\Sigma}_1$-club property has much higher consistency strength than the existence of a cardinal with the $\mathbf{\Sigma}_1$-colouring property.

 \begin{proposition}\label{proposition:ClubPropertyGoodIndi}
  Let $\kappa$ be an uncountable regular cardinal and let $A$ be a subset of $\kappa$ such that the set $\{A\}$ is $\Sigma_n(\kappa,z)$-definable for some set $z$. If $\kappa$ has the $\Sigma_n(z)$-club property, then there is a club subset of $\kappa$ that is a good set of indiscernibles for $\langle\LL_\kappa[A],\in,A\rangle$. 
 \end{proposition}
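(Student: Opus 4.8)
The plan is to realise the desired set as the intersection of the standard \emph{condensation club} with a diagonal intersection of homogeneous clubs obtained from the club property, one small family of clubs for each finite tuple of ordinal parameters below $\kappa$. First I would take care of clause (i) in the definition of a good set of indiscernibles. Since a homogeneous club has size $\kappa$, the $\Sigma_n(z)$-club property trivially implies the $\Sigma_n(z)$-colouring property, so Corollary \ref{corollary:InaccessibleInLA} shows that $\kappa$ is inaccessible in $\LL[A]$ and hence $\langle\LL_\kappa[A],\in,A\rangle$ is a model of $\ZFC$. By condensation for $\LL[A]$, the set $C_0$ of all $\gamma<\kappa$ with $\langle\LL_\gamma[A\cap\gamma],\in,A\cap\gamma\rangle\prec\langle\LL_\kappa[A],\in,A\rangle$ --- i.e. the set of closure points $\gamma$ of the Skolem functions of $\langle\LL_\kappa[A],\in,A\rangle$ for which $\mathrm{Hull}(\gamma)\cap\On=\gamma$ --- contains a club subset of $\kappa$, and every $\gamma\in C_0$ satisfies clause (i).

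Next comes a definability bookkeeping step. Fix a $\Sigma_n$-formula $\theta(v,\kappa,z)$ witnessing that $\{A\}$ is $\Sigma_n(\kappa,z)$-definable. Since $n\geq 1$ and the relation $\langle\LL_\kappa[v],\in,v\rangle\models\varphi(\vec a)$ is $\Delta_1(\kappa,v,\vec a)$ for a genuine parameter $v$, the satisfaction relation $\langle\LL_\kappa[A],\in,A\rangle\models\varphi(\vec a)$ (in the free variables $\vec a$, with $\varphi$, $\kappa$ and $z$ as parameters) is both $\Sigma_n(\kappa,z)$ --- write it as $\exists v\,(\theta(v,\kappa,z)\wedge\langle\LL_\kappa[v],\in,v\rangle\models\varphi(\vec a))$ --- and $\Pi_n(\kappa,z)$ --- write it as $\forall v\,(\theta(v,\kappa,z)\to\langle\LL_\kappa[v],\in,v\rangle\models\varphi(\vec a))$. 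I would also fix a $\Delta_1(\kappa)$-definable bijection $p$ between $\kappa$ and $[\kappa]^{{<}\omega}$ together with the club $C_p$ of its closure points.

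Now, for each $\rho<\kappa$, each $k<\omega$ and each formula $\varphi$ with $\betrag{p(\rho)}+k$ free variables, let $\map{c_{\rho,\varphi,k}}{[\kappa]^k}{2}$ be given by $c_{\rho,\varphi,k}(s)=1$ exactly when $\langle\LL_\kappa[A],\in,A\rangle\models\varphi$ holds with the free variables of $\varphi$ filled, in order, by the increasing enumeration of $p(\rho)$ followed by the increasing enumeration of $s$. By the previous paragraph each $c_{\rho,\varphi,k}$ is a $\Sigma_n(\kappa,\rho,z)$-partition, so --- using Lemma \ref{lemma:CharacterizationsClubProperty}, whose equivalent formulations of the club property admit finitely many ordinal parameters below $\kappa$ in addition to $z$ --- there is a $c_{\rho,\varphi,k}$-homogeneous club $C_{\rho,\varphi,k}$. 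Putting $C_\rho=\bigcap\Set{C_{\rho,\varphi,k}}{k<\omega,~\varphi}$, a club, I would set
$$I ~ = ~ C_0\cap C_p\cap\bigtriangleup_{\rho<\kappa}C_\rho,$$
which is a club in $\kappa$. To verify that $I$ works: clause (i) holds as $I\subseteq C_0$; for clause (ii), given $\gamma\in I$ and $\xi_1<\cdots<\xi_j<\gamma$ there is, by $\gamma\in C_p$, an ordinal $\rho<\gamma$ with $p(\rho)=(\xi_1,\ldots,\xi_j)$, and then $\gamma\in\bigtriangleup_{\rho'<\kappa}C_{\rho'}$ forces $I\setminus\gamma\subseteq C_\rho\subseteq C_{\rho,\varphi,k}$ for all $\varphi$ and $k$; homogeneity of $C_{\rho,\varphi,k}$ then gives $\langle\LL_\kappa[A],\in,A\rangle\models\varphi(\xi_1,\ldots,\xi_j,\vec\alpha)\leftrightarrow\varphi(\xi_1,\ldots,\xi_j,\vec\beta)$ for any two increasing tuples $\vec\alpha,\vec\beta$ from $I\setminus\gamma$ of equal length, so $I\setminus\gamma$ is a set of indiscernibles for $\langle\LL_\kappa[A],\in,A,\xi\rangle_{\xi<\gamma}$.

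The step I expect to be the real obstacle is the one concealed in clause (ii): indiscernibility must hold over the structure expanded by \emph{all} ordinals below $\gamma$, so the naive single colouring recording for a finite set $s$ the full first-order type of $s\setminus\{\min s\}$ over $\min s$ has a colour space of size up to $2^{\betrag{\min s}}$, which need not be below $\kappa$ and so lies beyond the reach of the club property. The way around this is to split the parameters off --- for a fixed finite parameter-tuple the colouring needs only two colours --- and let a diagonal intersection over the codes $\rho<\kappa$ of these tuples arrange that, below every point of the resulting club, all finite parameter-tuples are handled simultaneously. The remaining point to check carefully, but essentially routine, is the definability accounting of the second paragraph, keeping satisfaction in $\LL_\kappa[A]$ inside $\Delta_n(\kappa,z)$ although $A$ is only $\Sigma_n(\kappa,z)$-definable and not a parameter.
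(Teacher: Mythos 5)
Your proposal is correct and follows essentially the same route as the paper's proof: two-valued colourings indexed by a formula and a finite tuple of ordinal parameters, homogeneous clubs supplied by the club property in its parametrized form (Lemma \ref{lemma:CharacterizationsClubProperty}), a diagonal intersection over the parameters, and intersection with the condensation club for clause (i). Your coding of parameter tuples by single ordinals $\rho$ (versus the paper's bounding of tuples below $\beta$ and passage to $\Lim(C)$) and your explicit $\Delta_n$ bookkeeping for the satisfaction relation are only presentational differences.
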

 
 \begin{proof}
  Given an $\calL_\in$-formula $\varphi(v_0,\ldots,v_{k+m-1})$ and ordinals $\beta_0,\ldots,\beta_{m-1}<\kappa$, we define a function $\map{c_{\varphi,\beta_0,\ldots,\beta_{m-1}}}{[\kappa]^k}{2}$ by setting $$c_{\varphi,\beta_0,\ldots,\beta_{m-1}}(\{\alpha_0,\ldots,\alpha_k\})=0 ~ \Longleftrightarrow ~ \langle\LL_\kappa[A],\in,A\rangle\models\varphi(\alpha_0,\ldots,\alpha_k,\beta_0,\ldots,\beta_{m-1})$$ for all $\alpha_0<\ldots<\alpha_k<\kappa$. By our assumptions on $A$, we know that the set $\{\LL_\kappa[A]\}$ is $\Sigma_n(\kappa,z)$-definable and hence $c_{\varphi,\beta_0,\ldots,\beta_{m-1}}$ is a $\Sigma_n(z)$-partition. But this shows that there is $c_{\varphi,\beta_0,\ldots,\beta_{m-1}}$-homogeneous club in $\kappa$. Given $\beta<\kappa$, this implies that there is a club $C_\beta$ in $\kappa$ that is  $c_{\varphi,\beta_0,\ldots,\beta_{m-1}}$-homogeneous for every $\calL_\in$-formula $\varphi(v_0,\ldots,v_{k+m-1})$ and all $\beta_0,\ldots,\beta_{m-1}<\beta$. Let $C$ denote the intersection of $\bigtriangleup\Set{C_\beta}{\beta<\kappa}$  with the club of all $\gamma<\kappa$ such that $\langle\LL_\gamma[A\cap\gamma],\in,A\cap\gamma\rangle$ is an elementary substructure of $\langle\LL_\kappa[A],\in,A\rangle$.  Then it is easy to check that $\Lim(C)$ is a good set of indiscernibles for $\langle\LL_\kappa[A],\in,A\rangle$. 
 \end{proof}

The following corollary uses the above result to show that strong anti-large cardinal assumptions imply the existence of simply definable bistationary subsets of uncountable regular cardinals. In particular, it shows that the existence of a cardinal with the $\mathbf{\Sigma}_1$-club property implies the existence of $x^\#$ for every $x\in\RRR$.

 \begin{corollary}\label{corollary:ClubPropSharps}
  If $x$ is a real such that $x^\#$ does not exist and $\kappa$ is an uncountable regular cardinal, then there is a bistationary subset $A$ of $\kappa$ with the property that the set $\{A\}$ is $\Sigma_1(\kappa,\gamma_0,\ldots,\gamma_m,x)$-definable for some $\gamma_0,\ldots,\gamma_{m-1}<\kappa$. 
 \end{corollary}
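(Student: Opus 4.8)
The plan is to prove the contrapositive. So I would fix an uncountable regular cardinal $\kappa$ and assume that no bistationary subset $A$ of $\kappa$ has the property that $\{A\}$ is $\Sigma_1$-definable from $\kappa$, the real $x$ and finitely many ordinals below $\kappa$, aiming to conclude that $x^\#$ exists. Since $x$ is a real, $x\in\HH{\kappa}$, and unravelling the assumption shows that it is literally statement (i) of Lemma \ref{lemma:CharacterizationsClubProperty} for $n=1$ and $z=x$: every $A\subseteq\kappa$ whose singleton is $\Sigma_1(\kappa,\gamma_0,\dots,\gamma_{m-1},x)$-definable contains a club or is disjoint from one. Hence $\kappa$ satisfies all of the equivalent conditions of that lemma, so in particular $\kappa$ has the $\Sigma_1(x)$-club property.

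Next I would feed this into Proposition \ref{proposition:ClubPropertyGoodIndi}, applied with $n=1$ and $A=z=x$, viewing $x$ as a subset of $\omega\subseteq\kappa$ whose singleton is trivially $\Sigma_0(x)$-definable and hence $\Sigma_1(\kappa,x)$-definable. This yields a club $C\subseteq\kappa$ that is a good set of indiscernibles for $\langle\LL_\kappa[x],\in,x\rangle$. It then remains to observe that such a $C$ produces $x^\#$: since $\kappa$ is regular and uncountable, $C$ has order type $\kappa\geq\omega_1$ and $\LL_\kappa[x]\models\ZFC^-$, while clauses (i) and (ii) of the definition of a good set of indiscernibles give $\langle\LL_\gamma[x],\in,x\rangle\prec\langle\LL_\kappa[x],\in,x\rangle$ for every $\gamma\in C$ together with indiscernibility over all initial segments. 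This is exactly the data from which one reads off a remarkable, well-founded Ehrenfeucht--Mostowski blueprint for $\LL[x]$ --- that is, $x^\#$ exists (see {\cite[Section 1]{MR645907}}), contradicting the hypothesis of the corollary.

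I expect the only step carrying genuine content to be this last passage. The definition of a good set of indiscernibles recalled in the excerpt places no lower bound on the size of the set, so the conclusion really hinges on Proposition \ref{proposition:ClubPropertyGoodIndi} delivering a \emph{club} --- an uncountable good set of indiscernibles all of whose members reflect $\langle\LL_\kappa[x],\in,x\rangle$ --- and on the Dodd--Jensen analysis of such sets, which converts this into the existence of $x^\#$. The remaining steps are bookkeeping with Lemma \ref{lemma:CharacterizationsClubProperty} and Proposition \ref{proposition:ClubPropertyGoodIndi}, once one notes that $x\in\HH{\kappa}$ and that ordinal parameters below $\kappa$ are absorbed into the $\Sigma_1$-definitions.
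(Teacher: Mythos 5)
Your argument is correct and is essentially the paper's proof read in contrapositive form: the paper likewise combines Proposition \ref{proposition:ClubPropertyGoodIndi} with Lemma \ref{lemma:CharacterizationsClubProperty} and the standard Dodd--Jensen/Kunen fact that an uncountable good set of indiscernibles for $\langle\LL_\kappa[x],\in,x\rangle$ yields $x^\#$ (cited there as {\cite[Theorem 9.14]{MR1994835}}). Your identification of the uncountability of the club of indiscernibles as the only step with real content matches the paper's use of that same fact.
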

 
\begin{proof}
 By our assumption, standard arguments (see, for example, {\cite[Theorem 9.14]{MR1994835}}) show that there is no uncountable good set of indiscernibles for $\langle\LL_\kappa[x],\in,x\rangle$ and hence Proposition \ref{proposition:ClubPropertyGoodIndi} shows that $\kappa$ does not have the $\Sigma_1(x)$-club property. 
Lemma \ref{lemma:CharacterizationsClubProperty} then yields the desired conclusion. 
\end{proof}


In \cite{MR2830415}, Gitman provided another useful characterization of Ramseyness by showing that a cardinal $\kappa$ is Ramsey if and only if for every $A\subseteq\kappa$, there is a weak $\kappa$-model $M$ with $A\in M$ and a weakly amenable countably complete $M$-ultrafilter on $\kappa$ (see {\cite[Proposition 2.8.(3)]{MR2830415}}). 
In combination with arguments from \cite{MR2830415}, the above results already show that the $\mathbf{\Sigma}_1$-club property implies the restriction of the above property to $\Sigma_1$-definable singletons. 
 We will later show that, in canonical inner models, this restricted property is actually equivalent to the $\mathbf{\Sigma}_1$-club property. This will allow us to show that the $\mathbf{\Sigma}_1$-club property is downwards absolute to the Dodd--Jensen core model $\KK^{DJ}$. 
The proof of the forward direction of {\cite[Proposition 2.8.(3)]{MR2830415}} in {\cite[Section 4]{MR2830415}} also provides a proof of the following statement.

\begin{lemma}\label{lemma:GitmanLemma}
   Let $\kappa$ be an uncountable regular cardinal and let $A$ be a subset of $\kappa$ with the property that $\kappa$ is an inaccessible cardinal in $\LL[A]$. If there is a good set of indiscernibles for $\langle\LL_\kappa[A],\in,A\rangle$, then there is a weak $\kappa$-model $M$ with $A\in M$ and a weakly amenable countably complete $M$-ultrafilter on $\kappa$. 
\end{lemma}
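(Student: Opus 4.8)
The plan is to deduce this from the construction used in Gitman's proof of the forward direction of {\cite[Proposition 2.8.(3)]{MR2830415}}, tracking what is actually used. Recall that in Gitman's argument one starts with a good set of indiscernibles $I$ of order type $\kappa$ for a structure of the form $\langle\LL_\kappa[A],\in,A\rangle$ and builds, via an ultrapower-style construction along $I$, a weak $\kappa$-model together with a weakly amenable, countably complete ultrafilter on $\kappa$. The key observation is that her construction never uses that $\kappa$ is weakly compact or Ramsey beyond the existence of the good set of indiscernibles; the only additional ingredient needed to run it is that $\LL_\kappa[A]$ (and enough of its power set of $\kappa$) sits inside a weak $\kappa$-model, which is exactly where the hypothesis that $\kappa$ is inaccessible in $\LL[A]$ enters.

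In more detail, the first step is to fix a good set of indiscernibles $I$ for $\langle\LL_\kappa[A],\in,A\rangle$; we may assume $\otp(I)=\kappa$ after intersecting with a club, since by (i) in the definition of good indiscernibles the set of $\gamma$ with $\LL_\gamma[A\cap\gamma]\prec\LL_\kappa[A]$ is club and the indiscernibles below such $\gamma$ form a good set for the smaller structure. The second step is to produce the weak $\kappa$-model: since $\kappa$ is inaccessible in $\LL[A]$, we have $\POT{\kappa}^{\LL[A]}\subseteq\LL_{\kappa^+}[A]$ and $\betrag{\LL_{\kappa^+}[A]}=\kappa$, so we may take $M$ to be the transitive collapse of a suitable elementary submodel of $\langle\LL_\theta[A],\in\rangle$ of size $\kappa$ containing $\kappa+1$, $A$ and $I$ (or simply $M=\LL_\lambda[A]$ for an appropriate $\lambda$ with $\LL_\lambda[A]\models\ZFC^-$, noting $\LL_{\kappa^{+}}[A]$ has a $\ZFC^-$ model around it of size $\kappa$). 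This model satisfies $A\in M$, $\kappa\in M$, $|M|=\kappa$ and $M\models\ZFC^-$, and crucially $\POT{\kappa}\cap M=\POT{\kappa}^{\LL[A]}$.

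The third step is to define the $M$-ultrafilter. Following Gitman, for $X\in\POT{\kappa}\cap M$ one puts $X\in U$ iff $X$ contains a tail of $I$, i.e. iff there is $\gamma\in I$ with $I\setminus\gamma\subseteq X$; well-definedness and the ultrafilter property on $\POT{\kappa}\cap M$ follow from indiscernibility together with the fact that every such $X$ is, inside the relevant end-segment structure, defined by an $\calL_\in$-formula with finitely many indiscernible and sub-$\gamma$ parameters, so either it or its complement contains a tail of $I$. Normality and $\kappa$-completeness for sequences in $M$ come from the usual pressing-down argument applied to regressive functions that lie in $M$, again using that $I$ is a set of indiscernibles. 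Weak amenability — that $U\cap N\in M$ for every $N\in M$ of size $\le\kappa$ in $M$ — follows because membership of $X$ in $U$ is, by the definability just mentioned, decided uniformly inside $\LL_\kappa[A]$, hence computed by $M$. Countable completeness is the one genuinely nontrivial point: given a decreasing $\omega$-sequence $\langle X_n\mid n<\omega\rangle$ from $U$, each $X_n$ contains a tail $I\setminus\gamma_n$ of $I$, and since $I$ has uncountable order type $\kappa$ (a regular uncountable cardinal), $\sup_n\gamma_n<\kappa$ lies below some $\gamma\in I$, so $I\setminus\gamma\subseteq\bigcap_n X_n$ witnesses $\bigcap_n X_n\neq\emptyset$.

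The main obstacle, and the part that requires genuine care rather than routine bookkeeping, is verifying weak amenability and the $M$-ultrafilter axioms at the exact level of definability available: one must check that for an arbitrary element of $M$ coding a subset of $\POT{\kappa}$, the restriction of $U$ to that family is again an element of $M$, which hinges on the fact that the "contains a tail of $I$" relation is $\LL_\kappa[A]$-definable with indiscernible parameters in a way that $M$ can internalize. Since all of this is carried out explicitly in {\cite[Section 4]{MR2830415}} for the Ramsey case, and the only place that argument invokes Ramseyness is to obtain the good indiscernibles (which we are now handed by hypothesis) and the ambient weak $\kappa$-model (which the inaccessibility of $\kappa$ in $\LL[A]$ supplies), the proof is essentially a citation of that construction with the indiscernibles and model fed in as data. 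We therefore conclude as desired.
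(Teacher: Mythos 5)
Your overall strategy coincides with the paper's: the paper offers no proof beyond the remark that the construction in Section 4 of Gitman's paper goes through once one is handed the good set of indiscernibles and a suitable weak $\kappa$-model, and you say essentially the same thing, with the tail filter on the indiscernibles as the ultrafilter and the uncountable order type of $I$ yielding countable completeness. Those parts of your sketch are fine.

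The concrete choice of $M$ you propose, however, creates a genuine gap. You take $M$ to be (a hull of) $\LL_\theta[A]$, or $\LL_\lambda[A]$ for a suitable $\lambda$ with $\LL_\lambda[A]\models\ZFC^-$, and assert that every $X\in\POT{\kappa}\cap M$ is defined ``inside the relevant end-segment structure'' by a formula with indiscernible and sub-$\gamma$ parameters, so that indiscernibility decides $X$. But $I$ is a set of indiscernibles only for $\langle\LL_\kappa[A],\in,A\rangle$; a set $X\in\POT{\kappa}\cap\LL_\lambda[A]$ with $X\notin\LL_{\kappa+1}[A]$ is definable only over some $\langle\LL_\beta[A],\in\rangle$ with $\beta>\kappa$ (for instance $X=\Set{\gamma<\kappa}{\LL_{\kappa+\gamma}[A]\models\sigma}$), and the indiscernibility hypothesis says nothing about such definitions, so nothing forces $X$ or its complement to contain a tail of $I$. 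Consequently the tail filter is not an $M$-ultrafilter for your $M$, and the claim $\POT{\kappa}\cap M=\POT{\kappa}^{\LL[A]}$ is both unjustified (it already presupposes $\betrag{\POT{\kappa}^{\LL[A]}}=\kappa$) and not what the construction delivers. In Gitman's setting this issue is invisible because Ramseyness may be applied to a set $B$ coding a previously fixed weak $\kappa$-model, after which every subset of $\kappa$ in that model is definable over $\langle\LL_\kappa[B],\in,B\rangle$ from an ordinal parameter; here the predicate $A$ is fixed in advance, and producing a weak $\kappa$-model all of whose subsets of $\kappa$ are decided by the tail filter is precisely the nontrivial content that the citation has to carry. (A smaller point: a weak $\kappa$-model containing $A$ exists with no hypothesis at all, so the inaccessibility of $\kappa$ in $\LL[A]$ is not there merely to ``supply the ambient model''; it is what makes $\LL_\kappa[A]=\HH{\kappa}^{\LL[A]}$, so that every element of $\LL_\kappa[A]$ is definable from ordinal parameters and clause (ii) of goodness applies to arbitrary parameters.)
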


By combining Corollary \ref{corollary:InaccessibleInLA}, Proposition \ref{proposition:ClubPropertyGoodIndi}  and the above lemma, we directly obtain the following corollary.

\begin{corollary}
 Let $\kappa$ be an uncountable regular cardinal and let $A$ be a subset of $\kappa$ with the property that the set $\{A\}$ is $\Sigma_n(\kappa,z)$-definable for some set $z$. If $\kappa$ has the $\Sigma_n(z)$-club property, then there exist a weak $\kappa$-model $M$ with $A\in M$ and a weakly amenable countably complete $M$-ultrafilter on $\kappa$. \qed 
\end{corollary}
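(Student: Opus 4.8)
The plan is to simply assemble the three cited ingredients, since the statement is explicitly flagged as following by combining Corollary \ref{corollary:InaccessibleInLA}, Proposition \ref{proposition:ClubPropertyGoodIndi} and Lemma \ref{lemma:GitmanLemma}. First I would observe that the $\Sigma_n(z)$-club property implies the $\Sigma_n(z)$-colouring property: given any $\Sigma_n(z)$-partition $\map{c}{[\kappa]^2}{2}$, the ordinal $2$ is a cardinal below $\kappa$, so the club property yields a $c$-homogeneous closed unbounded subset of $\kappa$, which in particular has cardinality $\kappa$. Hence Corollary \ref{corollary:InaccessibleInLA} applies to the set $A$ — whose singleton is $\Sigma_n(\kappa,z)$-definable by hypothesis — and shows that $\kappa$ is inaccessible in $\LL[A]$.

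Next, since $\{A\}$ is $\Sigma_n(\kappa,z)$-definable and $\kappa$ has the $\Sigma_n(z)$-club property, Proposition \ref{proposition:ClubPropertyGoodIndi} provides a club subset of $\kappa$ that is a good set of indiscernibles for the structure $\langle\LL_\kappa[A],\in,A\rangle$; in particular, such a good set of indiscernibles exists.

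Finally, feeding these two facts — that $\kappa$ is inaccessible in $\LL[A]$ and that there is a good set of indiscernibles for $\langle\LL_\kappa[A],\in,A\rangle$ — into Lemma \ref{lemma:GitmanLemma} produces a weak $\kappa$-model $M$ with $A\in M$ together with a weakly amenable countably complete $M$-ultrafilter on $\kappa$, which is exactly the desired conclusion.

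Because every step is a direct invocation of a previously established result, there is essentially no obstacle here; the only point that requires a moment's thought is the passage from the club property to the colouring property needed to apply Corollary \ref{corollary:InaccessibleInLA}, and this is immediate since the colouring in question has range of size $2<\kappa$ while closed unbounded subsets of $\kappa$ have cardinality $\kappa$.
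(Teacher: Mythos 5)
Your proposal is correct and is exactly the argument the paper intends: the corollary is stated as a direct combination of Corollary \ref{corollary:InaccessibleInLA}, Proposition \ref{proposition:ClubPropertyGoodIndi} and Lemma \ref{lemma:GitmanLemma}, and you assemble these three ingredients in the same way. Your explicit remark that the $\Sigma_n(z)$-club property yields the $\Sigma_n(z)$-colouring property (since a club in the regular cardinal $\kappa$ has cardinality $\kappa$) correctly supplies the one bridging step the paper leaves implicit when invoking Corollary \ref{corollary:InaccessibleInLA}.
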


The next result will later allow us to show that, in the case $n=1$, the converse of the above implication also holds true in certain canonical inner models. The arguments used in its proof are taken from the proof of {\cite[Lemma 6.7]{MR3694344}}.

\begin{lemma}\label{lemma:IterableModelsClubProperty}
 Let $\kappa$ be an uncountable regular cardinal, let $z\in\HH{\kappa}$ and let $\varphi(v_0,\ldots,v_3)$ be a $\Sigma_0$-formula. Assume that there is a unique subset $A$ of $\kappa$ with the property that $\exists x ~ \varphi(A,x,\kappa,z)$ holds. If there exist a weak $\kappa$-model $M$ with the property that $A,\tc{\{z\}}\in M\models\exists x ~ \varphi(A,x,\kappa,z)$ and a weakly amenable $M$-ultrafilter $U$ on $\kappa$ such that $\langle M,\in,U\rangle$ is $\omega_1$-iterable, then $A$ either contains a club subset of $\kappa$ or is disjoint from such a set. 
\end{lemma}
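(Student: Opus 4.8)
The plan is to argue by contradiction: assume $A$ is bistationary in $\kappa$ and derive a contradiction from the existence of the $\omega_1$-iterable structure $\langle M,\in,U\rangle$. The key idea is that the countably complete (hence wellfounded) iteration of $\langle M,\in,U\rangle$ along $\On$ produces, at a club of stages, copies of $M$ whose critical sequence enters $\kappa$ cofinally, and elementarity will pin down $A$ as a definable object that must be homogeneous on this club. More precisely, first I would form the iteration $\langle M_\alpha,\in,U_\alpha\rangle_{\alpha\in\On}$ of $\langle M,\in,U\rangle$ with iteration maps $j_{\alpha,\beta}\colon M_\alpha\to M_\beta$, which exists and has all $M_\alpha$ wellfounded because $\langle M,\in,U\rangle$ is $\omega_1$-iterable (so in particular all its countable iterates are wellfounded, which by standard arguments suffices to iterate through the ordinals; here I use that $M$ is a weak $\kappa$-model and $U$ is weakly amenable, so the iteration is the usual linear one and $\crit{j_{0,\alpha}}$ is the image of $\kappa$ under the first $\alpha$ applications). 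Let $\kappa_\alpha = j_{0,\alpha}(\kappa)$ be the critical sequence; then $C = \Set{\kappa_\alpha}{\alpha<\kappa}$ — more precisely the set of fixed points $\Set{\kappa_\alpha}{\kappa_\alpha = \alpha}$, which is club in $\kappa$ — is a closed unbounded subset of $\kappa$, and for each such $\alpha$ the structure $\langle M_\alpha,\in\rangle$ satisfies $\exists x\,\varphi(\bar A_\alpha, x, \alpha, z)$ where $\bar A_\alpha = j_{0,\alpha}(A)\cap\alpha$ and $z$ is fixed by $j_{0,\alpha}$ because $\tc{\{z\}}\in M$ has size less than $\kappa=\crit{j_{0,1}}$.

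The heart of the argument is then to identify $\bar A_\alpha$ with $A\cap\alpha$ for $\alpha$ in a club. Since $\varphi$ is $\Sigma_0$ and $\exists x\,\varphi(\cdot,x,\cdot,\cdot)$ is $\Sigma_1$, upward absoluteness gives that $\exists x\,\varphi(\bar A_\alpha,x,\alpha,z)$ holds in $\VV$ as well; by the uniqueness hypothesis applied with $\kappa$ replaced by $\alpha$ — here I need that the defining property "$A$ is the unique subset of $\kappa$ with $\exists x\,\varphi(A,x,\kappa,z)$" reflects to a club of $\alpha$, which follows by a Löwenheim–Skolem / elementarity argument on $\HH{\kappa}$ (or by noting $\{A\}$ is $\Sigma_1(\kappa,z)$-definable and using that $\kappa$ is inaccessible in $\LL[A]$, as in Corollary~\ref{corollary:InaccessibleInLA}, plus the fact from Lemma~\ref{lemma:CharacterizationsClubProperty}-style reasoning that such definitions localize) — we get that on a club $C'$ of $\alpha$, $\bar A_\alpha$ is the \emph{unique} subset of $\alpha$ witnessing $\exists x\,\varphi(\cdot,x,\alpha,z)$, and since $A\cap\alpha$ also witnesses it (by $\Sigma_1$-reflection of $\exists x\,\varphi(A,x,\kappa,z)$ at the right $\alpha$), we conclude $\bar A_\alpha = A\cap\alpha$ for all $\alpha\in C'$.

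Finally, since $j_{0,\alpha}(A)\cap\alpha = A\cap\alpha$ on the club $C'$ and $j_{0,\alpha}(A)\cap\kappa_\alpha$ depends only on the first $\alpha$ steps, the standard reflection argument for iterations shows membership of $\kappa_\beta$ in $j_{0,\beta+1}(A)$ is constant: either $\kappa_\alpha\in j_{0,\alpha+1}(A)$ for all relevant $\alpha$, or for none. In the first case the critical sequence (intersected with $C'$) is a club contained in $A$; in the second it is a club disjoint from $A$. Either way $A$ is not bistationary, the desired contradiction. I expect the main obstacle to be the bookkeeping in the middle step — showing that the uniqueness of the defining witness reflects to a club of ordinals $\alpha$ so that $j_{0,\alpha}(A)\cap\alpha = A\cap\alpha$; this is where one must be careful that $\varphi$ being $\Sigma_0$ (so $\exists x\,\varphi$ is genuinely $\Sigma_1$ and both upward absolute and, via the witness, suitably reflecting) is exactly what makes the identification go through, and one likely packages this using a club of $\alpha$ with $\HH{\alpha}\prec_{\Sigma_1}\HH{\kappa}$ together with weak amenability to control $j_{0,\alpha}(A)\restriction\alpha$.
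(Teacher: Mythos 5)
There is a genuine gap, and it is at the very first step. You iterate $\langle M,\in,U\rangle$ itself, where $M$ is a weak $\kappa$-model and $U$ is an $M$-ultrafilter on $\kappa$; the critical sequence of this iteration satisfies $\kappa_0=\kappa$ and $\kappa_\alpha=j_{0,\alpha}(\kappa)\geq\kappa$ for all $\alpha$, so it never enters $\kappa$. The set $\Set{\kappa_\alpha}{\kappa_\alpha=\alpha}$ you propose as a club in $\kappa$ is in fact empty below $\kappa$, and the whole construction of a club inside $\kappa$ from this iteration collapses. The paper avoids this by first choosing an elementary substructure $\langle N,\in,F\rangle$ of $\langle M,\in,U\rangle$ of cardinality \emph{less than} $\kappa$ containing $\tc{\{z\}}$, $A$ and a witness $a$ with $\varphi(A,a,\kappa,z)$, collapsing it to $\langle M_0,\in,U_0\rangle$ (which is again weakly amenable and $\omega_1$-iterable), and iterating \emph{that} structure $\kappa+1$ times. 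Since $M_0$ has size less than $\kappa$, the critical sequence $\Set{(j_{0,\alpha}\circ\pi)(\kappa)}{\alpha<\kappa}$ really is a club in $\kappa$ and $(j_{0,\kappa}\circ\pi)(\kappa)=\kappa$.

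Your proposed ``heart of the argument'' --- reflecting the uniqueness of $A$ to club-many $\alpha<\kappa$ so that $j_{0,\alpha}(A)\cap\alpha=A\cap\alpha$ --- is a second problem: the uniqueness hypothesis is a $\Pi_1$-type assertion quantifying over all subsets of $\kappa$, and nothing in the hypotheses makes it reflect to smaller ordinals; the L\"owenheim--Skolem sketch does not close this. In the correct argument no such reflection is needed. The uniqueness is used exactly once, at the top stage $\kappa$ of the iteration of the collapsed structure: $\exists x\,\varphi\bigl((j_{0,\kappa}\circ\pi)(A),x,\kappa,z\bigr)$ holds in $M_\kappa$, hence in $\VV$ by $\Sigma_1$-upward absoluteness (this is why carrying the witness $a$ into $N$ matters), and uniqueness forces $(j_{0,\kappa}\circ\pi)(A)=A$, whence $(j_{0,\alpha}\circ\pi)(A)=A\cap(j_{0,\alpha}\circ\pi)(\kappa)$ for all $\alpha<\kappa$. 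Your final dichotomy (membership of $A$, equivalently $\pi(A)$, in the ultrafilter decides whether the critical sequence lies inside or outside $A$) is the right closing move, but it only becomes available after the collapse-then-iterate setup.
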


\begin{proof}
 Fix $a\in M$ such that $\varphi(A,a,\kappa,z)$ holds and pick an elementary submodel $\langle N,\in,F\rangle$ of $\langle M,\in,U\rangle$ of cardinality less than $\kappa$ with $\tc{\{z\}}\cup\{A,a\}\subseteq N$. Let $\map{\pi}{N}{M_0}$ denote the corresponding transitive collapse and set $U_0=\pi[F]$. 
Then $U_0$ is a weakly amenable $M_0$-ultrafilter on $\pi(\kappa)$ and {\cite[Theorem 19.15]{MR1994835}} implies that $\langle M_0,\in,U_0\rangle$ is $\omega_1$-iterable. Let $$\langle\seq{M_\alpha}{\alpha\leq\kappa},\seq{\map{j_{\alpha,\beta}}{M_\alpha}{M_\beta}}{\alpha\leq\beta\leq\kappa}\rangle$$ denote the corresponding iteration of length $\kappa+1$. Then we have $(j_{0,\kappa}\circ\pi)(\kappa)=\kappa$, $(j_{0,\kappa}\circ\pi)(z)=z$ and $\Sigma_1$-upwards absoluteness implies that $\exists x ~ \varphi((j_{0,\kappa}\circ\pi)(A),x,\kappa,z)$ holds. This allows us to conclude that $A=(j_{0,\kappa}\circ\pi)(A)$ and $(j_{0,\alpha}\circ\pi)(A)=A\cap(j_{0,\alpha}\circ\pi)(\kappa)$ for all $\alpha<\kappa$.  Define $C$ to be the club $\Set{(j_{0,\alpha}\circ\pi)(\kappa)}{\alpha<\kappa}$ in $\kappa$. First, assume that $A\in U$. Then $(j_{0,\alpha}\circ\pi)(A)\in U_\alpha$ and hence $(j_{0,\alpha}\circ\pi)(\kappa)\in (j_{0,\alpha+1}\circ\pi)(A)\subseteq A$ for all $\alpha<\kappa$. This shows that $C$ is a subset of $A$ in this case. In the other case, if $A\notin U$, then the same argument shows that $A\cap C=\emptyset$.   
\end{proof}

\begin{lemma}\label{lemma:GoodWOIterableModelsClubProp}
 If $\kappa$ is an uncountable regular cardinal such that there exists a good $\Sigma_1(\kappa,y)$-well-ordering of $\POT{\kappa}$ for some $y\in\HH{\kappa}$, then the following statements are equivalent: 
 \begin{enumerate}[leftmargin=0.9cm]
  \item $\kappa$ has the $\mathbf{\Sigma}_1$-club property. 

  \item For all $A\subseteq\kappa$ with the property that the set $\{A\}$ is $\Sigma_1(\kappa,z)$-definable for some $z\in\HH{\kappa}$, there is a weak $\kappa$-model $M$ with $A\in M$ and a weakly amenable countably complete $M$-ultrafilter on $\kappa$. 
 \end{enumerate}
\end{lemma}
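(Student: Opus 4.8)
The plan is to prove the two implications separately; the hypothesis about the good well-ordering is needed only for $(\mathrm{ii})\Rightarrow(\mathrm{i})$. For $(\mathrm{i})\Rightarrow(\mathrm{ii})$, I would simply apply the corollary obtained above by combining Corollary \ref{corollary:InaccessibleInLA}, Proposition \ref{proposition:ClubPropertyGoodIndi} and Lemma \ref{lemma:GitmanLemma}: if $A\subseteq\kappa$ and $\{A\}$ is $\Sigma_1(\kappa,z)$-definable for some $z\in\HH{\kappa}$, then the $\mathbf{\Sigma}_1$-club property yields in particular the $\Sigma_1(z)$-club property, so that corollary (with $n=1$) produces a weak $\kappa$-model $M$ with $A\in M$ together with a weakly amenable countably complete $M$-ultrafilter on $\kappa$; the good well-ordering plays no role here.

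For $(\mathrm{ii})\Rightarrow(\mathrm{i})$, I would first reduce, via Lemma \ref{lemma:CharacterizationsClubProperty} and the closure of $\HH{\kappa}$ under pairing, to showing: for every $z\in\HH{\kappa}$ and every $A\subseteq\kappa$ such that $\{A\}$ is $\Sigma_1(\kappa,z)$-definable, $A$ contains or is disjoint from a club subset of $\kappa$. Writing the definition of $\{A\}$ as ``$A$ is the unique set with $\exists x\,\varphi(A,x,\kappa,z)$'' for a $\Sigma_0$-formula $\varphi(v_0,\ldots,v_3)$, the goal is to verify the hypotheses of Lemma \ref{lemma:IterableModelsClubProperty}. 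The obstacle to doing this directly is that a witness $x$ for $\exists x\,\varphi(A,x,\kappa,z)$ may have hereditary cardinality above $\kappa$ and therefore cannot be coded by a subset of $\kappa$. To get around this I would reflect: choose $\theta$ large enough that $\HH{\theta}\models\exists x\,\varphi(A,x,\kappa,z)$, pick $N\prec\HH{\theta}$ of cardinality $\kappa$ with $\kappa\cup\{\kappa,A,z\}\cup\tc{\{z\}}\subseteq N$ and a witness $x_0\in N$, and let $P$ be the transitive collapse of $N$. Then $P$ is transitive of cardinality $\kappa$, fixes $\kappa$, $z$ and $A$, and satisfies $\varphi(A,\bar x_0,\kappa,z)$ for the collapse $\bar x_0$ of $x_0$ (which then also holds in $\VV$ by $\Sigma_0$-absoluteness). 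Fixing a bijection of $\kappa$ onto $P$, the membership relation of $P$ together with the three distinguished points naming $\kappa$, $z$ and $A$ is coded by a subset of $\kappa$; using the good $\Sigma_1(\kappa,y)$-well-ordering $\lhd$ of $\POT{\kappa}$ I would let $B$ be the $\lhd$-least subset of $\kappa$ coding such a structure, which exists by the construction just described.

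The heart of the argument is the verification that $\{B\}$ is $\Sigma_1(\kappa,w)$-definable, where $w=\goedel{y}{z}\in\HH{\kappa}$. There are two points. First, since the Mostowski collapse of a well-founded extensional set relation exists and is unique, the predicate ``$E\subseteq\kappa$ codes a transitive structure $Q$ in which the distinguished points are $\kappa$, $z$ and some subset $a$ of $\kappa$ with $Q\models\exists u\,\varphi(a,u,\kappa,z)$'' is $\Delta_1(\kappa,z)$; moreover $\Sigma_0$-absoluteness of $\varphi$ together with the uniqueness of $A$ forces $a=A$ for every such $E$, so any code honestly computes $\kappa$, $z$ and $A$. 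Second, ``$B$ is $\lhd$-least with this predicate'' is expressed by postulating a proper initial segment $D\in I(\lhd)$ containing $B$ all of whose elements fail the predicate; since $I(\lhd)$ is $\Sigma_1(\kappa,y)$-definable and both the predicate and its negation are $\Sigma_1$, the bounded universal quantifier over $D$ is again $\Sigma_1$ by the Collection scheme — exactly as in the proof of Proposition \ref{proposition:IntersectionsWithHOD} — so the whole definition of $B$ is $\Sigma_1(\kappa,w)$. Now applying (ii) to $B$ yields a weak $\kappa$-model $M$ with $B\in M$ and a weakly amenable countably complete $M$-ultrafilter $U$ on $\kappa$; inside $M$ one decodes $B$ to recover $A$ and $z$ (so $\tc{\{z\}}\in M$) and the transitive structure $P\in M$, and transitivity of $P$ with $\Sigma_0$-absoluteness of $\varphi$ gives $M\models\exists x\,\varphi(A,x,\kappa,z)$. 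Finally, a standard argument (Gaifman's theorem in the form for $M$-ultrafilters; cf.\ the proof of Lemma \ref{lemma:IterableModelsClubProperty}, {\cite[Theorem 19.15]{MR1994835}} and {\cite{MR2830415}}) shows that a weakly amenable countably complete $M$-ultrafilter has well-founded iterated ultrapowers of all lengths, so $\langle M,\in,U\rangle$ is $\omega_1$-iterable, and Lemma \ref{lemma:IterableModelsClubProperty} then gives that $A$ contains or is disjoint from a club. I expect the $\Sigma_1$-bookkeeping in the construction of $B$ — bounding the complexity of the code predicate and of ``$\lhd$-least'', and checking that the coded structure computes $\kappa$, $z$ and $A$ correctly — to be the main obstacle, and it is precisely here that the good $\Sigma_1(\kappa,y)$-well-ordering of $\POT{\kappa}$ is essential.
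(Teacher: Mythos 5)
Your proof is correct and follows essentially the same route as the paper: the forward direction is exactly the quoted corollary, and for (ii)$\Rightarrow$(i) the paper likewise uses the good well-ordering to select a $\lhd$-least $B\subseteq\kappa$ whose singleton is $\Sigma_1(\kappa,y,z)$-definable and which forces any weak $\kappa$-model containing it to contain $A$ and $\tc{\{z\}}$ and to satisfy $\exists x\,\varphi(A,x,\kappa,z)$, then invokes countable completeness $\Rightarrow$ $\omega_1$-iterability and Lemma \ref{lemma:IterableModelsClubProperty}. The only difference is the coding device --- the paper takes $B$ to be $\lhd$-least such that the least $\ZFC^-$-level $\LL_\alpha[B]$ with $\alpha>\kappa$ contains $A,\tc{\{z\}}$ and satisfies the statement, whereas you code a collapsed elementary submodel of some $\HH{\theta}$ as a well-founded relation on $\kappa$; both work (modulo the small slip that your initial segment $D$ should consist of the $\lhd$-predecessors of $B$, i.e.\ $D\cup\{B\}\in I(\lhd)$, rather than contain $B$).
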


\begin{proof}
  Let $\lhd$ be a well-ordering of some class containing $\POT{\kappa}$ such that the class $I(\lhd)$  is $\Sigma_1(\kappa,z)$-definable. 
 Fix a $\Sigma_0$-formula $\varphi(v_0,\ldots,v_3)$ and $z\in\HH{\kappa}$ such that there is a unique subset $A$ of $\kappa$ with the property that $\exists x ~ \varphi(A,x,\kappa,z)$ holds. Then there is an $x\in\HH{\kappa^+}$ such that $\varphi(A,x,\kappa,z)$ holds. 
 Let $B$ denote the $\lhd$-least element of $\POT{\kappa}$ with the property that, if $\alpha>\kappa$ is minimal with $\LL[B]\models\ZFC^{-}$, then we have $A,\tc{\{z\}}\in\LL_\alpha[B]\models\exists x ~ \varphi(A,x,\kappa,z)$. 
Then the set $\{B\}$ is $\Sigma_1(\kappa,y,z)$-definable and our assumptions yield a weak $\kappa$-model $M$ with $B\in M$ and a weakly amenable countably complete $M$-ultrafilter on $\kappa$. 
Then $A,z\in M\models\exists x ~ \varphi(A,x,\kappa,z)$ and, since countable completeness implies $\omega_1$-iterability (see {\cite[Lemma 19.11 and 19.12]{MR1994835}}), we can apply Lemma \ref{lemma:IterableModelsClubProperty} to conclude that $A$ either contains a club in $\kappa$ or is disjoint from such a set. 
\end{proof}

\begin{lemma}
 If $\kappa$ is an uncountable regular cardinal with the $\mathbf{\Sigma}_1$-club property, then $\kappa$ is an inaccessible cardinal with the $\mathbf{\Sigma}_1$-club property in $\KK^{DJ}$.
\end{lemma}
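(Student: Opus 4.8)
The plan is to combine the downward absoluteness machinery already developed in this section with the characterization of the $\mathbf{\Sigma}_1$-club property via iterable models. The key point is that $\KK^{DJ}$ carries a good $\Sigma_1(\kappa)$-well-ordering of $\POT{\kappa}$ for every uncountable cardinal $\kappa$ (cited above from \cite{GoodWO}), so Lemma \ref{lemma:GoodWOIterableModelsClubProp} will be available inside $\KK^{DJ}$ once we have verified its hypothesis there. First I would fix $A\subseteq\kappa$ with $A\in\KK^{DJ}$ such that the set $\{A\}$ is $\Sigma_1(\kappa,z)$-definable in $\KK^{DJ}$ for some $z\in\HH{\kappa}^{\KK^{DJ}}$; it suffices, by Lemma \ref{lemma:GoodWOIterableModelsClubProp} applied in $\KK^{DJ}$, to produce a weak $\kappa$-model $M\in\KK^{DJ}$ with $A\in M$ together with a weakly amenable countably complete $M$-ultrafilter on $\kappa$ that lies in $\KK^{DJ}$.

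Next I would pull this data down from $\VV$. Using the standard coding of a $\Sigma_1(\kappa,z)$-definition of $\{A\}$ in $\KK^{DJ}$ (and the $\Sigma_1(\kappa)$-definability of $\KK^{DJ}$ and its initial segments, which makes the relevant singletons $\Sigma_1$-definable in $\VV$ with parameters in $\HH{\kappa}$), one sees that the set $\{A\}$ is $\Sigma_1(\kappa,z')$-definable in $\VV$ for a suitable $z'\in\HH{\kappa}$. Since $\kappa$ has the $\mathbf{\Sigma}_1$-club property in $\VV$, Proposition \ref{proposition:ClubPropertyGoodIndi} (together with Corollary \ref{corollary:InaccessibleInLA}, which gives that $\kappa$ is inaccessible in $\LL[A]$ and hence in $\KK^{DJ}$) yields a club good set of indiscernibles $I$ for $\langle\LL_\kappa[A],\in,A\rangle$. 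Feeding $I$ into Lemma \ref{lemma:GitmanLemma} produces, in $\VV$, a weak $\kappa$-model $M_0$ with $A\in M_0$ and a weakly amenable countably complete $M_0$-ultrafilter $U_0$ on $\kappa$. The task is then to obtain such an object \emph{inside} $\KK^{DJ}$.

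The crucial step — and the one I expect to be the main obstacle — is the absoluteness of this ultrafilter data to $\KK^{DJ}$. Here I would invoke the iterability/comparison theory of the Dodd--Jensen core model: the existence (in $\VV$) of a weak $\kappa$-model carrying a weakly amenable countably complete ultrafilter on $\kappa$ is exactly the hypothesis that produces, via Dodd--Jensen's analysis, a sharp-like mouse over $A$ at $\kappa$; since $A\in\KK^{DJ}$ and countable completeness gives $\omega_1$-iterability (by \cite[Lemma 19.11 and 19.12]{MR1994835}), the relevant iterable premouse is itself an element of $\KK^{DJ}$ (the core model is closed under the mice witnessing such iterable ultrafilters). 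This is the analogue, one measurable-free level down, of the fact that if $0^\#$ exists then it is in $\KK^{DJ}$; more precisely, one uses that $\KK^{DJ}$ computes $\POT{\kappa}$ correctly below the first measurable and that an $\omega_1$-iterable weakly amenable $M$-ultrafilter over a set in $\KK^{DJ}$ reflects into $\KK^{DJ}$ by the Dodd--Jensen covering and rigidity arguments. Once the required $M\in\KK^{DJ}$ and $U\in\KK^{DJ}$ are secured, Lemma \ref{lemma:GoodWOIterableModelsClubProp}, applied within $\KK^{DJ}$ (whose hypothesis on the good well-ordering of $\POT{\kappa}$ holds by \cite[Lemma 1.10]{GoodWO}), shows that $\kappa$ has the $\mathbf{\Sigma}_1$-club property in $\KK^{DJ}$; and inaccessibility of $\kappa$ in $\KK^{DJ}$ follows from Corollary \ref{corollary:InaccessibleInLA} as already noted. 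I would write the absoluteness step carefully, isolating exactly which features of $\KK^{DJ}$ (correctness of $\POT{\kappa}$, closure under $\omega_1$-iterable mice, rigidity) are used, since this is where all the real content of the lemma sits.
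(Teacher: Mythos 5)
Your skeleton matches the paper's: pull the $\Sigma_1$-definition of $\{A\}$ down from $\KK^{DJ}$ to $\VV$, use Proposition \ref{proposition:ClubPropertyGoodIndi} to get a good set of indiscernibles for $\langle\LL_\kappa[A],\in,A\rangle$, convert this into a weak $\kappa$-model with a weakly amenable countably complete ultrafilter inside $\KK^{DJ}$, and finish with Lemma \ref{lemma:GoodWOIterableModelsClubProp} there, using the good $\Sigma_1(\kappa)$-well-ordering from \cite{GoodWO}. But the step you flag as ``the main obstacle'' is exactly where your argument has a genuine gap. You apply Lemma \ref{lemma:GitmanLemma} in $\VV$ and then try to reflect the resulting pair $(M_0,U_0)$ into $\KK^{DJ}$ by appealing to ``closure of the core model under the mice witnessing such iterable ultrafilters'', ``correctness of $\POT{\kappa}$ below the first measurable'', and ``covering and rigidity''. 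None of these, as stated, does the job: an arbitrary weak $\kappa$-model with a weakly amenable $M$-ultrafilter is not a Dodd--Jensen premouse, so closure of $\KK^{DJ}$ under lower parts of iterable premice says nothing about it; and $\KK^{DJ}$ certainly does not compute $\POT{\kappa}$ correctly in general. The model $M_0$ produced by Gitman's construction is built from the good set of indiscernibles in $\VV$, which itself need not lie in $\KK^{DJ}$, so there is no reason for $(M_0,U_0)$ to be in $\KK^{DJ}$. The paper avoids this by reversing the order: it first invokes the \emph{Jensen Indiscernibles Lemma} ({\cite[Lemma 1.3]{MR645907}}) to transfer a good set of indiscernibles for $\langle\LL_\kappa[A],\in,A\rangle$ of cardinality $\kappa$ into $\KK^{DJ}$, and only then applies Lemma \ref{lemma:GitmanLemma} \emph{inside} $\KK^{DJ}$. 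That specific transfer lemma is the missing ingredient; without it (or an equally precise substitute) your absoluteness step does not go through.

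Two smaller points. First, when pulling the definition of $\{A\}$ down to $\VV$ you need the representation of $\KK^{DJ}$ as the union of lower parts of iterable premice, which requires first knowing that $0^\#$ exists; the paper gets this from Corollary \ref{corollary:ClubPropSharps}, and you should record it. Second, your claim that inaccessibility of $\kappa$ in $\KK^{DJ}$ ``follows from Corollary \ref{corollary:InaccessibleInLA}'' is not right: that corollary gives inaccessibility in $\LL[A]$, which is an inner model of $\KK^{DJ}$, so it does not transfer upward. The paper instead applies Proposition \ref{proposition:FailureSigma1PPgoodWO} inside $\KK^{DJ}$, combining the already-established club property there with the good $\Sigma_1(\kappa)$-well-ordering of $\HH{\kappa}^{\KK^{DJ}}$.
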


\begin{proof}
 Fix $z\in\HH{\kappa}^{\KK^{DJ}}$ and $A\in\POT{\kappa}^{\KK^{DJ}}$ such that the set $\{A\}$ is $\Sigma_1(\kappa,z)$-definable in $\KK^{DJ}$. 
  By Corollary \ref{corollary:ClubPropSharps}, our assumption implies the existence of $0^\#$ and hence results of Dodd and Jensen (see {\cite[p. 238]{MR730856}}) show that $\KK^{DJ}$ is equal to the union of all \emph{lower parts} of \emph{iterable premice} in this situation. 
Since the class of all iterable premice is $\Sigma_1(\kappa)$-definable (see, for example, the proof of {\cite[Lemma 2.3]{GoodWO}}), this shows that the class $\KK^{DJ}$ is also $\Sigma_1(\kappa)$-definable in this case and we can conclude that the set $\{A\}$ is $\Sigma_1(\kappa,z)$-definable in $\VV$. 
Therefore, we can apply Proposition \ref{proposition:ClubPropertyGoodIndi} to find a club subset of $\kappa$ that is a good set of indiscernibles for $\langle\LL_\kappa[B],\in,B\rangle$. 
In this situation, we can apply the \emph{Jensen Indiscernibles Lemma} (see {\cite[Lemma 1.3]{MR645907}}) to find a good set of indiscernibles for $\langle\LL_\kappa[B],\in,B\rangle$ of cardinality $\kappa$ that is an element of $\KK^{DJ}$. 
Since Corollary \ref{corollary:InaccessibleInLA} shows that $\kappa$ is inaccessible in $\LL[B]$, we can now apply Lemma \ref{lemma:GitmanLemma} to show that in $\KK^{DJ}$, there is a weak $\kappa$-model $M$ with $A\in M$ and a weakly amenable countably complete $M$-ultrafilter on $\kappa$. 
But now, the results of {\cite[Section 2]{GoodWO}} show that the restriction of the canonical well-ordering of $\KK^{DJ}$ to $\POT{\kappa}^{\KK^{DJ}}$ is a good $\Sigma_1(\kappa)$ in $\KK^{DJ}$. 
Therefore, the above computations allow us to use Lemma \ref{lemma:GoodWOIterableModelsClubProp} in $\KK^{DJ}$ to conclude that $\kappa$ has the $\mathbf{\Sigma}_1$-club property in this model. 
Finally, we can apply Proposition  \ref{proposition:FailureSigma1PPgoodWO} to show that $\kappa$ is inaccessible in $\KK^{DJ}$.  
\end{proof}

Next, we provide an upper bounds for the consistency strength of the existence of an inaccessible cardinal with the $\mathbf{\Sigma}_1$-club property  with the help of the following large cardinal property strengthening weak compactness that was introduced by Sharpe and Welch in \cite{MR2817562} and extensively studied  in \cite{MR2830435}. 
 %
 %
 %

\begin{definition}\label{definition:IterableCardinal}
 An uncountable cardinal $\kappa$ is \emph{$\omega_1$-iterable} if for every subset $A$ of $\kappa$, there is a weak $\kappa$-model $M$ and a weakly amenable $M$-ultrafilter $U$ on $\kappa$ such that $A\in M$ and $\langle M,\in,U\rangle$ is $\omega_1$-iterable. 
\end{definition}

The following corollary is a direct consequence of Lemma \ref{lemma:IterableModelsClubProperty}. 
Note that this result is basically already proven in {\cite[Section 6]{MR3694344}}, but only for $\Sigma_1(\kappa)$-definitions.

\begin{corollary}\label{proposition:IterableCardinalsClubProperty}
  All $\omega_1$-iterable cardinals have the $\mathbf{\Sigma}_1$-club property. \qed 
\end{corollary}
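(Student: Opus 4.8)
The plan is to derive this corollary directly from Lemma \ref{lemma:IterableModelsClubProperty} together with the characterization of the $\mathbf{\Sigma}_1$-club property given in Lemma \ref{lemma:CharacterizationsClubProperty}. Let $\kappa$ be an $\omega_1$-iterable cardinal; note that such a cardinal is weakly compact, hence uncountable and regular, so the relevant lemmas apply.

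First I would fix a parameter $z\in\HH{\kappa}$ and a subset $A$ of $\kappa$ with the property that the set $\{A\}$ is $\Sigma_1(\kappa,z)$-definable, say via a $\Sigma_1$-formula of the form $\exists x\ \varphi(v_0,x,v_1,v_2)$ with $\varphi$ a $\Sigma_0$-formula, so that $A$ is the unique subset of $\kappa$ satisfying $\exists x\ \varphi(A,x,\kappa,z)$. By Lemma \ref{lemma:CharacterizationsClubProperty}, to establish the $\mathbf{\Sigma}_1$-club property it suffices to show that every such $A$ either contains a club subset of $\kappa$ or is disjoint from one; in fact one should run through the reduction in that lemma so that it is enough to treat singletons definable with parameters in $\HH{\kappa}\cup\{\kappa\}$, which is exactly the situation just described (the ordinal parameters $\gamma_0,\dots,\gamma_{m-1}<\kappa$ can be absorbed into the single parameter $z\in\HH{\kappa}$, as remarked at the start of the previous section).

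Next I would apply the definition of $\omega_1$-iterability (Definition \ref{definition:IterableCardinal}) to the set $B=A\oplus\tc{\{z\}}$, or more simply to a set coding both $A$ and $\tc{\{z\}}$: this yields a weak $\kappa$-model $M$ and a weakly amenable $M$-ultrafilter $U$ on $\kappa$ such that $A,\tc{\{z\}}\in M$ and $\langle M,\in,U\rangle$ is $\omega_1$-iterable. Since $\Sigma_0$-formulas are absolute between transitive models and $M$ is transitive with $A,\tc{\{z\}}\in M$ and $\kappa\in M$, and since $\exists x\ \varphi(A,x,\kappa,z)$ holds in $\VV$ with a witness $x\in\HH{\kappa^+}$, one needs the witness to be reflected into $M$; this is ensured because $M\models\ZFC^-$ and $\kappa\in M$, so $M$ contains all the objects of hereditary size $\le\kappa$ that are definable from its elements, and in particular a witness for the existential statement must exist inside $M$ (alternatively, first pass to a weak $\kappa$-model that additionally contains such a witness, which $\omega_1$-iterability also provides after coding the witness into the subset of $\kappa$ one feeds in). Thus $A,\tc{\{z\}}\in M\models\exists x\ \varphi(A,x,\kappa,z)$, and Lemma \ref{lemma:IterableModelsClubProperty} applies verbatim to conclude that $A$ contains or is disjoint from a club in $\kappa$.

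The main obstacle, and the only point requiring care, is the reflection of the $\Sigma_0$-witness $x$ into the weak $\kappa$-model $M$: $\omega_1$-iterability as stated only hands us a model containing the prescribed subset of $\kappa$, and one must argue that $M\models\exists x\ \varphi(A,x,\kappa,z)$ rather than merely having this hold in $\VV$. This is handled by the standard device of coding the witness: apply $\omega_1$-iterability to a subset $A'$ of $\kappa$ that codes $A$, $\tc{\{z\}}$, and (a well-order of) $\tc{\{x\}}$ for some fixed witness $x$ — all of hereditary size $\le\kappa$ — so that the resulting $M$ decodes $x$ and hence satisfies the existential statement. Once this is in place the rest is a direct citation of the two preceding lemmas, and since $z\in\HH{\kappa}$ was arbitrary, $\kappa$ has the $\mathbf{\Sigma}_1$-club property.
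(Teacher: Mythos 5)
Your proposal is correct and matches the paper's intent exactly: the paper gives no written proof beyond calling the corollary a direct consequence of Lemma \ref{lemma:IterableModelsClubProperty}, and the details you supply (reduce via Lemma \ref{lemma:CharacterizationsClubProperty} to $\Sigma_1(\kappa,z)$-definable singletons $\{A\}$, code $A$, $\tc{\{z\}}$ and a witness of hereditary size at most $\kappa$ into one subset of $\kappa$, feed it to $\omega_1$-iterability, and apply the lemma) are precisely the intended argument. One sentence in your middle paragraph is false as stated --- a weak $\kappa$-model of size $\kappa$ certainly does not contain all objects of hereditary size at most $\kappa$ definable from its elements --- but this is harmless, since you immediately replace it with the correct device of coding a fixed witness into the subset of $\kappa$ to which iterability is applied.
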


In the following, we show that the $\mathbf{\Sigma}_1$-club property at $\omega_1$ can be established by collapsing an inaccessible cardinal with this property.

\begin{lemma}\label{lemma:SmallForcingClubProperty}
 Let $\kappa$ be an uncountable regular cardinal with the $\mathbf{\Sigma}_n$-club property and let $\PPP\in\HH{\kappa}$ be a partial order. If $G$ is $\PPP$-generic over $\VV$, then $\kappa$ has the $\mathbf{\Sigma}_n$-club property in $\VV[G]$. 
\end{lemma}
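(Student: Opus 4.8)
The strategy is to adapt the proof of Lemma~\ref{lemma:LevySolovayPartitionProperty}, replacing its appeal to the elementary embedding characterisation of the colouring property by a direct decomposition argument.

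First I would reduce what has to be shown. By the equivalence of statements~(i) and~(iii) in Lemma~\ref{lemma:CharacterizationsClubProperty} (applied in $\VV[G]$), together with the pairing trick that lets us restrict to a single parameter from $\HH{\kappa}$, it suffices to prove the following: whenever $z\in\HH{\kappa}^{\VV[G]}$, $\varphi(v_0,v_1,v_2)$ is a $\Sigma_n$-formula, and $A$ is the unique subset of $\kappa$ in $\VV[G]$ with $\varphi(A,\kappa,z)$, then $A$ either contains a club subset of $\kappa$ or is disjoint from one in $\VV[G]$. Now I would set up the machinery exactly as in the proof of Lemma~\ref{lemma:LevySolovayPartitionProperty}: since $\PPP\in\HH{\kappa}^\VV$, choose a $\PPP$-name $\dot z\in\HH{\kappa}^\VV$ with $\dot z^G=z$, a condition $p\in G$ forcing $\anf{\exists!\,X~\varphi(X,\check\kappa,\dot z)}$, and a bijection $b$ between a cardinal $\nu<\kappa$ and the set of all conditions of $\PPP$ below $p$ (which is possible since $\PPP\in\HH{\kappa}$). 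For $\gamma<\nu$ define the ground-model set
\[
 A_\gamma ~=~ \Set{\alpha<\kappa}{b(\gamma)\Vdash^\VV_\PPP\anf{\exists x~[\varphi(x,\check\kappa,\dot z)\wedge\check\alpha\in x]}}.
\]

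Next I would record the definability of the $A_\gamma$. The analysis of the forcing relation carried out in the proof of Lemma~\ref{lemma:LevySolovayPartitionProperty}, combined with the fact that $b(\gamma)$ fails to force $\anf{\exists x~[\varphi(x,\check\kappa,\dot z)\wedge\check\alpha\in x]}$ exactly when some $b(\delta)\leq_\PPP b(\gamma)$ forces $\anf{\exists x~[\varphi(x,\check\kappa,\dot z)\wedge\check\alpha\notin x]}$ (here the uniqueness forced by $p$ is what is used), shows that for each $\gamma<\nu$ both $A_\gamma$ and $\kappa\setminus A_\gamma$ — and hence the singleton $\{A_\gamma\}$ — are $\Sigma_n(\kappa,w_\gamma)$-definable in $\VV$ for a parameter $w_\gamma\in\HH{\kappa}^\VV$ built from $\kappa,\nu,\gamma,b,\dot z$ and $\PPP$. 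Since $\kappa$ has the $\mathbf{\Sigma}_n$-club property in $\VV$, Lemma~\ref{lemma:CharacterizationsClubProperty} provides, for each $\gamma<\nu$, a club $C_\gamma\subseteq\kappa$ in $\VV$ with either $C_\gamma\subseteq A_\gamma$ or $C_\gamma\cap A_\gamma=\emptyset$. Let $C=\bigcap\Set{C_\gamma}{\gamma<\nu}$; since $\nu<\kappa$ and $\kappa$ is regular, $C$ is a club in $\kappa$ in $\VV$, and since forcing adds no ordinals and $\kappa$ stays regular in $\VV[G]$, $C$ is still a club subset of $\kappa$ in $\VV[G]$.

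Finally I would finish the argument inside $\VV[G]$. Using that $p\in G$, that $b$ enumerates the conditions below $p$, and that $A$ is the unique witness of $\varphi(\cdot,\kappa,z)$, one checks that for every $\alpha<\kappa$ one has $\alpha\in A$ if and only if $b(\gamma)\in G$ and $\alpha\in A_\gamma$ for some $\gamma<\nu$; that is, $A=\bigcup\Set{A_\gamma}{\gamma<\nu,~b(\gamma)\in G}$. Setting $S=\Set{\gamma<\nu}{C_\gamma\subseteq A_\gamma}\in\VV$ and $S_G=\Set{\gamma<\nu}{b(\gamma)\in G}\in\VV[G]$, on the club $C$ we have $A_\gamma\cap C=C$ for $\gamma\in S$ and $A_\gamma\cap C=\emptyset$ for $\gamma\notin S$, whence $A\cap C=C$ if $S_G\cap S\neq\emptyset$ and $A\cap C=\emptyset$ otherwise. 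Either way $A$ contains a club subset of $\kappa$ or is disjoint from one, which completes the plan. I expect the only real obstacle to be the definability bookkeeping in the third paragraph — checking that $\kappa\setminus A_\gamma$, and thus $\{A_\gamma\}$, is $\Sigma_n$-definable over $\VV$ — but this is essentially identical to the corresponding step in the proof of Lemma~\ref{lemma:LevySolovayPartitionProperty}, where the uniqueness clause forced by $p$ plays exactly the same role.
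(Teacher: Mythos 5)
Your proof is correct. It does, however, take a somewhat different route from the paper's. The paper works directly with the partition form of the club property: given a $\Sigma_n(z)$-partition $\map{c}{[\kappa]^k}{\gamma}$ in $\VV[G]$, it defines a single auxiliary ground-model partition $c_0$ sending a tuple to the least G\"odel pair $\goedel{\beta}{\delta}$ such that $b(\beta)$ forces the value of $c$ at that tuple to be $\delta$, applies the ground-model club property to $c_0$, and then uses a genericity/density argument to extract a condition in $G$ together with a club on which $c$ is constant. You instead pass through the equivalence of Lemma \ref{lemma:CharacterizationsClubProperty} to the ``contains or avoids a club'' form, decompose the unique definable set $A$ as $\bigcup\Set{A_\gamma}{\gamma<\nu,\ b(\gamma)\in G}$ with each $A_\gamma$ a ground-model $\Sigma_n$-definable set, and intersect the $\nu$-many resulting clubs. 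Both arguments hinge on the same two facts -- the $\Sigma_n$-definability of the forcing relation for $\Sigma_n$-formulas and the uniqueness clause forced by $p$ (which is what makes the complements, and hence the singletons, $\Sigma_n$-definable) -- so the definability bookkeeping is identical. What your version buys is that the final step is purely combinatorial ($A\cap C$ is literally $C$ or $\emptyset$), avoiding the density argument the paper leaves implicit; what it costs is a heavier reliance on both directions of Lemma \ref{lemma:CharacterizationsClubProperty} and on the preservation of the regularity of $\kappa$ by the small forcing, which you correctly note but should make explicit since the characterisation is being applied in $\VV[G]$.
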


\begin{proof}
 Fix $z\in\HH{\kappa}^{\VV[G]}$, a $\Sigma_n(z)$-partition $\map{c}{[\kappa]^k}{\gamma}$ in $\VV[G]$ with $\gamma<\kappa$ and a $\Sigma_n$-formula $\varphi(v_0,\ldots,v_{k+2})$ defining $c$ in $\VV[G]$ as in Definition \ref{definition:SigmanPartition}.  
 Work in $\VV$ and fix a condition $p$ in $\PPP$, a $\PPP$-name $\dot{z}\in\HH{\kappa}^\VV$ with $z=\dot{z}^G$ and a bijection $b$ between a cardinal $\nu$ and the set of all conditions in $\PPP$ below $p$. 
 Given $\alpha_0<\ldots<\alpha_{k-1}<\kappa$, define $c_0(\{\alpha_0,\ldots,\alpha_{k-1}\})$ to be the least ordinal of the form $\goedel{\beta}{\delta}$, where $\beta<\nu$, $\delta<\gamma$ and $b(\beta)\Vdash_\PPP\varphi(\check{\alpha}_0,\ldots,\check{\alpha}_{k-1},\check{\delta},\check{\kappa},\dot{z})$. Then the arguments used in the proof of Lemma \ref{lemma:LevySolovayPartitionProperty} show that $c_0$ is a $\Sigma_n(\dot{z},\PPP)$-partition. 
 By our assumptions, genericity now yields $q\in G$, $\delta<\gamma$ and a club subset $C$ of $\kappa$ in $\VV$ with $q\Vdash^\VV_\PPP\varphi(\check{\alpha}_0,\ldots,\check{\alpha}_{k-1},\check{\delta},\check{\kappa},\dot{z})$ for all $\alpha_0,\ldots,\alpha_{n-1}\in C$ with $\alpha_0<\ldots<\alpha_{n-1}$. 
 In particular, there is a $c$-homogeneous subset of $\kappa$ in $\VV[G]$ that is closed and unbounded in $\kappa$.  
\end{proof}

\begin{lemma}\label{lemma:ForcingClubPropertyWeaklyInaccessibleSuccessor}
 Let $\kappa$ be an inaccessible cardinal, let $\PPP\in\{\Add{\omega}{\kappa},\Col{\omega}{{<}\kappa}\}$ and let $G$ be $\PPP$-generic over $\VV$. If $\kappa$ has the $\mathbf{\Sigma}_1$-club property in $\VV$, then $\kappa$ has the $\mathbf{\Sigma}_1$-club property in $\VV[G]$. 
\end{lemma}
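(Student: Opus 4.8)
The plan is to combine the small-forcing preservation result, Lemma~\ref{lemma:SmallForcingClubProperty}, with the factorisation of $\PPP$ into small pieces together with a definability analysis of the forcing itself, exactly in the spirit of the proofs of Lemma~\ref{lemma:LevySolovayPartitionProperty} and Lemma~\ref{lemma:Sigma1PartitionPropertyCollapses}. Since $\kappa$ is inaccessible, for $\PPP=\Add{\omega}{\kappa}$ we have $\PPP\cong\prod_{\alpha<\kappa}\Add{\omega}{\alpha_{\alpha}}$ in the natural way, and for $\PPP=\Col{\omega}{{<}\kappa}$ we have $\PPP\cong\Col{\omega}{{<}\nu}\times\Col{\omega}{[\nu,\kappa)}$ for each $\nu<\kappa$; in both cases $\PPP$ is the direct limit of its restrictions $\PPP_\nu\in\HH{\kappa}$ to coordinates below $\nu$, with quotient $\PPP/\PPP_\nu$, and $\PPP$ satisfies the $\kappa$-chain condition.

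Now let $G$ be $\PPP$-generic over $\VV$ and let $\map{c}{[\kappa]^k}{\gamma}$ be a $\Sigma_n(\kappa,z)$-partition in $\VV[G]$ with $\gamma<\kappa$ and $z\in\HH{\kappa}^{\VV[G]}$; write $A\subseteq\kappa$ for the (coded) graph of $c$, so $\{A\}$ is $\Sigma_1(\kappa,z)$-definable in $\VV[G]$. First I would use the $\kappa$-chain condition and $\HH{\kappa}^{\VV[G]}=\bigcup_{\nu<\kappa}\HH{\kappa}^{\VV[G\cap\PPP_\nu]}$ to find a regular cardinal $\nu<\kappa$ and a $\PPP_\nu$-generic filter $G_\nu=G\cap\PPP_\nu$ such that $z\in\VV[G_\nu]$ and $\VV[G]$ is a $\PPP/\PPP_\nu$-generic extension of $\VV[G_\nu]$. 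Since $\PPP_\nu\in\HH{\kappa}$, Lemma~\ref{lemma:SmallForcingClubProperty} gives that $\kappa$ retains the $\mathbf{\Sigma}_1$-club property in $\VV[G_\nu]$. The remaining issue is to push the $c$-homogeneous club from $\VV[G_\nu]$ up to $\VV[G]$, and this is where the definability of the \emph{tail} forcing $\PPP/\PPP_\nu$ matters: I would argue that $\PPP/\PPP_\nu$ is weakly homogeneous and that $\{\PPP/\PPP_\nu\}$ is $\Sigma_1(\kappa,\nu)$-definable in $\VV[G_\nu]$ — for $\PPP=\Add{\omega}{\kappa}$ this is because the tail is $\Add{\omega}{\kappa}$ itself (absorbing the finitely many affected coordinates), which is $\Sigma_1(\kappa)$-definable by part~(i) of Proposition~\ref{proposition:SmallForcingDef}; for $\PPP=\Col{\omega}{{<}\kappa}$ the tail is $\Col{\omega}{[\nu,\kappa)}\cong\Col{\omega}{{<}\kappa}$, again $\Sigma_1(\kappa)$-definable by the same proposition.

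With this in hand, part~(ii) of Proposition~\ref{proposition:SmallForcingDef} applies to the tail forcing over $\VV[G_\nu]$: it shows that $A$ already lies in $\VV[G_\nu]$ and that $\{A\}$ is $\Sigma_1(\kappa,\nu,z)$-definable in $\VV[G_\nu]$. Hence $c\in\VV[G_\nu]$ is a $\Sigma_1(\kappa,\nu,z)$-partition there, and since $\nu,z\in\HH{\kappa}^{\VV[G_\nu]}$, the $\mathbf{\Sigma}_1$-club property of $\kappa$ in $\VV[G_\nu]$ (coupled with Lemma~\ref{lemma:CharacterizationsClubProperty}) yields a $c$-homogeneous club $C\subseteq\kappa$ in $\VV[G_\nu]$. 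As $C$ remains a club and remains $c$-homogeneous in the extension $\VV[G]$ — being $c$-homogeneous is an absolute combinatorial statement about $C$ and the fixed function $c$, and $\PPP/\PPP_\nu$ adds no bounded subsets of $\kappa$ of size below $\kappa$ destroying clubness since it is a finite-support product/iteration and $\kappa$ stays regular — we conclude that $\kappa$ has the $\mathbf{\Sigma}_1$-club property in $\VV[G]$.

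The main obstacle I anticipate is the bookkeeping needed to verify that the tail forcing $\PPP/\PPP_\nu$ is genuinely $\Sigma_1(\kappa,\nu)$-definable \emph{in the intermediate model} $\VV[G_\nu]$ and not merely in $\VV$, since the definability of $\Col{\omega}{{<}\kappa}$ and $\Add{\omega}{\kappa}$ given by Proposition~\ref{proposition:SmallForcingDef}(i) is proved over $\LL_\kappa$-style parameters and must be checked to survive the passage to $\VV[G_\nu]$ (this is why one restricts to $\mu=\omega$ in the hypothesis of the analogous Lemma~\ref{lemma:ForcingClubPropertyWeaklyInaccessibleSuccessor}); a careful statement is that $\HH{\kappa}^{\VV[G_\nu]}$ is closed enough and the forcing is defined by a bounded formula over $\HH{\kappa}$, so $\{\PPP/\PPP_\nu\}$ is $\Sigma_1(\kappa,\nu)$-definable uniformly. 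Once this definability and the weak homogeneity of the tail are nailed down, the rest is a routine reprise of the factor-through-a-small-piece argument already used twice in Section~\ref{section:SigmaNpartiotionproperty}.
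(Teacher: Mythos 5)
Your proposal is correct and follows essentially the same route as the paper's proof: factor $\PPP$ through a small initial segment capturing the parameter $z$, apply Lemma~\ref{lemma:SmallForcingClubProperty} to preserve the club property in the intermediate model, and use the weak homogeneity and $\Sigma_1(\kappa,\omega)$-definability of the tail forcing via Proposition~\ref{proposition:SmallForcingDef} to pull the definable set $A$ down to that model, where the homogeneous club is found and then persists. The point you flag about the tail's definability holding in the intermediate model (because the conditions are finite, hence the forcing is absolutely computed in $\LL_\kappa$) is exactly the reason for the $\mu=\omega$ restriction and is handled the same way in the paper.
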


\begin{proof}
 Pick a subset $A$ of $\kappa$ in $\VV[G]$ that is $\Sigma_n(\kappa,z)$-definable in $\VV[G]$ for some $z\in\HH{\kappa}^{\VV[G]}$. 
As in the proof of Lemma \ref{lemma:Sigma1PartitionPropertyCollapses}, we can use Proposition \ref{proposition:SmallForcingDef} to find a regular cardinal $\nu<\kappa$ in $\VV$ and $H\in\VV[G]$ such that $H$ is either $\Add{\omega}{\nu}$- or $\Col{\omega}{\nu}$-generic over $\VV$, $A,z\in\VV[H]$, the set $\{A\}$ is $\Sigma_n(\kappa,\mu,z)$-definable in $\VV[H]$ and $\VV[G]$ is a $\PPP$-generic extension of $\VV[H]$. 
 In this situation, Lemma \ref{lemma:SmallForcingClubProperty} shows that $\kappa$ has the $\mathbf{\Sigma}_1$-club property in $\VV[H]$ and hence there is a club subset $C$ of $\kappa$ in $\VV[H]$ that is either contained in $A$ or disjoint from $A$.  
\end{proof}

In the remainder of this section, we show that the validity of the $\mathbf{\Sigma}_2$-club property at $\omega_1$ is equiconsistent with the existence of a measurable cardinal.

\begin{proposition}
 If $\omega_1$ has the $\Sigma_2$-club property, then $\omega_1$ is a measurable cardinal in $\HOD$. 
\end{proposition}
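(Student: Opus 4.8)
The plan is to show that the filter of club subsets of $\omega_1$, when intersected with $\HOD$, yields a $\HOD$-ultrafilter on $\omega_1$, and that this ultrafilter is $\sigma$-complete in $\HOD$. First I would observe that the preceding proposition characterizes the $\mathbf{\Sigma}_2$-club property (and hence its instances) via the statement that $\HOD_z$ contains no bistationary subset of $\kappa$; taking $z=\emptyset$, the hypothesis that $\omega_1$ has the $\Sigma_2$-club property (with no parameters beyond $\omega_1$ itself) gives that $\HOD$ contains no bistationary subset of $\omega_1$. Thus for every $A\in\POT{\omega_1}^{\HOD}$, either $A$ or $\omega_1\setminus A$ contains a club subset of $\omega_1$ (computed in $\VV$). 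Define $U=\Set{A\in\POT{\omega_1}^{\HOD}}{\textit{$A$ contains a club subset of $\omega_1$ in $\VV$}}$. The key point is that $U\in\HOD$: since the predicate "$A$ is bistationary" is $\Sigma_2$ with parameter $\omega_1$ over $\VV$, membership in $U$ is definable over $\VV$ with parameter $\omega_1$, and $U\subseteq\HOD$, so $U$ is an ordinal-definable subset of $\HOD$, hence $U\in\HOD$.

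Next I would verify that $\langle\HOD,\in,U\rangle$ satisfies "$U$ is a normal, $\omega_1$-complete, nonprincipal ultrafilter on $\omega_1$." Nonprincipality and the ultrafilter property (for each $A\in\POT{\omega_1}^{\HOD}$, exactly one of $A,\omega_1\setminus A$ lies in $U$) follow immediately from the no-bistationary-set conclusion, since two disjoint sets cannot both contain clubs. For $\sigma$-completeness in $\HOD$: given a sequence $\seq{A_n}{n<\omega}\in\HOD$ of elements of $U$, each $A_n$ contains a club $C_n$ in $\VV$; then $\bigcap_{n<\omega}C_n$ is a club in $\VV$ contained in $\bigcap_n A_n$, so $\bigcap_n A_n\in U$. (Note this intersection is taken in $\VV$, not $\HOD$, but $\bigcap_n A_n$ itself is an element of $\HOD$ since the sequence is, so the conclusion $\bigcap_n A_n\in U$ is meaningful.) The same argument using the diagonal intersection of clubs, together with the fact that the diagonal intersection of a sequence of sets in $\HOD$ is again in $\HOD$ and that every club contains a club of limit ordinals closed under the relevant Fodor-type operations, gives normality of $U$ in $\HOD$; alternatively one derives $\sigma$-completeness directly, which already suffices to make $\omega_1$ measurable in $\HOD$ since a nonprincipal $\sigma$-complete ultrafilter on $\omega_1$ witnesses measurability at $\omega_1$. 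Finally, $\omega_1$ is a regular uncountable cardinal of $\VV$ and hence of $\HOD$, so in $\HOD$ the cardinal $\omega_1$ carries a nonprincipal $\sigma$-complete ultrafilter, i.e. it is measurable there.

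The main obstacle I anticipate is the bookkeeping needed to argue cleanly that $U\in\HOD$ and that the relevant closure operations (countable and diagonal intersections) land back inside $\HOD$. The subtlety is that clubs witnessing membership in $U$ live in $\VV$ and need not be in $\HOD$, so one must phrase everything in terms of the sets $A_n$ and their $\HOD$-combinations while using $\VV$-clubs only as external certificates; the definability of "contains a club" over $\VV$ with parameter $\omega_1$ is what makes this go through, and it is exactly the $\Sigma_2(\omega_1)$-character of the notion of bistationarity that was exploited in the preceding proposition. Once these definability points are pinned down, the verification of the ultrafilter axioms is routine.
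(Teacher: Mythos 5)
Your proposal is correct and follows essentially the same route as the paper: both arguments take the trace of the club filter on $\HOD$, use the good $\Sigma_2$-well-ordering of $\HOD$ (via Proposition \ref{proposition:IntersectionsWithHOD}) to rule out bistationary sets in $\HOD$ and thus obtain an ultrafilter, and get completeness from the closure of the club filter. The only point to tighten is that for measurability of $\omega_1^{\VV}$ in $\HOD$ one should note full ${<}\omega_1^{\VV}$-completeness (not just $\sigma$-completeness), which follows by the same argument since every $\HOD$-sequence of length less than $\omega_1^{\VV}$ is countable in $\VV$.
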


\begin{proof}
 Let $F$ denote the intersection of the club filter on $\omega_1$ with $\HOD$. Then $F$ is an element of $\HOD$. 
 Assume that $F$ does not witness the measurability of $\omega_1$ in $\HOD$. By the closure properties of the club filter, this implies that $\HOD$ contains a bistationary subset of $\omega_1$. If $A$ denotes the least such subset in the canonical well-ordering of $\HOD$, then the fact that this ordering is a good $\Sigma_2$-well-ordering implies that the set $\{A\}$ is $\Sigma_2$-definable, contradicting our assumption. 
\end{proof}

The next lemma shows that a measurable cardinal is also an upper bound for the consistency of the validity of the $\mathbf{\Sigma}_2$-club property at $\omega_1$. Its proof is small variation of a classical result of Jech, Magidor, Mitchell and Prikry from \cite{MR560220}.

\begin{lemma}\label{lemma:ConsClub2Omega1}
 If $\kappa$ is a measurable cardinal, then there is a generic extension $\VV[G]$ of $\VV$ with the property that $\kappa=\omega_1^{\VV[G]}$ and no bistationary subset of $\omega_1$ in $\VV[G]$ is contained in $\HOD(\RRR)^{\VV[G]}$. In particular, in $\VV[G]$, the cardinal $\omega_1$ has the $\mathbf{\Sigma}_n$-club property for all $n<\omega$. 
\end{lemma}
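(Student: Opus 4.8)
The plan is to perform the classical Lévy collapse of a measurable cardinal to $\omega_1$ and then argue that the resulting model has the desired definability-theoretic smallness property: no bistationary subset of $\omega_1$ lies in $\HOD(\RRR)$. I would take $\kappa$ measurable, fix a normal measure $\calU$ on $\kappa$, and force with $\PPP = \Col{\omega}{{<}\kappa}$ to obtain $\VV[G]$ in which $\kappa = \omega_1^{\VV[G]}$. The point of the proof of Jech--Magidor--Mitchell--Prikry \cite{MR560220} is that after this collapse the club filter on $\omega_1$ restricted to a suitable inner model is an ultrafilter; I want to run the same argument but with $\HOD(\RRR)^{\VV[G]}$ (which is contained in $\LLOf{\RRR^{\VV[G]}}$) in place of the full $\VV[G]$. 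Concretely, I would show that for every set $A \subseteq \omega_1$ with $A \in \HOD(\RRR)^{\VV[G]}$, either $A$ or $\omega_1 \setminus A$ contains a club.

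The key steps, in order: First I would note that $\Col{\omega}{{<}\kappa}$ is weakly homogeneous and that every real of $\VV[G]$ appears in some intermediate extension $\VV[G\restriction\alpha]$ for $\alpha<\kappa$; hence every element of $\HOD(\RRR)^{\VV[G]}$ is, by homogeneity, ordinal-definable in $\VV[G]$ from a single real together with ordinal parameters, and this definition can be pushed down to an intermediate model. Second, given such an $A \subseteq \omega_1$, I would use the standard Lévy--Solovay-style factoring: write $\PPP \cong \Col{\omega}{{<}\alpha} \times \Col{\omega}{[\alpha,\kappa)}$ where the real parameter lives in $\VV[G\restriction\alpha]$, observe that $\kappa$ is still measurable in $\VV[G\restriction\alpha]$ (small forcing), and that the tail forcing $\Col{\omega}{[\alpha,\kappa)}$ is (in $\VV[G\restriction\alpha]$) isomorphic to $\Col{\omega}{{<}\kappa}$. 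Third — the heart of the matter — I would show that the generic extension by the tail forcing satisfies: the set $\Set{\beta<\kappa}{\beta \in j(A)}$ for the ultrapower embedding $j$ determines membership of a club in $A$ versus $\omega_1\setminus A$; equivalently, lift the measure $\calU$ through the collapse (using that $j(\Col{\omega}{{<}\kappa}) = \Col{\omega}{{<}j(\kappa)}$ factors as $\Col{\omega}{{<}\kappa} \times \Col{\omega}{[\kappa,j(\kappa))}$ and a master-condition/genericity argument builds an $\VV[G\restriction\alpha]$-generic for the tail) to conclude that in $\VV[G]$ the club filter on $\omega_1$ decides every such $A$. Finally, having shown $\HOD(\RRR)^{\VV[G]}$ contains no bistationary subset of $\omega_1$, I would invoke the preceding proposition's argument in the form: the class $X$ of bistationary subsets of $\kappa$ and its complement are $\Sigma_2(\kappa)$-definable, $\HOD_z \subseteq \HOD(\RRR)$ for every $z \in \HH{\omega_1} = \HH{\aleph_1}$ (since $\HH{\aleph_1} \subseteq \LLOf{\RRR}$, indeed every $z\in\HH{\aleph_1}$ is coded by a real), so by the proposition characterizing the $\mathbf{\Sigma}_2$-club property there is no $\Sigma_2$-definable-singleton bistationary set, and by Lemma \ref{lemma:CharacterizationsClubProperty} the cardinal $\omega_1$ has the $\mathbf{\Sigma}_n$-club property for all $n<\omega$.

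The main obstacle I expect is the lifting argument in the third step: one needs to verify that the normal measure $\calU$ lifts to a (weakly amenable, countably complete, or at least club-filter-dominating) measure in $\VV[G]$ over the relevant sub-universe, and the subtlety is that the full lift of $j$ to $\VV[G]$ need not exist — what one really extracts is that for each individual $A \in \HOD(\RRR)^{\VV[G]}$, located in $\VV[G\restriction\alpha]$-definable form, the measure $\calU^{\VV[G\restriction\alpha]}$ still exists and can be used in $\VV[G\restriction\alpha]$ (where $\kappa$ is measurable) to find, via a density/genericity argument over the tail collapse, a club in $\VV[G]$ inside $A$ or its complement. Managing the bookkeeping so that the ordinal parameters in the $\HOD(\RRR)$-definition of $A$ do not spoil this — i.e., that $A$ together with those ordinals is genuinely captured at stage $\alpha$ — is where the care of the Jech--Magidor--Mitchell--Prikry argument has to be replayed; everything else (homogeneity, factoring of Lévy collapses, small-forcing preservation of measurability, and the final invocation of Lemma \ref{lemma:CharacterizationsClubProperty}) is routine.
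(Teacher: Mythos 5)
The fatal step is your third one. A genericity/lifting argument over the tail collapse cannot produce a club inside a measure-one set $A$: the L\'evy collapse $\Col{\omega}{{<}\kappa}$ and all of its tails satisfy the $\kappa$-chain condition, and $\kappa$-cc forcing preserves stationary subsets of $\kappa$ (every club of the extension contains a ground-model club). Hence any set that is bistationary in $\VV$ remains bistationary in $\VV[G]$, no matter which side of it the measure favours. Lifting $\calU$ only shows that measure-one sets are \emph{stationary} in $\VV[G]$ (equivalently, that the induced ideal is precipitous); it does not make the club filter decide them. The conclusion fails outright in the pure L\'evy extension: the set $A=\Set{\alpha<\kappa}{\cof{\alpha}^{\LL}=\omega}$ is $\Sigma_1(\kappa)$-definable, so it lies in $\HOD^{\VV[G]}\subseteq\HOD(\RRR)^{\VV[G]}$, and it is bistationary in $\VV$ (it contains the stationary set of $\omega$-limits of cardinals, while its complement contains $S^{\kappa}_{\omega_1}$), hence still bistationary in $\VV[G]$.

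This is exactly why the paper's proof does not stop at the collapse. After forcing with $\Col{\omega}{{<}\kappa}$ it forces further with the countable-support product $\vec{\CCC}$ of the club-shooting posets $\CCC(A)$ for $A\in U$, which makes every measure-one set contain a club; the homogeneity and factoring arguments you describe are then used, as in your fourth step, to push any $\HOD(\RRR)$-subset $B$ of $\omega_1$ down into a small extension $\VV[x]$ where $U$ still generates a measure, so that $B$ or its complement contains some $A\in U$ and hence a club. The genuinely nontrivial part — entirely absent from your proposal — is showing that $\vec{\CCC}$ is $\sigma$-distributive over $\VV[G_0]$ (so that $\kappa$ remains $\omega_1$ and no new reals appear); the paper does this with a master-condition argument using the lift of $j$ to $\map{j_G}{\VV[G_0]}{M[G]}$, where $G$ is generic for $\Col{\omega}{{<}j(\kappa)}$ rather than $\Col{\omega}{{<}\kappa}$. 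Your reduction of the ``in particular'' clause to Lemma \ref{lemma:CharacterizationsClubProperty} via $\HOD_z\subseteq\HOD(\RRR)$ for $z\in\HH{\omega_1}$ is fine, but the core claim it rests on is not established by your argument.
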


\begin{proof}
 Let $U$ be a normal ultrafilter  on $\kappa$, let $\map{j}{\VV}{M}$ denote the canonical ultrapower embedding induced by $U$, let $G$ be $\Col{\omega}{{<}j(\kappa)}$-generic over $\VV$, let $G_0$ denote the filter on $\Col{\omega}{{<}\kappa}$ induced by $G$ and let $\map{j_G}{\VV[G_0]}{M[G]}$ denote the canonical lifting of $j$ to $\VV[G]$ (see {\cite[Proposition 9.1]{MR2768691}}). Since $\Col{\omega}{{<}\kappa}$ satisfies the $\kappa$-chain condition in $\VV$, we know that every element of $U$ is a stationary subset of $\kappa$ in $\VV[G_0]$. 
  
 Work in $\VV[G_0]$. By the above remark, if $A$ is an element of $U$, then the partial order $\CCC(A)$ consisting of all bounded closed subsets of $A$ ordered by end-extension is $\sigma$-distributive, weakly homogeneous and forces $A$ to contain a club subset of $\kappa=\omega_1^{\VV[G_0]}$ (see {\cite[Section 3.5, Theorem 1]{MR0373889}} and {\cite[Theorem 1]{MR716625}}). 
Let $\vec{\CCC}$ denote the countable support product of forcings of the form $\CCC(A)$ with $A\in U$. Then $\vec{\CCC}$ is weakly homogeneous and the fact that $\CH$ holds allows us to use a $\Delta$-system argument to show that $\vec{\CCC}$ satisfies the $\aleph_2$-chain condition.

\begin{claim*}
 $\vec{\CCC}$ is $\sigma$-distributive in $\VV[G_0]$. 
\end{claim*}

\begin{proof}[Proof of the Claim]
 Work in $\VV[G_0]$. Fix a condition $\vec{p}$ in $\vec{\CCC}$ and a $\vec{\CCC}$-nice name $\tau$ for a countable set of ordinals. 
Since $\vec{\CCC}$ satisfies the $\aleph_2$-chain condition, there is a subset $U_1$ of $U$ of cardinality $\kappa$ such that the support of $\vec{p}$ and the supports of all conditions appearing in $\tau$ are subsets of $U_1$. 
Using the fact that $\Col{\omega}{{<}\kappa}$ satisfies the $ \kappa$-chain condition in $\VV$, we find a subset $U_0$ of $U$ in $\VV$ that contains $U_1$ and has cardinality $\kappa$ in $\VV$. 
In this situation, the closure properties of $M$ imply that the sets  $U_0$, $j[U_0]$ and $j\restriction U_0$ are all contained in $M$ and all three sets are countable in $M[G]$. 
Let $\vec{\CCC}_0$ denote the countable support product of all partial orders of the form $\CCC(A)$ with $A\in U_0$, let $\vec{p}_0$ denote the condition in $\vec{\CCC}_0$ corresponding to $\vec{p}$ and let $\tau_0$ denote the canonical $\vec{\CCC}_0$-name induced by $\tau$.   

Since $\vec{\CCC}_0\in\HH{\kappa^+}^{\VV[G_0]}$ and $\Col{\omega}{{<}\kappa}$ satisfies the $\kappa$-chain condition in $\VV$, there is a $\Col{\omega}{{<}\kappa}$-name $\dot{\CCC}\in\HH{\kappa^+}^\VV$ for a partial order with the property that $\dot{\CCC}^{G_0}$ is the suborder of $\vec{\CCC}_0$ consisting of all conditions below $\vec{p}_0$. But then  $\dot{\CCC}$ is an element of $M$ and, by {\cite[Theorem 14.2]{MR2768691}}, there is a complete embedding $\map{\iota}{\Col{\omega}{{<}\kappa}*\dot{\CCC}}{\Col{\omega}{{<}j(\kappa)}}$ in $M$ that extends the identity on $\Col{\omega}{{<}\kappa}$. 
Let $\vec{H}$ denote the filter on $\vec{\CCC}_0$ induced by $\iota$ and $G$. Moreover, given $A\in U_0$, let $H_A$ denote the filter on $\CCC(A)$ induced by $\vec{H}$. 
For each $A\in U_0$, we then have $\bigcup H_A\in M[G]$, $\kappa\in j(A)$, $\bigcup H_A\subseteq A\subseteq j(A)$ and hence $\{\kappa\}\cup\bigcup H_A$ is a bounded closed subset of $j(A)$ in $M[G]$. 
By the above computations, there is a condition $\vec{q}$ in $j_G(\vec{\CCC}_0)$ with support $j[U_0]$ and the property that $\vec{q}\vspace{0.7pt}(j(A))=\{\kappa\}\cup\bigcup H_A$ for all $A\in U_0$.
 But then we have $\vec{q}\leq_{j_G(\vec{\CCC}_0)}j_G(\vec{r})$ for all $\vec{r}\in\vec{H}$. In particular, if $n<\omega$, then there is a condition $\vec{r}_n$ in $\vec{H}$ that decides the $n$-the element of $\tau_0$ in $\VV[G]$ and satisfies $\vec{q}\leq_{j_G(\vec{\CCC}_0)}j_G(\vec{r}_n)$.  
By elementarity, this yields a condition $\vec{r}$ below $\vec{p}$ in $\vec{\CCC}_0$ with $$\vec{r}\Vdash^{\VV[G_0]}_{\vec{\CCC}_0}\anf{\dot{\tau}_0=\check{c}}$$ for some countable set  of ordinals $c$. Since $\vec{\CCC}_0$ is a complete suborder of $\vec{\CCC}$, these computations yield the statement of the claim.  
\end{proof}

Now, let $\vec{H}$ be $\vec{\CCC}$-generic over $\VV[G_0]$, fix a subset $B$ of $\kappa$ in $\HOD(\RRR)^{\VV[G_0,\vec{H}]}$ and pick $x\in\RRR^{\VV[G_0,\vec{H}]}$ with $B\in\HOD_x^{\VV[G_0,\vec{H}]}$. By the above claim, we have $\kappa=\omega_1^{\VV[G_0,\vec{H}]}$, $x\in\VV[G_0]$ and the homogeneity of $\vec{\CCC}$ in $\VV[G]$ implies that $B\in\VV[G_0]$. Moreover, since $\vec{\CCC}$ is definable in $\VV[G_0]$ by a formula that only uses $U$ as a parameter, we know that $B\in\HOD_{U,x}^{\VV[G_0]}$. Since $\VV[G_0]$ is a $\Col{\omega}{{<}\kappa}$-generic extension of $\VV[x]$, the homogeneity of $\Col{\omega}{{<}\kappa}$ of $\Col{\omega}{{<}\kappa}$ implies that $B\in\VV[x]$. In this situation, standard arguments show that $\VV[x]$ is a generic extension of $\VV$ using a partial order of size less than $\kappa$ in $\VV$ and therefore the proof of the Levy--Solovay--Theorem (see, for example, {\cite[Proposition 10.15]{MR1994835}}) shows that the set $\Set{E\in\POT{\kappa}^{\VV[x]}}{\exists D\in U ~ D\subseteq E}$ witnesses the measurability of $\kappa$ in $\VV[x]$. By the above computations, this yields an $A\in U$ such that either $A\subseteq B$ or $A\cap B=\emptyset$ holds. But now, our constructions ensure that there is a club subset $C$ of $\kappa$ in $\VV[G_0,\vec{H}]$ with $C\subseteq A$ and therefore $A$ is not a bistationary subset of $\omega_1$ in $\VV[G_0,\vec{H}]$. 
\end{proof}


\section{Definable partitions of countable ordinals}\label{section:DefPartCountableOrdinals}

In this short section, we show that many natural extensions of the axioms of $\ZFC$ cause $\omega_1$ to have strong partition properties for simply definable colourings. These results are summarized in the following theorem.

\begin{theorem}\label{theorem:Omega1Sigma1wc}
 Assume that one of the following assumptions holds: 
 \begin{enumerate}[leftmargin=0.9cm]
  \item There is a measurable cardinal above a Woodin cardinal.
  
  \item There is a measurable cardinal and a precipitous ideal on $\omega_1$. 
  
  \item \emph{Bounded Martin's Maximum} $\BMM$ holds and the nonstationary ideal on $\omega_1$ is precipitous.  
  
  \item Woodin's Axiom $(*)$ holds. 
 \end{enumerate}
 Then $\omega_1$ has the $\mathbf{\Sigma}_1$-club property. 
\end{theorem}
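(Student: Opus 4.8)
The plan is to verify the hypotheses of Corollary~\ref{proposition:IterableCardinalsClubProperty} --- or rather, to relate the $\mathbf{\Sigma}_1$-club property at $\omega_1$ to the existence of iterable models for $\Sigma_1$-definable subsets of $\omega_1$ via Lemma~\ref{lemma:IterableModelsClubProperty}. Concretely, suppose $A\subseteq\omega_1$ is such that $\{A\}$ is $\Sigma_1(\omega_1,z)$-definable for some $z\in\HH{\omega_1}=\HH{\aleph_1}$; write $A$ as the unique set satisfying $\exists x~\varphi(A,x,\omega_1,z)$ for a $\Sigma_0$-formula $\varphi$. Note that $z$ is (coded by) a real, so this is precisely the setting of \cite[Section~6]{MR3694344}. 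By Lemma~\ref{lemma:IterableModelsClubProperty}, it suffices to produce, under each of the four hypotheses, a weak $\omega_1$-model $M$ with $A,\tc{\{z\}}\in M\models\exists x~\varphi(A,x,\omega_1,z)$ together with a weakly amenable $M$-ultrafilter $U$ on $\omega_1$ such that $\langle M,\in,U\rangle$ is $\omega_1$-iterable. The key point is that each of (i)--(iv) is known to yield such iterable structures at $\omega_1$: this is exactly the content of the ``$\omega_1$ is generically measurable / virtually measurable with iterability'' phenomena studied in \cite{MR3694344}.

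The first step is to recall from \cite[Section~6]{MR3694344} (whose arguments, as the excerpt notes before Corollary~\ref{proposition:IterableCardinalsClubProperty}, already establish this result for $\Sigma_1(\omega_1)$-definitions, i.e. without the real parameter $z$) the precise large-cardinal input needed: for every $B\subseteq\omega_1$ there is a weak $\omega_1$-model $M\ni B$ carrying a weakly amenable $\omega_1$-iterable $M$-ultrafilter. Under (i), this comes from the stationary-tower machinery: a Woodin cardinal below a measurable gives, in a generic extension, an embedding with critical point $\omega_1$, and standard reflection/absorption arguments pull back iterable hulls into $\VV$; the measurable above ensures the requisite iterability (indeed countable completeness, hence $\omega_1$-iterability by \cite[Lemma~19.11 and~19.12]{MR1994835}). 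Under (ii) and (iii), a precipitous ideal on $\omega_1$ directly yields, generically, an elementary embedding $j\colon\VV\to N$ with $\crit(j)=\omega_1$; the derived ultrafilter on $\omega_1$ restricted to a countable-sized elementary submodel of a large $\HH{\theta}$ gives the weakly amenable $M$-ultrafilter, and the additional measurable cardinal in (ii), respectively the consequences of $\BMM$ together with precipitousness in (iii), provide the countable completeness needed for $\omega_1$-iterability. Under (iv), Woodin's axiom $(*)$ implies the nonstationary ideal on $\omega_1$ is saturated (in particular precipitous) and much more, so (iv) subsumes the argument of (iii).

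The second step is the parameter upgrade: handle the real parameter $z$ rather than merely $\Sigma_1(\omega_1)$-definitions. This is cheap: since $z\in\HH{\aleph_1}$ is coded by a real and all four hypotheses are preserved by, and their conclusions uniform over, the choice of a real parameter (the generic embeddings have critical point $\omega_1$ and fix all reals, and the iterable hulls can be taken to contain any prescribed real), one simply insists that the weak $\omega_1$-model $M$ produced in the first step additionally contains $\tc{\{z\}}$ and satisfies $\exists x~\varphi(A,x,\omega_1,z)$ --- which holds automatically by $\Sigma_1$-elementarity once $A,z\in M$ and $M$ is an elementary submodel of a sufficiently large rank-initial segment of $\VV$ (or of the relevant iterable premouse). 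Then Lemma~\ref{lemma:IterableModelsClubProperty} applies verbatim and yields that $A$ contains or is disjoint from a club in $\omega_1$. Since this holds for every $\Sigma_1(\omega_1,z)$-definable singleton $\{A\}$ with $z\in\HH{\omega_1}$, part~(i) of Lemma~\ref{lemma:CharacterizationsClubProperty} gives that $\omega_1$ has the $\mathbf{\Sigma}_1$-club property.

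The main obstacle is the first step under hypotheses (ii) and (iii): unlike the Woodin-cardinal case, a precipitous ideal on $\omega_1$ only gives an embedding \emph{generically}, and one must descend to a weak $\omega_1$-model \emph{in} $\VV$ carrying an $M$-ultrafilter that is not merely countably complete in the generic extension but genuinely $\omega_1$-iterable --- here one uses that the derived $M$-ultrafilter on a countable elementary submodel, together with the measurable cardinal (in (ii)) or the reflection consequences of $\BMM$ plus the precipitousness hypothesis (in (iii)), can be shown countably complete by a reflection argument, and countable completeness implies $\omega_1$-iterability by \cite[Lemma~19.11 and~19.12]{MR1994835}. Making this descent precise, and checking that $\BMM$ together with precipitousness of $\mathrm{NS}_{\omega_1}$ suffices (this is the most delicate of the four, relying on the interplay between the Martin's-Maximum-style reflection and genericity of the ideal embedding), is where the real work lies; once that is in hand, the rest is a routine application of the already-established Lemmas~\ref{lemma:IterableModelsClubProperty} and~\ref{lemma:CharacterizationsClubProperty}.
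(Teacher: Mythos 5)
Your overall strategy --- reduce everything to Lemma~\ref{lemma:IterableModelsClubProperty} by producing, under each of (i)--(iv), a weak $\omega_1$-model $M\ni A,\tc{\{z\}}$ carrying a weakly amenable $\omega_1$-iterable $M$-ultrafilter on $\omega_1$ --- is not the paper's route, and its key step is left unproved in exactly the place where the theorem's entire difficulty sits. Your ``first step'' asserts that each hypothesis yields such iterable structures in $\VV$ for every prescribed (Hrushovski: every $\Sigma_1$-definable) subset of $\omega_1$, but the mechanisms you offer do not deliver this. For (ii) and (iii), a precipitous ideal only gives an elementary embedding with critical point $\omega_1$ in a generic extension; the derived ultrafilter lives in $\VV[G]$, need not restrict to a set in $\VV$ (neither $\POT{\omega_1}/\mathrm{NS}_{\omega_1}$ nor a general precipitous ideal is homogeneous in the way that would let you pull it back), and --- crucially --- precipitousness gives only well-foundedness of the generic ultrapower, \emph{not} countable completeness of the generic ultrafilter: a decreasing $\omega$-sequence of positive sets with empty intersection can perfectly well lie in a generic ultrafilter. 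So the claim that ``a reflection argument'' plus a measurable cardinal or $\BMM$ yields countable completeness, hence $\omega_1$-iterability, is not substantiated and is, as stated, the wrong mechanism. For (i), the stationary tower is likewise not weakly homogeneous, and the measurable above the Woodin cardinal is not there to supply iterability of a tower-derived ultrafilter; Corollary~\ref{corollary:FailureSigma1wcInM1} shows a Woodin cardinal alone is compatible with failure even of the $\Sigma_1$-colouring property, so whatever argument one gives must use the measurable in an essential and identifiable way, which your sketch does not.

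The paper's actual proof is structurally different. For (i)--(iii) it shows that (i) and (ii) each imply that $M_1^\#(A)$ exists for every $A\subseteq\omega_1$ (citing \cite{MR1300637}, \cite{MR1257469} and {\cite[Theorem 2.1]{MR3694344}}), assumes towards a contradiction that some bistationary $A$ has $\{A\}$ $\Sigma_1(\omega_1,z)$-definable (via Lemma~\ref{lemma:CharacterizationsClubProperty}), and then invokes the absoluteness result {\cite[Lemma 4.11]{MR3694344}}, which under these hypotheses (or under $\BMM$ plus precipitousness of $\mathrm{NS}_{\omega_1}$) forces the $\Sigma_1$-definable set $\{A\}$ to contain both a club-containing and a nonstationary set --- impossible for a singleton. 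No iterable ultrafilter structures are constructed at all; the work is done by $\Sigma_1$-generic absoluteness backed by the $M_1^\#$-operator. For (iv) the paper uses the $\sigma$-closure and weak homogeneity of $\PPP_{max}$ (Proposition~\ref{proposition:PmaxClubn}) to place $A$ inside $\LL(\RRR)$ and then applies Solovay's theorem that under $\AD$ the club filter is an ultrafilter on $\omega_1$; your treatment of (iv) is essentially this same fact in different clothing and is the only one of the four cases where your plan goes through. To repair your approach for (i)--(iii) you would need to actually prove the existence of the iterable structures in $\VV$, and the only route visible in the paper for that is circular: Proposition~\ref{proposition:ClubPropertyGoodIndi} and Lemma~\ref{lemma:GitmanLemma} produce them \emph{from} the $\mathbf{\Sigma}_1$-club property, i.e.\ from the conclusion you are trying to establish.
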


We prove the last implication stated above by providing an alternative way to establish the conclusion of Lemma \ref{lemma:ConsClub2Omega1}.

\begin{proposition}\label{proposition:PmaxClubn}
 Assume that $\AD$ holds in $\LL(\RRR)$ and $G$ is $\PPP_{max}$-generic over $\LL(\RRR)$. Then $\omega_1$ has the $\mathbf{\Sigma}_2$-club property in $\LL(\RRR)[G]$. 
\end{proposition}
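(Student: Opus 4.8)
The plan is to reduce the $\mathbf{\Sigma}_2$-club property in $\LL(\RRR)[G]$ to the fact that, in a $\PPP_{max}$-extension of $\LL(\RRR)$, every subset of $\omega_1$ lying in $\HOD(\RRR)$ is already decided by the club filter. The key structural input is that $\PPP_{max}$ is weakly homogeneous and that $\LL(\RRR)[G]$ has the form $\HOD(\RRR)^{\LL(\RRR)[G]}$-like behaviour for sets of ordinals: more precisely, if $A\subseteq\omega_1$ and the set $\{A\}$ is $\Sigma_2(\omega_1,z)$-definable in $\LL(\RRR)[G]$ for some $z\in\HH{\omega_1}^{\LL(\RRR)[G]}=\HH{\omega_1}^{\LL(\RRR)}$, then $z$ is essentially a real (coded by an element of $\RRR^{\LL(\RRR)}$, since $\PPP_{max}$ adds no reals), and the $\Sigma_2$ definition, evaluated in $\LL(\RRR)[G]$, together with weak homogeneity of $\PPP_{max}$ over $\LL(\RRR)$, shows that $A\in\LL(\RRR)$ and in fact $A\in\HOD_{z}^{\LL(\RRR)}$. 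Thus it suffices to show that every such $A\in\HOD(\RRR)^{\LL(\RRR)}$ either contains or is disjoint from a club subset of $\omega_1$ in $\LL(\RRR)[G]$, and then invoke Lemma \ref{lemma:CharacterizationsClubProperty} to pass from this statement about $\Sigma_2$-definable singletons to the full $\mathbf{\Sigma}_2$-club property.

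First I would record the absoluteness facts: $\PPP_{max}$ is homogeneous, adds no reals, forces $\ZFC$ and makes $\omega_1^{\LL(\RRR)[G]}=\omega_1^{\LL(\RRR)}=\Theta^{\LL(\RRR)}$'s predecessor in the relevant sense — the standard Woodin facts — so $\HH{\omega_1}^{\LL(\RRR)[G]}=\HH{\omega_1}^{\LL(\RRR)}$ and every parameter $z\in\HH{\omega_1}$ is hereditarily coded by a real in $\LL(\RRR)$. Next I would use the argument from the proof of Proposition \ref{proposition:SmallForcingDef}(ii): for a weakly homogeneous forcing, a $\Sigma_n$-definable (with ground-model parameters) subset of $\omega_1$ in the extension is already in the ground model and its singleton is $\Sigma_n$-definable there with the forcing as an extra parameter. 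Applied with $\PPP_{max}$ over $\LL(\RRR)$, this gives $A\in\LL(\RRR)$ with $\{A\}$ definable over $\LL(\RRR)$ from $\omega_1$, $z$, and $\PPP_{max}$; since $\PPP_{max}$ is ordinal-definable from a real in $\LL(\RRR)$ and $z$ is coded by a real, we get $A\in\HOD(\RRR)^{\LL(\RRR)}$.

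Now the core point: in $\LL(\RRR)$ under $\AD$, every set of ordinals in $\HOD(\RRR)$ is, after $\PPP_{max}$-forcing, decided by the club filter on $\omega_1$. This is exactly the phenomenon exploited in Lemma \ref{lemma:ConsClub2Omega1}, transported to the $\PPP_{max}$ setting: one can run the same style of argument using that $\PPP_{max}$ forces the nonstationary ideal on $\omega_1$ to be saturated and that in $\LL(\RRR)[G]$ the club filter restricted to $\HOD(\RRR)$ is an ultrafilter on the $\HOD(\RRR)$-subsets of $\omega_1$ — equivalently, no bistationary subset of $\omega_1$ in $\LL(\RRR)[G]$ lies in $\HOD(\RRR)$. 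Granting this, our $A\in\HOD(\RRR)$ either contains a club or is disjoint from one in $\LL(\RRR)[G]$, which is condition (i) of Lemma \ref{lemma:CharacterizationsClubProperty} for $n=2$ and the relevant $z$; since $z\in\HH{\omega_1}$ was arbitrary and $\mathbf{\Sigma}_2$ absorbs finitely many $\HH{\omega_1}$-parameters into one, this yields the $\mathbf{\Sigma}_2$-club property.

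The main obstacle will be establishing the club-filter-ultrafilter statement for $\HOD(\RRR)$-sets in the $\PPP_{max}$-extension — i.e. the analogue of the conclusion of Lemma \ref{lemma:ConsClub2Omega1} but with $\PPP_{max}$ in place of the Jech--Magidor--Mitchell--Prikry collapse-and-club-shoot iteration. This is where one must use genuine $\PPP_{max}$-machinery: the saturation of $\mathrm{NS}_{\omega_1}$ in $\LL(\RRR)[G]$, the fact that $\PPP_{max}$-conditions are (iterable) countable models carrying generic filters, and a reflection/genericity argument showing that any $\HOD(\RRR)$-coded stationary-costationary $A$ would have to be correctly reflected by densely many conditions in a way that contradicts its costationarity. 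I expect the cleanest route is to quote the known $\PPP_{max}$ theory (Woodin) that in $\LL(\RRR)[G]$ the club filter on $\omega_1$ is the unique normal ultrafilter on $\POT{\omega_1}\cap\HOD(\RRR)$, exactly paralleling how Lemma \ref{lemma:ConsClub2Omega1} obtains this after a measurable collapse; everything else — the homogeneity reduction and the appeal to Lemma \ref{lemma:CharacterizationsClubProperty} — is routine.
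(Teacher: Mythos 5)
Your reduction is the same as the paper's: use the $\sigma$-closure of $\PPP_{max}$ (no new reals, no new elements of $\HH{\omega_1}$) together with its weak homogeneity over $\LL(\RRR)$ to pull any $A\subseteq\omega_1$ whose singleton is $\Sigma_2(\omega_1,z)$-definable in $\LL(\RRR)[G]$ back into $\HOD(\RRR)^{\LL(\RRR)}\subseteq\LL(\RRR)$, and then conclude via the equivalence between the $\mathbf{\Sigma}_2$-club property and the absence of bistationary sets in $\HOD_z$. That part is fine.

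The problem is the step you flag as ``the main obstacle'': you leave it unproved and, worse, you misdiagnose what it requires. No $\PPP_{max}$-specific machinery --- saturation of $\mathrm{NS}_{\omega_1}$, iterability of conditions, reflection arguments, or a uniqueness theorem for normal ultrafilters on $\POT{\omega_1}\cap\HOD(\RRR)$ in the extension --- is needed. Once you know $A\in\LL(\RRR)$, the classical theorem of Solovay that under $\AD$ the club filter on $\omega_1$ is an ultrafilter already gives, \emph{inside $\LL(\RRR)$}, a club $C\subseteq\omega_1$ with $C\subseteq A$ or $C\cap A=\emptyset$. Since $\PPP_{max}$ is $\sigma$-closed, $\omega_1$ is preserved, so $C$ remains closed and unbounded in $\LL(\RRR)[G]$, and you are done. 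This is exactly how the paper closes the argument, in one line. Your proposed detour through the analogue of Lemma \ref{lemma:ConsClub2Omega1} for $\PPP_{max}$ would, if carried out, establish a true statement, but as written the proposal contains no proof of it, and the statement you would be labouring to prove is an immediate consequence of Solovay's theorem in the ground model plus upward absoluteness of ``club subset of $\omega_1$''.
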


\begin{proof}
 Pick a subset $A$ of $\omega_1$ in $\HOD(\RRR)^{\LL(\RRR)[G]}$. Since $\PPP_{max}$ is $\sigma$-closed and weakly homogeneous in $\LL(\RRR)$ (see {\cite[Lemma 2.10 \& 3.4]{MR2768703}}), we know that $A$ is an element of $\LL(\RRR)$. By a classical result of Solovay, this shows that $A$ either contains a club subset of $\omega_1$ in $\LL(\RRR)$ or is disjoint from such a set. 
\end{proof}

The above statement directly yields the fourth implication of Theorem \ref{theorem:Omega1Sigma1wc}.

\begin{proof}[Proof of implication (iv) of Theorem \ref{theorem:Omega1Sigma1wc}]
 Assume that Woodin's Axiom $(*)$ holds, i.e. $\AD$ holds in $\LL(\RRR)$ and there is some $G$ that is $\PPP_{max}$-generic over $\LL(\RRR)$ and satisfies $\POT{\omega_1}\subseteq\LL(\RRR)[G]$. Fix $z\in\HH{\omega_1}$ and $A\in\POT{\omega_1}$ such that the set $\{A\}$ is $\Sigma_1(\omega_1,z)$-definable. Then the same formula defines $\{A\}$ in $\LL(\RRR)[G]$ and Proposition \ref{proposition:PmaxClubn} implies that $A$ either contains a club or is disjoint from such a subset. 
\end{proof}

We now derive the other implications of Theorem \ref{theorem:Omega1Sigma1wc} from the results of \cite{MR3694344}.

\begin{proof}[Proof of the implications (i)--(iii) of Theorem \ref{theorem:Omega1Sigma1wc}]
 Note that the results of \cite{MR1300637} and \cite{MR1257469} imply that (i) implies that $M_1^\#(A)$ exists for every subset $A$ of $\omega_1$ (see {\cite[p. 1738]{MR2768699}} and {\cite[p. 1660]{MR2768698}}). Moreover, {\cite[Theorem 2.1]{MR3694344}} shows that (ii) implies the same conclusion. 
 Now, assume, towards a contradiction, that one of the first three assumptions listed in Theorem \ref{theorem:Omega1Sigma1wc} holds and $\omega_1$ does not have the $\mathbf{\Sigma}_1$-club property. Then Lemma  \ref{lemma:CharacterizationsClubProperty} yields a bistationary subset $A$ of $\omega_1$ with the property that the set $\{A\}$ is $\Sigma_1(\omega_1,z)$-definable for some $z\in\HH{\omega_1}$.  
 By the above remarks, we can now apply {\cite[Lemma 4.11]{MR3694344}} to conclude that the set $\{A\}$ contains both an element of the club filter and the non-stationary ideal, a contradiction. 
\end{proof}

We end this section by showing that the existence of a Woodin cardinal alone does not cause $\omega_1$ to have the $\Sigma_1$-colouring property.

 \begin{corollary}\label{corollary:FailureSigma1wcInM1}
 If $M_1$ exists, then, in $M_1$, the cardinal $\omega_1$ does not have the $\Sigma_1$-colouring property.  
\end{corollary}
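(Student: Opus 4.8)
The plan is to reduce the corollary to Proposition~\ref{proposition:FailureSigma1PPgoodWO} by exhibiting, inside $M_1$, a good $\Sigma_1(\omega_1)$-well-ordering of $\HH{\omega_1}$. Work in $M_1$. Since $M_1$ is a fine-structural model it satisfies $\GCH$, so $2^{\aleph_0}=\aleph_1$ and in particular $2^{\aleph_0}\geq\aleph_1=\omega_1$; hence, with $\kappa=\omega_1$ and $\nu=\omega$, the cardinal-arithmetic hypothesis of Proposition~\ref{proposition:FailureSigma1PPgoodWO} is met, and it remains only to produce the required well-ordering. Here $\HH{\omega_1}$ and $\POT{\omega}$ are interchangeable, since a fixed $\Sigma_0$-recursive coding of hereditarily countable sets by reals turns a good $\Sigma_1(\omega_1)$-well-ordering of $\POT{\omega}$ into one of $\HH{\omega_1}$.

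For the well-ordering I would take $\lhd$ to be the canonical well-ordering of $M_1$, i.e.\ its order of construction, restricted to $\HH{\omega_1}$. As $M_1\models\CH$, this well-orders $\HH{\omega_1}$ in order type $\omega_1$, so every proper initial segment $\Set{x}{x\lhd y}$ is again a hereditarily countable set. The point is that such an initial segment $D$ is recognized by the assertion ``there is a countable, iterable premouse $\mathcal{N}$ occurring as an initial segment of $M_1$ (or elementarily embedding into one) with $y\in\mathcal{N}$ and $D=\Set{x}{x\lhd^{\mathcal{N}}y}$'', using that, by condensation, $M_1$ computes its own order of construction correctly below any such $\mathcal{N}$, and that a countable elementary hull of the level of $M_1$ at which $y$ first appears always furnishes such an $\mathcal{N}$. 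Consequently $I(\lhd)$ is $\Sigma_1(\omega_1)$-definable \emph{provided} the predicate ``$\mathcal{N}$ is a countable iterable premouse occurring as an initial segment of $M_1$'' is $\Sigma_1(\omega_1)$-definable in $M_1$. This last fact is of exactly the kind isolated in the fine-structural analysis of canonical inner models (compare the remarks on good $\Sigma_n$-well-orderings above, in particular the assertion recorded there for $\KK^{DJ}$, and the results of \cite{GoodWO}), and I would quote it rather than reprove it.

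Granting the good $\Sigma_1(\omega_1)$-well-ordering of $\HH{\omega_1}$, Proposition~\ref{proposition:FailureSigma1PPgoodWO} applied in $M_1$ with $\kappa=\omega_1$, $\nu=\omega$, $n=1$ and no further parameter shows that $\omega_1$ does not have the $\Sigma_1(\omega)$-colouring property in $M_1$; since $\omega\in\HH{\omega_1}$, this yields that $\omega_1$ does not have the $\mathbf{\Sigma}_1$-colouring property in $M_1$, as required. (Equivalently, one may bypass Proposition~\ref{proposition:FailureSigma1PPgoodWO} and note that $\alpha\mapsto{}$``the $\lhd$-least real coding a well-ordering of $\omega$ of order type $\alpha$'' is a $\Sigma_1(\omega_1)$-definable injection of $\omega_1$ into ${}^{\omega}2$ in $M_1$, directly contradicting Proposition~\ref{Proposition:NoDefinableInjection}.)

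The main obstacle is precisely this fine-structural input: one must know that in $M_1$ the class of countable iterable premice occurring as initial segments of $M_1$ — equivalently, the order of construction of $M_1$ restricted to $\HH{\omega_1}$ — is $\Sigma_1$-definable with $\omega_1$ as the only parameter. This is delicate because $(\omega_1{+}1)$-iterability of the $1$-small mice in play is \emph{a priori} only $\Pi^1_2$, so one genuinely uses the special features of $M_1$ (its minimality and the behaviour of $Q$-structures below the Woodin cardinal) to bring the definition down to $\Sigma_1(\omega_1)$; once that is in place, the rest of the argument is bookkeeping.
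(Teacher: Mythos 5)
Your proposal is correct and follows the paper's own route: the paper likewise reduces the corollary to Proposition~\ref{proposition:FailureSigma1PPgoodWO} (with $\kappa=\omega_1$, $\nu=\omega$) by producing a good $\Sigma_1(\omega_1)$-well-ordering in $M_1$, except that it simply cites {\cite[Theorem 5.2]{MR3694344}} for that well-ordering rather than sketching its construction from the order of construction of $M_1$ and the $\Sigma_1(\omega_1)$-definability of iterability for countable initial segments, which is exactly the fine-structural input you correctly isolate and defer to a citation.
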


\begin{proof}
 By {\cite[Theorem 5.2]{MR3694344}}, there is a good $\Sigma_1(\omega_1)$-well-ordering of $\POT{\omega_1}$ in $M_1$. Therefore, Proposition \ref{proposition:FailureSigma1PPgoodWO} yields the conclusion of the corollary. 
\end{proof}


\section{Definable partitions of $[\omega_2]^2$}\label{section:Omega2}

In this section, we study simply definable colourings of finite subsets of the second uncountable cardinal and the influence of canonical extensions of $\ZFC$ on these partitions. 
A combination of Corollary \ref{corollary:GenericTreeCodingCounterexamples} with Lemma \ref{lemma:Sigma1PartitionPropertyCollapses} already shows that  the statement that $\omega_2$ has the $\mathbf{\Sigma}_n$-colouring property is independent from the axiom of $\ZFC$ together with large cardinal assumptions for all $0<n<\omega$.

The following proposition shows that strong forcing axioms outright imply a failure of the $\mathbf{\Sigma}_1$-colouring  property at $\omega_2$.

\begin{proposition}\label{proposition:BPFAfailureBoldface}
 Assume that the \emph{Bounded Proper Forcing Axiom $\BPFA$} holds. If $z\subseteq\omega_1$ with $\omega_1=\omega_1^{\LL[z]}$, then $\omega_2$ does not have the $\Sigma_1(z)$-colouring property.  
\end{proposition}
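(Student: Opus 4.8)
The plan is to exploit the well-known fact that $\BPFA$ implies that $\HH{\omega_2}$ has a $\Sigma_1$-definable well-ordering using a subset of $\omega_1$ as a parameter, and then invoke Proposition \ref{proposition:FailureSigma1PPgoodWO}. More precisely, a theorem of Caicedo and Veli\v{c}kovi\'c (building on Moore's work) shows that under $\BPFA$ there is a subset $w$ of $\omega_1$ such that $\HH{\omega_2}$ has a well-ordering whose initial segments are uniformly $\Sigma_1$-definable over $\langle\HH{\omega_2},\in\rangle$ with parameter $w$; equivalently, the class $\HH{\omega_2}$ has a good $\Sigma_1(\omega_2,w)$-well-ordering. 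The hypothesis that $\omega_1 = \omega_1^{\LL[z]}$ is what lets us take $z$ itself (together with a real coding the relevant parameters, absorbed into $\HH{\omega_1}$) as the parameter: the Caicedo--Veli\v{c}kovi\'c parameter $w$ can be built from $z$ and a countable amount of extra information, so that $\{w\}$ is $\Sigma_1(\omega_2, z)$-definable. Since $\nu = \omega_1$ satisfies $2^{\omega_1}\geq\omega_2 = \kappa$ trivially (indeed $2^{\omega_1}\geq\omega_2$ always), Proposition \ref{proposition:FailureSigma1PPgoodWO} then yields that $\omega_2$ does not have the $\Sigma_1(\omega_1, z)$-colouring property, hence not the $\Sigma_1(z)$-colouring property as $\omega_1\in\HH{\omega_2}$.

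In more detail, the key steps, in order, would be: (1) Recall from the $\BPFA$ literature that there is a $\Sigma_1$-formula $\psi$ and a parameter $r\in\POT{\omega_1}$ such that $\psi(\cdot,\cdot,\omega_2,r)$ defines over $\VV$ a well-ordering $\lhd$ of $\HH{\omega_2}$ with $\Sigma_1(\omega_2, r)$-definable proper initial segments; this is exactly the statement that $\HH{\omega_2}$ has a good $\Sigma_1(\omega_2,r)$-well-ordering. (2) Check that the parameter $r$ may be chosen so that $\{r\}$ is $\Sigma_1(\omega_2,z)$-definable. Here the hypothesis $\omega_1=\omega_1^{\LL[z]}$ is essential: the construction of $r$ uses ladder systems and $\in$-recursive codings that are canonical once one fixes $z$ and works inside $\LL[z]$ up to $\omega_1$, so $r$ is the $<_{\LL[z]}$-least object with a certain $\Sigma_1(\omega_2,z)$ property. (3) Apply Proposition \ref{proposition:FailureSigma1PPgoodWO} with $\kappa=\omega_2$, $\nu=\omega_1$ and parameter $z$: since $\HH{\omega_2}$ has a good $\Sigma_1(\omega_2,z)$-well-ordering and $2^{\omega_1}\geq\omega_2$, we conclude that $\omega_2$ lacks the $\Sigma_1(\omega_1,z)$-colouring property. (4) Observe $\omega_1\in\HH{\omega_2}$, so this is in particular a failure of the $\Sigma_1(z)$-colouring property in the sense of the relevant definition (a single $\HH{\omega_2}$-parameter suffices since $\HH{\omega_2}$ is closed under pairing, as noted at the start of Section \ref{section:SigmaNpartiotionproperty}).

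The main obstacle is Step (2): pinning down exactly which $\BPFA$-consequence gives the good $\Sigma_1$-well-ordering and tracking the parameter carefully enough to see that it is $\Sigma_1(\omega_2,z)$-definable under the assumption $\omega_1=\omega_1^{\LL[z]}$. The cleanest route is probably to cite the Caicedo--Veli\v{c}kovi\'c result in the form that, assuming $\BPFA$, every set in $\HH{\omega_2}$ is $\Sigma_1$-definable over $\langle\HH{\omega_2},\in\rangle$ from $\omega_1$ and a fixed real together with a fixed subset of $\omega_1$, and then note that under $\omega_1=\omega_1^{\LL[z]}$ that subset of $\omega_1$ and the real are themselves definable from $z$ via the $\LL[z]$-order. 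Everything else is a routine application of machinery already set up in Section \ref{section:SigmaNpartiotionproperty}. One should also double-check the trivial cardinal-arithmetic point $2^{\omega_1}\geq\omega_2$ (immediate from Cantor) so that the hypothesis of Proposition \ref{proposition:FailureSigma1PPgoodWO} is met without any extra assumption.
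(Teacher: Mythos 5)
Your proposal follows essentially the same route as the paper: under the hypothesis $\omega_1=\omega_1^{\LL[z]}$ one fixes the $<_{\LL[z]}$-least ladder system $\vec{C}$ on $\omega_1$, so that $\{\vec{C}\}$ is $\Sigma_1(\omega_1,z)$-definable, invokes the Caicedo--Veli\v{c}kovi\'c theorem that $\BPFA$ yields a good $\Sigma_1(\vec{C})$-well-ordering of $\HH{\omega_2}$, and then applies Proposition \ref{proposition:FailureSigma1PPgoodWO} with $\nu=\omega_1$. The only point worth tightening is your step (4): to drop $\omega_1$ as a parameter one should observe that $\{\omega_1\}$ is $\Sigma_1(\omega_2)$-definable (as in the proof of Proposition \ref{proposition:FailuresClubProperty}), rather than merely that $\omega_1\in\HH{\omega_2}$.
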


\begin{proof}
 By our assumption, there is a \emph{ladder system} $\vec{C}$ (i.e. a sequence $\seq{C_\alpha}{\alpha\in\Lim\cap\omega_1}$ with the property that $C_\alpha$ is a cofinal subset of $\alpha$ of order-type $\omega$ for every countable limit ordinal $\alpha$) with the property that the set $\{\vec{C}\}$ is $\Sigma_1(\omega_1,z)$-definable. 
 By {\cite[Theorem 2]{MR2231126}}, $\BPFA$ implies that $\HH{\omega_2}$ has a good $\Sigma_1(\vec{C})$-well-ordering. 
 Since this implies that $\HH{\omega_2}$ has a good $\Sigma_1(\omega_2,z)$-well-ordering, we can apply Proposition \ref{proposition:FailureSigma1PPgoodWO} to conclude that $\omega_2$ does not have the $\Sigma_1(z)$-colouring property. 
\end{proof}

 The failures of the $\mathbf{\Sigma}_1$-colouring property provided by Corollary \ref{corollary:GenericTreeCodingCounterexamples} and Proposition \ref{proposition:BPFAfailureBoldface} both make use of subsets of $\omega_1$ that encode a great amount of information. 
Therefore, it is natural to consider even simpler partitions and ask if large cardinal assumptions or strong forcing axioms cause $\omega_2$ to possess the $\Sigma_1$-colouring property. 
This question is answered negatively by the following result. It also answers one of the main questions left open by the results of \cite{MR3694344}.

\begin{theorem}\label{theorem:MainOmega2Negative}
 Assume that $\BPFA$ holds. If there is a $\lhd$ well-ordering of the reals that is definable over the structure $\langle\HH{\omega_2},\in\rangle$ by a formula with parameter $z\in\HH{\omega_2}$, then the following statements hold:
 \begin{enumerate}[leftmargin=0.9cm]
  \item The well-ordering $\lhd$ is $\Sigma_1(\omega_2,z)$-definable. 

  \item The cardinal $\omega_2$ does not have the $\Sigma_1(z)$-colouring property.  
 \end{enumerate}
\end{theorem}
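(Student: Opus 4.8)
The plan is to prove (i) first and then derive (ii) from it.

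\emph{From (i) to (ii).} Assume (i), so that the relation $\lhd$ is $\Sigma_1(\omega_2,z)$-definable. Since $\lhd$ linearly orders the reals, its complement on the reals is $\Set{(x,y)}{y\lhd x\text{ or }y=x}$ and hence is $\Sigma_1(\omega_2,z)$-definable as well, so $\lhd$ is in fact $\Delta_1(\omega_2,z)$-definable. It follows that the rank function $\rho$ sending a real $r$ to $\otp{\Set{s}{s\lhd r}}$ is $\Sigma_1(\omega_2,z)$-definable: the equation $\rho(r)=\xi$ asserts the existence of an order isomorphism from $\xi$ onto $\Set{s}{s\lhd r}$, and the only clause in the obvious description of such an isomorphism that is not already bounded — surjectivity onto $\Set{s}{s\lhd r}$ — becomes a bounded quantifier over the \emph{set} of all reals applied to a $\Sigma_1$-matrix once $\lhd$ is known to be $\Delta_1(\omega_2,z)$-definable, and is therefore $\Sigma_1$ by the Collection scheme in $\VV$. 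Since $\BPFA$ implies $2^{\aleph_0}=\aleph_2$, the order type of $\lhd$ is at least $\omega_2$, so the map sending $\xi<\omega_2$ to the unique real $r$ with $\rho(r)=\xi$ is a $\Sigma_1(\omega_2,z)$-definable injection of $\omega_2$ into the reals; composing it with a fixed $\Sigma_1$-definable injection of the reals into ${}^\omega 2$ yields a $\Sigma_1(\omega_2,z)$-definable function $\map{\iota}{\omega_2}{{}^{{<}\omega_2}2}$ with $\iota(\xi)\restriction\omega=\iota(\xi)$ for all $\xi<\omega_2$. As $\Set{\iota(\xi)\restriction\omega}{\xi<\omega_2}$ has cardinality $\omega_2$, this contradicts the conclusion of Proposition \ref{Proposition:NoDefinableInjection}, applied with $\gamma=\omega$, under the hypothesis that $\omega_2$ has the $\Sigma_1(z)$-colouring property; hence $\omega_2$ does not have that property.

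\emph{Proof of (i), the upgrade step.} The target of the first step is to show that $\lhd$ is $\Sigma_1$-definable over $\langle\HH{\omega_2},\in\rangle$ with parameter $z$, say by a $\Sigma_1$-formula $\chi$ with $x\lhd y$ if and only if $\HH{\omega_2}\models\chi(x,y,z)$ for all reals $x,y$. Granting this, $\Sigma_1(\omega_2,z)$-definability of $\lhd$ over $\VV$ follows by a condensation argument: one checks that $x\lhd y$ holds exactly when there is a transitive set $M$ modelling a fixed sufficiently large finite fragment of $\ZFC^-$, including that $\HH{\omega_2}$ exists, with $x,y,z\in M$ and $\On\cap M<\omega_2$, such that $M\models\chi^{\HH{\omega_2}^{M}}(x,y,z)$. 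The forward direction is witnessed by the transitive collapse of an elementary submodel of some large $\langle\HH{\theta},\in\rangle$ of size $\aleph_1$ containing $\omega_1\cup\{x,y,z,\omega_2\}$; the backward direction holds because a witness $v\in M$ to the existential quantifier of $\chi$ has, computed inside $M$, transitive closure of size below $\On\cap M<\omega_2$, hence truly of size at most $\aleph_1$, so that $v\in\HH{\omega_2}^{\VV}$ and $\HH{\omega_2}^{\VV}\models\chi(x,y,z)$ follow by $\Sigma_0$-absoluteness of the matrix of $\chi$. The displayed description of $\lhd$ is $\Sigma_1(\omega_2,z)$, since the defining condition on $M$ is bounded once $\omega_2$ is a parameter.

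\emph{Proof of (i), the main step.} To obtain the $\Sigma_1$-definition over $\HH{\omega_2}$, I would feed the hypothesis into the analysis of definable well-orderings under $\BPFA$ due to Caicedo and Veli\v{c}kovi\'{c}. From the given well-ordering $\lhd$ of the reals one extracts, definably over $\HH{\omega_2}$ with parameter $z$ — coding hereditarily countable sets by reals and choosing $\lhd$-least codes — a well-ordering of $\HH{\omega_1}$, and in particular a ladder system on $\omega_1$; by \cite[Theorem 2]{MR2231126}, $\BPFA$ then provides a good $\Sigma_1$-well-ordering $\lhd^{*}$ of $\HH{\omega_2}$ whose proper initial segments are elementary submodels of $\langle\HH{\omega_2},\in\rangle$. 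Since $\lhd$ is definable over $\HH{\omega_2}$ from $z$, one then has $x\lhd y$ precisely when some proper initial segment of $\lhd^{*}$ containing $x$, $y$, and $z$ satisfies the formula defining $\lhd$; as membership in the class of proper initial segments of $\lhd^{*}$ is a $\Sigma_1$-condition, this presents $\lhd$ as a $\Sigma_1$-relation over $\HH{\omega_2}$ in parameters naming $\lhd^{*}$.

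The main obstacle will be to carry out this last step with parameter exactly $z$. The ladder system above, and with it the well-ordering $\lhd^{*}$, are defined with reference to $\lhd$, so a direct unravelling yields a $\Sigma_1$-definition of $\lhd$ over $\HH{\omega_2}$ that still names the ladder system as an auxiliary parameter, and re-expressing that parameter through $\lhd$ reintroduces an unbounded quantifier over the reals and pushes the complexity past $\Sigma_1$. I expect to resolve this using Bagaria's characterization of $\BPFA$ through generic $\Sigma_1$-absoluteness for $\langle\HH{\omega_2},\in\rangle$, applied to a proper forcing that adjoins a ladder system coding $z$, together with the fact that the Caicedo--Veli\v{c}kovi\'{c} well-ordering is given by a single $\Sigma_1$-formula uniformly in its ladder-system parameter, so that this parameter can be existentially quantified away. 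With the first step in hand, the condensation step of the preceding paragraph finishes the proof.
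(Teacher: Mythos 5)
Your overall architecture (prove (i), then derive (ii) via Proposition \ref{Proposition:NoDefinableInjection}) matches the paper, and your ``upgrade step'' condensation argument is essentially sound. The genuine gap is in your ``main step'', and it stems from a missing idea that makes that entire step unnecessary. The paper first proves that under $\BPFA$ the \emph{singleton} $\{\HH{\omega_2}\}$ is $\Sigma_1(\omega_2)$-definable: by {\cite[Theorem 1]{MR2231126}} there is a finite fragment $\mathsf{F}$ of $\ZFC+\BPFA$ such that every transitive model $M$ of $\mathsf{F}$ with $\omega_2=\omega_2^M$ contains all subsets of $\omega_1$, so $\HH{\omega_2}$ is the unique $B$ for which there exists such an $M$ with $B=\HH{\omega_2}^M$. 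Once this is available, \emph{any} relation definable over $\langle\HH{\omega_2},\in\rangle$ by \emph{any} formula with parameter $z$ is automatically $\Sigma_1(\omega_2,z)$-definable over $\VV$, since satisfaction in a set structure is $\Delta_1$; there is no need to first lower the complexity of the definition of $\lhd$ \emph{over} $\HH{\omega_2}$ to $\Sigma_1$. Your attempt to do that lowering is exactly where the proof breaks: you correctly identify that the Caicedo--Veli\v{c}kovi\'c well-ordering carries a ladder-system parameter that is itself extracted from $\lhd$, and your proposed repair (generic $\Sigma_1$-absoluteness plus forcing to add a ladder system, then existentially quantifying the parameter away) is not an argument. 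Different ladder systems yield different well-orderings $\lhd^{*}$, a generically added ladder system does not live in $\VV$, and quantifying the parameter away only shows that \emph{some} pair $(x,y)$ lies in \emph{some} such well-ordering's version of the construction, not that it lies in the fixed $\lhd$ you started with; moreover, your claim that every proper initial segment of $\lhd^{*}$ is fully elementary in $\langle\HH{\omega_2},\in\rangle$ (which you need in order to reflect the arbitrary-complexity formula defining $\lhd$) is not what the cited theorem provides.

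A secondary soft spot: in your (i)$\Rightarrow$(ii) argument, the surjectivity clause of the rank function is a quantifier bounded by the \emph{set} $\POT{\omega}$, and for this to stay $\Sigma_1$ you need the singleton $\{\POT{\omega}\}$ to be $\Sigma_1(\omega_2,z)$-definable. That again follows immediately from the $\Sigma_1(\omega_2)$-definability of $\{\HH{\omega_2}\}$ (as $\POT{\omega}=\Set{x\in\HH{\omega_2}}{x\subseteq\omega}$), but you have not established it by your route; the paper sidesteps the issue entirely by observing that the canonical enumeration $\map{\iota}{\omega_2}{{}^\omega 2}$ of the first $\omega_2$-many $\lhd$-elements is itself definable over $\langle\HH{\omega_2},\in\rangle$ with parameter $z$ and then applying the same lemma a second time.
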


 Results of Asper\'o and Larson cited below show that  the statement that the assumptions of Theorem \ref{theorem:MainOmega2Negative} are satisfied for the empty set as a parameter are compatible with both large cardinal assumptions and strong forcing axioms. In particular, the existence of a $\Sigma_1(\omega_2)$-definable well-ordering of the reals is compatible with these assumptions. This answers {\cite[Question 7.5]{MR3694344}}.

\begin{remark}
 \begin{enumerate}[leftmargin=0.9cm]
  \item If $\kappa$ is supercompact, then {\cite[Theorem 5.2]{MR2320944}} shows that there is a semi-proper partial order $\PPP\subseteq\HH{\kappa}$ with the property that whenever $G$ is $\PPP$-generic over $\VV$, then $\PFA^{++}$ (see {\cite[Definition 5.1]{MR2320944}}) holds in $\VV[G]$ and there is a well-ordering of $\HH{\omega_2}^{\VV[G]}$ that is   definable over $\langle\HH{\omega_2},\in\rangle$ by a formula without parameters. 
  
  \item If $\kappa$ is a supercompact limit of supercompact cardinals, then {\cite[Theorem 7.1]{MR2474445}} yields a semi-proper partial order $\PPP$ with the property that whenever $G$ is $\PPP$-generic over $\VV$, then $\MM^{{+}\omega}$ (see {\cite[Section 1]{MR2474445}}) holds in $\VV[G]$ and there is a well-ordering of $\HH{\omega_2}^{\VV[G]}$ that is definable over $\langle\HH{\omega_2},\in\rangle$ by a formula without parameters. 
 \end{enumerate}
\end{remark}

Note that it is not know whether the stronger forcing axiom $\MM^{++}$ (see {\cite[Section 1]{MR2474445}}) implies that no well-ordering of the reals is definable over $\langle\HH{\omega_2},\in\rangle$ by a formula without parameters. This question is motivated by the open question whether $\MM^{++}$ implies Woodin's axiom $(*)$ (see \cite{MR2474445} for a discussion).

Theorem \ref{theorem:MainOmega2Negative} is a direct consequence of the following lemma. The lemma itself follows directly from arguments due to Caicedo and  Veli{\v{c}}kovi{\'c} that are used to prove {\cite[Theorem 1]{MR2231126}} stating that, if $\BPFA$ holds and $M$ is an inner model of $\ZFC+\BPFA$ with $\omega_2=\omega_2^M$, then $\POT{\omega_1}\subseteq M$.

\begin{lemma}\label{lemma:BPFALocallyDefinable}
 If $\BPFA$ holds, then the set $\{\HH{\omega_2}\}$ is $\Sigma_1(\omega_2)$-definable. 
\end{lemma}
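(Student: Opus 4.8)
The goal is to produce a $\Sigma_1$-formula in the parameter $\omega_2$ that characterizes, among all sets $x$, exactly the set $\HH{\omega_2}$. The natural strategy is to say ``$x$ is the transitive collapse of a sufficiently correct elementary submodel of some $\HH{\theta}$ computed inside a model that knows $\omega_2$'', and to use $\BPFA$ to make this locally decidable. Concretely, I would aim to show that, assuming $\BPFA$, the set $\HH{\omega_2}$ is the unique transitive set $x$ with $\omega_2 \in x$ such that there exist a transitive model $N$ of a fixed finite fragment $T$ of $\ZFC^-$ with $x \in N$, $\omega_2 = \omega_2^N$, and $N$ believes that $x = \HH{(\omega_2)}$; provided we can verify that the displayed conditions are $\Sigma_1$ in $\omega_2$ and that $\BPFA$ guarantees every such $N$ computes $\HH{\omega_2}$ correctly. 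The key engine is the Caicedo--Veli\v{c}kovi\'c argument behind {\cite[Theorem 1]{MR2231126}}: $\BPFA$ implies that any inner model (and more generally any sufficiently closed transitive model) agreeing with $V$ about $\omega_2$ must contain all of $\POT{\omega_1}$, and hence compute $\HH{\omega_2}$ correctly, because the reals (and subsets of $\omega_1$) can be decoded from $\omega_2$ using the canonical ``Caicedo--Veli\v{c}kovi\'c coding'' of reals by ordinals in the interval $[\omega_1,\omega_2)$ via club-guessing/ladder-system machinery that $\BPFA$ makes absolute.

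\textbf{Key steps, in order.} First I would fix a ladder system $\vec{C} = \seq{C_\alpha}{\alpha \in \Lim \cap \omega_1}$ that is $\Sigma_1(\omega_1)$-definable (available as in the proof of Proposition~\ref{proposition:BPFAfailureBoldface}), and recall the Caicedo--Veli\v{c}kovi\'c scheme that, under $\BPFA$, assigns to each real (equivalently, each $A \subseteq \omega_1$) an ordinal code below $\omega_2$, in such a way that the decoding relation ``$\xi$ codes $A$'' is upward absolute between transitive models containing $\vec C \restriction \alpha$ for the relevant $\alpha$ and is correctly computed in any model with the right $\omega_1$ and $\omega_2$. Second, I would give the candidate $\Sigma_1$-definition: $\varphi(x,\omega_2)$ asserts the existence of an ordinal $\theta > \omega_2$, a countable-in-the-real-world but transitive model $N \models T$ with $x,\theta,\vec C \in N$, $\omega_1^N = \omega_1$, $\omega_2^N = \omega_2$, $N \models ``x = \HH{\omega_2} \wedge \theta = |\HH{\omega_2}|^+ \wedge \ldots$'' — all existential over a transitive set witness, so genuinely $\Sigma_1$ once I check that $\omega_1$ itself is $\Sigma_1(\omega_2)$-definable (it is: $\omega_1$ is the largest cardinal of any transitive $\ZFC^-$-model in which $\omega_2$ is the second uncountable cardinal, and being a cardinal in such a model is $\Sigma_1$). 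Third, I would argue both directions: (a) $\HH{\omega_2}$ itself satisfies $\varphi$, witnessed by taking $N = \LL_\theta[\vec C, \ldots]$-type collapses of a countable elementary submodel of a large $\HH{\Theta}$ containing all relevant data; (b) conversely, if $\varphi(x,\omega_2)$ holds as witnessed by $N$, then by the $\BPFA$-coding the model $N$ must contain every subset of $\omega_1$ (decode each $A$ from its code $\xi < \omega_2 = \omega_2^N$ using $\vec C \in N$), hence $\POT{\omega_1} \subseteq N$, hence $\HH{\omega_2}^N = \HH{\omega_2}$, and since $N \models x = \HH{\omega_2}^N$ and $N$ is transitive and correct about ``$\HH{\omega_2}$'' at the level of its own $\omega_2$, we get $x = \HH{\omega_2}$.

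\textbf{Main obstacle.} The delicate point is step (b): extracting from ``$N$ knows $\omega_2$ and $\vec C$'' the conclusion ``$\POT{\omega_1} \subseteq N$'' via the Caicedo--Veli\v{c}kovi\'c decoding. One must be careful that the coding ordinals assigned under $\BPFA$ really do land below $\omega_2$ and that the decoding predicate is absolute to, and correctly computed in, the model $N$ — this is precisely the content of the $\BPFA$ reflection argument in \cite{MR2231126}, applied not to an inner model but to an arbitrary transitive model with the correct $\omega_1$ and $\omega_2$; checking that the relevant instances of $\BPFA$ (which are genuinely used in the real world $V$, not in $N$) suffice to pin down the codes, and that a countable transitive $N$ with the right ordinals can still decode, requires quoting their lemmas carefully rather than reproving them. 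A secondary technical nuisance is bookkeeping the finite fragment $T$ of $\ZFC^-$ and verifying that ``$x = \HH{\omega_2}$'' as evaluated in $N$ is robust under the collapse; this is routine but must be stated. I expect the cleanest writeup to simply invoke {\cite[Theorem 1]{MR2231126}} and its proof as a black box for the coding, and to spend the actual work on verifying the $\Sigma_1$-form of the definition and the two absoluteness directions.
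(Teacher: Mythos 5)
Your overall strategy --- characterizing $\HH{\omega_2}$ as the common value of $\HH{\omega_2}^N$ over transitive set models $N$ that compute $\omega_2$ correctly, and using the Caicedo--Veli\v{c}kovi\'c machinery to show that every such $N$ absorbs $\POT{\omega_1}$ --- is the same as the paper's. The paper's proof is a three-line version of it: the proof of {\cite[Theorem 1]{MR2231126}} yields a \emph{finite} fragment $\mathsf{F}$ of $\ZFC+\BPFA$ such that every transitive model $M$ of $\mathsf{F}+$``$\omega_2$ exists'' with $\omega_2=\omega_2^M$ contains all subsets of $\omega_1$, and then $\HH{\omega_2}$ is the unique set of the form $\HH{\omega_2}^M$ for such an $M$, which is visibly a $\Sigma_1(\omega_2)$-definition.

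Your execution, however, has two genuine gaps. First, and most seriously, you ``fix a ladder system $\vec{C}$ that is $\Sigma_1(\omega_1)$-definable, available as in the proof of Proposition \ref{proposition:BPFAfailureBoldface}'' --- but that proposition only produces a ladder system whose singleton is $\Sigma_1(\omega_1,z)$-definable for a parameter $z\subseteq\omega_1$ with $\omega_1=\omega_1^{\LL[z]}$; a parameter-free $\Sigma_1(\omega_1)$-definable ladder system is not available in general. If $\vec{C}$ (or $z$) enters your formula as a parameter, you only obtain $\Sigma_1(\omega_2,z)$-definability, which is strictly weaker than what the lemma asserts. The repair is to not fix $\vec{C}$ at all: either require the witness model to satisfy a finite fragment of $\BPFA$ itself (the paper's route, so that the Caicedo--Veli\v{c}kovi\'c argument applies to $N$ wholesale), or observe that any transitive $\ZFC^-$-model $N$ with $\omega_2^N=\omega_2$ automatically satisfies $\omega_1^N=\omega_1$ and contains \emph{some} ladder system on $\omega_1$, and that the coding theorem applies to arbitrary ladder systems, so the decoding can be run with $N$'s own ladder system and no external parameter is needed. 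Second, in your existence direction (a) you propose a ``countable-in-the-real-world but transitive'' witness $N$ with $\omega_2^N=\omega_2$ and $\HH{\omega_2}\in N$; such an $N$ cannot be countable, since it contains $\omega_2$ as an ordinal. The witness should simply be $\HH{\theta}$ for a suitable regular $\theta>2^{\aleph_1}$ chosen so that $\HH{\theta}$ satisfies the relevant finite fragment; $\Sigma_1$-formulas impose no cardinality restriction on their witnesses.
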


\begin{proof}
 The proof of {\cite[Theorem 1]{MR2231126}} shows that there is a finite fragment $\mathsf{F}$ of the theory $\ZFC+\BPFA$ with the property that $\ZFC+\BPFA$ proves that every transitive model $M$ of $\mathsf{F}+\anf{\textit{$\omega_2$ exists}}$ with $\omega_2=\omega_2^M$ contains all subsets of $\omega_1$. This shows that $\BPFA$ implies that $\HH{\omega_2}$ is the unique set $B$ with the property that there is a transitive model $M$ of $\mathsf{F}+\anf{\textit{$\omega_2$ exists}}$ with $\omega_2=\omega_2^M$ and $B=\HH{\omega_2}^M$. In particular, $\BPFA$ implies that the set $\{\HH{\omega_2}\}$ is $\Sigma_1(\omega_2)$-definable. 
\end{proof}

Note that both large cardinal assumptions and strong forcing axioms imply that the set $\HH{\omega_1}$ is not $\Sigma_1(\omega_1)$-definable. This follows directly from the fact that there are non-projective sets of reals that can be defined over the structure $\langle\HH{\omega_1},\in\rangle$ and {\cite[Lemma 3.3]{MR3694344}} showing that these extensions of $\ZFC$ imply that every $\Sigma_1(\omega_1)$-definable set of reals is $\Sigma^1_3$-definable.

\begin{proof}[Proof of Theorem \ref{theorem:MainOmega2Negative}]
  Assume that $\BPFA$ holds and that there is a well-ordering $\lhd$ of ${}^\omega 2$ that is definable over the structure $\langle\HH{\omega_2},\in\rangle$ by a formula with parameter $z\in\HH{\omega_2}$. Then Lemma \ref{lemma:BPFALocallyDefinable} directly implies that the set $\lhd$ is $\Sigma_1(\omega_2,z)$-definable. Since $\BPFA$ implies that $\CH$ fails, we know that $\langle{}^\omega 2,\lhd\rangle$ has order-type at least $\omega_2$. Let $\map{\iota}{\omega_2}{{}^\omega 2}$ denote the canonical enumeration of the first $\omega_2$-many elements of $\langle{}^\omega 2,\lhd\rangle$. Then $\iota$ is also definable over the structure $\langle\HH{\omega_2},\rangle$ by a formula with parameter $z$. In this situation, Lemma  \ref{lemma:BPFALocallyDefinable} shows that $\iota$ is $\Sigma_1(\omega_2,z)$-definable and we can apply Proposition \ref{Proposition:NoDefinableInjection} to conclude that $\omega_2$ does not have the $\Sigma_1(z)$-colouring  property.  
\end{proof}


\section{Limit cardinals}\label{section:Limits}

We now consider the question, which of the above partition relations for definable colourings can hold at regular limit cardinals. 
In these considerations, we focus on inaccessible cardinals that are not weakly compact. 
 We start by showing that there are many such cardinals with the $\mathbf{\Sigma}_1$-colouring property below weakly compact cardinals and there are many such cardinals with the $\mathbf{\Sigma}_1$-club property below weakly compact cardinals with this property.

\begin{theorem}\label{theorem:Sigma1WCbelowWC}
 Let $\kappa$ be a weakly compact cardinal, let $A$ be a subset of $\kappa$ and let $\Psi(v)$ be a $\Pi^1_1$-formula with $\VV_\kappa\models\Psi(A)$. 
 \begin{enumerate}[leftmargin=0.9cm]
  \item The statement $\Psi(A)$ reflects to an inaccessible cardinal less than $\kappa$ with the $\mathbf{\Sigma}_1$-colouring property. 
  
  \item If the cardinal $\kappa$ has the $\mathbf{\Sigma}_1$-club property, then the statement $\Psi(A)$ reflects to an inaccessible cardinal less than $\kappa$ with the $\mathbf{\Sigma}_1$-club property. 
 \end{enumerate}
\end{theorem}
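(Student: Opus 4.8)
The plan is to establish both parts by a single reflection argument driven by the weak compactness of $\kappa$. The first move is to record, using Lemma \ref{lemma:CharacterizationsPartitionProperty} (respectively Lemma \ref{lemma:CharacterizationsClubProperty}), that ``$\alpha$ has the $\mathbf{\Sigma}_1$-colouring property'' (respectively ``$\alpha$ has the $\mathbf{\Sigma}_1$-club property'') is, for inaccessible $\alpha<\kappa$, a statement of low logical complexity about the set $\HH{\alpha^+}$: each colouring one must handle is a $\Sigma_1(\alpha,z)$-definable object with $z\in\HH{\alpha}$, hence --- since $\HH{\alpha^+}\prec_{\Sigma_1}\VV$ --- already definable over $\langle\HH{\alpha^+},\in\rangle$, and the homogeneous set (respectively club) one seeks is a subset of $\alpha$; for $\alpha<\kappa$ with $\kappa$ inaccessible this is all absorbed inside $\VV_\kappa$. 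The plan is then to build a weak $\kappa$-model $\bar M$, namely the transitive collapse of an elementary submodel $X\prec\langle\HH{\theta},\in,A\rangle$ of size $\kappa$ with $\VV_\kappa\cup\{\VV_\kappa,A,\kappa\}\subseteq X$, ${}^{{<}\kappa}X\subseteq X$, and $X$ containing every $\Sigma_1(\kappa,\cdot)$-definable partition of $[\kappa]^2$ (there are only $\kappa$ many, each of size $\kappa$). Then $\bar M$ is $\Sigma_1$-correct, $\kappa$ is inaccessible in $\bar M$, $\HH{\kappa}^{\bar M}=\VV_\kappa\in\bar M$, $A\in\bar M$, and $\bar M$ contains all the relevant partitions; being elementarily equivalent to $\HH{\theta}$, it also satisfies ``$\VV_\kappa\models\Psi(A)$'' and, in case (ii), ``$\kappa$ has the $\mathbf{\Sigma}_1$-club property''.

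Weak compactness of $\kappa$ supplies an elementary embedding $\map{j}{\bar M}{N}$ with $\crit{j}=\kappa$, which I would take to come from a weakly amenable $\bar M$-ultrafilter, so that $N$ and $\bar M$ have the same subsets of $\kappa$. The core of the argument is to verify that $N$ recognises $\kappa$ as carrying the $\mathbf{\Sigma}_1$-colouring property (and, under the hypothesis of (ii), the $\mathbf{\Sigma}_1$-club property): every partition of $[\kappa]^2$ that $N$ deems $\Sigma_1(\kappa,\cdot)$-definable lies in $\bar M$, and then the Ramification-tree analysis from the proof of Lemma \ref{lemma:CharacterizationsPartitionProperty} (respectively, the club-reflection analysis behind Lemma \ref{lemma:CharacterizationsClubProperty}), applied with the embedding $j$, produces the required homogeneous unbounded set (respectively homogeneous club) inside $N$. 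Granting this, $N$ satisfies ``there is an inaccessible $\alpha<j(\kappa)$ --- namely $\alpha=\kappa$ --- with $\VV_\alpha\models\Psi(j(A)\cap\alpha)$ that has the $\mathbf{\Sigma}_1$-colouring property'', where one uses that $\VV_\kappa^N=\VV_\kappa$, that $j(A)\cap\kappa=A$, and that $\Psi$ is downward absolute to $\VV_\kappa$. Pulling this back along $j$, we get that $\bar M$ satisfies ``there is an inaccessible $\alpha<\kappa$ with $\VV_\alpha\models\Psi(A\cap\alpha)$ possessing the $\mathbf{\Sigma}_1$-colouring property''; since for $\alpha<\kappa$ this assertion is absolute between $\bar M$ and $\VV$ (it lives inside the common element $\VV_\kappa$, and $\VV_{\alpha+1}\subseteq\bar M$), we obtain the desired reflecting cardinal in $\VV$. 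Part (ii) is the same argument with ``colouring'' replaced by ``club'' throughout, using Lemma \ref{lemma:CharacterizationsClubProperty} and feeding in the hypothesis that $\kappa$ has the $\mathbf{\Sigma}_1$-club property.

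The step I expect to be the main obstacle is the verification that $N$ recognises $\kappa$ as having the $\mathbf{\Sigma}_1$-colouring (respectively club) property: concretely, controlling which partitions of $[\kappa]^2$ count as $\Sigma_1(\kappa,\cdot)$-definable in the sense of $N$, of $\bar M$, and of $\VV$, and checking that the homogeneous sets (clubs) produced by the embedding genuinely witness the property \emph{inside} $N$. This is exactly where the distinction between the $\mathbf{\Sigma}_1$-colouring property and full weak compactness matters: the former is low enough in complexity --- it concerns only the $\Sigma_1$-definable colourings, which are cofinally few and locally visible --- that it transfers through the collapse and the embedding, whereas full weak compactness, which (as the least weakly compact cardinal shows) need not reflect below $\kappa$, does not. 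The remaining points --- the existence of the weakly amenable ultrafilter, the closure and correctness properties of $\bar M$, and the routine absoluteness of ``inaccessible'' and ``$\VV_\alpha\models\Psi(\cdot)$'' for $\alpha<\kappa$ --- are standard.
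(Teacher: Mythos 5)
Your overall architecture matches the paper's: collapse a suitable elementary submodel of some $\HH{\theta}$ containing $A$ and $\VV_\kappa$ to a weak $\kappa$-model, use weak compactness to embed it into a transitive $N$ with critical point $\kappa$, argue that $N$ sees $\kappa$ as an inaccessible cardinal with the relevant definable partition property and that $\Psi$ passes down by $\Pi^1_1$-absoluteness, and pull back along $j$, using that the target assertion is local to $\VV_\kappa$. However, there is a genuine gap at the pivotal step. You write that weak compactness ``supplies an elementary embedding $\map{j}{\bar M}{N}$ \ldots which I would take to come from a weakly amenable $\bar M$-ultrafilter, so that $N$ and $\bar M$ have the same subsets of $\kappa$,'' and you then lean on this to conclude that every partition $N$ deems $\Sigma_1(\kappa,\cdot)$-definable lies in $\bar M$. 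Weak compactness does not provide weakly amenable ($\kappa$-powerset-preserving) ultrafilters on arbitrary weak $\kappa$-models: the statement that every $A\subseteq\kappa$ lies in a weak $\kappa$-model carrying a weakly amenable ultrafilter with well-founded ultrapower is Gitman's \emph{weakly Ramsey} (equivalently, $1$-iterable) property, which is strictly stronger than weak compactness (such cardinals are limits of ineffable, hence of weakly compact, cardinals, so the least weakly compact cardinal is a counterexample). As stated, the step that feeds the $N$-definable colourings into $\bar M$ therefore fails.

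The containment you need is nevertheless true, and the paper obtains it from an ingredient you already have but do not deploy here: the $\Sigma_1$-correctness of the collapsed model. Since $\bar M$ arises from an elementary submodel of $\HH{\theta}$ fixing $\kappa$, $\VV_\kappa$ and $A$, and $\bar M\subseteq N$ with $N$ transitive, $\Sigma_1$-formulas with parameters in $\bar M$ are absolute between $\bar M$, $\VV$ and $N$ (upward from $\bar M$ to $N$ by transitivity, and downward from $N$ via $\VV$ and Levy reflection back into $\HH{\theta}$ and hence into $\bar M$). Consequently, if $N$ believes $c$ is the unique witness to a $\Sigma_1$-formula with parameters $\kappa,z$ ($z\in\HH{\kappa}\subseteq\bar M$), then $\bar M$ already contains such a witness, which must equal $c$; no agreement on $\POT{\kappa}$ is needed. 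A second, smaller point: once $c\in\bar M$, re-running the ramification-tree construction inside $N$ via $j$ is heavier than necessary and would require checking that each stage of that construction can be carried out in $N$; the paper instead observes that $\bar M$ is elementarily a model of ``$\kappa$ is weakly compact'' (being a collapse of $X\prec\HH{\theta}$ with $\kappa$ weakly compact in $\VV$, respectively ``$\kappa$ has the $\mathbf{\Sigma}_1$-club property'' for part (ii)), so $\bar M$ itself already contains the required homogeneous unbounded set (respectively homogeneous club), which then lies in $N$ since $\bar M\subseteq N$. With the weak-amenability appeal replaced by the $\Sigma_1$-absoluteness argument, your proof goes through and is essentially the paper's.
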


\begin{proof}
 Pick an elementary submodel $M$ of $\HH{\kappa^+}$ of cardinality $\kappa$ with $\kappa,A\in M$ and ${}^{{<}\kappa}M\subseteq M$. The results of {\cite{MR1133077}} now yield a transitive set $N$ and an elementary  embedding $\map{j}{M}{N}$ with critical point $\kappa$ such that both $M$ is an element of $N$.  Then $\kappa$ is inaccessible in $N$, $\HH{\kappa}\subseteq M\subseteq N$, $A=j(A)\cap\kappa\in N$ and $\Pi^1_1$-downwards absoluteness implies that $(\VV_\kappa\models\Psi(A))^N$.

\begin{claim*}
 $\Sigma_1$-formulas with parameters in $M$ are absolute between $M$ and $N$. 
\end{claim*}

\begin{proof}[Proof of the Claim]
 Since $M\subseteq N$, it suffices to show that $\Sigma_1$-formulas with parameters from $M$ are downwards-absolute from $N$ to $M$. Fix $z_0,\ldots,z_{n-1}\in M$ and a $\Sigma_1$-formula $\varphi(v_0,\ldots,v_{n-1})$ such that $\varphi(z_0,\ldots,z_{n-1})$ holds in $N$. Then $\Sigma_1$-upwards absoluteness implies that $\varphi(z_0,\ldots,z_{n-1})$ holds in $\VV$ and the $\Sigma_1$-Reflection Principle implies that this statement holds in $\HH{\kappa^+}$. By the definition of $M$, we can conclude that $\varphi(z_0,\ldots,z_{n-1})$ holds in $M$.
\end{proof}

(i) First, let $\map{c}{[\kappa]^2}{2}$ be a function in $N$ that is $\Sigma_1(\kappa,z)$-definable in $N$ for some $z\in\HH{\kappa}^N$. Then $z\in M$ and the above claim implies that the same $\Sigma_1$-formula defines $c$ in $M$. Since $M$ is an elementary submodel of $\HH{\kappa^+}$ and $\kappa$ is weakly compact, $M$ contains a $c$-homogeneous subset of $\kappa$ that is unbounded in $\kappa$ and this subset is also an element of $N$.  
These computations show that in the structure $\langle j(\VV_\kappa),\in,A\rangle$, there is an inaccessible cardinal $\nu$ with the $\mathbf{\Sigma}_1$-colouring property with  $\VV_\nu\models\Psi(j(A)\cap\nu)$. With the help of a universal $\Sigma_1$-formula, this statement can be expressed by a first-order statement in $N$ that only uses the parameters $j(\kappa)$ and $j(A)$. By elementarity, there is an inaccessible cardinal $\mu<\kappa$ with the $\mathbf{\Sigma}_1$-colouring property such that $\VV_\mu\models\Psi(A\cap\mu)$ holds.

 (ii) Now, assume that $\kappa$ has the $\mathbf{\Sigma}_1$-club property and let $\map{c}{[\kappa]^{{<}\omega}}{\gamma}$ be a function in $N$ with $\gamma<\kappa$ that is  $\Sigma_1(\kappa,z)$-definable in $N$ for some $z\in\HH{\kappa}^N$. As above, we can conclude that $c$ is an element of $M$ and $\Sigma_1(\kappa,z)$-definable in that model. Since $M$ is an elementary submodel of $\HH{\kappa^+}$ and $\kappa$ has the $\Sigma_1$-club property, elementarity implies that $M$ contains a $c$-homogeneous set that is closed and unbounded in $\kappa$. As in (i), these computations show that in $\langle j(\VV_\kappa),\in,A\rangle$, there is an inaccessible cardinal $\nu$ with the $\mathbf{\Sigma}_1$-club property and the property that $\VV_\nu\models\Psi(j(A)\cap\nu)$ holds.  
\end{proof}

Next, we show that certain regular limits of cardinals with large cardinal properties stronger than weak compactness are also examples of inaccessible cardinals with the $\mathbf{\Sigma}_1$-club property. 
The following lemma also shows that successors of singular cardinals $\nu$ can possess the $\Sigma_1(z)$-club property for all parameters $z$ in $\HH{\nu}$.

\begin{lemma}\label{lemma:LimitMeasurablesClubProperty}
 Let $\nu$ be a strong limit cardinal, let $\kappa\in\{\nu,\nu^+\}$ be a regular cardinal and let $\delta<\nu$ be a measurable cardinal with $\cof{\nu}\neq\delta$. If $A$ is a bistationary subset of $\kappa$ and $z\in\HH{\delta}$, then the set $\{A\}$ is not $\Sigma_1(\kappa,z)$-definable. 
\end{lemma}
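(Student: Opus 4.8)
The plan is to prove the stronger dichotomy that every subset $A$ of $\kappa$ for which $\{A\}$ is $\Sigma_1(\kappa,z)$-definable with $z\in\HH{\delta}$ either contains a closed unbounded subset of $\kappa$ or is disjoint from one; since a bistationary set can be neither, the lemma follows. Fix a $\Sigma_0$-formula $\psi$ such that $A$ is the unique set with $\exists x\,\psi(A,x,\kappa,z)$, fix a witness $a$, let $W$ be a normal ultrafilter on $\delta$, and let $\langle M_\alpha,j_{\alpha\beta}\rangle$ be the linear iterated ultrapower of $\VV$ by $W$ and its images, with critical sequence $\langle\delta_\alpha=j_{0,\alpha}(\delta)\mid\alpha\in\On\rangle$. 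Since $z\in\HH{\delta}$ has hereditary cardinality, hence rank, below $\crit{j_{0,\alpha}}=\delta$, we have $j_{0,\alpha}(z)=z$ for every $\alpha$.

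First I would treat the case $\kappa=\nu$, which then is inaccessible, by iterating $W$ exactly $\nu$ times. Since $\nu$ is a regular strong limit and every $\delta_\beta$ with $\beta<\nu$ lies below $\nu$, a routine induction shows that $\langle\delta_\alpha\mid\alpha<\nu\rangle$ is strictly increasing, continuous, and cofinal in $\nu$; moreover, because each step is an ultrapower by a measure on some $\delta_\beta<\nu$ and $\nu$ is a regular strong limit, every function $\delta_\beta\to\nu$ in $M_\beta$ is bounded and the ultrapower image of each such bound has cardinality $<\nu$, whence $j_{0,\alpha}(\nu)=\nu$ and so $j_{0,\alpha}(\kappa)=\kappa$ for all $\alpha<\nu$. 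Combining $j_{0,\alpha}(\kappa)=\kappa$ and $j_{0,\alpha}(z)=z$ with the elementarity of $j_{0,\alpha}$, $\Sigma_1$-upwards absoluteness, and the uniqueness of $A$ in $\VV$, one gets $j_{0,\alpha}(A)=A$ for all $\alpha<\nu$. Put $C=\{\delta_\alpha\mid\alpha<\nu\}$, a club in $\kappa$. If $A\cap\delta\in W$, then applying $j_{0,\alpha}$ gives $A\cap\delta_\alpha=j_{0,\alpha}(A)\cap\delta_\alpha=j_{0,\alpha}(A\cap\delta)\in j_{0,\alpha}(W)$, so, as $\delta_\alpha$ is the seed of the ultrapower $M_{\alpha+1}=\Ult{M_\alpha}{j_{0,\alpha}(W)}$, we obtain $\delta_\alpha\in j_{\alpha,\alpha+1}(A)=j_{0,\alpha+1}(A)=A$; thus $C\subseteq A$. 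If instead $\delta\setminus A\in W$, the symmetric computation gives $A\cap C=\emptyset$. Either way $A$ is not bistationary.

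The case $\kappa=\nu^+$ needs a different device, and this is the step I expect to be the genuine obstacle: now the critical sequence of the iteration of $W$ only reaches $\delta_\nu=\nu$ and then jumps over $\nu^+$, so it produces no club in $\kappa$. Here the plan is to reduce to Lemma~\ref{lemma:IterableModelsClubProperty}: it suffices to produce a weak $\kappa$-model $M$ with $A,\tc{\{z\}}\in M\models\exists x\,\psi(A,x,\kappa,z)$ together with a weakly amenable $M$-ultrafilter $U$ on $\kappa$ such that $\langle M,\in,U\rangle$ is $\omega_1$-iterable, since the conclusion of that lemma is exactly the desired dichotomy. Such an $M$, and a model satisfying the displayed sentence, are easy to arrange (collapse an elementary submodel of $\langle\HH{\theta},\in\rangle$ of size $\kappa$ containing $\kappa+1$ and $A$); the technical heart is building the $M$-ultrafilter on $\kappa=\nu^+$. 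For this one works in the $\nu$-th iterate $M_\nu$ of $\VV$ by $W$, in which $\delta$ has been sent to $\nu$ and $\nu$ has become measurable, uses $\cof{\nu}\ne\delta$ to guarantee that the iteration is continuous at $\nu$, so that $(\nu^+)^{M_\nu}=\nu^+$ and the relevant ordinals are not displaced, and then passes to a weak $\kappa$-model inside $M_\nu$ carrying $A$, exploiting weak amenability to thin a measure on $\nu^+$ there down to the required $M$-ultrafilter and deducing $\omega_1$-iterability from the genuine iterability of $W$. Keeping track of exactly which subsets of $\kappa$ this ultrafilter must measure, and verifying amenability and iterability, are the places where care is needed and constitute the crux of the argument.
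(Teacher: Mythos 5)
Your first case ($\kappa=\nu$) is correct and is essentially the paper's argument. The gap is in the case $\kappa=\nu^+$, and it begins with a false premise: the critical sequence does \emph{not} ``jump over $\nu^+$'' after reaching $\delta_\nu=\nu$. By the standard representation of ordinals in iterated ultrapowers, every ordinal below $j_{0,\alpha}(\xi)$ has the form $j_{0,\alpha}(f)(\delta_{\beta_1},\ldots,\delta_{\beta_m})$ with $\map{f}{[\delta]^m}{\xi}$ in $\VV$ and $\beta_1<\ldots<\beta_m<\alpha$, so $\betrag{j_{0,\alpha}(\xi)}\leq\betrag{\xi}^\delta\cdot\betrag{\alpha}$. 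This is exactly where the hypothesis $\cof{\nu}\neq\delta$ enters: if $\cof{\nu}>\delta$ then $\nu^\delta=\nu$ and hence $j_{0,\alpha}(\nu)<\nu^+$ for all $\alpha<\nu^+$; if $\cof{\nu}<\delta$ a short induction (successor steps fix $j_{0,\alpha}(\nu)$ because a cofinal map $\map{c}{\cof{\nu}}{j_{0,\alpha}(\nu)}$ lies below the critical point, and limit steps are controlled by cardinality) gives the same bound. Consequently $j_{0,\alpha}(\nu^+)=(j_{0,\alpha}(\nu)^+)^{M_\alpha}=\nu^+$ and $\delta_\alpha<j_{0,\alpha}(\nu)<\nu^+$ for all $\alpha<\nu^+$, so $\Set{\delta_\alpha}{\alpha<\nu^+}$ \emph{is} a club in $\kappa=\nu^+$ and the argument of your first case goes through verbatim. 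This is precisely what the paper does: a single iteration of length $\kappa$ handles both cases, with the only extra work being the claim $j_{0,\alpha}(\kappa)=\kappa$ for $\alpha<\kappa$.

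The alternative route you sketch for $\kappa=\nu^+$ cannot be repaired. The model $M$ you propose (the collapse of an elementary submodel containing $\kappa+1$ and $A$) satisfies $\kappa=(\nu^+)^M$ with $\nu\in M$, and no weakly amenable $M$-ultrafilter $U$ on $\kappa$ can exist for such an $M$: weak amenability forces $\POT{\kappa}\cap M=\POT{\kappa}\cap N$ for the ultrapower $N=\Ult{M}{U}$, hence $\POT{\nu}\cap M=\POT{\nu}\cap N$ and $(\nu^+)^N=(\nu^+)^M=\kappa$; but then $j(\kappa)=j\bigl((\nu^+)^M\bigr)=(j(\nu)^+)^N=(\nu^+)^N=\kappa$, contradicting $\crit{j}=\kappa$. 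So Lemma \ref{lemma:IterableModelsClubProperty} is not applicable at a successor cardinal in this way, and the step you yourself flag as ``the genuine obstacle'' and ``the crux'' is not merely unfinished but unachievable along that path.
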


\begin{proof}
 Fix a normal ultrafilter $U$ on $\delta$ and let $$\langle\seq{M_\alpha}{\alpha\in\On} ,\seq{\map{j_{\alpha,\beta}}{M_\alpha}{M_\beta}}{\alpha\leq\beta\in\On}\rangle$$ denote the system of ultrapowers and elementary embeddings induced by $\langle\VV,\in,U\rangle$. Given $\alpha<\kappa$, set $\delta_\alpha=j_{0,\alpha}(\delta)$ and $\nu_\alpha=j_{0,\alpha}(\nu)$.

 \begin{claim*}
  If $\alpha<\kappa$, then $j_{0,\alpha}(\kappa)=\kappa$. 
 \end{claim*}

 \begin{proof}[Proof of the Claim]
  First, assume that $\kappa=\nu$. Since $\kappa$ is inaccessible in this case,  {\cite[Corollary 19.7.(c)]{MR1994835}} directly yields the statement of the claim. 

  Now, assume that $\kappa=\nu^+$. Note that, in order to prove the statement of the claim, it suffices to show that $\nu_\alpha<\kappa$ holds for all $\alpha<\kappa$, because we then have $\kappa\geq(\nu_\alpha^+)^{M_\alpha}=j_{0,\alpha}(\kappa)\geq\kappa$ for all $\alpha<\kappa$. If $\cof{\nu}>\delta$, then our assumptions imply that $\nu^\delta=\nu$ and therefore {\cite[Corollary 19.7.(a)]{MR1994835}} shows that $\nu_\alpha<(\nu^\delta\cdot\betrag{\alpha})^+=\kappa$ holds for all   $\alpha<\kappa$. In the other case, assume that $\cof{\nu}<\delta$. This assumption  implies that $\cof{\nu_\alpha}^{M_\alpha}=\cof{\nu}<\delta\leq\delta_\alpha$ for all $\alpha<\kappa$. We show $\nu_\alpha<\kappa$ by induction on $\alpha<\kappa$. Assume that $\alpha=\bar{\alpha}+1$. Then elementarity implies that $\nu_{\bar{\alpha}}$ is a strong limit cardinal greater than $\delta_\alpha$ in $M_{\bar{\alpha}}$. Therefore, {\cite[Corollary 19.7.(a)]{MR1994835}} shows that $j_{\bar{\alpha},\alpha}(\gamma)<\nu_{\bar{\alpha}}$ holds for all $\gamma<\nu_{\bar{\alpha}}$. By the above remarks, there is a function $\map{c}{\cof{\nu}}{\nu_{\bar{\alpha}}}$ in $M_{\bar{\alpha}}$ that is cofinal in $\nu_{\bar{\alpha}}$. But then elementarity implies that $\map{j_{\bar{\alpha},\alpha}(c)}{\cof{\nu}}{\nu_{\bar{\alpha}}}$ is cofinal in $\nu_{\alpha}$ and therefore $\nu_\alpha=\nu_{\bar{\alpha}}<\kappa$.  
Now, assume that $\alpha\in\Lim\cap\kappa$. Then our induction hypothesis implies that $\mu=\sup_{\bar{\alpha}<\alpha}\nu_{\bar{\alpha}}<\kappa$. If $\gamma<\nu_\alpha $, then there is an $\bar{\alpha}<\alpha$ and $\bar{\gamma}<\nu_{\bar{\alpha}}$ with $j_{\bar{\alpha},\alpha}(\bar{\gamma})=\gamma$. This shows that $\betrag{\nu_\alpha}\leq\mu\cdot\betrag{\alpha}<\kappa$. 
 \end{proof}

Fix a $\Sigma_1$-formula $\varphi(v_0,v_1,v_2)$ and $z\in\HH{\delta}$ with the property that there is a unique subset $A$ of $\kappa$ with the property that $\varphi(A,\kappa,z)$ holds.

\begin{claim*}
 If $\alpha<\kappa$, then $j_{0,\alpha}(A)=A$. 
\end{claim*}

\begin{proof}
 By the above claim and elementarity, we know that $\varphi(j_{0,\alpha}(A),\kappa,z)$ holds in $M_\alpha$. But then $\Sigma_1$-upwards absoluteness implies that $\varphi(j_{0,\alpha}(A),\kappa,z)$ holds in $\VV$ and the uniqueness of $A$ yields the statement of the claim. 
\end{proof}

Let $C$  be the club subset $\Set{\delta_\alpha}{\alpha<\kappa}$ of $\kappa$. If $\delta\in A$, then $\delta_\alpha=j_{0,\alpha}(\delta)\in j_{0,\alpha}(A)=A$ for all $\alpha<\kappa$ and hence $C\subseteq A$. In the other case, if $\delta\notin A$, then the same argument shows that $A\cap C=\emptyset$.  
\end{proof}

\begin{corollary}
 Regular limits of measurable cardinals have the $\mathbf{\Sigma}_1$-club property. \qed 
\end{corollary}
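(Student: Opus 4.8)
The plan is to obtain the corollary as an immediate consequence of Lemma \ref{lemma:LimitMeasurablesClubProperty} and the characterization of the club property provided by Lemma \ref{lemma:CharacterizationsClubProperty}. First I would observe that a regular limit $\kappa$ of measurable cardinals is inaccessible and, being a limit of strong limit cardinals, is itself a strong limit cardinal; hence Lemma \ref{lemma:LimitMeasurablesClubProperty} is applicable with $\nu=\kappa$ (so that $\kappa\in\{\nu,\nu^+\}$ holds via $\kappa=\nu$).

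Next I would fix a parameter $w\in\HH{\kappa}$, which by closure of $\HH{\kappa}$ under the pairing function is all that needs to be considered in view of the remark opening Section \ref{section:SigmaNpartiotionproperty} together with clause (i) of Lemma \ref{lemma:CharacterizationsClubProperty}, and I would fix a subset $A$ of $\kappa$ such that $\{A\}$ is $\Sigma_1(\kappa,w)$-definable. Since $w$ has hereditary cardinality less than $\kappa$ and $\kappa$ is a limit of measurable cardinals, there is a measurable cardinal $\delta<\kappa$ with $w\in\HH{\delta}$. Because $\kappa$ is regular we have $\cof{\kappa}=\kappa>\delta$, so in particular $\cof{\kappa}\neq\delta$, and all hypotheses of Lemma \ref{lemma:LimitMeasurablesClubProperty} are met with $\nu=\kappa$, this $\delta$, and parameter $w$. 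The lemma therefore shows that $A$ is not a bistationary subset of $\kappa$, i.e. $A$ or its complement contains a club subset of $\kappa$; equivalently, $A$ either contains a club subset of $\kappa$ or is disjoint from such a subset.

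Since this holds for every $w\in\HH{\kappa}$ and every $\Sigma_1(\kappa,w)$-definable singleton $\{A\}$ with $A\subseteq\kappa$, clause (i) of Lemma \ref{lemma:CharacterizationsClubProperty} is verified for all parameters in $\HH{\kappa}$, and by the equivalence of (i) and (iii) in that lemma the cardinal $\kappa$ has the $\mathbf{\Sigma}_1$-club property. I do not expect a real obstacle in carrying this out, since the corollary is essentially a repackaging of Lemma \ref{lemma:LimitMeasurablesClubProperty}; the only two points needing any care are the reduction of finitely many parameters (including ordinal parameters below $\kappa$) to a single parameter in $\HH{\kappa}$, and the elementary observation that every element of $\HH{\kappa}$ lies in $\HH{\delta}$ for some measurable $\delta<\kappa$, which is exactly where the hypothesis that $\kappa$ is a limit of measurables is used.
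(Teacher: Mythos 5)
Your proposal is correct and is exactly the argument the paper intends (the corollary is stated with an immediate \qed as a direct consequence of Lemma \ref{lemma:LimitMeasurablesClubProperty}): a regular limit of measurables is a regular strong limit, every parameter in $\HH{\kappa}$ lies in $\HH{\delta}$ for some measurable $\delta<\kappa$ with $\cof{\kappa}=\kappa\neq\delta$, and the conclusion then follows via clause (i) of Lemma \ref{lemma:CharacterizationsClubProperty}. The two points you flag as needing care (collapsing finitely many parameters, including ordinals below $\kappa$, into a single element of $\HH{\kappa}$, and locating that parameter below a measurable) are handled correctly.
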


Next, we use $\omega_1$-iterable cardinals to provide more examples of non-weakly compact cardinals with the $\mathbf{\Sigma}_1$-club property. 
Again, the results of {\cite[Section 6]{MR3694344}} already provide this statement for $\Sigma_1(\kappa)$-definable subsets of $\kappa$.

\begin{theorem}
 Stationary limits of $\omega_1$-iterable cardinals have the $\mathbf{\Sigma}_1$-club property.  
\end{theorem}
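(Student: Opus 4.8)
Let $\kappa$ be a stationary limit of $\omega_1$-iterable cardinals; I want to show $\kappa$ has the $\mathbf{\Sigma}_1$-club property. By Lemma \ref{lemma:CharacterizationsClubProperty}, it suffices to fix $z\in\HH{\kappa}$ and $A\subseteq\kappa$ with $\{A\}$ being $\Sigma_1(\kappa,z)$-definable, and to produce a club subset of $\kappa$ that is either contained in $A$ or disjoint from $A$. Write $\{A\}$ as the unique set with $\exists x\,\varphi(A,x,\kappa,z)$ for a $\Sigma_0$-formula $\varphi$ (using the normal form discussed after Definition \ref{definition:SigmanPartition}, together with a pairing trick to absorb $x$ into the formula's witness). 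Let $T\subseteq\kappa$ be the stationary set of $\omega_1$-iterable cardinals below $\kappa$.

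First I would show that for each sufficiently large $\delta\in T$, the set $A\cap\delta$ "is the right object below $\delta$" and is either co-bounded or disjoint-from-a-club in $\delta$. Concretely: choose $\delta\in T$ large enough that $z\in\HH{\delta}$ and $\tc{\{z\}}\subseteq\HH\delta$ (this rules out only boundedly many $\delta$, so $T'$ = the surviving $\delta$'s is still stationary). Since $\delta$ is $\omega_1$-iterable, apply Definition \ref{definition:IterableCardinal} to the set $A\cap\delta$ to get a weak $\delta$-model $M$ with $A\cap\delta\in M$ and a weakly amenable $M$-ultrafilter $U$ on $\delta$ with $\langle M,\in,U\rangle$ $\omega_1$-iterable. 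The key point is that $A\cap\delta$ is $\Sigma_1(\delta,z)$-definable: by $\Sigma_1$-upward absoluteness and $\Sigma_1$-reflection (using $\kappa$ regular and that $\delta$ is a cardinal below $\kappa$ with $z\in\HH\delta$), the $\Sigma_1$-formula defining $\{A\}$ with parameters $\kappa,z$ "localizes" to define $\{A\cap\delta\}$ with parameters $\delta,z$ — this is exactly the kind of reflection argument that made $\{S^{\omega_{k+1}}_{\omega_k}\}$ definable in Proposition \ref{proposition:FailuresClubProperty} and that appears in Theorem \ref{theorem:Sigma1WCbelowWC}. Granting this, Lemma \ref{lemma:IterableModelsClubProperty} applies inside $\VV$ at the cardinal $\delta$: $A\cap\delta$ either contains a club subset of $\delta$ or is disjoint from one. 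Let $S_0\subseteq T'$ be the stationary set of those $\delta$ where $A\cap\delta$ contains a club of $\delta$, and $S_1\subseteq T'$ the stationary set where it is disjoint from a club; one of $S_0,S_1$ is stationary — say $S_0$ (the other case is symmetric).

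Next I would build the club witnessing homogeneity. For $\delta\in S_0$ let $C_\delta$ be the (say, $<$-least, or canonically chosen) club subset of $\delta$ with $C_\delta\subseteq A\cap\delta$. I want to thread these together to get a single club $C\subseteq\kappa$ with $C\subseteq A$. The natural candidate is $C=\{\delta<\kappa : \delta$ is a limit point of $S_0$ and $A\cap\delta$ contains a club of $\delta$ that reflects coherently$\}$ — more cleanly: since $S_0$ is stationary and each $\delta\in S_0$ witnesses $A\cap\delta\supseteq C_\delta$ for a club $C_\delta$, I claim the set $\Lim(S_0)$ (limit points of $S_0$ below $\kappa$) is a club contained in $A$. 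Indeed, if $\gamma\in\Lim(S_0)$, pick $\delta\in S_0$ with $\gamma<\delta$; then $\gamma\in\delta$ and I need $\gamma\in A$. This does not quite follow pointwise, so instead I would use a pressing-down / coherence argument: by shrinking $S_0$ we may assume the witnessing clubs cohere, i.e. $\delta<\delta'$ in $S_0$ implies $C_{\delta}=C_{\delta'}\cap\delta$ (this uses that the $\Sigma_1$-definition localizes uniformly, so the canonical witnessing club is itself definable and hence absolute between the localizations), and then $C=\bigcup_{\delta\in S_0}C_\delta$ is a club in $\kappa$ with $C\subseteq A$. If $S_1$ was the stationary one instead, the same construction yields a club disjoint from $A$.

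**The main obstacle.** The delicate step is the uniform localization: that the $\Sigma_1(\kappa,z)$-definition of $\{A\}$ reflects down to a $\Sigma_1(\delta,z)$-definition of $\{A\cap\delta\}$ for stationarily many $\delta\in T$, and moreover does so \emph{coherently} so that the witnessing clubs glue. Upward $\Sigma_1$-absoluteness gives one direction for free; the reflection direction needs care because $\Sigma_1$-reflection to $\HH\delta$ requires $\delta$ to absorb the relevant witnesses, and one must check that the uniqueness clause ($A$ is the \emph{unique} such set) is preserved — this is where one argues that if some $B\neq A\cap\delta$ also satisfied the localized formula at stationarily many $\delta$, one could glue the $B$'s to contradict uniqueness of $A$ at $\kappa$. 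This argument is essentially the one underlying Proposition \ref{proposition:FailuresClubProperty}(ii) and Theorem \ref{theorem:Sigma1WCbelowWC}, adapted to the "stationarily many witnesses" setting, and I expect the bookkeeping there — rather than any new idea — to be the bulk of the proof.
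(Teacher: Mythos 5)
Your plan has two genuine gaps, and the first is exactly the ``main obstacle'' you flag but do not resolve. Lemma \ref{lemma:IterableModelsClubProperty} requires that $A\cap\delta$ be the \emph{unique} subset of $\delta$ satisfying $\exists x\,\varphi(\,\cdot\,,x,\delta,z)$. Upward $\Sigma_1$-absoluteness (via collapsing an elementary submodel of $\HH{\kappa^+}$) does give you, for club-many $\delta$, that $A\cap\delta$ is \emph{a} solution at $\delta$; it gives no control over other solutions, and uniqueness is a $\Pi_1$ statement that need not reflect. Your proposed repair --- glue spurious solutions $B_\delta\neq A\cap\delta$ at stationarily many $\delta$ to contradict uniqueness at $\kappa$ --- fails because the $B_\delta$ at different $\delta$ need not cohere, and even a coherent union need not satisfy the $\Sigma_1$-formula at $\kappa$ (existential witnesses do not pass to unions of chains). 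Without uniqueness at $\delta$ you cannot invoke the lemma there at all. The second gap is the gluing of the clubs $C_\delta$: the implication ``for stationarily many $\delta$, $A\cap\delta$ contains a club in $\delta$, hence $A$ contains a club in $\kappa$'' is false in general --- $S^{\omega_2}_\omega$ contains a club in every $\delta$ of cofinality $\omega_1$ (the limit points of any club of order-type $\omega_1$ in $\delta$ all have countable cofinality), yet it is bistationary in $\omega_2$. The coherence $C_{\delta}=C_{\delta'}\cap\delta$ that you would need is not available, because the club produced by Lemma \ref{lemma:IterableModelsClubProperty} is the critical sequence of a non-canonical iteration and depends on the chosen weak model and ultrafilter; and note that both of your sets $S_0$ and $S_1$ could be stationary simultaneously, so your case split is not even well-posed.

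The paper's proof sidesteps both problems by never applying Lemma \ref{lemma:IterableModelsClubProperty} below $\kappa$. It uses the stationarity of the set of $\omega_1$-iterable cardinals only to find a \emph{single} $\delta$ with $\delta=\kappa\cap X$ for an elementary submodel $X$ of $\HH{\kappa^+}$ containing $A$, $\kappa$ and $\tc{\{z\}}$; it collapses $X$ to $M$, places $M$ inside an $\omega_1$-iterable structure $\langle N,\in,U\rangle$ witnessing the iterability of $\delta$, and then iterates this structure $\kappa$ many times so that $\delta$ is sent to $\kappa$. Uniqueness of $A$ is invoked only at $\kappa$ --- where it holds by hypothesis --- to identify the image of the collapsed $A$ under $j_{0,\kappa}\circ\pi$ with $A$ itself, and Lemma \ref{lemma:IterableModelsClubProperty} is then applied once, at $\kappa$, to the iterated model $N_\kappa$. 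The idea missing from your write-up is this length-$\kappa$ iteration transporting the local structure at $\delta$ up to $\kappa$.
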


\begin{proof}
 Pick such a cardinal $\kappa$, a $\Sigma_0$-formula $\varphi(v_0,\ldots,v_3)$ and $z\in\HH{\kappa}$ such that there is a unique $A\subseteq\kappa$ with the property that $\exists x ~ \varphi(A,x,\kappa,z)$ holds. By constructing a continuous elementary chain of elementary submodels of $\HH{\kappa^+}$ of cardinality less than $\kappa$, we  find an $\omega_1$-iterable cardinal $\delta<\kappa$ and an elementary substructure $X$ of $\HH{\kappa^+}$ of cardinality $\delta$ such that $\kappa,\tc{\{z\}},A\in X$, $\delta=\kappa\cap X$ and $z\in\HH{\delta}$.  Let $\map{\pi}{X}{M}$ denote the corresponding transitive collapse. 
Since $\delta$ is $\omega_1$-iterable, there is a transitive $\ZFC^-$-model $N$ and a weakly amenable $N$-ultrafilter $U$ on $\delta$ such that $\delta,M\in N$ and $\langle N,\in,U\rangle$ is $\omega_1$-iterable. Then $\tc{\{z\}},\pi(A)\in N$ and a combination of elementarity and $\Sigma_1$-upwards absoluteness implies that $N\models\exists x ~ \varphi(\pi(A),x,\delta,z)$. 
 Let $$\langle\seq{N_\alpha}{\alpha\in\On} ,\seq{\map{j_{\alpha,\beta}}{N_\alpha}{N_\beta}}{\alpha\leq\beta\in\On}\rangle$$ denote the  system of ultrapowers and elementary embeddings induced by $\langle N,\in,U\rangle$.  Then we have $j_{0,\kappa}(\delta)=\kappa$ and elementarity implies  $N_\kappa\models\exists x ~ \varphi((j_{0,\kappa}\circ\pi)(A),\kappa,z)$. But then $\varphi((j_{0,\kappa}\circ\pi)(A),\kappa,z)$ holds in $\VV$ and hence we get $A=(j_{0,\kappa}\circ\pi)(A)$. This shows that $A,\tc{\{z\}}\in N_\kappa\models\exists x ~ \varphi(A,x,\kappa,z)$ and, since there is a weakly amenable $N_\kappa$-ultrafilter $U_\kappa$ on $\kappa$ such $\langle N_\kappa,\in,U_\kappa\rangle$ is $\omega_1$-iterable, we can apply Lemma \ref{lemma:IterableModelsClubProperty} to conclude that $A$ either contains a club subset of $\kappa$ or is disjoint from such a subset.  
\end{proof}


\section{Successors of singular cardinals}\label{section:SuccSingular}

In this short section, we study the extend of definable partition properties at successors of singular cardinals. By combining Corollary \ref{corollary:2ImpliesN} with the following result  of Cummings, S. Friedman, Magidor, Rinot  and Sinapova from \cite{DefSuccSingCardinals}, it can be shown that the consistency of the existence of singular strong limit cardinal of countable cofinality whose successor has the $\mathbf{\Sigma}_2$-colouring property can be established from strong large cardinal assumptions.

 \begin{theorem}[\cite{DefSuccSingCardinals}]\label{theorem:HODSingular}
  Assume that $\nu$ is a singular limit of supercompact cardinals with $\cof{\nu}=\omega$ and $\kappa>\nu$ is supercompact.  Then there is a generic extension $\VV[G]$ of the ground model $\VV$ such that the following statements hold: 
  \begin{enumerate}[leftmargin=0.9cm]
   \item The models $\VV$ and $\VV[G]$ have the same bounded subsets of $\nu$. 

   \item Every infinite cardinal $\mu$ with $\mu\leq\nu$ or $\mu\geq\kappa$ is preserved in $\VV[G]$. 

   \item $\kappa=(\nu^+)^{\VV[G]}$. 

   \item If $z\in\POT{\nu}^{\VV[G]}$, then $\kappa$ is supercompact in $\HOD_z^{\VV[G]}$.
\end{enumerate}
 \end{theorem}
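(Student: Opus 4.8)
The plan is to obtain $\VV[G]$ as a two-step extension $\VV[G_0][G_1]$ by a forcing $\PPP=\PPP_0*\dot\PPP_1$. Fix an increasing sequence $\seq{\nu_l}{l<\omega}$ of supercompact cardinals cofinal in $\nu$, together with normal ultrafilters witnessing their supercompactness. For $\PPP_0$ I would use an Easton-support iteration of length $\kappa$ all of whose nontrivial iterands lie strictly above $\nu$ and are forced to be ${\ge}\nu^+$-directed closed, combining a Laver-style preparation for $\kappa$ with a coding (into the continuum function at regular cardinals above $\kappa$) that makes $G_0$ --- and hence a Laver function for $\kappa$ and its associated supercompactness measures --- definable from ordinal parameters in $\VV[G_0]$; one also arranges that $\GCH$ holds above $\nu$ in $\VV[G_0]$, so that in $\VV[G_0]$ the cardinal $\kappa$ is supercompact and, for every $\lambda\ge\kappa$, the class $\POT{\POT{\POTI{\lambda}{\kappa}}}$ is a subclass of $\HOD$. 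Since every iterand of $\PPP_0$ is ${\ge}\nu^+$-directed closed, $\PPP_0$ is ${<}\nu^+$-closed; a standard $\Delta$-system argument shows it is $\kappa$-c.c., and it has size $\kappa$. For $\dot\PPP_1$ the ordinary L\'evy collapse $\Col{\nu^+}{{<}\kappa}$ is unavailable because $\nu$ has cofinality $\omega$, so instead one uses the \emph{diagonal} L\'evy collapse of the interval $(\nu,\kappa)$ onto $\nu$: its conditions are approximated along $\seq{\nu_l}{l<\omega}$, and a winning strategy for the closure game of length ${<}\nu$ is obtained by passing to measure-one sets with respect to the ultrafilters on the $\nu_l$. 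This produces a weakly homogeneous forcing $\PPP_1$ of size $\kappa$ which is ${<}\nu$-strategically closed, collapses exactly the cardinals in the interval $(\nu,\kappa)$ of $\VV[G_0]$ to have cardinality $\nu$ while leaving $\kappa$ a cardinal, and enjoys a chain-condition-like factoring property: every subset of $\nu$ in $\VV[G]$ already lies in an extension of $\VV[G_0]$ by a complete sub-forcing of $\PPP_1$ of size less than $\kappa$.

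Granting these properties, statements (i)--(iii) follow by routine bookkeeping. The iteration $\PPP_0*\dot\PPP_1$ adds no new subset of any ordinal below $\nu$, since $\PPP_0$ is ${<}\nu^+$-closed and $\PPP_1$ is ${<}\nu$-strategically closed, which is (i). Both factors preserve all cardinals $\ge\kappa$ ($\PPP_0$ being $\kappa$-c.c.\ and $\PPP_1$ of size $\kappa$) and all cardinals $\le\nu$ (by the stated closure), while $\PPP_1$ collapses exactly the interval $(\nu,\kappa)$; this yields (ii) and (iii).

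For (iv), fix $z\in\POT{\nu}^{\VV[G]}$. Since $\kappa$ is a successor cardinal in $\VV[G]$ one cannot expect $\kappa$ to be supercompact in $\VV[G]$ itself; the point is rather that $\HOD_z^{\VV[G]}$ does not see the collapsing maps. Because $\PPP_1$ is weakly homogeneous and definable over $\VV[G_0]$ from ordinal parameters, and because the coding built into $\PPP_0$ makes $G_0$ ordinal-definable in $\VV[G_0]$, one obtains $\HOD_z^{\VV[G]}\subseteq\VV[G_0][z]$. By the factoring property of $\PPP_1$, the model $\VV[G_0][z]$ arises from $\VV[G_0]$ by forcing of size less than $\kappa$, so the L\'evy--Solovay theorem shows that $\kappa$ remains supercompact in $\VV[G_0][z]$. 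Moreover, for each $\lambda\ge\kappa$ there is, by the coding, an ordinal-definable normal fine $\kappa$-complete ultrafilter $U_\lambda$ on $\POTI{\lambda}{\kappa}$ in $\VV[G_0]$, and the filter $\bar U_\lambda$ it generates in $\VV[G_0][z]$ is, again by L\'evy--Solovay, a normal fine $\kappa$-complete ultrafilter on $\POTI{\lambda}{\kappa}$ there; since $\bar U_\lambda$ is definable from $\lambda$, $z$, and ordinal parameters, we get $\bar U_\lambda\in\HOD_z^{\VV[G]}$, and as $\HOD_z^{\VV[G]}\subseteq\VV[G_0][z]$ --- in which $\bar U_\lambda$ is a genuine such ultrafilter --- the ultrafilter $\bar U_\lambda$ witnesses that $\kappa$ is $\lambda$-supercompact in $\HOD_z^{\VV[G]}$. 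Since $\lambda\ge\kappa$ was arbitrary, this gives (iv).

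The main obstacle I anticipate is the construction and analysis of $\PPP_1$: building, for a singular cardinal $\nu$ of cofinality $\omega$, a collapse of $(\nu,\kappa)$ onto $\nu$ that is simultaneously ${<}\nu$-strategically closed \emph{and} has the chain-condition-like factoring property used above is exactly where the hypothesis that $\nu$ is a limit of supercompact cardinals, rather than an arbitrary singular cardinal of cofinality $\omega$, becomes essential. A further delicate point is arranging the preparation $\PPP_0$ so that $\PPP_1$ --- which has size $\kappa$ and hence leaves $2^\lambda$ unchanged for $\lambda\ge\kappa$ --- does not disturb the coded continuum function above $\kappa$, so that the Laver function and the ultrafilters $U_\lambda$ stay ordinal-definable in $\VV[G]$, and so that the generated ultrafilters $\bar U_\lambda$ genuinely measure $\POT{\POTI{\lambda}{\kappa}}^{\HOD_z^{\VV[G]}}$ for every relevant $\lambda$; verifying the compatibility of these two demands is where the bulk of the work lies.
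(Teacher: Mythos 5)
The first thing to say is that the paper does not prove Theorem \ref{theorem:HODSingular} at all: it is quoted verbatim from \cite{DefSuccSingCardinals} and used as a black box in Section \ref{section:SuccSingular}, so there is no internal proof to measure your attempt against. Judged as a reconstruction of the cited argument, your outline points in the right general direction --- the source does use a diagonal, measure-guided collapse of the interval $(\nu,\kappa)$ onto $\nu$ built from supercompactness measures along the sequence $\seq{\nu_l}{l<\omega}$, exploits weak homogeneity together with a factoring property to trap $\HOD_z^{\VV[G]}$ inside a small extension of an intermediate model, and finishes with a L\'evy--Solovay argument. But what you have written is a plan rather than a proof: the definition of $\PPP_1$ and the verification that it is simultaneously weakly homogeneous, sufficiently distributive to add no bounded subsets of $\nu$, $\kappa$-preserving, and equipped with the ``every subset of $\nu$ lives in a small complete subforcing'' property is precisely the technical core of \cite{DefSuccSingCardinals}, and you explicitly defer all of it. (Note also that the naive full-support product $\prod_l\Col{\nu_l^+}{{<}\kappa}$ collapses $\kappa$; the measure-one constraints are needed exactly to restore a chain condition, so none of the four asserted properties of $\PPP_1$ is routine.)

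Beyond incompleteness, the last step of (iv) has a genuine gap as written. Knowing that $\bar U_\lambda$ is a normal fine $\kappa$-complete ultrafilter on $\POTI{\lambda}{\kappa}$ in $\VV[G_0][z]$ and that its trace on $\HOD_z^{\VV[G]}$ is ordinal-definable from $z$ does not yet make it witness $\lambda$-supercompactness inside $\HOD_z^{\VV[G]}$: one must also know that $\POTI{\lambda}{\kappa}$ as computed in $\HOD_z^{\VV[G]}$ is $\bar U_\lambda$-large, since otherwise the restricted filter need not be an ultrafilter (let alone fine or normal) on the inner model's version of $\POTI{\lambda}{\kappa}$; measures do not in general restrict to measures on inner models. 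Your closing remark about $\bar U_\lambda$ ``genuinely measuring'' $\POT{\POTI{\lambda}{\kappa}}^{\HOD_z^{\VV[G]}}$ gestures at the right issue, but the required largeness of $\POTI{\lambda}{\kappa}^{\HOD_z^{\VV[G]}}$ is exactly where the coding in $\PPP_0$, the homogeneity of $\PPP_1$ and the lifting of the measures must be made to interact, and that interaction is not addressed. So: right overall shape, but the load-bearing steps are asserted rather than proved, and step (iv) would fail as literally written.
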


In contrast, results of Shelah show that that the $\mathbf{\Sigma}_2$-colouring property always fails at successors of singular strong limit cardinals of uncountable cofinality.

\begin{proposition}
 Let $\nu$ be a singular strong limit cardinal of uncountable cofinality. If $z\subseteq\nu$ with $\HH{\nu}\subseteq\LL[z]$, then no regular cardinal less than or equal to $2^\nu$ has the $\Sigma_2(\nu,z)$-colouring property.  
\end{proposition}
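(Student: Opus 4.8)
The plan is to combine Proposition~\ref{Proposition:NoDefinableInjection} with Shelah's pcf-theoretic analysis of cardinal exponentiation at singular cardinals of uncountable cofinality. First I would use the hypothesis $\HH{\nu}\subseteq\LL[z]$ to single out a canonical, $\Sigma_1(\nu,z)$-definable well-ordering $W$ of $\HH{\nu}$: for instance, let $W$ be the $<_{\LL[z]}$-least well-ordering of order-type $\nu$ of the class of all sets that are hereditarily of cardinality less than $\nu$; since $\HH{\nu}\subseteq\LL[z]$ and the canonical well-ordering of $\LL[z]$ is $\Sigma_1(z)$-definable over $\langle\VV,\in\rangle$, the set $\{W\}$ is $\Sigma_1(\nu,z)$-definable. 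The crucial external input is that, because $\cof{\nu}$ is uncountable, Shelah's theory provides a single $\Sigma_1$-formula which, applied to $\nu$ and $W$, defines a sequence $\seq{f_\alpha}{\alpha<2^\nu}$ of pairwise distinct functions from $\cof{\nu}$ into $\nu$ --- concretely, a scale of length $2^\nu=\nu^{\cof{\nu}}$ in a reduced product $\prod_{i<\cof{\nu}}\nu_i$ of an increasing cofinal sequence $\seq{\nu_i}{i<\cof{\nu}}$ of regular cardinals below $\nu$ that is definable from $W$, using Shelah's computation that for a strong limit singular cardinal $2^\nu=\nu^{\cof{\nu}}$ is realised as the maximum of the pcf of a suitable such cofinal set, and using that the uncountability of $\cof{\nu}$ makes the relevant generators and the scale definable from $W$ (see \cite{MR661475} and \cite{MR1462202}).

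With this in place I would first dispatch the small cardinals. Since $\nu$ is singular, $\kappa\neq\nu$, and for $\kappa<\nu$ the hypothesis already yields $\HH{\kappa}\subseteq\HH{\nu}\subseteq\LL[z]$ and hence a good $\Sigma_1(\nu,z)$-well-ordering of $\HH{\kappa}$; the claim for such $\kappa$ then follows from Proposition~\ref{Proposition:NoDefinableInjection} (applied to a $\Sigma_2(\kappa,\nu,z)$-definable injection of $\kappa$ into some $\POT{\mu}$ with $\mu<\kappa$, which exists since $\POT{\mu}\subseteq\HH{\nu}$ whenever $2^\mu\geq\kappa$, as $\nu$ is a strong limit) and from Proposition~\ref{proposition:Sigma2PartitionPropertyWeaklyCompact}. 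The heart of the statement thus concerns regular cardinals $\kappa$ with $\nu<\kappa\leq 2^\nu$, which I assume from now on; such a $\kappa$ cannot be inaccessible, since inaccessibility would give $2^\nu<\kappa$, contradicting $\kappa\leq 2^\nu$.

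For the main case I would restrict the $\Sigma_1(\nu,z)$-definable sequence $\seq{f_\alpha}{\alpha<2^\nu}$ to its initial segment $\seq{f_\alpha}{\alpha<\kappa}$ and code each $f_\alpha$, first by the corresponding subset of $\nu\times\nu$ (using $\cof{\nu}<\nu$) and then, via the G\"odel pairing, by a subset of $\nu$. This produces a $\Sigma_2(\kappa,\nu,z)$-definable injection $\iota$ of $\kappa$ into $\POT{\nu}$. Since $\nu<\kappa$, I may regard $\iota$ as a $\Sigma_2(\kappa,\nu,z)$-definable function $\map{\iota}{\kappa}{{}^{{<}\kappa}2}$ all of whose values have domain $\nu$. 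If $\kappa$ had the $\Sigma_2(\nu,z)$-colouring property, then Proposition~\ref{Proposition:NoDefinableInjection}, applied with $\gamma=\nu$, would force the set $\Set{\iota(\alpha)\restriction\nu}{\alpha<\kappa}=\ran{\iota}$ to have cardinality less than $\kappa$, contradicting the injectivity of $\iota$. Hence no such $\kappa$ has the $\Sigma_2(\nu,z)$-colouring property, which completes the argument.

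The hard part will be the pcf input: one must verify that Shelah's construction of a scale witnessing $2^\nu=\nu^{\cof{\nu}}$ can be carried out $\Sigma_1$-definably from a well-ordering of $\HH{\nu}$. This is exactly where the uncountability of $\cof{\nu}$ is used --- for uncountable cofinality the bounding ideals $J_{<\lambda}$ are well-behaved enough, via the trichotomy theorem and the existence of canonical exact upper bounds, that the generating sequences and the scale functions can be chosen canonically relative to $W$, whereas for countable cofinality genuinely non-definable choices may be required. Everything else is routine: tracking definitional complexity to confirm that restricting the scale and applying the coding maps keeps us within the class of $\Sigma_2(\kappa,\nu,z)$-definable functions, together with the elementary observation that no inaccessible cardinal lies in the interval $(\nu,2^\nu]$.
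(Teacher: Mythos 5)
Your endgame coincides with the paper's: both arguments terminate by exhibiting a $\Sigma_2(\kappa,\nu,z)$-definable injection of $\kappa$ into $\POT{\nu}$ (equivalently, a definable witness that $\kappa$ is not inaccessible in some $\LL[A]$) and then invoking Proposition~\ref{Proposition:NoDefinableInjection}, resp.\ Corollary~\ref{corollary:InaccessibleInLA}. The genuine gap is in how you produce that injection. You propose to extract it from a ``$\Sigma_1(\nu,z)$-definable scale of length $2^\nu$'', asserting that Shelah's pcf theory supplies a single $\Sigma_1$-formula that defines, from $\nu$ and a well-ordering $W$ of $\HH{\nu}$, a sequence of $2^\nu$ pairwise distinct functions in $\prod_{i<\cof{\nu}}\nu_i$. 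This is precisely where the mathematical content lies, and what you write does not establish it: such a scale is a sequence of $2^\nu$ subsets of $\nu$, so its members cannot be chosen as ``$W$-least'' for a well-ordering of $\HH{\nu}$ alone, and pcf theory gives the \emph{existence} of scales of true cofinality $\nu^{\cof{\nu}}=2^\nu$ but not their canonicity relative to $\HH{\nu}$. Making such a family definable is exactly the content of the theorem of Shelah that the paper cites, namely that for a singular strong limit $\nu$ of uncountable cofinality, $\HH{\nu}\subseteq\LL[z]$ implies $\POT{\nu}\subseteq\HOD_z$. The paper uses this as a black box and then applies Proposition~\ref{proposition:IntersectionsWithHOD} to convert mere membership in $\HOD_z$ into $\Sigma_2(\kappa,\nu,z)$-definability of a single witness $A\subseteq\kappa$ coding an injection of $\kappa$ into $\POT{\nu}$, after which Corollary~\ref{corollary:InaccessibleInLA} finishes. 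That route naturally lands at $\Sigma_2$ (via the $\Sigma_2$-definable well-ordering of $\HOD_z$), which is all the proposition requires; your stronger claim of $\Sigma_1$-definability of the scale is neither needed nor, as far as I can see, available in the form you invoke. If you replace your ``hard part'' by the citation of Shelah's $\HOD$ theorem combined with Proposition~\ref{proposition:IntersectionsWithHOD}, your argument becomes essentially the paper's.

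A secondary point: your dispatch of regular $\kappa<\nu$ does not close. When $\kappa$ is not a strong limit you indeed obtain a definable injection into some $\POT{\mu}$ with $2^\mu\geq\kappa$, but for inaccessible $\kappa<\nu$ Proposition~\ref{proposition:Sigma2PartitionPropertyWeaklyCompact} only yields that $\kappa$ would be weakly compact, which is no contradiction (and for weakly compact $\kappa$ every colouring of $[\kappa]^2$ has an unbounded homogeneous set, so no definability argument can help). The paper's own proof is equally silent on this range; the substance of the proposition is the interval $\nu<\kappa\leq 2^\nu$, in particular $\kappa=\nu^+$, where your observation that no inaccessible cardinal can lie in that interval is correct and your final application of Proposition~\ref{Proposition:NoDefinableInjection} with $\gamma=\nu$ is sound.
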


\begin{proof}
 A result of Shelah from \cite{MR1462202} (see also {\cite[Section 2]{DefSuccSingCardinals}}) shows that $\POT{\nu}\subseteq\HOD_z$. Pick a regular cardinal $\kappa\leq 2^\nu$. Then $\kappa\leq (2^\nu)^{\HOD_z}$ and therefore $\HOD_z$ contains a subset $A$ of $\kappa$ with $(2^\nu)^{\LL[A]}\geq\kappa$. By Proposition \ref{proposition:IntersectionsWithHOD}, there is such a subset $A$ of $\kappa$ with the property that the set $\{A\}$ is $\Sigma_2(\nu,z)$-definable.  But then Corollary \ref{corollary:InaccessibleInLA} implies that $\kappa$ does not have the $\Sigma_2(\nu,z)$-colouring property.  
\end{proof}

We close this section by showing that the validity of the $\Sigma_1$-colouring property at the successor of a singular cardinal has much larger consistency strength than the corresponding statement for successors of regular cardinals.

\begin{lemma}
 If there is a singular cardinal $\nu$ such that the cardinal $\nu^+$ has the $\Sigma_1$-colouring property, then there is an inner model with a measurable cardinal. 
\end{lemma}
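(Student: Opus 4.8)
The plan is to argue by contradiction, via the Dodd--Jensen core model. So I would assume that there is no inner model with a measurable cardinal and let $\nu$ be a singular cardinal such that $\kappa := \nu^+$ has the $\Sigma_1$-colouring property. Under this assumption the Dodd--Jensen core model $\KK^{DJ}$ exists and satisfies the Covering Lemma (see \cite{MR730856}); in particular, weak covering at the singular cardinal $\nu$ yields $(\nu^+)^{\KK^{DJ}} = \nu^+ = \kappa$. Since $\kappa$ is a cardinal of $\VV$, it remains a cardinal of the inner model $\KK^{DJ}$, and hence $2^\nu\geq\nu^+=\kappa$ holds in $\KK^{DJ}$, so there is, in $\KK^{DJ}$, an injection $e\colon\kappa\to\POT{\nu}$.

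The next step is to turn $e$ into a $\Sigma_1$-definable singleton over $\langle\VV,\in\rangle$ with parameters in $\HH{\kappa}$. By the results of \cite{GoodWO}, the restriction of the canonical well-ordering of $\KK^{DJ}$ to $\POT{\kappa}^{\KK^{DJ}}$ is a good $\Sigma_1(\kappa)$-well-ordering inside $\KK^{DJ}$, and, using the G\"odel pairing function, injections of $\kappa$ into $\POT{\nu}$ are coded by canonical subsets of $\kappa$. Hence, if $A$ denotes the $\lhd$-least subset of $\kappa$ coding such an injection, where $\lhd$ is that well-ordering, then $\{A\}$ is $\Sigma_1(\kappa,\nu)$-definable in $\KK^{DJ}$ (by the same argument as in Proposition \ref{proposition:IntersectionsWithHOD}). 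To pass to $\VV$, I would argue exactly as in the paper's treatment of the $\mathbf{\Sigma}_1$-club property in $\KK^{DJ}$: the class $\KK^{DJ}$ is $\Sigma_1(\kappa)$-definable over $\VV$ — it equals $\LL$ if $0^\#$ does not exist, and it is the union of the lower parts of all iterable premice otherwise, the latter class being $\Sigma_1(\kappa)$-definable by \cite[Lemma 2.3]{GoodWO} — and this definition is cofinal enough in $\KK^{DJ}$ that the relativisation of the above $\KK^{DJ}$-definition of $\{A\}$ is again a $\Sigma_1(\kappa,\nu)$-definition in $\VV$.

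The contradiction is then immediate. Since $A\in\KK^{DJ}$, we have $\LL[A]\subseteq\KK^{DJ}$, so $\kappa$ remains a cardinal in $\LL[A]$; and $e$, being recoverable from $A$ inside $\LL[A]$, witnesses $(2^\nu)^{\LL[A]}\geq\kappa$, so $\kappa$ is not a strong limit, hence not inaccessible, in $\LL[A]$. On the other hand, $\nu\in\HH{\kappa}$, so $\kappa$ has the $\Sigma_1(\nu)$-colouring property, and Corollary \ref{corollary:InaccessibleInLA} forces $\kappa$ to be inaccessible in $\LL[A]$ --- a contradiction, which completes the proof. (When $0^\#$ fails this streamlines further, since then $\KK^{DJ}=\LL$, Jensen's Covering Lemma gives $\kappa=(\nu^+)^\LL$, and Corollary \ref{corollary:InaccessibleInLA} applied to $A=\emptyset$ already contradicts this.)

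The step I expect to require the most care is the definability bookkeeping in the second paragraph: one must check that ``$A$ is the $\lhd$-least subset of $\kappa$ coding an injection of $\kappa$ into $\POT{\nu}$'' is captured by a single $\Sigma_1$-formula over $\VV$ with parameters $\kappa$ and $\nu$, which amounts to combining the good $\Sigma_1(\kappa)$-well-ordering of $\POT{\kappa}^{\KK^{DJ}}$ supplied by \cite{GoodWO} with the $\Sigma_1(\kappa)$-definability of $\KK^{DJ}$ in $\VV$ --- precisely the kind of verification already carried out in the paper for the $\mathbf{\Sigma}_1$-club property. A secondary point to record is the exact form of weak covering invoked, namely that $(\nu^+)^{\KK^{DJ}}=\nu^+$ for every singular cardinal $\nu$ in the absence of an inner model with a measurable cardinal, which is a standard consequence of the Dodd--Jensen Covering Lemma.
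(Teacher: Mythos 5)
Your overall strategy is the same as the paper's: assume there is no inner model with a measurable cardinal, use the Dodd--Jensen Covering Theorem to get $\kappa=(\nu^+)^{\KK^{DJ}}$, code a witness to the accessibility of $\kappa$ into a subset $A$ of $\kappa$ whose singleton is $\Sigma_1$-definable, and contradict Corollary \ref{corollary:InaccessibleInLA}. There is, however, a genuine gap in the parameter bookkeeping. The lemma assumes only the lightface $\Sigma_1$-colouring property, i.e.\ the $\Sigma_n(z_0,\dots,z_{m-1})$-colouring property with $m=0$, so the only admissible parameter in the definition of a counterexample is $\kappa$ itself. Your $A$ is the $\lhd$-least code of an injection of $\kappa$ into $\POT{\nu}$, so $\{A\}$ is only $\Sigma_1(\kappa,\nu)$-definable, and your justification for why this suffices --- ``$\nu\in\HH{\kappa}$, so $\kappa$ has the $\Sigma_1(\nu)$-colouring property'' --- silently upgrades the lightface hypothesis to the boldface $\mathbf{\Sigma}_1$-colouring property. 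This matters because $\{\nu\}$ need not be $\Sigma_1(\nu^+)$-definable (the paper remarks on exactly this after Proposition \ref{proposition:FailuresClubProperty}), so the parameter $\nu$ cannot simply be absorbed. As written, your argument proves only the weaker statement in which $\nu^+$ is assumed to have the $\Sigma_1(\nu)$-colouring property. The repair is precisely the device the paper uses: existentially quantify over the small cardinal, i.e.\ take the least code of an object witnessing ``there is some $\lambda<\kappa$ together with an injection of $\kappa$ into $\POT{\lambda}$'' (the paper codes a sequence of surjections $\map{s_\gamma}{\lambda}{\gamma}$ rather than an injection, which is immaterial); the quantifier over $\lambda$ is then part of the $\Sigma_1$-formula and $\{A\}$ becomes $\Sigma_1(\kappa)$-definable, after which your concluding contradiction goes through. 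Your parenthetical treatment of the case where $0^\#$ fails is fine and already parameter-free.

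A secondary point concerns the definability device itself. You invoke the good $\Sigma_1(\kappa)$-well-ordering of $\POT{\kappa}^{\KK^{DJ}}$ as a tool over $\VV$, whereas the cited result from \cite{GoodWO} (and the paper's own use of it in Lemma \ref{lemma:GoodWOIterableModelsClubProp}) supplies it as a well-ordering computed and applied inside $\KK^{DJ}$. The paper's proof of the present lemma avoids this issue by taking, inside each iterable premouse $M=\LL_\alpha[F]$, the $<_{\LL[F]}$-least code lying in $lp(M)$, and then running a comparison argument to see that all premice yield the same set; your route instead needs the additional verification that the initial segments of the $\KK^{DJ}$-ordering are $\Sigma_1(\kappa)$-definable over $\VV$. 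You correctly flag this as the delicate step, and it can be carried out with the same ingredients, but it is not literally covered by the citation you give.
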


\begin{proof}
 Assume that the above conclusion fails. Set $\kappa=\nu^+$ and let $\KK^{DJ}$ denote the \emph{Dodd--Jensen core model}. 
 
 \begin{claim*}
  There is a subset $A$ of $\kappa$ with the set property that the set $\{A\}$ is $\Sigma_1(\kappa)$-definable and there is an ordinal $\lambda<\kappa$ and a sequence $\seq{\map{s_\gamma}{\lambda}{\gamma}}{\gamma<\kappa}$ of surjection such that 
   \begin{equation}\label{equation:CodeSubsetSurjectionsSuccessor}
     A ~ = ~ \Set{\goedel{\gamma}{\alpha,s_\gamma(\alpha)}}{\gamma<\kappa, ~ \alpha<\lambda}.
    \end{equation}
 \end{claim*}
 
 \begin{proof}[Proof of the Claim]
  By our assumption,  the \emph{Covering Theorem} for $\KK$ (see {\cite[Theorem 5.17]{MR661475}}) implies that $\kappa=(\nu^+)^\KK$ and this shows that there is a sequence $\seq{\map{s_\gamma}{\nu}{\gamma}}{\gamma<\kappa}$ of surjections that is an element of $\KK$. 
 
 First, assume that there are no \emph{iterable premice} (see {\cite[Section 1]{MR730856}}). Then results of Dodd and Jensen (see  {\cite[p. 238]{MR730856}}) show that $\KK^{DJ}=\LL$. Let $A$ denote the $<_\LL$-least subset of $\kappa$ with the property that there is a $\lambda<\kappa$ and a sequence $\seq{\map{s_\gamma}{\lambda}{\gamma}}{\gamma<\kappa}$ of surjections with (\ref{equation:CodeSubsetSurjectionsSuccessor}). Then it is easy to see that the  set $\{A\}$ is $\Sigma_1(\kappa)$-definable. 
  
  Next, if $\KK\neq\LL$, then the results of Dodd and Jensen mentioned above show that $\KK^{DJ}$ is equal to the union of the \emph{lower parts $lp(M)$} of all iterable premice $M$. 
 Let $\calA$ denote the class of all subsets $A$ of $\kappa$ with the property that there is an iterable premouse $M=\LL_\alpha[F]$ such that $A\in lp(M)$ and $A$ is the $<_{\LL[F]}$-minimal subset of $\kappa$ in $M$ with the property that there is an ordinal $\lambda<\kappa$ and a sequence $\seq{\map{s_\gamma}{\lambda}{\gamma}}{\gamma<\kappa}$ in $lp(M)$ with (\ref{equation:CodeSubsetSurjectionsSuccessor}). Then our assumptions imply that $\calA$ is non-empty and, since the proof of {\cite[Lemma 2.3]{GoodWO}} shows that the class of all iterable premice is $\Sigma_1(\kappa)$-definable, we know that $\calA$ is definable in the same way. But then a comparison argument (see {\cite[Lemma 1.12.(7)]{MR730856}}) shows that $\calA$ consists of a single subset of $\kappa$.  
 \end{proof}
 
 Let $A$ be the subset of $\kappa$ given by the above claim. Then $\kappa$ is not a limit cardinal in $\LL[A]$ and therefore Corollary \ref{corollary:InaccessibleInLA} shows that $\kappa$ does not have the $\Sigma_1$-colouring property. 
\end{proof}


\section{Definable Homeomorphisms}\label{section:DefHomeo}

We present the results that were the initial motivation for the work presented in this paper. Remember that, given an uncountable regular cardinal $\kappa$, the \emph{generalized Baire space} of $\kappa$ consists of the set ${}^\kappa\kappa$ of all functions from $\kappa$ to $\kappa$ equipped with the topology whose basic open sets consist of all extensions of functions of the form $\map{s}{\alpha}{\kappa}$ with $\alpha<\kappa$. The \emph{generalized Cantor space} of $\kappa$ is the subspace of ${}^\kappa\kappa$ given by the set ${}^\kappa 2$ of all binary functions. 
 A classical result of Hung and Negrepontis from \cite{MR0367930} then shows that an uncountable regular cardinal $\kappa$ is weakly compact if and only if the generalized Baire space ${}^\kappa\kappa$ is not homeomorphic to the generalized Cantor space ${}^\kappa 2$ of $\kappa$ . 
Motivated by this characterization, Andretta and Motto Ros recently showed that the theory $\ZF+\DC+\AD$ proves that the generalized Baire space of $\omega_1$ is not homeomorphic to the generalized Cantor space of $\omega_1$ (see {\cite[Section 6.1]{2016arXiv160909292A}}). By combining this result with work of Woodin on the $\Pi_2$-maximality of the $\PPP_{max}$-extension of $\LL(\RRR)$ (see {\cite[Lemma 2.10 \& Theorem 7.3]{MR2768703}}), one can directly conclude that the existence of infinitely many Woodin cardinals with a measurable cardinal above them all implies that no homeomorphism between the generalized Baire space of $\omega_1$ and the generalized Cantor space of $\omega_1$ is definable by a $\Sigma_1$-formula that only uses the cardinal $\omega_1$ and elements of $\HH{\omega_1}$ as parameters, because Woodin's results show that the same formula defines a homeomorphism of these spaces in $\LL(\RRR)$. 
The question whether the above conclusion can be derived from weaker large cardinal assumptions was the initial motivation for the work presented in this paper. In combination with Theorem \ref{theorem:Omega1Sigma1wc}, the following lemma answers this question affirmatively.

\begin{lemma}
 If $\kappa$ is an uncountable regular cardinal with the $\Sigma_n(z)$-colouring property, then no homeomorphism between ${}^\kappa\kappa$ and ${}^\kappa 2$ is $\Sigma_n(\kappa,z)$-definable. 
\end{lemma}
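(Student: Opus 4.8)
The plan is to derive a contradiction with Proposition \ref{Proposition:NoDefinableInjection} by showing that a $\Sigma_n(\kappa,z)$-definable homeomorphism $\map{h}{{}^\kappa\kappa}{{}^\kappa 2}$ would yield a $\Sigma_n(\kappa,z)$-definable injection of $\kappa$ into some ${}^\gamma 2$ with $\gamma<\kappa$, which is forbidden once $\kappa$ has the $\Sigma_n(z)$-colouring property. The key point is that a homeomorphism must respect the combinatorial structure of basic open sets, and the generalized Baire space ${}^\kappa\kappa$ contains, at each node, a branching point of degree $\kappa$, whereas in ${}^\kappa 2$ every node has branching degree $2$; a homeomorphism must therefore ``spread out'' a $\kappa$-indexed family of pairwise disjoint clopen sets into a $\kappa$-indexed antichain of clopen sets in ${}^\kappa 2$, and the first levels at which these separate give a definable injection $\map{\iota}{\kappa}{{}^{{<}\kappa}2}$, hence — after a minimality argument as in the proof of Proposition \ref{Proposition:NoDefinableInjection} — one into a fixed ${}^\gamma 2$.

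First I would make this precise. Consider, for $\alpha<\kappa$, the clopen set $N_\alpha = \Set{f\in{}^\kappa\kappa}{f(0)=\alpha}$; these are pairwise disjoint, clopen, and nonempty, and the family $\seq{N_\alpha}{\alpha<\kappa}$ is certainly $\Sigma_0$-definable. Their images $h[N_\alpha]$ are pairwise disjoint, nonempty open subsets of ${}^\kappa 2$, and since $h$ is $\Sigma_n(\kappa,z)$-definable, the map $\alpha\mapsto h[N_\alpha]$ is $\Sigma_n(\kappa,z)$-definable in a suitable coding (e.g.\ via the $\Sigma_n$-definable set of pairs $\goedel{\alpha}{s}$ with $s\in{}^{{<}\kappa}2$ and $N_s\subseteq h[N_\alpha]$). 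For each $\alpha$, pick the $<_{\LL[\cdots]}$-least — or, more robustly, the lexicographically least of minimal length, using a $\Sigma_n(\kappa,z)$-definable well-ordering of ${}^{{<}\kappa}2$ by length-then-lex — element $\iota(\alpha)\in{}^{{<}\kappa}2$ with $N_{\iota(\alpha)}\subseteq h[N_\alpha]$. Disjointness of the $h[N_\alpha]$ forces the $\iota(\alpha)$ to be pairwise incomparable in the tree ${}^{{<}\kappa}2$, so $\alpha\mapsto\iota(\alpha)$ is an injection of $\kappa$ into ${}^{{<}\kappa}2$, and it is $\Sigma_n(\kappa,z)$-definable since $h$ is. Now Proposition \ref{Proposition:NoDefinableInjection} (applied with $f = \iota$) tells us that for every $\gamma<\kappa$ the set $\Set{\iota(\alpha)\restriction\gamma}{\alpha<\kappa}$ has size $<\kappa$; but an antichain in ${}^{{<}\kappa}2$ of size $\kappa$ must, by a standard fusion/pigeonhole argument using the regularity of $\kappa$ and $2^{{<}\kappa}$-style counting at each level, produce some $\gamma<\kappa$ below which $\kappa$-many of the $\iota(\alpha)$ already differ, contradicting the conclusion of the Proposition.

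The step I expect to be the main obstacle is the last one: extracting from a $\kappa$-sized antichain in ${}^{{<}\kappa}2$ a genuine contradiction with Proposition \ref{Proposition:NoDefinableInjection}, since an antichain need not have all its elements of bounded length. The clean way to handle this is to argue directly rather than through a cardinality count of levels: define $\map{i}{\kappa}{\kappa}$ recursively, letting $i(\bar\alpha)$ enumerate those $\alpha$ for which $\iota(\alpha)$ is incomparable with all previously chosen $\iota(i(\beta))$ — but in fact a cleaner route avoids this entirely. Instead of $\iota$, define the colouring $\map{c}{[\kappa]^2}{2}$ by $c(\{\alpha,\beta\})=0$ iff $\iota(\alpha)<_{lex}\iota(\beta)$ (where $<_{lex}$ is the length-lexicographic order on the antichain); this is a $\Sigma_n(z)$-partition, so by hypothesis there is a $c$-homogeneous $H\subseteq\kappa$ of size $\kappa$, and then — exactly as in the Ramification argument inside the proof of Lemma \ref{lemma:CharacterizationsPartitionProperty}, or more simply because a linearly ordered antichain of size $\kappa$ in ${}^{{<}\kappa}2$ still forces unbounded splitting below some fixed level by the regularity of $\kappa$ — one reaches a contradiction. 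I would therefore carry out the argument by feeding $\iota$ directly into the $\Sigma_n(z)$-colouring property via this lexicographic colouring, mirroring the internal structure of the proof of Proposition \ref{Proposition:NoDefinableInjection}, so that no separate combinatorial lemma about antichains is needed.
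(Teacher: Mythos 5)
Your construction of a $\Sigma_n(\kappa,z)$-definable injection $\iota$ of $\kappa$ into ${}^{{<}\kappa}2$ with pairwise incomparable range is fine (modulo the selector: the lexicographic order on ${}^\gamma 2$ is not a well-order for infinite $\gamma$, so a ``lex-least candidate of minimal length'' need not exist; the paper instead fixes a canonical point $x_\alpha\in N_\alpha$ and takes $\iota(\alpha)=h(x_\alpha)\restriction\gamma_\alpha$ for the minimal suitable $\gamma_\alpha$). The genuine gap is the final step, and neither of your two proposed ways of closing it works. A $\kappa$-sized pairwise incomparable family in ${}^{{<}\kappa}2$ need not have $\kappa$-many distinct restrictions at any fixed level: take $\iota(\beta)=(x\restriction\beta)^\frown\langle 1-x(\beta)\rangle$ for the constant branch $x\equiv 1$. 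This is a $\Sigma_0(\kappa)$-definable injection whose range is pairwise incomparable and $<_{lex}$-increasing, for every $\gamma<\kappa$ the set $\Set{\iota(\beta)\restriction\gamma}{\beta<\kappa}$ has size $\betrag{\gamma}+1<\kappa$, and it exists at every weakly compact cardinal. So it contradicts neither Proposition \ref{Proposition:NoDefinableInjection} nor the colouring property, and the lexicographic colouring of such a family is homogeneous on all of $\kappa$ without any consequence. In short, ``definable injection with antichain range'' is not a contradictory configuration, so the reduction you aim for cannot succeed as stated.

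What is missing is the topological input from $h$ beyond injectivity: the sets $h[N_\alpha]$ are open and \emph{cover} ${}^\kappa 2$. The paper refutes clause (ii) of Lemma \ref{lemma:CharacterizationsPartitionProperty} rather than Proposition \ref{Proposition:NoDefinableInjection}: if some $x\in{}^\kappa 2$ had $\Set{\alpha<\kappa}{\exists\beta<\kappa ~ x\restriction\alpha\subseteq\iota(\beta)}$ unbounded, then $x\in h[N_{\alpha^*}]$ for some $\alpha^*$, openness yields $\delta<\kappa$ with $N_{x\restriction\delta}\subseteq h[N_{\alpha^*}]$, and for any $\alpha>\delta$ in that set with witness $\beta$ one gets $\emptyset\neq N_{\iota(\beta)}\subseteq N_{x\restriction\delta}\cap h[N_\beta]\subseteq h[N_{\alpha^*}]\cap h[N_\beta]$, forcing $\beta=\alpha^*$ and hence $\alpha\leq\dom{\iota(\alpha^*)}$, a contradiction. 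Equivalently: the reference points form a closed discrete set, so no branch can thread through the antichain. This covering/openness argument is the step your proposal lacks, and without it both of your closing moves fail on the counterexample above.
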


\begin{proof}
 Assume that there is a $\Sigma_1$-formula $\varphi(v_0,\ldots,v_3)$ such that there is a homeomorphism $\map{h}{{}^\kappa\kappa}{{}^\kappa 2}$ with the property that for every $x\in{}^\kappa\kappa$, the function $h(x)$ is the unique set $y$ such that $\varphi(\kappa,x,y,z)$ holds. Given $\alpha<\kappa$, let $U_\alpha$ denote the open subset of ${}^\kappa\kappa$ consisting of all functions $x\in{}^\kappa\kappa$ with $x(0)=\alpha$ and let $x_\alpha$ denote the unique element of $U_\alpha$ with $x_\alpha(\beta)=0$ for all $0<\beta<\kappa$. If $\alpha<\kappa$, then $h[U_\alpha]$ is an open subset of ${}^\kappa 2$ that contains $h(x_\alpha)$ and hence there is a $\gamma<\kappa$ with the property that $h[U_\alpha]$ contains all extensions of $h(x_\alpha)\restriction\gamma$ in ${}^\kappa 2$. But this shows that for all $\alpha<\kappa$, there is a unique minimal $\gamma_\alpha<\kappa$ with the property that $$\alpha=\beta ~ \Longleftrightarrow ~ {h(x_\alpha)\restriction\gamma_\alpha}\subseteq h(x_\beta)$$ holds for all $\beta<\kappa$. Then the resulting map $$\Map{\iota}{\kappa}{{}^{{<}\kappa}2}{\alpha}{h(x_\alpha)\restriction\gamma_\alpha}$$ is an injection and our assumptions imply that it is $\Sigma_n(\kappa,z)$-definable. Since all elements of $\ran{\iota}$ are pairwise incompatible, we can use Lemma \ref{lemma:CharacterizationsPartitionProperty} to conclude that $\kappa$ does not have the $\Sigma_n(z)$-colouring property.  
\end{proof}

In combination with Theorem \ref{theorem:Omega1Sigma1wc}, the above lemma shows that the existence of a measurable cardinal above a Woodin cardinal implies that no homeomorphism between the generalized Baire space of $\omega_1$ and the generalized Cantor space of $\omega_1$ is definable by a $\Sigma_1$-formula with parameters in $\HH{\omega_1}\cup\{\omega_1\}$. 
The next lemma shows that the implication  proven  above can be reversed in certain models of set theory.

\begin{lemma}\label{lemma:DefHomeo}
 Let $\kappa$ be an uncountable regular cardinal with the property that there is a good $\Sigma_n(\kappa,y)$-well-ordering of $\HH{\kappa}$ of length $\kappa$. If $\kappa$ does not have the $\Sigma_n(z)$-colouring property, then there is a $\Sigma_n(\kappa,y,z)$-definable homeomorphism between ${}^\kappa\kappa$ and ${}^\kappa 2$. 
\end{lemma}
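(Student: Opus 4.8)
The plan is to use the failure of the colouring property together with the good well-ordering to organise the space $\,{}^\kappa 2$ as a tree that branches $\kappa$-fold at every node, and to let the desired homeomorphism read off a point of $\,{}^\kappa\kappa$ from the path it traces through this tree. First I would record the arithmetical content of the hypothesis: a good $\Sigma_n(\kappa,y)$-well-ordering of $\HH{\kappa}$ of length $\kappa$ forces $\betrag{\HH{\kappa}}=\kappa$, hence $\kappa^{{<}\kappa}=2^{{<}\kappa}=\kappa$; in particular ${}^{{<}\kappa}2$ and ${}^{{<}\kappa}\kappa$ lie in $\HH{\kappa}$, these sets are $\Sigma_n(\kappa,y)$-definably well-ordered in order type $\kappa$, and for every $s\in{}^{{<}\kappa}2$ there is a canonical, $\Sigma_0$-definable-from-$s$ homeomorphism of the basic clopen set $\Set{x\in{}^\kappa 2}{s\subseteq x}$ onto ${}^\kappa 2$ that restricts to a tree isomorphism of $\Set{t\in{}^{{<}\kappa}2}{s\subseteq t}$ onto ${}^{{<}\kappa}2$. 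The aim is to produce by $\Sigma_n$-recursion a $\Sigma_n(\kappa,y,z)$-definable system $\seq{s_t}{t\in{}^{{<}\kappa}\kappa}$ of elements of ${}^{{<}\kappa}2$ with $s_\emptyset=\emptyset$, with $s_t\subseteq s_{t'}$ whenever $t\subseteq t'$, with $s_t=\bigcup_{\beta<\length{t}}s_{t\restriction\beta}$ at limit stages, and such that at every node $t$ the sets $\Set{x\in{}^\kappa 2}{s_{t^\frown\alpha}\subseteq x}$ (for $\alpha<\kappa$) form a partition of $\Set{x\in{}^\kappa 2}{s_t\subseteq x}$ into $\kappa$ many nonempty pairwise distinct clopen pieces. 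Granting such a system, regularity of $\kappa$ keeps each $s_t$ in ${}^{{<}\kappa}2$ and forces $\length{s_{x\restriction\alpha}}\ge\alpha$, so $h(x):=\bigcup_{\alpha<\kappa}s_{x\restriction\alpha}$ is a well-defined member of ${}^\kappa 2$; one then checks directly that $h$ is injective (a first disagreement between $x$ and $x'$ sends them into incompatible pieces), surjective (recursively read off $x$ from the unique piece of each successive partition containing a given $y\in{}^\kappa 2$), and that $h$ maps the basic open set $\Set{x\in{}^\kappa\kappa}{t\subseteq x}$ onto the basic clopen set $\Set{x\in{}^\kappa 2}{s_t\subseteq x}$, so that $h$ is a continuous open bijection, i.e.\ a homeomorphism, and it is $\Sigma_n(\kappa,y,z)$-definable because the system $\seq{s_t}{t}$ is.

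The heart of the argument is the construction of a single $\Sigma_n(\kappa,y,z)$-definable maximal antichain $A_0\subseteq{}^{{<}\kappa}2$ of size exactly $\kappa$; transporting it by the canonical tree isomorphisms above yields, uniformly and $\Sigma_n(\kappa,y,z)$-definably in $s\in{}^{{<}\kappa}2$, a maximal antichain of size $\kappa$ among the extensions of $s$, and enumerating it $\Sigma_n(\kappa,y,z)$-definably by $\kappa$ with the help of the good well-ordering provides the map $\alpha\mapsto s_{s^\frown\alpha}$ used in the recursion. To build $A_0$, I would invoke the failure of the $\Sigma_n(z)$-colouring property: by the equivalence (i)$\,\Leftrightarrow\,$(ii) of Lemma~\ref{lemma:CharacterizationsPartitionProperty} there is a $\Sigma_n(\kappa,z)$-definable injection $\map{\iota}{\kappa}{{}^{{<}\kappa}2}$ such that no $x\in{}^\kappa 2$ has $\Set{\alpha<\kappa}{\exists\beta<\kappa\ x\restriction\alpha\subseteq\iota(\beta)}$ unbounded; equivalently, the downward-closed subtree $T=\Set{t\in{}^{{<}\kappa}2}{\exists\beta<\kappa\ t\subseteq\iota(\beta)}$ of ${}^{{<}\kappa}2$ has no cofinal branch. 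Let $A$ be the set of $\subseteq$-minimal elements of ${}^{{<}\kappa}2\setminus T$; since $T$ has no cofinal branch, $A$ is a $\Sigma_n(\kappa,z)$-definable maximal antichain. If $\betrag{A}=\kappa$, set $A_0=A$. Otherwise $A$ is bounded below $\kappa$, which forces $T\subseteq{}^{{<}\gamma}2$ for some $\gamma<\kappa$ (every $x\in{}^\kappa 2$ leaves $T$ before level $\gamma$, so ${}^\gamma 2\cap T=\emptyset$ and $T$ is downward closed); then $\kappa=\betrag{\ran{\iota}}\le 2^{{<}\gamma}\le 2^{{<}\kappa}=\kappa$, so $2^{{<}\gamma}=\kappa$, and since $\gamma<\kappa$ and $\kappa$ is regular some $\delta<\gamma$ satisfies $2^\delta=\kappa$; here set $A_0={}^\delta 2$, which is a maximal antichain in ${}^{{<}\kappa}2$ of size $\kappa$, locating $\delta$ and hence $A_0$ $\Sigma_n(\kappa,y)$-definably by means of the good well-ordering. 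The system $\seq{s_t}{t}$ is then defined by $\Sigma_n$-recursion on $\length{t}$ via $s_\emptyset=\emptyset$, $s_{t^\frown\alpha}=$ the $\alpha$-th member (in the fixed definable enumeration) of the canonical size-$\kappa$ maximal antichain above $s_t$, and $s_t=\bigcup_{\beta<\length{t}}s_{t\restriction\beta}$ at limits; the $\Sigma_n$-Recursion Theorem guarantees that the system, and hence $h$, is $\Sigma_n(\kappa,y,z)$-definable.

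I expect the genuine difficulty to lie in this middle step --- securing a definable maximal antichain of \emph{full} size $\kappa$ and, above it, a $\kappa$-fold branching system that both exhausts $\,{}^\kappa 2$ and has every branch resolving to a single point --- rather than in the topology. In the ``tall'' case it rests on the absence of a cofinal branch through $T$, which is exactly where Lemma~\ref{lemma:CharacterizationsPartitionProperty} and the failure of the colouring property enter; in the ``short'' case it rests on the fact that the good well-ordering pins $2^{{<}\kappa}=\kappa$, which by regularity of $\kappa$ produces a cardinal $\delta<\kappa$ with $2^\delta=\kappa$ and makes ${}^\delta 2$ a full-size definable maximal antichain. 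The remaining verifications --- that the resulting $h$ is a bijection and that $h$ and $h^{-1}$ are continuous, and that all objects in sight stay $\Sigma_n(\kappa,y,z)$-definable through the recursion --- are routine tree-topology bookkeeping once the branching system is in place, the only care needed being the handling of limit stages, where regularity of $\kappa$ keeps the unions inside ${}^{{<}\kappa}2$.
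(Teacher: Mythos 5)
Your proof is correct and follows essentially the same route as the paper: extract the $\Sigma_n(\kappa,z)$-definable injection $\iota$ from Lemma~\ref{lemma:CharacterizationsPartitionProperty}, form the front $\partial T$ of minimal nodes outside the tree $T$ of initial segments of elements of $\ran{\iota}$, enumerate this maximal antichain in order type $\kappa$ via the good well-ordering, and obtain the homeomorphism by block concatenation. The only divergence is that your second case (the antichain having size less than $\kappa$) is vacuous --- the injectivity of $\iota$ together with the regularity of $\kappa$ already forces $\partial T$ to have cardinality $\kappa$, which is exactly how the paper argues --- but your treatment of that case is harmless.
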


\begin{proof}
 By Lemma \ref{lemma:CharacterizationsPartitionProperty}, our assumptions imply the existence of a $\Sigma_n(\kappa,z)$-definable injection $\map{\iota}{\kappa}{{}^{{<}\kappa}2}$ with the property that for all $x\in{}^\kappa 2$, there is an $\alpha<\kappa$ such that there is no $\beta<\kappa$ with $x\restriction\alpha\subseteq\iota(\beta)$. 
  Set $T=\Set{t\in{}^{{<}\kappa}2}{\exists\alpha<\kappa ~ t\subseteq\iota(\alpha)}$ and define $\partial T$ to be the set of all $t\in{}^{{<}\kappa}2\setminus T$ with the property that $t\restriction\alpha\in T$ holds for all $\alpha<\length{t}$. Then our assumptions imply that for every $x\in{}^\kappa 2$, there is a unique $t_x\in\partial T$ with $t_x\subseteq x$. 
 
 \begin{claim*}
  The set $\partial T$ has cardinality $\kappa$. 
 \end{claim*}
 
 \begin{proof}[Proof of the Claim]
  Assume that $\partial T$ has cardinality less than $\kappa$. Since $\iota$ is an injection, there is an $\alpha<\kappa$ such that there is no $t\in\partial T$ with $\iota(\alpha)\subseteq\partial T$. Pick $x\in{}^\kappa 2$ with $\iota(\alpha)\subseteq x$. Then $t_x\nsubseteq\iota(\alpha)$ and therefore $\iota(\alpha)\subseteq t_x\in\partial T$, a contradiction.  
 \end{proof}

Note that our assumption on $\iota$ imply that the set $\partial T$ is $\Sigma_n(\kappa,z)$-definable. By the above claim, the existence of a good $\Sigma_n(\kappa,y)$-well-ordering of $\HH{\kappa}$ of length $\kappa$ then yields the existence of a $\Sigma_n(\kappa,y,z)$-definable bijection $\map{b}{\kappa}{\partial T}$. Given $y\in{}^\kappa\kappa$, we can then find a unique element $h(y)$ of ${}^\kappa 2$ with the property that there is a continuous increasing sequence $\seq{\beta_\alpha}{\alpha<\kappa}$ of ordinals less than $\kappa$ with $\beta_0=0$, $\beta_{\alpha+1}=\beta_\alpha+\length{b(y(\alpha))}$ and $h(y)(\beta_\alpha+\beta)=b(y(\alpha))(\beta)$ for all $\alpha<\kappa$ and $\beta<\length{b(y(\alpha))}$.

\begin{claim*}
 The map $\map{h}{{}^\kappa\kappa}{{}^\kappa 2}$ is a homeomorphism. 
\end{claim*}

\begin{proof}[Proof of the Claim]
 Given $x\in{}^\kappa 2$, there is a unique element $g(x)$ of ${}^\kappa\kappa$ with the property that there exists a sequence $\seq{x_\alpha}{\alpha<\kappa}$ of elements of ${}^\kappa 2$ and a continuous increasing sequence $\seq{\beta_\alpha}{\alpha<\kappa}$ of ordinals less than $\kappa$ such that the following statements hold: 
 \begin{enumerate}
  \item $x_0=x$ and $\beta_0=0$. 

  \item $\beta_{\alpha+1}=\beta_\alpha+\length{t_{x_\alpha}}$ and $b(g(x)(\alpha))=t_{x_\alpha}$ for all $\alpha<\kappa$. 

  \item $x_\alpha(\beta)=x(\beta_\alpha+\beta)$ for all $\alpha,\beta<\kappa$.  
 \end{enumerate}
 Then it is easy to check that $\map{g}{{}^\kappa 2}{{}^\kappa\kappa}$ and   $\map{h}{{}^\kappa\kappa}{{}^\kappa 2}$ are continuous functions with $g\circ h=\id_{{}^\kappa\kappa}$ and $h\circ g=\id_{{}^\kappa 2}$. 
\end{proof}

 Finally, the above construction ensure that $h$ is $\Sigma_n(\kappa,y,z)$-definable. 
\end{proof}

The above results show that the assumption $\VV=\LL$ or, more generally, $\VV=\KK^{DJ}$ implies that an uncountable regular cardinal $\kappa$ has the $\Sigma_1(z)$-colouring property if and only if there is no $\Sigma_1(\kappa,z)$-definable  homeomorphism between ${}^\kappa\kappa$ and ${}^\kappa 2$. 
Moreover, a combination of the above lemma with {\cite[Theorem 2]{MR2231126}} (as in the proof of Proposition \ref{proposition:BPFAfailureBoldface}) shows that $\BPFA$ implies that for every $z\subseteq\omega_1$ with $\omega_1=\omega_1^{\LL[z]}$, there is a $\Sigma_1(\omega_2,z)$-definable  homeomorphism between the generalized Baire space of $\omega_2$ and the generalized Cantor space of $\omega_2$. 
Finally, by combining the construction from the proof of Lemma \ref{lemma:DefHomeo} with arguments from the proof of {\cite[Theorem 5.2]{MR3694344}}, it is possible to show that the existence of a single Woodin cardinal is compatible with the existence of a $\Sigma_1(\omega_1)$-definable  homeomorphism between the generalized Baire space of $\omega_1$ and the generalized Cantor space of $\omega_1$.


\section{Open questions}\label{section:questions}

We close this paper by presenting some questions left open by the above results. The results of Section \ref{section:Omega2} show that both $\PFA^{++}$ and $\MM^{{+}\omega}$ do not imply that $\omega_2$ has the $\Sigma_1$-colouring property. Since these results rely on the fact that these forcing axioms are compatible with the existence of a well-ordering of the reals that is definable over $\langle\HH{\omega_2},\in\rangle$ and it is commonly expected that stronger forcing axioms imply the non-existence of such a well-ordering, it is natural to conjecture that such axioms also rule out the existence of simply definable partitions without large homogeneous sets.

\begin{question}
 Do very strong forcing axioms, like $\MM^{++}$ or $\MM^{+++}$ (defined by Viale in \cite{MR3486170}), imply that $\omega_2$ has the $\Sigma_1$-colouring property?
\end{question}

In contrast, the above results also leave open the possibility that \emph{Martin's Maximum} is not only compatible with a failure of the $\Sigma_1$-colouring property at $\omega_2$, but outright implies such a failure.

\begin{question}
 Is $\MM$ consistent with the statement that $\omega_2$ has the $\Sigma_1$-colouring property?  
\end{question}

While the results of Section \ref{section:Limits} provide many examples of inaccessible non-weakly compact cardinals with the $\Sigma_1$-colouring property, they leave open the question question whether  small inaccessible cardinals can possess this property.

\begin{question}
 Is it consistent that the first inaccessible cardinal has the $\mathbf{\Sigma}_1$-colouring property? 
\end{question}

Somewhat surprisingly, the above results show that successors of singular strong limit cardinals of uncountable cofinality never have the $\mathbf{\Sigma}_2$-colouring property. This leaves open the following question.

\begin{question}
 Is it consistent that the successor of a singular cardinal of uncountable cofinality has the $\mathbf{\Sigma}_1$-colouring property?
\end{question}


 \bibliographystyle{amsplain}
\bibliography{references}

\providecommand{\bysame}{\leavevmode\hbox to3em{\hrulefill}\thinspace}
\providecommand{\MR}{\relax\ifhmode\unskip\space\fi MR }
\providecommand{\MRhref}[2]{%
  \href{http://www.ams.org/mathscinet-getitem?mr=#1}{#2}
}
\providecommand{\href}[2]{#2}
\begin{thebibliography}{10}

\bibitem{MR716625}
Uri Abraham and Saharon Shelah, \emph{Forcing closed unbounded sets}, Journal
  of Symbolic Logic \textbf{48} (1983), no.~3, 643--657. \MR{716625}

\bibitem{2016arXiv160909292A}
Alessandro {Andretta} and Luca {Motto Ros}, \emph{{Classifying uncountable
  structures up to bi-embeddability}}, preprint.

\bibitem{MR2320944}
David Asper\'o, \emph{Guessing and non-guessing of canonical functions}, Ann.
  Pure Appl. Logic \textbf{146} (2007), no.~2-3, 150--179. \MR{2320944}

\bibitem{MR2310340}
Joan Bagaria and Roger Bosch, \emph{Generic absoluteness under projective
  forcing}, Fund. Math. \textbf{194} (2007), no.~2, 95--120. \MR{2310340}

\bibitem{MR776640}
James~E. Baumgartner, \emph{Applications of the proper forcing axiom}, Handbook
  of set-theoretic topology, North-Holland, Amsterdam, 1984, pp.~913--959.
  \MR{776640}

\bibitem{MR2267146}
Roger Bosch, \emph{Small definably-large cardinals}, Set theory, Trends Math.,
  Birkh\"auser, Basel, 2006, pp.~55--82. \MR{2267146}

\bibitem{MR2231126}
Andr{\'e}s~Eduardo Caicedo and Boban Veli{\v{c}}kovi{\'c}, \emph{The bounded
  proper forcing axiom and well orderings of the reals}, Math. Res. Lett.
  \textbf{13} (2006), no.~2-3, 393--408. \MR{2231126 (2007d:03076)}

\bibitem{MR2768691}
James Cummings, \emph{Iterated forcing and elementary embeddings}, Handbook of
  set theory. {V}ols. 1, 2, 3, Springer, Dordrecht, 2010, pp.~775--883.
  \MR{2768691}

\bibitem{DefSuccSingCardinals}
James Cummings, Sy-David Friedman, Menachem Magidor, Assaf Rinot, and Dima
  Sinapova, \emph{Ordinal definable subsets of singular cardinals}, to appear
  in the \emph{Israel Journal of Mathematics}.

\bibitem{MR661475}
Tony Dodd and Ronald~B. Jensen, \emph{The covering lemma for {$K$}}, Ann. Math.
  Logic \textbf{22} (1982), no.~1, 1--30. \MR{661475}

\bibitem{MR645907}
Hans-Dieter Donder, Ronald~B. Jensen, and Bernd~J. Koppelberg, \emph{Some
  applications of the core model}, Set theory and model theory ({B}onn, 1979),
  Lecture Notes in Math., vol. 872, Springer, Berlin-New York, 1981,
  pp.~55--97. \MR{645907}

\bibitem{MR730856}
Hans-Dieter Donder and Peter Koepke, \emph{On the consistency strength of
  ``accessible''\ {J}\'onsson cardinals and of the weak {C}hang conjecture},
  Ann. Pure Appl. Logic \textbf{25} (1983), no.~3, 233--261. \MR{730856}

\bibitem{MR0141603}
Paul {Erd\H{o}s} and Andras Hajnal, \emph{Some remarks concerning our paper
  ``{O}n the structure of set-mappings''. {N}on-existence of a two-valued
  {$\sigma$}-measure for the first uncountable inaccessible cardinal}, Acta
  Math. Acad. Sci. Hungar. \textbf{13} (1962), 223--226. \MR{0141603}

\bibitem{MR2830415}
Victoria Gitman, \emph{Ramsey-like cardinals}, J. Symbolic Logic \textbf{76}
  (2011), no.~2, 519--540. \MR{2830415 (2012e:03110)}

\bibitem{MR2830435}
Victoria Gitman and Philip~D. Welch, \emph{Ramsey-like cardinals {II}}, J.
  Symbolic Logic \textbf{76} (2011), no.~2, 541--560. \MR{2830435
  (2012e:03111)}

\bibitem{MR0373889}
Serge Grigorieff, \emph{Intermediate submodels and generic extensions in set
  theory}, Ann. Math. (2) \textbf{101} (1975), 447--490. \MR{0373889}

\bibitem{MR1133077}
Kai Hauser, \emph{Indescribable cardinals and elementary embeddings}, J.
  Symbolic Logic \textbf{56} (1991), no.~2, 439--457. \MR{1133077}

\bibitem{MR0367930}
H.~H. Hung and S.~Negrepontis, \emph{Spaces homeomorphic to {$(2^{\alpha
  })_{\alpha }$}}, Bull. Amer. Math. Soc. \textbf{79} (1973), 143--146.
  \MR{0367930 (51 \#4172)}

\bibitem{MR0244036}
Thomas Jech, \emph{{$\omega _{1}$} can be measurable}, Israel J. Math.
  \textbf{6} (1968), 363--367 (1969). \MR{0244036}

\bibitem{MR1940513}
\bysame, \emph{Set theory}, Springer Monographs in Mathematics,
  Springer-Verlag, Berlin, 2003, The third millennium edition, revised and
  expanded. \MR{1940513}

\bibitem{MR560220}
Thomas Jech, Menachem Magidor, William~J. Mitchell, and Karel Prikry,
  \emph{Precipitous ideals}, J. Symbolic Logic \textbf{45} (1980), no.~1, 1--8.
  \MR{560220}

\bibitem{MR1994835}
Akihiro Kanamori, \emph{The higher infinite}, second ed., Springer Monographs
  in Mathematics, Springer-Verlag, Berlin, 2003, Large cardinals in set theory
  from their beginnings. \MR{1994835}

\bibitem{MR597342}
Kenneth Kunen, \emph{Set theory}, Studies in Logic and the Foundations of
  Mathematics, vol. 102, North-Holland Publishing Co., Amsterdam-New York,
  1980, An introduction to independence proofs. \MR{597342}

\bibitem{MR2069032}
Paul~B. Larson, \emph{The stationary tower}, University Lecture Series,
  vol.~32, American Mathematical Society, Providence, RI, 2004, Notes on a
  course by W. Hugh Woodin. \MR{2069032}

\bibitem{MR2474445}
\bysame, \emph{Martin's maximum and definability in {$H(\aleph_2)$}}, Ann. Pure
  Appl. Logic \textbf{156} (2008), no.~1, 110--122. \MR{2474445}

\bibitem{MR2768703}
\bysame, \emph{Forcing over models of determinacy}, Handbook of set theory.
  {V}ols. 1, 2, 3, Springer, Dordrecht, 2010, pp.~2121--2177. \MR{2768703}

\bibitem{MR1791372}
Amir Leshem, \emph{On the consistency of the definable tree property on
  {$\aleph_1$}}, J. Symbolic Logic \textbf{65} (2000), no.~3, 1204--1214.
  \MR{1791372}

\bibitem{MR2987148}
Philipp L\"ucke, \emph{{$\Sigma^1_1$}-definability at uncountable regular
  cardinals}, J. Symbolic Logic \textbf{77} (2012), no.~3, 1011--1046.
  \MR{2987148}

\bibitem{MR3694344}
Philipp L\"ucke, Ralf Schindler, and Philipp Schlicht,
  \emph{{$\Sigma_1(\kappa)$}-definable subsets of {${\rm H}(\kappa^+)$}}, J.
  Symb. Log. \textbf{82} (2017), no.~3, 1106--1131. \MR{3694344}

\bibitem{GoodWO}
Philipp L\"ucke and Philipp Schlicht, \emph{Measurable cardinals and good
  {$\Sigma_1(\kappa)$}-wellorderings}, to appear in {\emph{Mathematical Logic
  Quarterly}}.

\bibitem{MR1300637}
William~J. Mitchell and John~R. Steel, \emph{Fine structure and iteration
  trees}, Lecture Notes in Logic, vol.~3, Springer-Verlag, Berlin, 1994.
  \MR{1300637 (95m:03099)}

\bibitem{MR2768699}
Ernest Schimmerling, \emph{A core model toolbox and guide}, Handbook of set
  theory. {V}ols. 1, 2, 3, Springer, Dordrecht, 2010, pp.~1685--1751.
  \MR{2768699}

\bibitem{MR2817562}
Ian Sharpe and Philip~D. Welch, \emph{Greatly {E}rd{\H o}s cardinals with some
  generalizations to the {C}hang and {R}amsey properties}, Ann. Pure Appl.
  Logic \textbf{162} (2011), no.~11, 863--902. \MR{2817562}

\bibitem{MR1462202}
Saharon Shelah, \emph{Set theory without choice: not everything on cofinality
  is possible}, Arch. Math. Logic \textbf{36} (1997), no.~2, 81--125.
  \MR{1462202}

\bibitem{MR1257469}
John~R. Steel, \emph{Inner models with many {W}oodin cardinals}, Ann. Pure
  Appl. Logic \textbf{65} (1993), no.~2, 185--209. \MR{1257469 (95c:03132)}

\bibitem{MR2768698}
\bysame, \emph{An outline of inner model theory}, Handbook of set theory.
  {V}ols. 1, 2, 3, Springer, Dordrecht, 2010, pp.~1595--1684. \MR{2768698}

\bibitem{MR2355670}
Stevo Todor\v{c}evi\'{c}, \emph{Walks on ordinals and their characteristics},
  Progress in Mathematics, vol. 263, Birkh\"auser Verlag, Basel, 2007.
  \MR{2355670}

\bibitem{MR3486170}
Matteo Viale, \emph{Category forcings, {$\mathrm{MM}^{+++}$}, and generic
  absoluteness for the theory of strong forcing axioms}, J. Amer. Math. Soc.
  \textbf{29} (2016), no.~3, 675--728. \MR{3486170}

\end{thebibliography}


\end{document}